\title{Category Theory Using String Diagrams}
\author{Dan Marsden}
\newcommand{\leftop}{\ensuremath{\operatorname{\lhd}}}
\newcommand{\rightop}{\ensuremath{\operatorname{\rhd}}}
\newcommand{\mvright}[1]{\ensuremath{#1^{\rhd}}}
\newcommand{\mvrightright}[1]{\ensuremath{#1^{\rhd\rhd}}}
\newcommand{\mvleft}[1]{\ensuremath{#1^{\lhd}}}
\newcommand{\mvleftleft}[1]{\ensuremath{#1^{\lhd\lhd}}}
\newcommand{\adjsql}[1]{\ensuremath{#1_l}}
\newcommand{\adjsqr}[1]{\ensuremath{#1_r}}
\newcommand{\repcontrato}[4]{
\path (#1.center) ++(0,-0.5) coordinate (#1-r)
     +(-1,1) coordinate (#1-rl)
     +(1,1) coordinate (#1-rr);
\coordinate (#1-a) at (#1.south);
\path
 let \p1 = (#1.north) in
 let \p2 = (#1-rl) in
 let \p3 = (#1-rr) in
 coordinate (#1-b) at (\x2, \y1)
 coordinate (#1-c) at (\x3, \y1);
\fill[catset] (#1.south) -- (#1-r) to[out=0, in=270] (#1-rr) -- (#1-c) -- (#1.north east) -- (#1.south east) -- cycle;
\fill[catcop] (#1-b) -- (#1-rl) to[out=270,in=180] (#1-r) to[out=0,in=270] (#1-rr) -- (#1-c) -- cycle;
\fill[catterm] (#1.south) -- (#1-r) to[out=180, in=270] (#1-rl) -- (#1-b) -- (#1.north west) -- (#1.south west) -- cycle;
\draw (#1-r) to[out=180,in=270] (#1-rl) to node[swap]{#2} (#1-b);
\draw (#1-r) to[out=0, in=270] (#1-rr) to node[swap]{#3} (#1-c);
\draw (#1-a) to node[swap]{$*$} (#1-r);
\strnat{#1-r};
\strlabu{#1-r}{#4}
}
\newcommand{\repcontratoex}[5]{
\coordinate[label=below:#2] (#1-b) at (#1.south);
\coordinate[label=above:#5] (#1-t) at (#1.north);
 \draw (#1-b) -- ++(0,1.5)
       (#1-t) -- ++(0,-1.5);
 \path (#1.south west) -- ++(0.5,1) coordinate (#1-bl)
       (#1.north east) -- ++(-0.5,-1) coordinate (#1-tr);
 \node[rectangle, fit=(#1-bl)(#1-tr)] (#1-subdiag) {};
 \repcontrato{#1-subdiag}{#2}{#3}{#4};
 \draw[very thick] (#1-subdiag.south west) rectangle (#1-subdiag.north east);
\begin{pgfonlayer}{background}
 \fill[catterm] (#1.south west) rectangle (#1.north);
 \fill[catc] (#1.south east) rectangle (#1.north);
\end{pgfonlayer}
}
\begin{document}

\maketitle

\begin{abstract}
In \citep{Fokkinga1992, Fokkinga1992b} 
and \citep{FokkingaMeertens1994} a calculational approach to category theory
is developed. The scheme has many merits, but sacrifices useful type information
in the move to an equational style of reasoning. By contrast, traditional proofs
by diagram pasting retain the vital type information, but poorly express the reasoning
and development of categorical proofs. In order to combine the strengths of these
two perspectives, we propose the use of string diagrams, common folklore in the category
theory community, allowing us to retain the type information whilst pursuing a calculational
form of proof. These graphical representations provide a topological perspective on
categorical proofs, and silently handle functoriality and naturality conditions that
require awkward bookkeeping in more traditional notation.

Our approach is to proceed primarily by example, systematically applying graphical techniques to
many aspects of category theory. We develop string diagrammatic formulations
of many common notions, including adjunctions, monads, Kan extensions,
limits and colimits. We describe representable functors
graphically, and exploit these as a uniform source of graphical calculation rules
for many category theoretic concepts.
These graphical tools are then used to explicitly prove many standard
results in our proposed diagrammatic style.  
\end{abstract}

\section{Introduction}
\label{sec:introduction}
This work develops in some detail many aspects of basic category theory, 
with the aim of demonstrating the combined effectiveness of two key concepts:
\begin{enumerate}
 \item The calculational reasoning approach to mathematics.
 \item The use of string diagrams in category theory.
\end{enumerate}
The calculational style of mathematics is an approach to formal proofs originating in
the computer science community. The scheme is typically characterized by a goal directed systematic manipulation of
equations, similar in flavour to high school arithmetic.
The merits of this style of reasoning are discussed in \citep{vanGasteren1990, DijkstraScholten1990}
and discrete mathematics is formulated in a calculational style in \citep{GriesSchneider1993}.

String diagrams are a graphical formalism for working in category theory,
providing a convenient tool for describing morphisms within bicategories \citep{Benabou1967},
some background on string diagrams can be found in \citep{Street1995}.
Recent work in quantum computation and quantum foundations has successfully applied various diagrammatic
calculi based on string diagrams to model quantum 
systems in various forms of monoidal category, see for example
\citep{AbramskyCoecke2008, CoeckeDuncan2011, StayVicary2013, Vicary2012, BaezStay2011}
and the comprehensive survey \citep{Selinger2011} for more details. Further applications
of string diagrams for monoidal categories in more logical settings can be found in \citep{Mellies2006} and \citep{Mellies2012}.
Technical aspects of using string diagrams to reason in monoidal categories are detailed in
\citep{KellyLaplaza1980, JoyalStreet1991, JoyalStreet1995, JoyalStreet1988}.

Category theory was developed in a calculational style in \citep{Fokkinga1992, Fokkinga1992b} 
and \citep{FokkingaMeertens1994}. In these papers, we are presented with a choice of the traditional
commuting diagram style of reasoning about category theory, and a calculational approach based on formal
manipulation of the corresponding equations. We propose that string diagrams provide a third alternative
strategy, naturally supporting calculational reasoning, but continuing to carry the important type information
that is abandoned in the move to symbolic equations. Additionally, string diagram notation 
``does a lot of the work for free'',  an important aspect in choosing notation as advocated in \citep{Backhouse1989}. 
Issues of associativity, functoriality and naturality are handled silently by the notation, allowing
attention to be focused on the essential aspects of the proof.

To illustrate the difference between traditional ``diagram pasting'', the calculational approach
using symbolic equations as described in for example \citep{Fokkinga1992}, 
and the string diagrammatic calculus, we will investigate a simple example.

\begin{example}[Algebra Homomorphisms]
\label{ex:alghom}
Let $\mathcal{C}$ be a category, and $T : \mathcal{C} \rightarrow \mathcal{C}$ be an endofunctor on $\mathcal{C}$.
A \define{$T$-algebra} is a pair consisting of an object $X$ of $\mathcal{C}$ and a $\mathcal{C}$ morphism of type $T(X) \rightarrow X$.
A \define{$T$-algebra homomorphism} of type $(X, a) \rightarrow (Y,a')$ is a $\mathcal{C}$ morphism $h : X \rightarrow Y$ such that
the following diagram commutes:
\begin{equation}
\label{commuting:1}
\begin{gathered}
\begin{tikzpicture}[auto, node distance=2cm]
\node (TX) {$T(X)$}
      node[below of=TX] (X) {$X$}
      node[right of=TX] (TY) {$T(Y)$}
      node[below of=TY] (Y) {$Y$};
\draw [->] (TX) to node[swap]{$a$} (X);
\draw [->] (TY) to node{$a'$} (Y);
\draw [->] (TX) to node{$T(h)$} (TY);
\draw [->] (X) to node[swap]{$h$} (Y);
\end{tikzpicture}
\end{gathered}
\end{equation}
$T$-algebras and their homomorphisms form a category \algcat{T}.
Now we will show, using the three different proof styles discussed above, 
that for a category $\mathcal{D}$, a functor $F : \mathcal{C} \rightarrow \mathcal{D}$, an
endofunctor $T':\mathcal{D} \rightarrow \mathcal{D}$ and natural transformation $\alpha : T' \circ F \Rightarrow F \circ T$,
if $h : (X, a) \rightarrow (Y, a')$ is a $T$-algebra homomorphism then $F h$ is a $T'$-algebra homomorphism of type 
$(X, F(a) \circ \alpha_X) \rightarrow (Y, F(a') \circ \alpha_Y)$. 

Firstly we will prove this in the traditional diagram pasting
manner by noting that by naturality of $\alpha$ the following diagram commutes:
\begin{equation}
\label{commuting:2}
\begin{gathered}
\begin{tikzpicture}[auto, node distance=2cm]
\path node(T'X) {$T'F(X)$}
      node[below of=T'X] (TX) {$FT(X)$}
      node[right of=T'X, node distance=3cm] (T'Y) {$T'F(Y)$}
      node[below of=T'Y] (TY) {$FT(Y)$};
\draw [->] (T'X) to node{$T'F(h)$} (T'Y);
\draw [->] (TX) to node[swap]{$FT(h)$} (TY);
\draw [->] (T'X) to node[swap]{$\alpha_X$} (TX);
\draw [->] (T'Y) to node{$\alpha_Y$} (TY);
\end{tikzpicture}
\end{gathered}
\end{equation}
Also, as functor application preserves commuting diagrams, the following also commutes:
\begin{equation}
\label{commuting:3}
\begin{gathered}
\begin{tikzpicture}[auto, node distance=2cm]
\node (TX) {$FT(X)$}
      node[below of=TX] (X) {$F(X)$}
      node[right of=TX, node distance=3cm] (TY) {$FT(Y)$}
      node[below of=TY] (Y) {$F(Y)$};
\draw [->] (TX) to node[swap]{$F(a)$} (X);
\draw [->] (TY) to node{$F(a')$} (Y);
\draw [->] (TX) to node{$FT(h)$} (TY);
\draw [->] (X) to node[swap]{$F(h)$} (Y);
\end{tikzpicture}
\end{gathered}
\end{equation}
We can then paste diagrams \eqref{commuting:2} and \eqref{commuting:3} together, giving:
\begin{equation}
\label{commuting:4}
\begin{gathered}
\begin{tikzpicture}[auto, node distance=2cm]
\path node (T'X) {$T'F(X)$}
      node[below of=T'X] (TX) {$FT(X)$}
      node[below of=TX] (X) {$F(X)$}
      node[right of=T'X, node distance=3cm] (T'Y) {$T'F(Y)$}
      node[below of=T'Y] (TY) {$FT(Y)$}
      node[below of=TY] (Y) {$F(Y)$};
\draw [->] (TX) to node[swap]{$F(a)$} (X);
\draw [->] (TY) to node{$F(a')$} (Y);
\draw [->] (TX) to node{$FT(h)$} (TY);
\draw [->] (X) to node[swap]{$F(h)$} (Y);
\draw [->] (T'X) to node{$T'F(h)$} (T'Y);
\draw [->] (T'X) to node[swap]{$\alpha_X$} (TX);
\draw [->] (T'Y) to node{$\alpha_Y$} (TY);
\end{tikzpicture}
\end{gathered}
\end{equation}
The claim then follows from the outer rectangle of diagram \eqref{commuting:4}.

A proof of the same claim in a calculational style similar to \citep{Fokkinga1992b}
might proceed as follows:
\begin{eqproof*}
F(h) \circ F(a) \circ \alpha_X
\explain{functoriality}
F(h \circ a) \circ \alpha_X
\explain{$h$ is a $T$-algebra homomorphism}
F(a' \circ T(h)) \circ \alpha_X
\explain{functoriality}
F(a') \circ FT(h) \circ \alpha_X
\explain{naturality}
F(a') \circ \alpha_Y \circ T'F(h)
\end{eqproof*}
and this proves the claim.

Finally, we will sketch how such a proof will look using string diagrams, deliberately mirroring
the previous calculational style of proof, but now encoding the mathematical data graphically.
The precise details of this graphical approach will be developed in later sections, 
but hopefully the example is sufficiently straightforward for the underlying ideas to be apparent:
\begin{equation*}
\begin{gathered}
\begin{tikzpicture}[scale=0.5]
\path coordinate[dot, label=left:$h$] (h) +(0,1) coordinate[label=above:$Y$] (tl)
 ++(0,-1) coordinate[dot, label=left:$a$] (a) +(0,-3) coordinate[label=below:$X$] (bl)
 ++(2,-3) coordinate[label=below:$T'$] (br)
 (bl) ++(1,0) coordinate[label=below:$F$] (bm)
 (a) ++(3,0) coordinate (r) ++(0,2) coordinate[label=above:$F$] (tr);
\draw (bl) -- (a) -- (h) -- (tl);
\draw[name path=curv] (bm) to[out=90, in=-90] (r) -- (tr);
\draw[name path=vert] (a) to[out=0, in=90] (br);
\path[name intersections={of=vert and curv}] coordinate[dot, label=-45:$\alpha$] (alpha) at (intersection-1);
\begin{pgfonlayer}{background}
\fill[catterm] ($(tl) + (-1,0)$) rectangle (br);
\fill[catc] (bl) rectangle ($(tr) + (1,0)$);
\fill[catd] (bm) to[out=90, in=-90] (r) -- (tr) -- ++(1,0) -- ++(0,-5) -- cycle;
\end{pgfonlayer}
\end{tikzpicture}
\end{gathered}
\stackrel{\eqref{commuting:1}}{=}
\begin{gathered}
\begin{tikzpicture}[scale=0.5]
\path coordinate[dot, label=left:$a'$] (a') +(0,1) coordinate[label=above:$Y$] (tl)
 +(2,-4) coordinate[label=below:$T'$] (br)
 ++(0,-1) coordinate[dot, label=left:$h$] (h)
 ++(0,-3) coordinate[label=below:$X$] (bl)
 (bl) ++(1,0) coordinate[label=below:$F$] (bm)
 (h) ++(3,0) coordinate (r) ++(0,2) coordinate[label=above:$F$] (tr);
\draw (bl) -- (h) -- (a') -- (tl);
\draw[name path=vert] (br) to[out=90, in=0] (a');
\draw[name path=curv] (bm) to[out=90, in=-90] (r) -- (tr);
\path[name intersections={of=vert and curv}] coordinate[dot, label=-45:$\alpha$] (alpha) at (intersection-1);
\begin{pgfonlayer}{background}
\fill[catterm] ($(tl) + (-1,0)$) rectangle (bl);
\fill[catc] (bl) rectangle ($(tr) + (1,0)$);
\fill[catd] (bm) to[out=90, in=-90] (r) -- (tr) -- ++(1,0) -- ++(0,-5) -- cycle;
\end{pgfonlayer}
\end{tikzpicture}
\end{gathered}
\stackrel{\text{Nat.}}{=}
\begin{gathered}
\begin{tikzpicture}[scale=0.5]
\path coordinate[dot, label=left:$a'$] (a') +(0,1) coordinate[label=above:$Y$] (tl)
 +(2,-4) coordinate[label=below:$T'$] (br)
 ++(0,-3) coordinate[dot, label=left:$h$] (h)
 ++(0,-1) coordinate[label=below:$X$] (bl)
 (bl) ++(1,0) coordinate[label=below:$F$] (bm)
 (h) ++(3,2) coordinate (r) ++(0,2) coordinate[label=above:$F$] (tr);
\draw (bl) -- (h) -- (a') -- (tl);
\draw[name path=vert] (br) to[out=90, in=0] (a');
\draw[name path=curv] (bm) to[out=90, in=-90] (r) -- (tr);
\path[name intersections={of=vert and curv}] coordinate[dot, label=-45:$\alpha$] (alpha) at (intersection-1);
\begin{pgfonlayer}{background}
\fill[catterm] ($(tl) + (-1,0)$) rectangle (bl);
\fill[catc] (bl) rectangle ($(tr) + (1,0)$);
\fill[catd] (bm) to[out=90, in=-90] (r) -- (tr) -- ++(1,0) -- ++(0,-5) -- cycle;
\end{pgfonlayer}
\end{tikzpicture}
\end{gathered}
\end{equation*}
Note the topological nature of the proof, the first step allows us to slide $h$ through $a$ as $h$ is a homomorphism,
the second equality follows as naturality manifests itself
as sliding two morphisms past each other along the ``wires'' of the diagram.
\end{example}

Now we highlight some similarities and differences between the three styles of reasoning:
\begin{itemize}
 \item Using diagram pasting, a proof may often be presented as a single large and potentially complex commuting
  diagram from which the required equalities can be read off. Such a diagram efficiently encodes a large number
  of equalities, and all the associated type information. Unfortunately, this style of presentation does 
  not capture how the proof was
  constructed, it can be unclear in what order the reasoning proceeded, and constructing such a diagram is somewhat
  of an art form.
 \item The calculational approach presents a proof as a simple series of equalities and manipulations of equations.
  The sequence of steps and their motivation can be made very explicit, aiding both the development and later understanding
  of such a proof. The notation is also clearly more compact than the commuting diagrams, but at a significant cost as
  all the type information is lost in the conversion to equational form. This type information can help eliminate errors
  in reasoning, and often proofs in category theory are guided by the need to compose morphisms in a type correct manner.
 \item The string diagrammatic proof combines some of the benefits of both of the previous approaches. Proofs consist of
  equalities and manipulations of equations in a calculational style, but all the type information is retained,
  helping to guide and constrain the manipulations that can be formed. The approach is clearly more verbose than the
  traditional symbolic equations, but this additional verbosity carries useful type information, and tool support could
  reduce the cost of manipulating these more complex objects when developing a proof, as for example is done for a similar
  diagrammatic calculus used in quantum computing by the Quantomatic tool \citep{Quantomatic}. Also, using string diagram notation, 
  some aspects of proofs are ``handled by the notation'', for example the equational calculational proof above
  must explicitly invoke functoriality but in the graphical proof functoriality is implicit in the notation.
\end{itemize}

In this paper, we aim to provide an introductory, and reasonably self contained account 
of the use of string diagrams to reason calculationally about category theory.
We will start with the basic ideas of strings diagrams and then formulate
fundamental concepts of category theory such as adjunctions, monads, Kan extensions and limits and colimits
graphically. To illustrate the approach, these formulations will then be exploited to prove various standard results
in the proposed graphical style.
We will assume some basic familiarity with categorical concepts such as categories, functors and natural transformations,
and the elementary details of adjunctions, limits and colimits, for introductory accounts see for example 
\citep{AbrTze2011} or \citep{Pierce1991}, 
more comprehensive material can be found in the standard reference \citep{MacLane1998}.

As our aim to describe ``ordinary'' category theory, we will work exclusively with the 2-category of all categories, 
rather than working axiomatically with bicategories carrying sufficient structure. Many results are folklore or well
documented in the literature, although our heavy emphasis on providing a graphical formulation of these results in a form
suitable for calculation is certainly non-standard. In section \ref{sec:repr} we provide a new perspective on representable
functors as a source of graphical calculation rules. This perspective is then used to provide graphical rules for
Kan extensions, limits and colimits in later sections.

\section{An Outline of the Graphical Approach}
\label{sec:outline}
In the string diagram formalism, a category $\mathcal{C}$ is represented as a coloured region:
\begin{center}
\begin{tikzpicture}[scale=0.5]
\fill[catc] (0,0) rectangle (4,4);
\end{tikzpicture}
\end{center}
Different colours are used to denote different categories within a diagram. To avoid a great deal
of repetitive detail, in what follows we
will not in general specify which categories correspond to which colours.
This information, if needed, should be apparent from the other visual components within our diagrams.
A functor $F : \mathcal{C} \rightarrow \mathcal{D}$ is represented as an edge, commonly referred to as a wire, 
for example:
\begin{center}
\begin{tikzpicture}[scale=0.5]
\path coordinate[label=below:$F$] (b) ++(0,4) coordinate[label=above:$F$] (t);
\draw (b) -- (t);
\begin{pgfonlayer}{background}
\fill[catc] ($(t) + (-2,0)$) rectangle (b);
\fill[catd] (t) rectangle ($(b) + (2,0)$);
\end{pgfonlayer}
\end{tikzpicture}
\end{center}
Identity functors are often omitted and simply drawn as a region.
Given a second functor $G : \mathcal{D} \rightarrow \mathcal{E}$, the composite $G \circ F$ is
represented as:
\begin{center}
\begin{tikzpicture}[scale=0.5]
\path coordinate[label=below:$F$] (bl) +(0,4) coordinate[label=above:$F$] (tl)
 ++(2,0) coordinate[label=below:$G$] (br) ++(0,4) coordinate[label=above:$G$] (tr);
\draw (bl) -- (tl)
      (br) -- (tr);
\begin{pgfonlayer}{background}
\fill[catc] ($(tl) + (-2,0)$) rectangle (bl);
\fill[catd] (tl) rectangle (br);
\fill[cate] (tr) rectangle ($(br) + (2,0)$);
\end{pgfonlayer}
\end{tikzpicture}
\end{center}
So \emph{functors compose from left to right}. 
Note how by representing identity functors as regions, composing
on the left or right with an identity functor ``does nothing'' to our diagram as we would expect.
A natural transformation $\alpha : F \Rightarrow F'$ is represented as:
\begin{center}
\begin{tikzpicture}[scale=0.5]
\path coordinate[label=below:$F$] (b) ++(0,2) coordinate[dot, label=right:$\alpha$] (alpha) ++(0,2) coordinate[label=above:$F'$] (t);
\draw (b) -- (alpha) -- (t);
\begin{pgfonlayer}{background}
\fill[catc] ($(t) + (-2,0)$) rectangle (b);
\fill[catd] (t) rectangle ($(b) + (2,0)$);
\end{pgfonlayer}
\end{tikzpicture}
\end{center}
Given a second natural transformation $\alpha' : F' \Rightarrow F''$, the vertical composite $\alpha' \circ \alpha$ is
written as follows:
\begin{center}
\begin{tikzpicture}[scale=0.5]
\path coordinate[label=below:$F$] (b) ++(0,4) coordinate[label=above:$F''$] (t);
\coordinate[dot, label=right:$\alpha$] (alpha) at ($(b)!0.333!(t)$);
\coordinate[dot, label=right:$\alpha'$] (alpha') at ($(b)!0.666!(t)$);
\draw (b) -- (alpha) to node[left]{$F'$} (alpha') -- (t);
\begin{pgfonlayer}{background}
\fill[catc] ($(t) + (-2,0)$) rectangle (b);
\fill[catd] (t) rectangle ($(b) + (2,0)$);
\end{pgfonlayer}
\end{tikzpicture}
\end{center}
So \emph{natural transformations compose vertically from bottom to top}. Now if we also have a natural
transformation $\beta : G \Rightarrow G'$ we represent the horizontal composite $\beta * \alpha$ as:
\begin{center}
\begin{tikzpicture}[scale=0.5]
\path coordinate[label=below:$F$] (bl) ++(0,2) coordinate[dot, label=left:$\alpha$] (alpha) ++(0,2) coordinate[label=above:$F'$] (tl)
 (bl) ++(2,0) coordinate[label=below:$G$] (br) ++(0,2) coordinate[dot, label=right:$\beta$] (beta) ++(0,2) coordinate[label=above:$G'$] (tr);
\draw (bl) -- (alpha) -- (tl)
      (br) -- (beta) -- (tr);
\begin{pgfonlayer}{background}
\fill[catc] ($(tl) + (-2,0)$) rectangle (bl);
\fill[catd] (tl) rectangle (br);
\fill[cate] (tr) rectangle ($(br) + (2,0)$);
\end{pgfonlayer}
\end{tikzpicture}
\end{center}
So \emph{natural transformations compose horizontally from left to right}. Identity natural transformations
are omitted, so $1 : F \Rightarrow F$ is drawn as:
\begin{center}
\begin{tikzpicture}[scale=0.5]
\path coordinate[label=below:$F$] (b) ++(0,4) coordinate[label=above:$F$] (t);
\draw (b) -- (t);
\begin{pgfonlayer}{background}
\fill[catc] ($(t) + (-2,0)$) rectangle (b);
\fill[catd] (t) rectangle ($(b) + (2,0)$);
\end{pgfonlayer}
\end{tikzpicture}
\end{center}
In this way, vertical composition with identity natural transformations ``does nothing'' to our
diagram as we would expect.
Given another natural transformation $\beta' : G' \Rightarrow G''$, the \define{interchange law} holds:
\begin{equation*}
(\beta' \circ \beta) * (\alpha' \circ \alpha) = (\beta' * \alpha') \circ (\beta * \alpha)
\end{equation*}
and so the following composite is well defined, corresponding to a unique natural transformation:
\begin{center}
\begin{tikzpicture}[scale=0.5]
\path coordinate[label=below:$F$] (bl) +(0,4) coordinate[label=above:$F''$] (tl)
 ++(2,0) coordinate[label=below:$G$] (br) ++(0,4) coordinate[label=above:$G''$] (tr);
\coordinate[dot, label=right:$\alpha$] (alpha) at ($(bl)!0.333!(tl)$);
\coordinate[dot, label=right:$\alpha'$] (alpha') at ($(bl)!0.666!(tl)$);
\coordinate[dot, label=right:$\beta$] (beta) at ($(br)!0.333!(tr)$);
\coordinate[dot, label=right:$\beta'$] (beta') at ($(br)!0.666!(tr)$);
\draw (bl) -- (alpha) to node[left]{$F'$} (alpha') -- (tl)
      (br) -- (beta) to node[left]{$G'$} (beta') -- (tr);
\begin{pgfonlayer}{background}
\fill[catc] ($(tl) + (-2,0)$) rectangle (bl);
\fill[catd] (tl) rectangle (br);
\fill[cate] (tr) rectangle ($(br) + (2,0)$);
\end{pgfonlayer}
\end{tikzpicture}
\end{center}
By substituting identities appropriately, we have the following \define{sliding equalities}:
\begin{equation*}
\begin{gathered}
\begin{tikzpicture}[scale=0.5]
\path coordinate[label=below:$F$] (bl) +(0,4) coordinate[label=above:$F'$] (tl)
 ++(2,0) coordinate[label=below:$G$] (br) ++(0,4) coordinate[label=above:$G'$] (tr);
\coordinate[dot, label=left:$\alpha$] (alpha) at ($(bl)!0.333!(tl)$);
\coordinate[dot, label=right:$\beta$] (beta) at ($(br)!0.666!(tr)$);
\draw (bl) -- (alpha) -- (tl)
      (br) -- (beta) -- (tr);
\begin{pgfonlayer}{background}
\fill[catc] ($(tl) + (-2,0)$) rectangle (bl);
\fill[catd] (tl) rectangle (br);
\fill[cate] (tr) rectangle ($(br) + (2,0)$);
\end{pgfonlayer}
\end{tikzpicture}
\end{gathered}
=
\begin{gathered}
\begin{tikzpicture}[scale=0.5]
\path coordinate[label=below:$F$] (bl) +(0,4) coordinate[label=above:$F'$] (tl)
 ++(2,0) coordinate[label=below:$G$] (br) ++(0,4) coordinate[label=above:$G'$] (tr);
\coordinate[dot, label=left:$\alpha$] (alpha) at ($(bl)!0.5!(tl)$);
\coordinate[dot, label=right:$\beta$] (beta) at ($(br)!0.5!(tr)$);
\draw (bl) -- (alpha) -- (tl)
      (br) -- (beta) -- (tr);
\begin{pgfonlayer}{background}
\fill[catc] ($(tl) + (-2,0)$) rectangle (bl);
\fill[catd] (tl) rectangle (br);
\fill[cate] (tr) rectangle ($(br) + (2,0)$);
\end{pgfonlayer}
\end{tikzpicture}
\end{gathered}
=
\begin{gathered}
\begin{tikzpicture}[scale=0.5]
\path coordinate[label=below:$F$] (bl) +(0,4) coordinate[label=above:$F'$] (tl)
 ++(2,0) coordinate[label=below:$G$] (br) ++(0,4) coordinate[label=above:$G'$] (tr);
\coordinate[dot, label=left:$\alpha$] (alpha) at ($(bl)!0.666!(tl)$);
\coordinate[dot, label=right:$\beta$] (beta) at ($(br)!0.333!(tr)$);
\draw (bl) -- (alpha) -- (tl)
      (br) -- (beta) -- (tr);
\begin{pgfonlayer}{background}
\fill[catc] ($(tl) + (-2,0)$) rectangle (bl);
\fill[catd] (tl) rectangle (br);
\fill[cate] (tr) rectangle ($(br) + (2,0)$);
\end{pgfonlayer}
\end{tikzpicture}
\end{gathered}
\end{equation*}
For a similar graphical calculus in 
\citep{Dubuc2013}, this is memorably summarized as:
\begin{quotation}
This allows us to move cells up and down when there are no obstacles, as if they were elevators.
\end{quotation}
Much of what can be proved with the string diagram calculus flows from the fact we can 
slide natural transformations past each other like this, 
as we saw in example \ref{ex:alghom} in the introduction. 

\subsection{Some Elementary Techniques}

\subsubsection*{Objects and Morphisms}
\label{sec:objmor}
Often we will want to reason using categories, functors, natural transformations and ``ordinary'' objects and morphisms
within categories, as we did in example \ref{ex:alghom}. 
To achieve this, we consider the category $1$ with one object and one (identity) morphism.
Functors of type $1 \rightarrow \mathcal{C}$
can be identified with objects of the category $\mathcal{C}$. A morphism
$f : X \rightarrow Y$ is then a natural transformation between two of these functors from the terminal category.
If we identify the functors and the corresponding objects, then we can write $f$ in the obvious way as:
\begin{center}
\twocelldiag{f}{X}{Y}{catterm}{catc}{2}{2}{2}
\end{center}

\subsubsection*{Elements of Sets}
\label{sec:elements}
When we work in the category \refcset, given a set $X$, we may wish to refer to an element
$x \in X$. Extending the idea in section \ref{sec:objmor}, such an element can be identified 
with a morphism $x : * \rightarrow X$, where $*$ is the one element set
(the terminal object in \refcset). We then write this element as:
\begin{center}
\twocelldiag{x}{*}{X}{catterm}{catset}{2}{2}{2}
\end{center}

\subsubsection*{Naturality}
Often given a family of morphisms:
\begin{equation*}
(\alpha_X : F(X) \rightarrow G(X))_{X \in \obj{\mathcal{C}}}
\end{equation*}
 we wish to show that this family constitutes a natural transformation. 
This is equivalent to showing that for all $X, Y$ objects in $\mathcal{C}$ and all $f : X \rightarrow Y$
the following equality holds:
\begin{equation*}
\begin{gathered}
\begin{tikzpicture}[scale=0.5]
\path coordinate[label=below:$X$] (bl) ++(0,1) coordinate[dot, label=left:$\alpha_X$] (alphax) ++(0,1) coordinate[dot, label=left:$f$] (f) 
 ++(0,1) coordinate[label=above:$Y$] (tl)
 (bl) +(1,0) coordinate[label=below:$F$] (br)
 (tl) ++(1,0) coordinate[label=above:$G$] (tr) +(0,-1) coordinate (a);
\draw (bl) -- (alphax) -- (f) -- (tl)
 (br) to[out=90, in=0] (alphax) to[out=0, in=-90] (a) -- (tr);
\begin{pgfonlayer}{background}
\fill[catc] ($(tl) + (-2,0)$) rectangle ($(br) + (1,0)$);
\fill[catterm] ($(tl) + (-2,0)$) rectangle (bl);
\fill[catd] (br) to[out=90, in=0] (alphax) to[out=0, in=-90] (a) -- (tr) -- ($(tr) + (1,0)$) -- ($(br) + (1,0)$) -- cycle;
\end{pgfonlayer}
\end{tikzpicture}
\end{gathered}
=
\begin{gathered}
\begin{tikzpicture}[scale=0.5]
\path coordinate[label=below:$X$] (bl) ++(0,1) coordinate[dot, label=left:$f$] (f) ++(0,1) coordinate[dot, label=left:$\alpha_Y$] (alphay) 
 ++(0,1) coordinate[label=above:$Y$] (tl)
 (bl) ++(1,0) coordinate[label=below:$F$] (br) ++(0,1) coordinate (a)
 (tl) ++(1,0) coordinate[label=above:$G$] (tr);
\draw (bl) -- (f) -- (alphay) -- (tl)
 (br) -- (a) to[out=90, in=0] (alphay) to[out=0, in=-90] (tr);
\begin{pgfonlayer}{background}
\fill[catc] ($(tl) + (-2,0)$) rectangle ($(br) + (1,0)$);
\fill[catterm] ($(tl) + (-2,0)$) rectangle (bl);
\fill[catd] (br) -- (a) to[out=90, in=0] (alphay) to[out=0, in=-90] (tr) -- ($(tr) + (1,0)$) -- ($(br) + (1,0)$) -- cycle;
\end{pgfonlayer}
\end{tikzpicture}
\end{gathered}
\end{equation*}
Notice the similarity in the above condition with the sliding equations, effectively naturality says that
the natural transformation and function $f$ ``slide past each other'', and so we can draw them as two parallel
wires to illustrate this.

\section{Adjunctions}
\label{sec:adjunctions}
We will now investigate some of the properties of one of the central concepts of category theory, adjunctions.
As our focus is calculational proofs we will not provide concrete examples of adjunctions in the text, 
for many examples see \citep{MacLane1998, Borceux1994a, ACC2009} or any other good book on category theory.
Instead, we will formulate some of the key concepts graphically, and then illustrate the use of graphical techniques
in the proof of some standard results.

\subsection{Calculational Properties of Adjunctions}
Adjunctions can be described in a myriad of different and equivalent ways. 
In this section we adopt an approach that particularly suits our string diagrammatic presentation.  
In the various examples we will recover many of the other aspects of adjunctions.
A comprehensive discussion of the different formulations of the concept of an adjunction,
using an algebraic calculational style, is given in \citep{FokkingaMeertens1994}.

\begin{definition}[Adjunction]
An \define{adjunction} consists of functors and natural transformations:
\begin{center}
\begin{tabular}{c c c c}
\onecelldiag{F}{catc}{catd}{2}{2}
&
\onecelldiag{G}{catd}{catc}{2}{2}
&
\cupcell{\eta}{F}{G}{catc}{catd}
&
\capcell{\epsilon}{G}{F}{catd}{catc}
\end{tabular}
\end{center}
satisfying the following ``snake equations''
\begin{subequations}
\begin{trivlist}\vspace*{-\baselineskip}
\item
\begin{minipage}{0.495\textwidth}
\begin{equation}
\label{eq:snake-eta-epsilon}
\begin{gathered}
\begin{tikzpicture}[scale=0.5]
\path coordinate[dot, label=above:$\eta$] (eta) ++(1,1) coordinate (a) ++(1,1) coordinate[dot, label=below:$\epsilon$] (epsilon) 
 ++(1,-1) coordinate (b) ++(0,-2) coordinate[label=below:$F$] (br)
 (eta) ++(-1,1) coordinate (c) ++(0,2) coordinate[label=above:$F$] (tl);
\draw (tl) -- (c) to[out=-90, in=180] (eta) to[out=0, in=-90] (a) to[out=90, in=180] (epsilon) to[out=0, in=90] (b) -- (br);
\begin{pgfonlayer}{background}
\fill[catd] (tl) -- (c) to[out=-90, in=180] (eta) to[out=0, in=-90] (a) to[out=90, in=180] (epsilon) to[out=0, in=90] (b) -- (br)
 -- ++(0.5,0) -- ++(0,4) -- cycle;
\fill[catc] (tl) -- (c) to[out=-90, in=180] (eta) to[out=0, in=-90] (a) to[out=90, in=180] (epsilon) to[out=0, in=90] (b) -- (br)
 -- ++(-4.5,0) -- ++(0,4) -- cycle;
\end{pgfonlayer}
\end{tikzpicture}
\end{gathered}
\enskip=\enskip
\begin{gathered}
\begin{tikzpicture}[scale=0.5]
\path coordinate[label=below:$F$] (b) ++(0,4) coordinate[label=above:$F$] (t);
\draw (b) -- (t);
\begin{pgfonlayer}{background}
\fill[catc] ($(t) + (-1,0)$) rectangle (b);
\fill[catd] (t) rectangle ($(b) + (1,0)$);
\end{pgfonlayer}
\end{tikzpicture}
\end{gathered}
\end{equation}
\end{minipage}
\begin{minipage}{0.495\textwidth}
\begin{equation}
\label{eq:snake-epsilon-eta}
\begin{gathered}
\begin{tikzpicture}[scale=0.5]
\path coordinate[dot, label=below:$\epsilon$] (epsilon) ++(1,-1) coordinate (a) ++(1,-1) coordinate[dot, label=above:$\eta$] (eta)
 ++(1,1) coordinate (b) ++(0,2) coordinate[label=above:$G$] (tr)
 (epsilon) ++(-1,-1) coordinate (c) ++(0,-2) coordinate[label=below:$G$] (bl);
\draw (bl) -- (c) to[out=90, in=180] (epsilon) to[out=0, in=90] (a) to[out=-90, in=180] (eta) to[out=0, in=-90] (b) -- (tr);
\begin{pgfonlayer}{background}
\fill[catc] (bl) -- (c) to[out=90, in=180] (epsilon) to[out=0, in=90] (a) to[out=-90, in=180] (eta) to[out=0, in=-90] (b) -- (tr)
 -- ++(0.5,0) -- ++(0,-4) -- cycle;
\fill[catd] (bl) -- (c) to[out=90, in=180] (epsilon) to[out=0, in=90] (a) to[out=-90, in=180] (eta) to[out=0, in=-90] (b) -- (tr)
 -- ++(-4.5,0) -- ++(0,-4) -- cycle;
\end{pgfonlayer}
\end{tikzpicture}
\end{gathered}
\enskip=\enskip
\begin{gathered}
\begin{tikzpicture}[scale=0.5]
\path coordinate[label=below:$G$] (b) ++(0,4) coordinate[label=above:$G$] (t);
\draw (b) -- (t);
\begin{pgfonlayer}{background}
\fill[catd] ($(t) + (-1,0)$) rectangle (b);
\fill[catc] (t) rectangle ($(b) + (1,0)$);
\end{pgfonlayer}
\end{tikzpicture}
\end{gathered}
\end{equation}
\end{minipage}
\end{trivlist}
\end{subequations}
In this case we say that $F$ is a \define{left adjoint} for $G$, or $G$ is a \define{right adjoint} for $F$, written $F \dashv G$.
The natural transformations $\eta$ and $\epsilon$ are referred to as the \define{unit} and \define{counit} of the adjunction.
\end{definition}

\begin{lemma}[Adjunctions Compose]
\label{lem:adjcomp}
Two adjunctions $F : \mathcal{C} \rightarrow \mathcal{D} \dashv G : \mathcal{D} \rightarrow \mathcal{C}$
and $F' : \mathcal{D} \rightarrow \mathcal{E} \dashv G' : \mathcal{E} \rightarrow \mathcal{D}$ compose
to give an adjunction:
\begin{equation*}
F' \circ F \dashv G \circ G'
\end{equation*}
\end{lemma}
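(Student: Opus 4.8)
The plan is to produce the unit and counit of the composite adjunction as \emph{nested} versions of the data we already have, and then to verify the two snake equations by disentangling the resulting zig-zags with the sliding equalities of Section~\ref{sec:outline}.

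Write $\eta,\epsilon$ for the unit and counit of $F \dashv G$ and $\eta',\epsilon'$ for those of $F' \dashv G'$. Graphically $\eta$ is a cup emitting an $F$-wire and a $G$-wire and $\epsilon$ is the matching cap, and similarly for $\eta',\epsilon'$. I would define the candidate unit and counit of $F' \circ F \dashv G \circ G'$ by whiskering one pair inside the other, namely $\hat\eta = (G * \eta' * F) \circ \eta : 1_{\mathcal{C}} \Rightarrow (G \circ G') \circ (F' \circ F)$ and $\hat\epsilon = \epsilon' \circ (F' * \epsilon * G') : (F' \circ F) \circ (G \circ G') \Rightarrow 1_{\mathcal{E}}$. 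Pictorially $\hat\eta$ is a cup emitting, from left to right, the wires $F,F',G',G$ (the cup $\eta$ with $\eta'$ slotted into its inner $\mathcal{D}$-region), and $\hat\epsilon$ is the mirror-image nested cap. Since $\hat\eta$ and $\hat\epsilon$ are built from natural transformations by whiskering and vertical composition they are automatically natural, so no separate naturality check is required --- the calculus handles this silently.

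It then remains to check the two snake equations. Drawing the first, the two wires of $F' \circ F$ run down into the nested cup $\hat\eta$ and back up through the nested cap $\hat\epsilon$, and the crucial observation is that the picture decomposes into two \emph{interleaved but non-crossing} zig-zags: one on the $F$- and $G$-wires built from $\eta$ and $\epsilon$, and one on the $F'$- and $G'$-wires built from $\eta'$ and $\epsilon'$. I would use the sliding equalities to reshape the diagram so that the $\eta$--$\epsilon$ zig-zag stands alone as an ordinary kink, collapse it with the snake equation \eqref{eq:snake-eta-epsilon} for $F \dashv G$, and then collapse the remaining $\eta'$--$\epsilon'$ kink with \eqref{eq:snake-eta-epsilon} for $F' \dashv G'$, leaving the two straight wires of $F' \circ F$. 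The second snake equation is proved identically on the $G$- and $G'$-wires, collapsing the two kinks with the two instances of \eqref{eq:snake-epsilon-eta}.

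The main obstacle is purely one of bookkeeping: because $\hat\eta$ is forced to use $\eta$ as its outer cup while $\hat\epsilon$ is forced to use $\epsilon'$ as its outer cap (the composite unit must start at $1_{\mathcal{C}}$ and the composite counit must land in $1_{\mathcal{E}}$), the two zig-zags are genuinely interleaved rather than simply stacked, and one has to be sure the wire and region orderings let the sliding moves separate them without introducing crossings. This is exactly the kind of accounting the graphical notation does for free: once the nested cup and cap are drawn with the correct left-to-right order $F,F',G',G$, the separation is a routine planar manipulation and the interchange and functoriality conditions that justify it never need to be written down. I therefore expect the real content of the proof to be just two applications of each snake equation, with the effort concentrated in drawing the nested unit and counit correctly.
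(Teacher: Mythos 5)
Your proposal coincides with the paper's proof: the unit $(G * \eta' * F) \circ \eta$ and counit $\epsilon' \circ (F' * \epsilon * G')$ are exactly the nested cup and cap the paper takes, drawn with the same wire order $F, F', G', G$. The paper dismisses the verification of the snake equations as ``trivial to show,'' and your sketch---one sliding move to disentangle the two interleaved zig-zags (valid since $\eta'$ and $\epsilon$ are horizontally disjoint), followed by one application of each snake equation---is a correct account of that omitted check.
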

\begin{proof}
Take as unit and counit the composites:
\begin{equation*}
\begin{gathered}
\begin{tikzpicture}[scale=0.5]
\path coordinate[dot, label=above:$\eta'$] 
 (eta') ++(-1,1) coordinate (a) ++(0,1) coordinate[label=above:$F'$] (tl)
 (eta') ++(1,1) coordinate (b) ++(0,1) coordinate[label=above:$G'$] (tr)
 (eta') ++(0,-1) coordinate[dot, label=below:$\eta$] 
 (eta) ++(-2,2) coordinate (a') ++(0,1) coordinate[label=above:$F$] (tll)
 (eta) ++(2,2) coordinate (b') ++(0,1) coordinate[label=above:$G$] (trr);
\draw (tl) -- (a) to[out=-90, in=180] (eta') to[out=0, in=-90] (b) -- (tr)
 (tll) -- (a') to[out=-90, in=180] (eta) to[out=0, in=-90] (b') -- (trr);
\begin{pgfonlayer}{background}
\fill[catc] ($(tll) + (-1,0)$) rectangle ($(trr) + (1,-4)$);
\fill[catd] (tll) -- (a') to[out=-90, in=180] (eta) to[out=0, in=-90] (b') -- (trr) -- cycle;
\fill[cate] (tl) -- (a) to[out=-90, in=180] (eta') to[out=0, in=-90] (b) -- (tr) -- cycle;
\end{pgfonlayer}
\end{tikzpicture}
\end{gathered}
\quad
\begin{gathered}
\begin{tikzpicture}[scale=0.5]
\path coordinate[dot, label=below:$\epsilon$] 
 (epsilon) ++(-1,-1) coordinate (a) ++(0,-1) coordinate[label=below:$G$] (bl)
 (epsilon) ++(1,-1) coordinate (b) ++(0,-1) coordinate[label=below:$F$] (br)
 (epsilon) ++(0,1) coordinate[dot, label=above:$\epsilon'$] 
 (epsilon') ++(-2,-2) coordinate (a') ++(0,-1) coordinate[label=below:$G'$] (bll)
 (epsilon') ++(2,-2) coordinate (b') ++(0,-1) coordinate[label=below:$F'$] (brr);
\draw (bl) -- (a) to[out=90, in=180] (epsilon) to[out=0, in=90] (b) -- (br)
 (bll) -- (a') to[out=90, in=180] (epsilon') to[out=0, in=90] (b') -- (brr);
\begin{pgfonlayer}{background}
\fill[cate] ($(bll) +(-1,0)$) rectangle ($(brr) + (1,4)$);
\fill[catd] (bll) -- (a') to[out=90, in=180] (epsilon') to[out=0, in=90] (b') -- (brr) -- cycle;
\fill[catc] (bl) -- (a) to[out=90, in=180] (epsilon) to[out=0, in=90] (b) -- (br) -- cycle;
\fill[white] ($(bll) +(-1,4)$) rectangle ($(brr) + (1,6)$);
\end{pgfonlayer}
\end{tikzpicture}
\end{gathered}
\end{equation*}
That these satisfy the adjunction axioms is then trivial to show.
\end{proof}
We now consider in some detail how the units and counits of a pair of adjunctions relate to
certain natural transformations. A lot of structure can be derived from the fact that units and
counits let us ``bend wires''.
\begin{definition}[Wire Bending]
\label{def:wirebending}
We consider a situation with adjunctions $F \dashv G$ and $F' \dashv G'$ 
and functors $H, K$, with types as in the following (not necessarily commuting) diagram:
\begin{center}
\begin{tikzpicture}[auto]
\path node (tl) {$\mathcal{C}$}
      -- +(4,0) node (tr) {$\mathcal{C}'$}
      -- +(0,-2) node (bl) {$\mathcal{D}$}
      -- +(4,-2) node (br) {$\mathcal{D}'$};
\draw[->] (tl) to node{$H$} (tr);
\draw[->] (bl) to node{$K$} (br);
\node at ($(tl)!0.5!(bl)$) {$\dashv$};
\node at ($(tr)!0.5!(br)$) {$\dashv$};
\draw [->] (tl) to[out=-135, in=135] node[swap]{$F$} (bl);
\draw [->] (bl) to[out=45, in=-45] node[swap]{$G$} (tl);
\draw [->] (tr) to[out=-135, in=135] node[swap]{$F'$} (br);
\draw [->] (br) to[out=45, in=-45] node[swap]{$G'$} (tr);
\end{tikzpicture}
\end{center}
There are then bijections between the sets of natural transformations of the four types shown below:
\begin{center}
\begin{tabular}{c c}
\drawnatd{}{$H$}{$K$}{$F$}{$F'$}{2} & \upfork{}{H}{K}{F}{G'}{1}\\
\downfork{}{H}{K}{G}{F'}{1} & \drawnatu{}{$H$}{$K$}{$G$}{$G'$}{2}
\end{tabular}
\end{center}

Graphically, these bijections are induced by ``bending wires'' using the adjunction axioms
to move between the different types.

Starting in the top left hand corner, between each pair of types we define a function \mvright{(-)}, 
referred to as ``move right'', in the clockwise direction,
with an inverse \mvleft{(-)}, referred to as ``move left'', in the counter clockwise direction.
Hopefully this overloading of names will not cause confusion, given the abundance of type information
in the string diagrams we are using. Firstly:
\begin{equation*}
\begin{gathered}
\drawnatd{$\sigma$}{$H$}{$K$}{$F$}{$F'$}{1}
\end{gathered}
\mapsto
\begin{gathered}
\upfork{\mvright{\sigma}}{H}{K}{F}{G'}{1}
\end{gathered}
:=
\begin{gathered}
\slidecex{$\sigma$}{$\eta'$}{F}{G'}{H}{K}
\end{gathered}
\end{equation*}
with inverse:
\begin{equation*}
\begin{gathered}
\upfork{\sigma}{H}{K}{F}{G'}{1}
\end{gathered}
\mapsto
\begin{gathered}
\drawnatd{$\mvleft{\sigma}$}{$H$}{$K$}{$F$}{$F'$}{1}
\end{gathered}
:=
\begin{gathered}
\upforkr{\sigma}{\epsilon'}{H}{K}{F}{F'}
\end{gathered}
\end{equation*}
Secondly:
\begin{equation*}
\begin{gathered}
\upfork{\sigma}{H}{K}{F}{G'}{1}
\end{gathered}
\mapsto
\begin{gathered}
\drawnatu{$\mvright{\sigma}$}{$H$}{$K$}{$G$}{$G'$}{1}
\end{gathered}
:=
\begin{gathered}
\upforkl{\sigma}{\epsilon}{H}{K}{G}{G'}
\end{gathered}
\end{equation*}
with inverse:
\begin{equation*}
\begin{gathered}
\drawnatu{$\sigma$}{$H$}{$K$}{$G$}{$G'$}{1}
\end{gathered}
\mapsto
\begin{gathered}
\upfork{\mvleft{\sigma}}{H}{K}{F}{G'}{1}
\end{gathered}
:=
\begin{gathered}
\slidedex{$\sigma$}{$\eta$}{F}{G'}{H}{K}
\end{gathered}
\end{equation*}
Thirdly:
\begin{equation*}
\begin{gathered}
\drawnatu{$\sigma$}{$H$}{$K$}{$G$}{$G'$}{1}
\end{gathered}
\mapsto
\begin{gathered}
\downfork{\mvright{\sigma}}{H}{K}{G}{F'}{1}
\end{gathered}
:=
\begin{gathered}
\slideaex{$\sigma$}{$\epsilon'$}{G}{F'}{H}{K}
\end{gathered}
\end{equation*}
with inverse:
\begin{equation*}
\begin{gathered}
\downfork{\sigma}{H}{K}{G}{F'}{1}
\end{gathered}
\mapsto
\begin{gathered}
\drawnatu{$\mvleft{\sigma}$}{$H$}{$K$}{$G$}{$G'$}{1}
\end{gathered}
:=
\begin{gathered}
\downforkr{\sigma}{\eta'}{H}{K}{G}{G'}
\end{gathered}
\end{equation*}
Finally:
\begin{equation*}
\begin{gathered}
\downfork{\sigma}{H}{K}{G}{F'}{1}
\end{gathered}
\mapsto
\begin{gathered}
\drawnatd{$\mvright{\sigma}$}{$H$}{$K$}{$F$}{$F'$}{1}
\end{gathered}
:=
\begin{gathered}
\downforkl{\sigma}{\eta}{H}{K}{F}{F'}
\end{gathered}
\end{equation*}
with inverse:
\begin{equation*}
\begin{gathered}
\drawnatd{$\sigma$}{$H$}{$K$}{$F$}{$F'$}{1}
\end{gathered}
\mapsto
\begin{gathered}
\downfork{\mvleft{\sigma}}{H}{K}{G}{F'}{1}
\end{gathered}
:=
\begin{gathered}
\slidebex{$\sigma$}{$\epsilon$}{G}{F'}{H}{K}
\end{gathered}
\end{equation*}
That each of these pairs of maps witness a bijection is easy to check by applying
the adjunction axioms.
\end{definition}
\begin{lemma}
\label{lem:mvnat}
All of the mappings:
\begin{align*}
\rhd &: [\mathcal{C}, \mathcal{D}'](F'H, KF) \rightarrow [\mathcal{C}, \mathcal{C}'](H, G'KF)\\
\rhd &: [\mathcal{C}, \mathcal{C}'](H, G'KF) \rightarrow [\mathcal{D}, \mathcal{C}'](HG, G'K)\\
\rhd &: [\mathcal{D}, \mathcal{C}'](HG, G'K) \rightarrow [\mathcal{D}, \mathcal{D}'](F'HG, K)\\
\rhd &: [\mathcal{D}, \mathcal{D}'](F'HG, K) \rightarrow [\mathcal{C}, \mathcal{D}'](F'H, KF)
\end{align*}
and their inverses are natural in both $H$ and $K$.
\end{lemma}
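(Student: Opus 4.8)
The plan is to regard each of the four hom-sets in the statement as the value of a functor $[\mathcal{C},\mathcal{C}']^{\mathrm{op}} \times [\mathcal{D},\mathcal{D}'] \to \mathbf{Set}$. The key structural observation is that the typing is uniform: in every one of the four hom-sets the functor $H$ occurs only in the source and $K$ only in the target, so each is contravariant in $H$ and covariant in $K$. Naturality in $H$ and $K$ then means that, for 2-cells $\phi : \bar H \Rightarrow H$ and $\psi : K \Rightarrow \bar K$, each move-right map commutes with precomposition by the appropriately whiskered $\phi$ and postcomposition by the appropriately whiskered $\psi$. Since each move-right is a componentwise bijection whose inverse is the corresponding move-left (Definition \ref{def:wirebending}), it suffices to prove that the four move-right maps are natural: the inverse of a natural isomorphism is automatically natural, so the four move-left maps then come for free.

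The workhorse is that every move-right operation is, up to whiskering, ``apply one of $F, G, F', G'$ and then compose with a single whiskered unit or counit''. For the first map this reads $\sigma \mapsto \sigma^{\rhd} = G'\sigma \circ \eta' H$, i.e. $\eta'$ is whiskered onto the $H$-wire and $\sigma$ is slid along it. With this description each naturality square splits into two independent checks. Naturality in the variable on the side where the wire is bent reduces to naturality of the unit or counit used: for the first map, $(\sigma \circ F'\phi)^{\rhd} = \sigma^{\rhd} \circ \phi$ is precisely the equation $G'F'\phi \circ \eta'\bar H = \eta' H \circ \phi$, which is the naturality of $\eta'$ at $\phi$, i.e. an instance of the sliding equalities of Section \ref{sec:outline}. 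Naturality in the other variable is immediate from functoriality and the interchange law; for the first map, $(\psi F \circ \sigma)^{\rhd} = G'\psi F \circ \sigma^{\rhd}$ uses only that $G'$ preserves composition. Graphically, both checks are the same move: the cup or cap built from the relevant unit or counit slides past the inserted $\phi$- or $\psi$-node, since those nodes sit on disjoint wires.

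I would carry this out uniformly over the four move-right maps, in each case reading off from its defining diagram which unit or counit is used and on which wire it is whiskered, then invoking the corresponding sliding equation together with interchange. The only place requiring care is the bookkeeping: tracking the variances and the precise whiskerings of $\phi$ and $\psi$ in each of the four hom-sets, so that a single argument visibly covers all cases rather than sixteen near-identical manipulations. I expect this organisational point, rather than any individual calculation, to be the main (and modest) obstacle; the string-diagram notation keeps it manageable because functoriality and naturality are discharged silently by the sliding equalities, leaving each square a single topological move.
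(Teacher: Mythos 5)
Your proposal is correct and is essentially the proof the paper has in mind: the paper gives no argument for this lemma at all, dismissing it as ``easy, if lengthy, to check graphically and left as an exercise,'' and your outline is a sound organization of exactly that graphical check. Your two component verifications are right --- the naturality square in the bent variable is an instance of the sliding equalities (interchange past a node on a disjoint wire), the square in the other variable is whiskering functoriality --- and your device of deducing naturality of the $\lhd$ maps from the fact that the inverse of a natural isomorphism is natural, which halves the work, is the same observation the paper itself invokes in the proof of lemma \ref{lem:pushpop}.
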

\begin{proof}
This is easy, if lengthy, to check graphically and left as an exercise.
\end{proof}
\begin{lemma}
Each of the composites $\leftop \leftop \leftop \leftop$ and $\rightop \rightop \rightop \rightop$ as described in definition \ref{def:wirebending}
are equal to the identity.
\end{lemma}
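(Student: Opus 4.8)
The plan is to prove the statement for $\rightop\rightop\rightop\rightop$ by a direct graphical calculation and then to obtain the result for $\leftop\leftop\leftop\leftop$ essentially for free. By construction each clockwise move $\rightop$ is a bijection whose two-sided inverse is the corresponding counter-clockwise move $\leftop$; this is exactly what Definition \ref{def:wirebending} asserts. Writing the four clockwise moves as $\rightop_1,\dots,\rightop_4$ and their inverses as $\leftop_i=\rightop_i^{-1}$, the clockwise round trip from the top-left type is $\rightop_4\circ\rightop_3\circ\rightop_2\circ\rightop_1$, while the counter-clockwise round trip is $\rightop_1^{-1}\circ\rightop_2^{-1}\circ\rightop_3^{-1}\circ\rightop_4^{-1}$, which is precisely its inverse. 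Hence once $\rightop\rightop\rightop\rightop=\mathrm{id}$ is established we get $\leftop\leftop\leftop\leftop=(\rightop\rightop\rightop\rightop)^{-1}=\mathrm{id}$ with no further work, so the whole problem reduces to the single clockwise loop.

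For that loop I would take a natural transformation $\sigma$ of the top-left type and apply the four defining rules in order. Going once around the square bends the left $F'$-wire using the unit $\eta'$, bends a $G$-wire using the counit $\epsilon$, bends the $F'$-wire back using the counit $\epsilon'$, and finally bends the $G$-wire back using the unit $\eta$. The key observation is that these four bends introduce exactly one $\eta'/\epsilon'$ pair and one $\eta/\epsilon$ pair, placed so that each pair forms a single zig-zag on its own adjoint wire. Using the sliding (naturality) equalities to move $\sigma$ clear of the intervening cups and caps, each pair becomes a literal snake that is removed by the appropriate snake equation \eqref{eq:snake-eta-epsilon} or \eqref{eq:snake-epsilon-eta}; after both wires are straightened the diagram is just $\sigma$ again, which is the claim.

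Algebraically this is the assertion that the composite $(\epsilon'KF)\circ(F'G'K\epsilon F)\circ(F'G'\sigma GF)\circ(F'\eta'HGF)\circ(F'H\eta)$ collapses to $\sigma$. Naturality of $\epsilon'$ pulls the $\sigma$-block to the outside and exposes $(\epsilon'F')\circ(F'\eta')=1_{F'}$; applying interchange to the remaining factors then exposes $(\epsilon F)\circ(F\eta)=1_{F}$; both vanish by the snake equations, leaving $\sigma$. I expect the only genuine obstacle to be bookkeeping: one must verify that the four moves really do produce one clean zig-zag per adjunction rather than two interleaved ones, and that the naturality slides are performed in an order that isolates each snake before it is straightened. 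In the string-diagram notation this bookkeeping is almost entirely topological, since the two zig-zags are visibly separate on the $F'$- and $G$-wires, so the computation, while somewhat lengthy, presents no real difficulty once the round trip is drawn out.
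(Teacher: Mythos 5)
Your proposal is correct and follows essentially the same route as the paper, whose entire proof is that the claim is ``straightforward from expanding definitions and exploiting the adjunction axioms'' — precisely your calculation, which correctly expands the clockwise loop to $(\epsilon'KF)\circ(F'G'K\epsilon F)\circ(F'G'\sigma GF)\circ(F'\eta'HGF)\circ(F'H\eta)$ and collapses it to $\sigma$ via naturality of $\epsilon'$, the interchange law, and the two snake identities $(\epsilon'F')\circ(F'\eta')=1_{F'}$ and $(\epsilon F)\circ(F\eta)=1_{F}$. Your extra observation that $\leftop\leftop\leftop\leftop$ is the inverse of $\rightop\rightop\rightop\rightop$ (since each $\rightop$ is a bijection with inverse $\leftop$, as the paper's definition asserts), so that only one of the two loops needs to be computed, is a harmless labor-saving refinement rather than a genuinely different method.
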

\begin{proof}
Straightforward from expanding definitions and exploiting the adjunction axioms.
\end{proof}
We will now exploit these wire bending ideas to provide a graphical proof of a standard
result about adjunctions.
\begin{lemma}
\label{lem:adjup}
For every morphism $f : X \rightarrow G(Y)$ there exists a unique morphism $\hat{f} : F(X) \rightarrow Y$
such that:
\begin{equation}
\label{eq:adjup}
\begin{gathered}
\bendfmorph{\hat{f}}{0.5}
\end{gathered}
=
\begin{gathered}
\fmorph{f}{G}{X}{Y}{catterm}{catc}{catd}{1.0}{1}
\end{gathered}
\end{equation}
\end{lemma}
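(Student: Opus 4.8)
The plan is to read equation \eqref{eq:adjup} as an instance of wire bending and then let the snake equations do all the work. In the degenerate setting of Section \ref{sec:objmor}, the object $X$ is a functor $1 \to \mathcal{C}$, the object $Y$ is a functor $1 \to \mathcal{D}$, the morphism $f$ is a natural transformation of type $X \Rightarrow G \circ Y$, and the sought $\hat{f}$ is a natural transformation $F \circ X \Rightarrow Y$. So the content of the lemma is exactly the basic hom-set bijection $[\mathcal{C}](X, GY) \cong [\mathcal{D}](FX, Y)$, which the graphical calculus realises by bending the single output wire of $f$ around the counit $\epsilon$, using precisely the same technique as in Definition \ref{def:wirebending} but for a single adjunction.

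First I would construct $\hat{f}$. Graphically, cap off the $G$-wire emerging from $f$ with the counit $\epsilon$; symbolically this is $\hat{f} = \epsilon_Y \circ F(f)$, which manifestly has the required type $F(X) \to Y$. This is the ``move'' operation of Definition \ref{def:wirebending} specialised to the lone wire.

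Next I would verify equation \eqref{eq:adjup}, namely that bending $\hat{f}$ back (this time around the unit $\eta$, as in the left-hand diagram) recovers $f$. Diagrammatically the two bends produce a zig-zag in the $G$-wire, and the naturality of $\eta$, which the sliding equalities of Section \ref{sec:outline} render as the freedom to slide $\hat{f}$ along its wire, exposes a snake shape that the snake equation \eqref{eq:snake-epsilon-eta} straightens out. Symbolically this is the routine chase
\begin{equation*}
G(\hat{f}) \circ \eta_X = G(\epsilon_Y) \circ GF(f) \circ \eta_X = G(\epsilon_Y) \circ \eta_{G(Y)} \circ f = f,
\end{equation*}
using functoriality, then naturality of $\eta$ applied to $f$, and finally the identity $G(\epsilon) \circ \eta G = 1_G$ of \eqref{eq:snake-epsilon-eta}.

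Finally, uniqueness is immediate from bijectivity: the bending operation used to pass between the two diagrams of \eqref{eq:adjup} has the opposite bend as a two-sided inverse, the two composites reducing to identities by the snake equations exactly as checked at the end of Definition \ref{def:wirebending}. Hence any $g : F(X) \to Y$ satisfying \eqref{eq:adjup} is sent by the same injective bending to $f$ and must therefore coincide with $\hat{f}$. I do not anticipate a genuine obstacle here; the only care required is to match the specific shapes drawn on the two sides of \eqref{eq:adjup} with the abstract bending operations, after which the snake equation supplies the equality and the established bijection supplies uniqueness.
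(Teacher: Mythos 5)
Your proposal is correct and follows essentially the same route as the paper: the paper's proof is a short chain of equivalences that applies the wire-bending bijection of definition \ref{def:wirebending} to both sides of \eqref{eq:adjup} and straightens the resulting snake with \eqref{eq:snake-eta-epsilon}, obtaining $g = \epsilon_Y \circ F(f)$ and hence existence and uniqueness simultaneously. Your packaging --- constructing $\hat{f} = \epsilon_Y \circ F(f)$, verifying \eqref{eq:adjup} via naturality of $\eta$ and \eqref{eq:snake-epsilon-eta}, and deducing uniqueness from injectivity of the bend --- is the same bijection argument rearranged, matching the paper's own remark that the lemma is the degenerate case of definition \ref{def:wirebending} with one adjunction trivial.
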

\begin{proof}
Both existence and uniqueness follow from the following equivalences:
\begin{eqproof*}
\begin{gathered}
\bendfmorph{g}{0.5}
\end{gathered}
=
\begin{gathered}
\fmorph{f}{G}{X}{Y}{catterm}{catc}{catd}{1.0}{1}
\end{gathered}
\explain[\Leftrightarrow]{wire bending is a bijection}
\begin{gathered}
\begin{tikzpicture}[scale=0.5]
\path coordinate[dot, label=left:$g$] (g) ++(1,-1) coordinate[dot, label=above:$\eta$] (eta) ++(2,1) coordinate[dot, label=below:$\epsilon$] (epsilon)
 ++(1,-1) coordinate (a) ++(0,-0.5) coordinate[label=below:$F$] (br)
 (g) ++(0,-1.5) coordinate[label=below:$X$] (bl)
 (g) ++(0,0.5) coordinate[label=above:$Y$] (t);
\draw (bl) -- (g) -- (t)
 (g) to[out=-90, in=180] (eta) to[out=0, in=180] (epsilon) to[out=0, in=90] (a) -- (br);
\begin{pgfonlayer}{background}
\fill[catterm] ($(bl) + (-1,0)$) rectangle (t);
\fill[catd] (t) rectangle ($(br) + (1,0)$);
\fill[catc] (g) to[out=-90, in=180] (eta) to[out=0, in=180] (epsilon) to[out=0, in=90] (a) -- (br) -- (bl) -- cycle;
\end{pgfonlayer}
\end{tikzpicture}
\end{gathered}
=
\begin{gathered}
\bendgmorph{f}{0.5}
\end{gathered}
\explain[\Leftrightarrow]{adjunction axiom \eqref{eq:snake-eta-epsilon} }
\begin{gathered}
\gmorph{g}{F}{X}{Y}{catterm}{catc}{catd}{1}{1}
\end{gathered}
=
\begin{gathered}
\bendgmorph{f}{0.5}
\end{gathered}
\end{eqproof*}
\end{proof}
\begin{remark}
Although we proved lemma \ref{lem:adjup} directly, we could simply have observed that equation 
\eqref{eq:adjup}
is a special case of the first bijection in definition \ref{def:wirebending} with the 
leftmost adjunction in
the diagram being the trivial adjunction between the identity functors.
\end{remark}
We now see why the suggestive names ``move left'' and ``move right'' were chosen for the mapping in definition
\ref{def:wirebending}, as they allow us to ``slide'' a natural transformation back and forth along the bends
given by the units and counits of adjunctions, as described by the next lemma.
\begin{lemma}[Adjunction Sliding]
\label{lem:sliding}
We have the following equalities allowing us to ``slide'' a natural transformation along a pair
of adjunctions:
\begin{equation*}
\begin{gathered}
\slidecex{$\sigma$}{$\eta'$}{F}{G'}{H}{K}
\end{gathered}
=
\begin{gathered}
\upfork{$\mvright{\sigma}$}{H}{K}{F}{G'}{1}
\end{gathered}
=
\begin{gathered}
\slidedex{$\mvrightright{\sigma}$}{$\eta$}{F}{G'}{H}{K}
\end{gathered}
\end{equation*}
and
\begin{equation*}
\begin{gathered}
\slideaex{$\sigma$}{$\epsilon'$}{G}{F'}{H}{K}
\end{gathered}
=
\begin{gathered}
\downfork{$\mvright{\sigma}$}{H}{K}{G}{F'}{1}
\end{gathered}
=
\begin{gathered}
\slidebex{$\mvrightright{\sigma}$}{$\epsilon$}{G}{F'}{H}{K}
\end{gathered}
\end{equation*}
\end{lemma}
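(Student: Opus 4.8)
The plan is to treat each displayed chain of equalities one link at a time, and to exploit the fact that the two chains are mirror images of one another.

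First I would note that in both displays the left-hand equality holds by definition. In Definition~\ref{def:wirebending} the cell $\sigma^{\rhd}$ produced from a transformation $\sigma$ of the $F'H \Rightarrow KF$ type (respectively the $HG \Rightarrow G'K$ type) was \emph{defined} to be exactly the sliding diagram appearing on the far left, obtained by bending $\sigma$ against the unit $\eta'$ (respectively the counit $\epsilon'$) of the primed adjunction. So the canonical fork drawing of $\sigma^{\rhd}$ and the sliding diagram are two depictions of one and the same cell, and there is nothing to prove for the first equality.

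The content therefore lies in the right-hand equality, which asserts that the \emph{other} sliding diagram --- the one built from $\sigma^{\rhd\rhd}$ by bending against the \emph{unprimed} unit $\eta$ (respectively counit $\epsilon$) of $F \dashv G$ --- also reproduces $\sigma^{\rhd}$. The quickest route is to observe that this diagram is, by definition, ``move left'' applied to $\sigma^{\rhd\rhd}$, that is $(\sigma^{\rhd\rhd})^{\lhd} = ((\sigma^{\rhd})^{\rhd})^{\lhd}$; since $\lhd$ was shown in Definition~\ref{def:wirebending} to be a two-sided inverse of $\rhd$, this collapses to $\sigma^{\rhd}$ at once, giving the equality.

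To make the ``sliding'' that names the lemma visible, however, I would argue graphically. For the first display I would unfold one further move-right, so that $\sigma^{\rhd\rhd}$ becomes $\sigma^{\rhd}$ with its $F$-wire bent against the counit $\epsilon$ of $F \dashv G$, and substitute this into the sliding diagram, which bends against the unit $\eta$ of the same adjunction. The $\eta$-cup and $\epsilon$-cap now lie on a common wire; after first using the interchange (sliding) equalities of Section~\ref{sec:outline} to move $\sigma^{\rhd}$ clear of the bend, the remaining zigzag straightens by the snake equation~\eqref{eq:snake-eta-epsilon}, leaving precisely the fork form of $\sigma^{\rhd}$. The second display is the evident mirror image: the bends are the unit $\eta$ and counit $\epsilon$ of $F \dashv G$ arranged this time on a $G$-wire, and the zigzag is straightened by the other snake equation~\eqref{eq:snake-epsilon-eta}.

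The step needing most care is organising the picture so that exactly one snake equation applies: one must first slide $\sigma^{\rhd}$ along its wire away from the cup--cap pair, and then correctly identify which snake equation and which strand (the $F$-strand for the first display, the $G$-strand for the second) is being straightened. This is pure bookkeeping of units versus counits, and is exactly the kind of bookkeeping the string-diagram notation is designed to render transparent.
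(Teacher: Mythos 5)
Your proposal is correct and matches the paper's (very terse) proof: the paper likewise disposes of the left-hand equalities as holding by definition of $\rhd$, and of the right-hand ones by a single application of the appropriate snake equation, with the interchange/sliding step handled silently by the notation. Your additional observation that the right-hand equality is just $(\mvrightright{\sigma})^{\lhd}=\mvright{\sigma}$ via the inverse property of $\lhd$ and $\rhd$ is a legitimate shortcut, but it rests on the same adjunction-axiom computation, so it is not a genuinely different route.
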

\begin{proof}
The equalities either follow immediately from the definitions, or require a single
application of one of the adjunction axioms.
\end{proof}
\begin{remark}
By combining lemma \ref{lem:sliding} with the fact that the various pairs of functions \mvleft{(-)} and \mvright{(-)}
witness a bijection allow us to derive several similar sliding identities, involving various combinations
of both \mvleft{(-)} and \mvright{(-)}.
\end{remark}

\begin{definition}[Mates Under an Adjunction]
Given an adjunction $F : \mathcal{C} \rightarrow \mathcal{D} \dashv G : \mathcal{D} \rightarrow \mathcal{C}$ 
and endofunctors $H : \mathcal{C} \rightarrow \mathcal{C}, K : \mathcal{D} \rightarrow \mathcal{D}$,
there is a bijection between the natural transformations
of types $HG \Rightarrow GK$ and $FH \Rightarrow KF$. This is easily seen using the operations in 
definition \ref{def:wirebending}, in the special case where both adjunctions are the same, with:
\begin{equation*}
\begin{gathered}
\catstyle{catc'}{\catccol}
\catstyle{catd'}{\catdcol}
\drawnatu{$\sigma$}{$H$}{$K$}{$G$}{$G$}{1}
\end{gathered}
\mapsto
\begin{gathered}
\catstyle{catc'}{\catccol}
\catstyle{catd'}{\catdcol}
\drawnatd{$\mvrightright{\sigma}$}{$H$}{$K$}{$F$}{$F$}{1}
\end{gathered}
\end{equation*}
and
\begin{equation*}
\begin{gathered}
\catstyle{catc'}{\catccol}
\catstyle{catd'}{\catdcol}
\drawnatd{$\sigma$}{$H$}{$K$}{$F$}{$F$}{1}
\end{gathered}
\mapsto
\begin{gathered}
\catstyle{catc'}{\catccol}
\catstyle{catd'}{\catdcol}
\drawnatu{$\mvleftleft{\sigma}$}{$H$}{$K$}{$G$}{$G$}{1}
\end{gathered}
\end{equation*}
Natural transformations related by these bijections are referred to as \define{mates under the adjunction} $F \dashv G$.
\end{definition}

\subsection{Adjoint Squares}
We will now apply our graphical approach to adjunctions in an investigation of some properties of adjoint squares.

\begin{definition}[Adjoint Square]
\label{def:adjsq}
An \define{adjoint square} is a pair of adjunctions and two functors $H, K$ arranged
as in the following (not necessarily commuting) diagram:
\begin{center}
\begin{tikzpicture}[auto]
\path node (tl) {$\mathcal{C}_1$}
      -- +(4,0) node (tr) {$\mathcal{C}_2$}
      -- +(0,-2) node (bl) {$\mathcal{D}_1$}
      -- +(4,-2) node (br) {$\mathcal{D}_2$};
\draw[->] (tl) to node{$H$} (tr);
\draw[->] (bl) to node{$K$} (br);
\node at ($(tl)!0.5!(bl)$) {$\dashv$};
\node at ($(tr)!0.5!(br)$) {$\dashv$};
\draw [->] (tl) to[out=-135, in=135] node[swap]{$F_1$} (bl);
\draw [->] (bl) to[out=45, in=-45] node[swap]{$G_1$} (tl);
\draw [->] (tr) to[out=-135, in=135] node[swap]{$F_2$} (br);
\draw [->] (br) to[out=45, in=-45] node[swap]{$G_2$} (tr);
\end{tikzpicture}
\end{center}
and also natural transformations:
\begin{center}
\begin{tabular}{c c}
\drawnatd{\adjsql{\sigma}}{$H$}{$K$}{$F_1$}{$F_2$}{1}
&
\drawnatu{\adjsqr{\sigma}}{$H$}{$K$}{$G_1$}{$G_2$}{1}
\end{tabular}
\end{center}
satisfying the following sliding equalities:
\begin{subequations}
\begin{equation}
\label{eq:adjsqa}
\begin{gathered}
\slidecex{\adjsql{\sigma}}{$\eta_2$}{F_1}{G_2}{H}{K}
\end{gathered}
=
\begin{gathered}
\slidedex{\adjsqr{\sigma}}{$\eta_1$}{F_1}{G_2}{H}{K}
\end{gathered}
\end{equation}
\begin{equation}
\label{eq:adjsqb}
\begin{gathered}
\slideaex{\adjsqr{\sigma}}{$\epsilon_2$}{G_1}{F_2}{H}{K}
\end{gathered}
=
\begin{gathered}
\slidebex{\adjsql{\sigma}}{$\epsilon_1$}{G_1}{F_2}{H}{K}
\end{gathered}
\end{equation}
\end{subequations}
Such a pair of natural transformations are said to be \define{conjugate}.
See \citep{MacLane1998} exercise IV.7.4.
\end{definition}

\begin{lemma}
\label{lem:adjsqcond}
For definitions as in definition \ref{def:adjsq},
the conditions in equations \eqref{eq:adjsqa} and \eqref{eq:adjsqb} are equivalent.
Furthermore, \adjsql{\sigma} and \adjsqr{\sigma} determine each other uniquely.
\end{lemma}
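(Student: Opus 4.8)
The plan is to recognise that, by the very definitions of the move operations in Definition \ref{def:wirebending}, the two displayed sliding equalities are nothing more than the equations $\mvright{\adjsql{\sigma}} = \mvleft{\adjsqr{\sigma}}$ and $\mvright{\adjsqr{\sigma}} = \mvleft{\adjsql{\sigma}}$. Indeed, the left-hand side of \eqref{eq:adjsqa} is exactly the string diagram defining $\mvright{\adjsql{\sigma}}$ (the first ``move right'' applied to $\adjsql{\sigma}$), whilst its right-hand side is the diagram defining $\mvleft{\adjsqr{\sigma}}$ (the inverse of the second ``move right''); both are natural transformations of type $H \Rightarrow G_2 K F_1$. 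Similarly, the two sides of \eqref{eq:adjsqb} are the diagrams defining $\mvright{\adjsqr{\sigma}}$ and $\mvleft{\adjsql{\sigma}}$, both of type $F_2 H G_1 \Rightarrow K$. So the first task is simply to read off these identifications from the definitions, a step the type information carried by the diagrams makes transparent.

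Next I would use that $\rightop$ and $\leftop$ are mutually inverse bijections, as established in Definition \ref{def:wirebending}. Applying $\rightop$ to both sides of \eqref{eq:adjsqa}, whose members have type $H \Rightarrow G_2 K F_1$: the bijection $\rightop$ from this type to $H G_1 \Rightarrow G_2 K$ carries $\mvleft{\adjsqr{\sigma}}$ back to $\adjsqr{\sigma}$, so \eqref{eq:adjsqa} is equivalent to $\mvrightright{\adjsql{\sigma}} = \adjsqr{\sigma}$. Likewise, applying $\leftop$ to both sides of \eqref{eq:adjsqb}, whose members have type $F_2 H G_1 \Rightarrow K$, carries $\mvright{\adjsqr{\sigma}}$ back to $\adjsqr{\sigma}$ and shows that \eqref{eq:adjsqb} is equivalent to $\adjsqr{\sigma} = \mvleftleft{\adjsql{\sigma}}$.

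It then remains to reconcile the two reduced forms $\adjsqr{\sigma} = \mvrightright{\adjsql{\sigma}}$ and $\adjsqr{\sigma} = \mvleftleft{\adjsql{\sigma}}$. Here I would invoke the lemma stating that the fourfold composites $\rightop\rightop\rightop\rightop$ and $\leftop\leftop\leftop\leftop$ are the identity. Since $\leftop = \rightop^{-1}$, this forces $\mvleftleft{(-)} = \mvrightright{(-)}$ on the relevant type, so the two conditions coincide and \eqref{eq:adjsqa} $\Leftrightarrow$ \eqref{eq:adjsqb}, both equivalent to the single equation $\adjsqr{\sigma} = \mvrightright{\adjsql{\sigma}}$. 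Finally, because $\mvrightright{(-)}$ is a composite of bijections it is itself a bijection, so this equation exhibits $\adjsqr{\sigma}$ as uniquely determined by $\adjsql{\sigma}$ and, inverting, $\adjsql{\sigma} = \mvleftleft{\adjsqr{\sigma}}$ as uniquely determined by $\adjsqr{\sigma}$, which is precisely the mates correspondence introduced above. I expect the only delicate point to be the bookkeeping of directions in the second step---checking that a $\rightop$ genuinely cancels the preceding $\leftop$ at the correct type---which is exactly the kind of naturality-and-type tracking that the string diagrams are designed to render routine.
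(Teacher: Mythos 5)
Your proposal is correct and takes essentially the same approach as the paper: both translate the sliding equalities \eqref{eq:adjsqa} and \eqref{eq:adjsqb} into wire-bending form via the definitions, then combine the fact that $\leftop$ and $\rightop$ are mutually inverse with the lemma that $\leftop\leftop\leftop\leftop$ is the identity, and both obtain uniqueness immediately from bijectivity. The only cosmetic difference is that you reduce each equation to the common normal form $\adjsqr{\sigma} = \mvrightright{\adjsql{\sigma}}$, whereas the paper chains equivalences directly from one equation to the other.
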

\begin{proof}
That the conditions are equivalent follows from:
\begin{eqproof*}
\begin{gathered}
\slidecex{\adjsql{\sigma}}{$\eta_2$}{F_1}{G_2}{H}{K}
\end{gathered}
=
\begin{gathered}
\slidedex{\adjsqr{\sigma}}{$\eta_1$}{F_1}{G_2}{H}{K}
\end{gathered}
\explain[\Leftrightarrow]{definitions}
\begin{gathered}
\upfork{\mvright{\adjsql{\sigma}}}{H}{K}{F_1}{G_2}{1.5}
\end{gathered}
=
\begin{gathered}
\upfork{\mvleft{\adjsqr{\sigma}}}{H}{K}{F_1}{G_2}{1.5}
\end{gathered}
\explain[\Leftrightarrow]{applying the $\leftop$ wire bending bijection twice}
\begin{gathered}
\downfork{\adjsql{\sigma}^{\rightop \leftop \leftop}}{H}{K}{G_1}{F_2}{1.5}
\end{gathered}
=
\begin{gathered}
\downfork{\adjsqr{\sigma}^{\leftop \leftop \leftop}}{H}{K}{G_1}{F_2}{1.5}
\end{gathered}
\explain[\Leftrightarrow]{$\leftop$ and $\rightop$ are inverses, and $\leftop \leftop \leftop \leftop$ is the identity }
\begin{gathered}
\downfork{\mvleft{\adjsql{\sigma}}}{H}{K}{G_1}{F_2}{1.5}
\end{gathered}
=
\begin{gathered}
\downfork{\mvright{\adjsqr{\sigma}}}{H}{K}{G_1}{F_2}{1.5}
\end{gathered}
\explain[\Leftrightarrow]{definitions}
\begin{gathered}
\slidebex{\adjsql{\sigma}}{$\epsilon_1$}{G_1}{F_2}{H}{K}
\end{gathered}
=
\begin{gathered}
\slideaex{\adjsqr{\sigma}}{$\epsilon_2$}{G_1}{F_2}{H}{K}
\end{gathered}
\end{eqproof*}
That \adjsql{\sigma} and \adjsqr{\tau} determine each other uniquely is then immediate as they are
related by bijections from definition \ref{def:wirebending}.
\end{proof}

\begin{proposition}
\label{prop:adjsqcomp}
Adjoint squares compose vertically and horizontally.
\end{proposition}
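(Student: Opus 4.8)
The plan is to handle horizontal and vertical composition separately, in each case exhibiting the composite conjugate pair explicitly and then checking that it again satisfies one of the sliding equalities of Definition \ref{def:adjsq}. By Lemma \ref{lem:adjsqcond} it suffices to verify either \eqref{eq:adjsqa} or \eqref{eq:adjsqb} for the composite, since each half of a conjugate pair determines the other; this is what keeps the bookkeeping under control.

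For horizontal composition I would place two adjoint squares side by side, sharing their middle vertical adjunction $F_2 \dashv G_2$: the first with boundary $H\colon\mathcal{C}_1\to\mathcal{C}_2$, $K\colon\mathcal{D}_1\to\mathcal{D}_2$ and conjugate pair $(\sigma_l,\sigma_r)$, the second with boundary $H'\colon\mathcal{C}_2\to\mathcal{C}_3$, $K'\colon\mathcal{D}_2\to\mathcal{D}_3$ and conjugate pair $(\tau_l,\tau_r)$. The composite should have boundary $H'H$, $K'K$, outer adjunctions $F_1\dashv G_1$ and $F_3\dashv G_3$, left component the evident juxtaposition $(K'\sigma_l)\circ(\tau_l H)$ and right component $(\tau_r K)\circ(H'\sigma_r)$. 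Graphically this is just drawing the two diagrams next to each other so the shared $F_2$, $G_2$ wires join up. The sliding equality for the composite then follows by chaining the two given sliding equalities, passing through the shared middle adjunction, with no fresh application of a snake equation needed.

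For vertical composition I would stack two adjoint squares, the lower one sharing the upper one's bottom functor $K\colon\mathcal{D}_1\to\mathcal{D}_2$ as its top functor. Here the new vertical adjunctions are the composites supplied by Lemma \ref{lem:adjcomp}, namely $F_1'F_1\dashv G_1G_1'$ on the left and $F_2'F_2\dashv G_2G_2'$ on the right, carrying the stacked units and counits built in that lemma's proof; the composite square has top functor $H$ and bottom functor $L$. I would define the composite conjugate pair by vertically stacking the two given pairs and then verify one sliding equality. This is the step I expect to be the main obstacle: because the relevant unit and counit are now nested cups and caps, the equality cannot be read off in a single slide but must be peeled apart one adjunction at a time, applying the sliding equality of the inner square, then of the outer square, and using the snake equation \eqref{eq:snake-eta-epsilon} (or \eqref{eq:snake-epsilon-eta}) to reconcile the stacked unit with the two original units. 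The horizontal case has no such nesting and is comparatively immediate.

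Finally I would note that, once both composites are defined, the expected compatibility (associativity and an interchange-type law) reduces under the string-diagram representation to rearrangements already justified by the sliding equalities and snake equations, in the same routine spirit as Lemma \ref{lem:mvnat}, and I would either verify these graphically or leave them as an exercise.
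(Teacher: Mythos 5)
Your overall strategy coincides with the paper's: reduce to checking a single sliding equality via lemma \ref{lem:adjsqcond}, define the horizontal composite by juxtaposition along the shared adjunction $F_2 \dashv G_2$ and the vertical composite by stacking along the composite adjunctions of lemma \ref{lem:adjcomp}, and verify the sliding equality by applying the component squares' equalities one at a time. Your composite components type-check, and your horizontal case --- two slides chained through the shared middle adjunction, with no snake equation --- is exactly the paper's argument.

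However, your account of the vertical case contains a concrete error, even though the construction you describe does succeed: no snake equation is needed there either, and there is nothing to ``reconcile.'' By lemma \ref{lem:adjcomp} the unit of the composite adjunction $F_2'F_2 \dashv G_2G_2'$ \emph{is}, by definition, the nested diagram of the two component units (the inner cup drawn inside the outer cup); it is not some other 2-cell that must be compared with the component units via \eqref{eq:snake-eta-epsilon} or \eqref{eq:snake-epsilon-eta}. Consequently, sliding the stacked left component across this nested unit decomposes into two \emph{independent} slides: the inner square's transformation crosses the inner cup by its own instance of \eqref{eq:adjsqa}, the outer square's transformation crosses the outer cup by its own instance, and the interchange law --- which is silent in string-diagram notation --- does the rest. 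This is precisely why the paper dismisses the vertical verification as ``trivial.'' Your difficulty assessment is therefore inverted: the vertical case is, if anything, the more immediate one, since its two slides do not interact and can be performed in either order, whereas the horizontal case must chain sequentially through the shared middle adjunction. Had you carried out the vertical verification, you would have found no step at which a snake equation could even be applied; the obstacle you anticipated does not exist.
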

\begin{proof}
By lemma \ref{lem:adjsqcond} we will only need to check one of the two equations \eqref{eq:adjsqa} and \eqref{eq:adjsqb} holds 
in order to ensure our composition operations preserve conjugate pairs.
The vertical composition of adjunctions will be assumed to be as in lemma \ref{lem:adjcomp}.
We will denote the vertical composition operation as $\circ$, defined as follows:
\begin{align*}
(
\begin{gathered}
\catstyle{catc}{\catdcol}
\catstyle{catc'}{\catdpcol}
\catstyle{catd}{\catecol}
\catstyle{catd'}{\catepcol}
\drawnatd{\adjsql{\tau}}{$J$}{$K$}{$F_3$}{$F_4$}{1}
\end{gathered}
,
\begin{gathered}
\catstyle{catc}{\catdcol}
\catstyle{catc'}{\catdpcol}
\catstyle{catd}{\catecol}
\catstyle{catd'}{\catepcol}
\drawnatu{\adjsqr{\tau}}{$J$}{$K$}{$G_3$}{$G_4$}{1}
\end{gathered}
)
&\circ 
(
\begin{gathered}
\drawnatd{\adjsql{\sigma}}{$H$}{$J$}{$F_1$}{$F_2$}{1}
\end{gathered}
,
\begin{gathered}
\drawnatu{\adjsqr{\sigma}}{$H$}{$J$}{$G_1$}{$G_2$}{1}
\end{gathered}
)
\\
:=
(
\begin{gathered}
\begin{tikzpicture}[scale=0.5]
\path coordinate[label=above:$K$] (t) ++(0,-4) coordinate[label=below:$H$] (b)
 (t) ++(-1,0) coordinate[label=above:$F_3$] (tl) ++(-1,0) coordinate[label=above:$F_1$] (tll)
 (b) ++(1,0) coordinate[label=below:$F_2$] (br) ++(1,0) coordinate[label=below:$F_4$] (brr);
\draw[name path=vert] (b) -- (t);
\draw[name path=curv1] (br) to[out=90, in=-90] (tll);
\draw[name path=curv2] (brr) to[out=90, in=-90] (tl);
\path[name intersections={of=vert and curv1}] coordinate[dot, label=-135:\adjsql{\sigma}] (sigma) at (intersection-1);
\path[name intersections={of=vert and curv2}] coordinate[dot, label=45:\adjsql{\tau}] (sigma') at (intersection-1);
\begin{pgfonlayer}{background}
\fill[catc] ($(tll) + (-1,0)$) rectangle (b);
\fill[catc'] ($(brr) + (1,0)$) rectangle (t);
\begin{scope}
\clip ($(tll) + (-1,0)$) rectangle (b);
\fill[catd] (br) to[out=90, in=-90] (tll) -- ++(5,0) -- ++(0,-4) -- cycle;
\fill[cate] (brr) to[out=90, in=-90] (tl) -- ++(4,0) -- ++(0,-4) -- cycle;
\end{scope}
\begin{scope}
\clip ($(brr) + (1,0)$) rectangle (t);
\fill[catd'] (br) to[out=90, in=-90] (tll) -- ++(5,0) -- ++(0,-4) -- cycle;
\fill[cate'] (brr) to[out=90, in=-90] (tl) -- ++(4,0) -- ++(0,-4) -- cycle;
\end{scope}
\end{pgfonlayer}
\end{tikzpicture}
\end{gathered}
&,
\begin{gathered}
\begin{tikzpicture}[scale=0.5]
\path coordinate[label=above:$K$] (t) ++(0,-4) coordinate[label=below:$H$] (b)
 (t) ++(1,0) coordinate[label=above:$G_4$] (tr) ++(1,0) coordinate[label=above:$G_2$] (trr)
 (b) ++(-1,0) coordinate[label=below:$G_1$] (bl) ++(-1,0) coordinate[label=below:$G_3$] (bll);
\draw[name path=vert] (b) -- (t);
\draw[name path=curv1] (bll) to[out=90, in=-90] (tr);
\draw[name path=curv2] (bl) to[out=90, in=-90] (trr);
\path[name intersections={of=vert and curv1}] coordinate[dot, label=135:\adjsql{\tau}] (tau') at (intersection-1);
\path[name intersections={of=vert and curv2}] coordinate[dot, label=-45:\adjsqr{\sigma}] (tau) at (intersection-1);
\begin{pgfonlayer}{background}
\fill[cate] ($(bll) + (-1,-0)$) rectangle (t);
\fill[cate'] ($(trr) + (1,0)$) rectangle (b);
\begin{scope}
\clip ($(bll) + (-1,-0)$) rectangle (t);
\fill[catd] (bll) to[out=90, in=-90] (tr) -- ++(2,0) -- ++(0,-4) -- cycle;
\fill[catc] (bl) to[out=90, in=-90] (trr) -- ++(1,0) -- ++(0,-4) -- cycle;
\end{scope}
\begin{scope}
\clip ($(trr) + (1,0)$) rectangle (b);
\fill[catd'] (bll) to[out=90, in=-90] (tr) -- ++(2,0) -- ++(0,-4) -- cycle;
\fill[catc'] (bl) to[out=90, in=-90] (trr) -- ++(1,0) -- ++(0,-4) -- cycle;
\end{scope}
\end{pgfonlayer}
\end{tikzpicture}
\end{gathered}
)
\end{align*}
That the resulting pair of natural transformations are conjugate is trivial as the following
equality holds because the component natural transformations are conjugate:
\begin{equation*}
\begin{gathered}
\begin{tikzpicture}[scale=0.5]
\path coordinate[dot, label=45:$\eta_4$] (eta') 
 +(-2,1) coordinate[label=above:$F_3$] (tl)
 +(2,1) coordinate[label=above:$G_4$] (tr)
 (eta') ++(0,-1) coordinate[label=below:$\eta_2$] (eta)
 (tl) ++(-1,0) coordinate[label=above:$F_1$] (tll)
 (tr) ++(1,0) coordinate[label=above:$G_2$] (trr)
 (tl) ++(1,0) coordinate[label=above:$K$] (t) ++(0,-3) coordinate[label=below:$H$] (b);
\draw[name path=curv] (tll) to[out=-90, in=180] (eta) to[out=0, in=-90] (trr);
\draw[name path=curv'] (tl) to[out=-90, in=180] (eta') to[out=0, in=-90] (tr);
\draw[name path=vert] (b) -- (t);
\path[name intersections={of=vert and curv}] coordinate[dot, label=-135:\adjsql{\sigma}] (sigma) at (intersection-1);
\path[name intersections={of=vert and curv'}] coordinate[dot, label=45:\adjsql{\tau}] (sigma') at (intersection-1);
\begin{pgfonlayer}{background}
\fill[catc] ($(tll) + (-1,0)$) rectangle (b);
\fill[catc'] ($(trr) + (1,0)$) rectangle (b);
\begin{scope}
\clip ($(tll) + (-1,0)$) rectangle (b);
\fill[catd] (tll) to[out=-90, in=180] (eta) to[out=0, in=-90] (trr) -- cycle;
\fill[cate] (tl) to[out=-90, in=180] (eta') to[out=0, in=-90] (tr) -- cycle;
\end{scope}
\begin{scope}
\clip ($(trr) + (1,0)$) rectangle (b);
\fill[catd'] (tll) to[out=-90, in=180] (eta) to[out=0, in=-90] (trr) -- cycle;
\fill[cate'] (tl) to[out=-90, in=180] (eta') to[out=0, in=-90] (tr) -- cycle;
\end{scope} 
\end{pgfonlayer}
\end{tikzpicture}
\end{gathered}
=
\begin{gathered}
\begin{tikzpicture}[scale=0.5]
\path coordinate[dot, label=135:$\eta_2$] (eta') 
 +(-2,1) coordinate[label=above:$F_3$] (tl)
 +(2,1) coordinate[label=above:$G_4$] (tr)
 (eta') ++(0,-1) coordinate[label=below:$\eta_1$] (eta)
 (tl) ++(-1,0) coordinate[label=above:$F_1$] (tll)
 (tr) ++(1,0) coordinate[label=above:$G_2$] (trr)
 (tr) ++(-1,0) coordinate[label=above:$K$] (t) ++(0,-3) coordinate[label=below:$H$] (b);
\draw[name path=curv] (tll) to[out=-90, in=180] (eta) to[out=0, in=-90] (trr);
\draw[name path=curv'] (tl) to[out=-90, in=180] (eta') to[out=0, in=-90] (tr);
\draw[name path=vert] (b) -- (t);
\path[name intersections={of=vert and curv}] coordinate[dot, label=-45:\adjsqr{\sigma}] (tau) at (intersection-1);
\path[name intersections={of=vert and curv'}] coordinate[dot, label=135:\adjsqr{\tau}] (tau') at (intersection-1);
\begin{pgfonlayer}{background}
\fill[catc] ($(tll) + (-1,0)$) rectangle (b);
\fill[catc'] ($(trr) + (1,0)$) rectangle (b);
\begin{scope}
\clip ($(tll) + (-1,0)$) rectangle (b);
\fill[catd] (tll) to[out=-90, in=180] (eta) to[out=0, in=-90] (trr) -- cycle;
\fill[cate] (tl) to[out=-90, in=180] (eta') to[out=0, in=-90] (tr) -- cycle;
\end{scope}
\begin{scope}
\clip ($(trr) + (1,0)$) rectangle (b);
\fill[catd'] (tll) to[out=-90, in=180] (eta) to[out=0, in=-90] (trr) -- cycle;
\fill[cate'] (tl) to[out=-90, in=180] (eta') to[out=0, in=-90] (tr) -- cycle;
\end{scope} 
\end{pgfonlayer}
\end{tikzpicture}
\end{gathered}
\end{equation*}
We will denote the horizontal composition operation $*$, defined as follows:
\begin{align*}
(
\begin{gathered}
\catstyle{catc}{\catcpcol}
\catstyle{catc'}{\catcppcol}
\catstyle{catd}{\catdpcol}
\catstyle{catd'}{\catdppcol}
\drawnatd{\adjsql{\rho}}{$K$}{$L$}{$F_2$}{$F_3$}{1}
\end{gathered}
,
\begin{gathered}
\catstyle{catc}{\catcpcol}
\catstyle{catc'}{\catcppcol}
\catstyle{catd}{\catdpcol}
\catstyle{catd'}{\catdppcol}
\drawnatu{\adjsqr{\rho}}{$K$}{$L$}{$G_2$}{$G_3$}{1}
\end{gathered}
)
&* 
(
\begin{gathered}
\drawnatd{\adjsql{\sigma}}{$H$}{$J$}{$F_1$}{$F_2$}{1}
\end{gathered}
,
\begin{gathered}
\drawnatu{\adjsqr{\sigma}}{$H$}{$J$}{$G_1$}{$G_2$}{1}
\end{gathered}
)
\\
:=
(
\begin{gathered}
\begin{tikzpicture}[auto, x=0.5cm, y=0.5cm]
\path coordinate[label=above:$F_1$] (tl) ++(3,-3) coordinate[label=below:$F_3$] (br)
 (tl) ++(1,0) coordinate[label=above:$J$] (tm) ++(1,0) coordinate[label=above:$L$] (tr)
 (br) ++(-1,0) coordinate[label=below:$K$] (bm) ++(-1,0) coordinate[label=below:$H$] (bl);
\draw[name path=curv] (br) to[out=90, in=-90] (tl);
\draw[name path=vert1] (bm) -- (tr);
\draw[name path=vert2] (bl) -- (tm);
\path[name intersections={of=vert1 and curv}] coordinate[dot, label=45:\adjsql{\rho}] (sigma') at (intersection-1);
\path[name intersections={of=vert2 and curv}] coordinate[dot, label=-135:\adjsql{\sigma}] (sigma) at (intersection-1);
\begin{pgfonlayer}{background}
\fill[catc] ($(tl) + (-1,0)$) rectangle (bl);
\fill[catc'] (tm) rectangle (bm);
\fill[catc''] (tr) rectangle ($(br) + (1,0)$);
\begin{scope}
\clip ($(tl) + (-1,0)$) rectangle (bl);
\fill[catd] (br) to[out=90, in=-90] (tl) -- ++(4,0) -- ++(0,-3) -- cycle;
\end{scope}
\begin{scope}
\clip (tm) rectangle (bm);
\fill[catd'] (br) to[out=90, in=-90] (tl) -- ++(4,0) -- ++(0,-3) -- cycle;
\end{scope}
\begin{scope}
\clip (tr) rectangle ($(br) + (1,0)$);
\fill[catd''] (br) to[out=90, in=-90] (tl) -- ++(4,0) -- ++(0,-3) -- cycle;
\end{scope}
\end{pgfonlayer}
\end{tikzpicture}
\end{gathered}
&,
\begin{gathered}
\begin{tikzpicture}[scale=0.5]
\path coordinate[label=below:$G_1$] (bl) ++(1,0) coordinate[label=below:$H$] (bm) ++(1,0) coordinate[label=below:$K$] (br)
 (bl) ++(3,3) coordinate[label=above:$G_3$] (tr) ++(-1,0) coordinate[label=above:$L$] (tm) ++(-1,0) coordinate[label=above:$J$] (tl);
\draw[name path=curv] (bl) to[out=90, in=-90] (tr);
\draw[name path=vert1] (bm) -- (tl);
\draw[name path=vert2] (br) -- (tm);
\path[name intersections={of=vert1 and curv}] coordinate[dot, label=135:\adjsqr{\sigma}] (tau) at (intersection-1);
\path[name intersections={of=vert2 and curv}] coordinate[dot, label=-45:\adjsqr{\rho}] (tau') at (intersection-1);
\begin{pgfonlayer}{background}
\fill[catd] ($(bl) + (-1,0)$) rectangle (tl);
\fill[catd'] (bm) rectangle (tm);
\fill[catd''] (br) rectangle ($(tr) + (1,0)$);
\begin{scope}
\clip ($(bl) + (-1,0)$) rectangle (tl);
\fill[catc] (bl) to[out=90, in=-90] (tr) -- ++(1,0) -- ++(0,-3) -- cycle;
\end{scope}
\begin{scope}
\clip (bm) rectangle (tm);
\fill[catc'] (bl) to[out=90, in=-90] (tr) -- ++(1,0) -- ++(0,-3) -- cycle;
\end{scope}
\begin{scope}
\clip (br) rectangle ($(tr) + (1,0)$);
\fill[catc''] (bl) to[out=90, in=-90] (tr) -- ++(1,0) -- ++(0,-3) -- cycle;
\end{scope}
\end{pgfonlayer}
\end{tikzpicture}
\end{gathered}
)
\end{align*}
That the resulting pair of natural transformations are conjugate can be seen from the following equalities,
given by applying the adjoint square equations \eqref{eq:adjsqa} and \eqref{eq:adjsqb} for both the component adjoint squares:
\begin{equation*}
\begin{gathered}
\begin{tikzpicture}[scale=0.5]
\path coordinate[dot, label=below:$\eta_3$] (eta)
 +(-3,2) coordinate[label=above:$F_1$] (tl)
 +(3,2) coordinate[label=above:$G_3$] (tr)
 (tl) ++(1,0) coordinate[label=above:$J$] (vt1) ++(0,-3) coordinate[label=below:$H$] (vb1)
 (tl) ++(2,0) coordinate[label=above:$L$] (vt2) ++(0,-3) coordinate[label=below:$K$] (vb2);
\draw[name path=curv] (tl) to[out=-90, in=180] (eta) to[out=0, in=-90] (tr);
\draw[name path=vert1] (vb1) -- (vt1);
\draw[name path=vert2] (vb2) -- (vt2);
\path[name intersections={of=vert1 and curv}] coordinate[dot, label=-135:\adjsql{\sigma}] (sigma) at (intersection-1);
\path[name intersections={of=vert2 and curv}] coordinate[dot, label=45:\adjsql{\rho}] (sigma') at (intersection-1);
\begin{pgfonlayer}{background}
\fill[catc] ($(tl) + (-0.5,0)$) rectangle (vb1);
\fill[catc'] (vt1) rectangle (vb2);
\fill[catc''] (vb2) rectangle ($(tr) + (0.5,0)$);
\begin{scope}
\clip ($(tl) + (-0.5,0)$) rectangle (vb1);
\fill[catd] (tl) to[out=-90, in=180] (eta) to[out=0, in=-90] (tr) -- cycle;
\end{scope}
\begin{scope}
\clip (vt1) rectangle (vb2);
\fill[catd'] (tl) to[out=-90, in=180] (eta) to[out=0, in=-90] (tr) -- cycle;
\end{scope}
\begin{scope}
\clip (vb2) rectangle ($(tr) + (0.5,0)$);
\fill[catd''] (tl) to[out=-90, in=180] (eta) to[out=0, in=-90] (tr) -- cycle;
\end{scope}
\end{pgfonlayer}
\end{tikzpicture}
\end{gathered}
=
\begin{gathered}
\begin{tikzpicture}[scale=0.5]
\path coordinate[dot, label=below:$\eta_2$] (eta)
 +(-3,2) coordinate[label=above:$F_1$] (tl)
 +(3,2) coordinate[label=above:$G_3$] (tr)
 (tl) ++(1,0) coordinate[label=above:$J$] (vt1) ++(0,-3) coordinate[label=below:$H$] (vb1)
 (tr) ++(-1,0) coordinate[label=above:$L$] (vt2) ++(0,-3) coordinate[label=below:$K$] (vb2);
\draw[name path=curv] (tl) to[out=-90, in=180] (eta) to[out=0, in=-90] (tr);
\draw[name path=vert1] (vb1) -- (vt1);
\draw[name path=vert2] (vb2) -- (vt2);
\path[name intersections={of=vert1 and curv}] coordinate[dot, label=45:\adjsql{\sigma}] (sigma) at (intersection-1);
\path[name intersections={of=vert2 and curv}] coordinate[dot, label=135:\adjsqr{\rho}] (tau') at (intersection-1);
\begin{pgfonlayer}{background}
\fill[catc] ($(tl) + (-0.5,0)$) rectangle (vb1);
\fill[catc'] (vt1) rectangle (vb2);
\fill[catc''] (vb2) rectangle ($(tr) + (0.5,0)$);
\begin{scope}
\clip ($(tl) + (-0.5,0)$) rectangle (vb1);
\fill[catd] (tl) to[out=-90, in=180] (eta) to[out=0, in=-90] (tr) -- cycle;
\end{scope}
\begin{scope}
\clip (vt1) rectangle (vb2);
\fill[catd'] (tl) to[out=-90, in=180] (eta) to[out=0, in=-90] (tr) -- cycle;
\end{scope}
\begin{scope}
\clip (vb2) rectangle ($(tr) + (0.5,0)$);
\fill[catd''] (tl) to[out=-90, in=180] (eta) to[out=0, in=-90] (tr) -- cycle;
\end{scope}
\end{pgfonlayer}
\end{tikzpicture}
\end{gathered}
=
\begin{gathered}
\begin{tikzpicture}[scale=0.5]
\path coordinate[dot, label=below:$\eta_1$] (eta)
 +(-3,2) coordinate[label=above:$F_1$] (tl)
 +(3,2) coordinate[label=above:$G_3$] (tr)
 (tr) ++(-2,0) coordinate[label=above:$J$] (vt1) ++(0,-3) coordinate[label=below:$H$] (vb1)
 (tr) ++(-1,0) coordinate[label=above:$L$] (vt2) ++(0,-3) coordinate[label=below:$K$] (vb2);
\draw[name path=curv] (tl) to[out=-90, in=180] (eta) to[out=0, in=-90] (tr);
\draw[name path=vert1] (vb1) -- (vt1);
\draw[name path=vert2] (vb2) -- (vt2);
\path[name intersections={of=vert1 and curv}] coordinate[dot, label=135:\adjsqr{\sigma}] (tau) at (intersection-1);
\path[name intersections={of=vert2 and curv}] coordinate[dot, label=-45:\adjsqr{\rho}] (tau') at (intersection-1);
\begin{pgfonlayer}{background}
\fill[catc] ($(tl) + (-0.5,0)$) rectangle (vb1);
\fill[catc'] (vt1) rectangle (vb2);
\fill[catc''] (vb2) rectangle ($(tr) + (0.5,0)$);
\begin{scope}
\clip ($(tl) + (-0.5,0)$) rectangle (vb1);
\fill[catd] (tl) to[out=-90, in=180] (eta) to[out=0, in=-90] (tr) -- cycle;
\end{scope}
\begin{scope}
\clip (vt1) rectangle (vb2);
\fill[catd'] (tl) to[out=-90, in=180] (eta) to[out=0, in=-90] (tr) -- cycle;
\end{scope}
\begin{scope}
\clip (vb2) rectangle ($(tr) + (0.5,0)$);
\fill[catd''] (tl) to[out=-90, in=180] (eta) to[out=0, in=-90] (tr) -- cycle;
\end{scope}
\end{pgfonlayer}
\end{tikzpicture}
\end{gathered}
\end{equation*}
\end{proof}
It is straightforward to check that the two forms of composition both give adjoint
squares the structure of a category, with an appropriate choice of identities.
We also have an interchange law that holds between these two types of composition.
\begin{lemma}[Interchange Law]
With the composition operations defined in the proof of proposition \ref{prop:adjsqcomp},
let $P, Q, R, S$ be adjoint squares such that the required composites are well defined,
then the following equality holds:
\begin{equation*}
(S \circ Q) * (R \circ P) = (S * R) \circ (Q * P)
\end{equation*}
\end{lemma}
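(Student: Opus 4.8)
The plan is to exploit the rigidity already established for adjoint squares. By Lemma~\ref{lem:adjsqcond} a conjugate pair is completely determined by either one of its two components, so to prove the interchange law it suffices to verify it on the left components $(-)_l$ alone. This reduces the problem from an equation between conjugate pairs to an equation between ordinary natural transformations, where the string diagram calculus of Section~\ref{sec:outline} can do the work for us.

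First I would record the shape of the two composites on left components. Inspecting the definitions of $\circ$ and $*$ given in the proof of Proposition~\ref{prop:adjsqcomp}, the left component of a vertical (respectively horizontal) composite is obtained purely by pasting the left components of the two factors along the shared boundary functor carrying the intermediate category, with \emph{no} unit $\eta$ or counit $\epsilon$ appearing in the diagram. The units and counits are invoked in Proposition~\ref{prop:adjsqcomp} only to certify that the resulting pair is again conjugate; the left component itself is a unit-free pasting of left components. Consequently $(-)_l$ carries $\circ$ to vertical pasting of natural transformations and $*$ to horizontal pasting, each suitably whiskered by the ambient functors.

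With this in hand, the key step is to expand both sides of $(S \circ Q) * (R \circ P) = (S * R) \circ (Q * P)$ into a single string diagram built from the four left components $P_l, Q_l, R_l, S_l$ arranged in a $2 \times 2$ grid. On the left-hand side I would first form the two vertical composites, placing $Q_l$ above $P_l$ and $S_l$ above $R_l$, and then paste the two columns side by side; on the right-hand side I would first form the two horizontal composites, placing $R_l$ beside $P_l$ and $S_l$ beside $Q_l$, and then stack the two rows. Because the left components are pasted without any wire bending, both procedures deposit the same four nodes in the same relative positions of the grid.

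The only real content, and the step I expect to require the most care, is checking that these two pasting orders genuinely coincide, i.e. that $Q_l$ and $R_l$ may be slid past one another within the grid. But this is precisely the interchange law for natural transformations in the $2$-category of categories, which the string diagram notation absorbs silently through the sliding equalities of Section~\ref{sec:outline}: topologically the four cells occupy four corners and can be moved vertically and horizontally like elevators with no obstacle between them. Having established equality of the left components, the matching right components agree automatically, since each adjoint square is determined by its left component and both sides are conjugate pairs sharing it. This completes the argument.
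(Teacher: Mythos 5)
Your proposal is correct and follows essentially the same route as the paper: reduce to a single component via Lemma~\ref{lem:adjsqcond} together with Proposition~\ref{prop:adjsqcomp}, observe that on left components both composition operations are plain pastings (no units or counits), and then note that both sides expand to the same $2\times 2$ grid of $P_l, Q_l, R_l, S_l$, the equality being the interchange law for natural transformations that the string diagram notation absorbs silently. The paper's proof is exactly this, written out as a chain of diagram equalities passing through that common grid diagram.
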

\begin{proof}
By proposition \ref{prop:adjsqcomp} the composition operations preserve conjugate pairs,
and from lemma \ref{lem:adjsqcond} the members of a conjugate pair determine each other
uniquely. We therefore only need  to consider one of the component natural transformations of the adjoint squares. 
We then have the following sequence of equalities directly from the definitions 
(with a slight abuse of notation we omit the second component of the adjoint squares):
\begin{eqproof*}
\begin{gathered}
\catstyle{catc}{\catdpcol}
\catstyle{catc'}{\catdppcol}
\catstyle{catd}{\catepcol}
\catstyle{catd'}{\cateppcol}
\drawnatd{\adjsql{\lambda}}{}{}{}{}{1}
\end{gathered} 
\circ 
\begin{gathered}
\catstyle{catc}{\catcpcol}
\catstyle{catc'}{\catcppcol}
\catstyle{catd}{\catdpcol}
\catstyle{catd'}{\catdppcol}
\drawnatd{\adjsql{\rho}}{}{}{}{}{1}
\end{gathered} 
) 
* 
(
\begin{gathered}
\catstyle{catc}{\catdcol}
\catstyle{catc'}{\catdpcol}
\catstyle{catd}{\catecol}
\catstyle{catd'}{\catepcol}
\drawnatd{\adjsql{\tau}}{}{}{}{}{1}
\end{gathered} 
\circ 
\begin{gathered}
\drawnatd{\adjsql{\sigma}}{}{}{}{}{1}
\end{gathered} 
)
\explain{definition of vertical composition of adjoint squares}
\begin{gathered}
\begin{tikzpicture}[scale=0.5]
\path coordinate (t) ++(0,-4) coordinate (b)
 (t) ++(-1,0) coordinate (tl) ++(-1,0) coordinate (tll)
 (b) ++(1,0) coordinate (br) ++(1,0) coordinate (brr);
\draw[name path=vert] (b) -- (t);
\draw[name path=curv1] (br) to[out=90, in=-90] (tll);
\draw[name path=curv2] (brr) to[out=90, in=-90] (tl);
\path[name intersections={of=vert and curv1}] coordinate[dot, label=-135:\adjsql{\rho}] (sigma) at (intersection-1);
\path[name intersections={of=vert and curv2}] coordinate[dot, label=45:\adjsql{\lambda}] (sigma') at (intersection-1);
\begin{pgfonlayer}{background}
\fill[catc'] ($(tll) + (-1,0)$) rectangle (b);
\fill[catc''] ($(brr) + (1,0)$) rectangle (t);
\begin{scope}
\clip ($(tll) + (-1,0)$) rectangle (b);
\fill[catd'] (br) to[out=90, in=-90] (tll) -- ++(5,0) -- ++(0,-4) -- cycle;
\fill[cate'] (brr) to[out=90, in=-90] (tl) -- ++(4,0) -- ++(0,-4) -- cycle;
\end{scope}
\begin{scope}
\clip ($(brr) + (1,0)$) rectangle (t);
\fill[catd''] (br) to[out=90, in=-90] (tll) -- ++(5,0) -- ++(0,-4) -- cycle;
\fill[cate''] (brr) to[out=90, in=-90] (tl) -- ++(4,0) -- ++(0,-4) -- cycle;
\end{scope}
\end{pgfonlayer}
\end{tikzpicture}
\end{gathered}
*
\begin{gathered}
\begin{tikzpicture}[scale=0.5]
\path coordinate (t) ++(0,-4) coordinate (b)
 (t) ++(-1,0) coordinate (tl) ++(-1,0) coordinate (tll)
 (b) ++(1,0) coordinate (br) ++(1,0) coordinate (brr);
\draw[name path=vert] (b) -- (t);
\draw[name path=curv1] (br) to[out=90, in=-90] (tll);
\draw[name path=curv2] (brr) to[out=90, in=-90] (tl);
\path[name intersections={of=vert and curv1}] coordinate[dot, label=-135:\adjsql{\sigma}] (sigma) at (intersection-1);
\path[name intersections={of=vert and curv2}] coordinate[dot, label=45:\adjsql{\tau}] (sigma') at (intersection-1);
\begin{pgfonlayer}{background}
\fill[catc] ($(tll) + (-1,0)$) rectangle (b);
\fill[catc'] ($(brr) + (1,0)$) rectangle (t);
\begin{scope}
\clip ($(tll) + (-1,0)$) rectangle (b);
\fill[catd] (br) to[out=90, in=-90] (tll) -- ++(5,0) -- ++(0,-4) -- cycle;
\fill[cate] (brr) to[out=90, in=-90] (tl) -- ++(4,0) -- ++(0,-4) -- cycle;
\end{scope}
\begin{scope}
\clip ($(brr) + (1,0)$) rectangle (t);
\fill[catd'] (br) to[out=90, in=-90] (tll) -- ++(5,0) -- ++(0,-4) -- cycle;
\fill[cate'] (brr) to[out=90, in=-90] (tl) -- ++(4,0) -- ++(0,-4) -- cycle;
\end{scope}
\end{pgfonlayer}
\end{tikzpicture}
\end{gathered}
\explain{definition of horizontal composition of adjoint squares}
\begin{tikzpicture}[scale=0.5]
\path coordinate (a) ++(1,0) coordinate (b) ++(1,0) coordinate (c) ++(1,0) coordinate (d)
 (c) ++(0,-4) coordinate (a') ++(1,0) coordinate (b') ++(1,0) coordinate (c') ++(1,0) coordinate (d');
\draw[name path=vert1] (a') -- (c);
\draw[name path=vert2] (b') -- (d);
\draw[name path=curv1] (c') to[out=90, in=-90] (a);
\draw[name path=curv2] (d') to[out=90, in=-90] (b);
\path[name intersections={of=vert1 and curv1}] coordinate[dot, label=-135:\adjsql{\sigma}] (sigma') at (intersection-1);
\path[name intersections={of=vert2 and curv1}] coordinate[dot, label=-135:\adjsql{\rho}] (sigma') at (intersection-1);
\path[name intersections={of=vert1 and curv2}] coordinate[dot, label=45:\adjsql{\tau}] (sigma') at (intersection-1);
\path[name intersections={of=vert2 and curv2}] coordinate[dot, label=45:\adjsql{\lambda}] (sigma') at (intersection-1);
\begin{pgfonlayer}{background}
\fill[catc] ($(a) + (-1,0)$) rectangle (a');
\fill[catc'] (c) rectangle (b');
\fill[catc''] (d) rectangle ($(d') + (1,0)$);
\begin{scope}
\clip ($(a) + (-1,0)$) rectangle (a');
\fill[catd] (c') to[out=90, in=-90] (a) -- ++(6,0) -- ++(0,-4) -- cycle;
\fill[cate] (d') to[out=90, in=-90] (b) -- ++(5,0) -- ++(0,-4) -- cycle;
\end{scope}
\begin{scope}
\clip (c) rectangle (b');
\fill[catd'] (c') to[out=90, in=-90] (a) -- ++(6,0) -- ++(0,-4) -- cycle;
\fill[cate'] (d') to[out=90, in=-90] (b) -- ++(5,0) -- ++(0,-4) -- cycle;
\end{scope}
\begin{scope}
\clip (d) rectangle ($(d') + (1,0)$);
\fill[catd''] (c') to[out=90, in=-90] (a) -- ++(6,0) -- ++(0,-4) -- cycle;
\fill[cate''] (d') to[out=90, in=-90] (b) -- ++(5,0) -- ++(0,-4) -- cycle;
\end{scope}
\end{pgfonlayer}
\end{tikzpicture}
\explain{definition of vertical composition of adjoint squares}
\begin{gathered}
\begin{tikzpicture}[scale=0.5]
\path coordinate (tl) ++(3,-3) coordinate (br)
 (tl) ++(1,0) coordinate (tm) ++(1,0) coordinate (tr)
 (br) ++(-1,0) coordinate (bm) ++(-1,0) coordinate (bl);
\draw[name path=curv] (br) to[out=90, in=-90] (tl);
\draw[name path=vert1] (bm) -- (tr);
\draw[name path=vert2] (bl) -- (tm);
\path[name intersections={of=vert1 and curv}] coordinate[dot, label=45:\adjsql{\tau}] (sigma') at (intersection-1);
\path[name intersections={of=vert2 and curv}] coordinate[dot, label=-135:\adjsql{\lambda}] (sigma) at (intersection-1);
\begin{pgfonlayer}{background}
\fill[catd] ($(tl) + (-1,0)$) rectangle (bl);
\fill[catd'] (tm) rectangle (bm);
\fill[catd''] (tr) rectangle ($(br) + (1,0)$);
\begin{scope}
\clip ($(tl) + (-1,0)$) rectangle (bl);
\fill[cate] (br) to[out=90, in=-90] (tl) -- ++(4,0) -- ++(0,-3) -- cycle;
\end{scope}
\begin{scope}
\clip (tm) rectangle (bm);
\fill[cate'] (br) to[out=90, in=-90] (tl) -- ++(4,0) -- ++(0,-3) -- cycle;
\end{scope}
\begin{scope}
\clip (tr) rectangle ($(br) + (1,0)$);
\fill[cate''] (br) to[out=90, in=-90] (tl) -- ++(4,0) -- ++(0,-3) -- cycle;
\end{scope}
\end{pgfonlayer}
\end{tikzpicture}
\end{gathered}
\circ
\begin{gathered}
\begin{tikzpicture}[scale=0.5]
\path coordinate (tl) ++(3,-3) coordinate (br)
 (tl) ++(1,0) coordinate (tm) ++(1,0) coordinate (tr)
 (br) ++(-1,0) coordinate (bm) ++(-1,0) coordinate (bl);
\draw[name path=curv] (br) to[out=90, in=-90] (tl);
\draw[name path=vert1] (bm) -- (tr);
\draw[name path=vert2] (bl) -- (tm);
\path[name intersections={of=vert1 and curv}] coordinate[dot, label=45:\adjsql{\rho}] (sigma') at (intersection-1);
\path[name intersections={of=vert2 and curv}] coordinate[dot, label=-135:\adjsql{\sigma}] (sigma) at (intersection-1);
\begin{pgfonlayer}{background}
\fill[catc] ($(tl) + (-1,0)$) rectangle (bl);
\fill[catc'] (tm) rectangle (bm);
\fill[catc''] (tr) rectangle ($(br) + (1,0)$);
\begin{scope}
\clip ($(tl) + (-1,0)$) rectangle (bl);
\fill[catd] (br) to[out=90, in=-90] (tl) -- ++(4,0) -- ++(0,-3) -- cycle;
\end{scope}
\begin{scope}
\clip (tm) rectangle (bm);
\fill[catd'] (br) to[out=90, in=-90] (tl) -- ++(4,0) -- ++(0,-3) -- cycle;
\end{scope}
\begin{scope}
\clip (tr) rectangle ($(br) + (1,0)$);
\fill[catd''] (br) to[out=90, in=-90] (tl) -- ++(4,0) -- ++(0,-3) -- cycle;
\end{scope}
\end{pgfonlayer}
\end{tikzpicture}
\end{gathered}
\explain{definition of horizontal composition of adjoint squares}
(
\begin{gathered}
\catstyle{catc}{\catdpcol}
\catstyle{catc'}{\catdppcol}
\catstyle{catd}{\catepcol}
\catstyle{catd'}{\cateppcol}
\drawnatd{$\sigma^S$}{}{}{}{}{1}
\end{gathered} 
* 
\begin{gathered}
\catstyle{catc}{\catdcol}
\catstyle{catc'}{\catdpcol}
\catstyle{catd}{\catecol}
\catstyle{catd'}{\catepcol}
\drawnatd{$\sigma^R$}{}{}{}{}{1}
\end{gathered} 
) 
\circ 
(
\begin{gathered}
\catstyle{catc}{\catcpcol}
\catstyle{catc'}{\catcppcol}
\catstyle{catd}{\catdpcol}
\catstyle{catd'}{\catdppcol}
\drawnatd{$\sigma^Q$}{}{}{}{}{1}
\end{gathered} 
* 
\begin{gathered}
\drawnatd{$\sigma^P$}{}{}{}{}{1}
\end{gathered} 
)
\end{eqproof*}
\end{proof}
\begin{remark}
In fact, the adjoint squares form a double category \citep{Ehresmann1963}, 
commonly used as a device for organizing information about adjunctions, 
see \citep{Palmquist1971} and \citep{Day1974}. Proving the remaining details
graphically is left for the interested reader.
\end{remark}

\section{Monads}
Monads, also historically referred to as triples and standard constructions,
are another important notion in category theory, and are also of particular interest to
computer scientists with their connections to the semantics of programming languages, see
for example \citep{Moggi1991} and \citep{Wadler1995}. Monads are of course described in \citep{MacLane1998},
but there is much that is not covered. Standard sources for monad theory are \citep{BarrWells2005} and \citep{Manes1976},
a more modern presentation is given in the book in preparation \citep{Jacobs2012b}. We do not attempt to
give a comprehensive treatment here, rather we aim to formulate some basic aspects of monads in graphical terms,
and then provide examples of the string diagrammatic approach by proving some standard results.
As with the description of adjunctions in section \ref{sec:adjunctions}, we will not provide concrete examples of monads, leaving the
interested reader to refer to the various references in the text.

The string diagram notation is particularly effective when working with monads, providing a simple perspective
on many of the mathematical structures under consideration. These ideas will be highlighted in this section.

\subsection{Calculational Properties of Monads}
We now introduce graphical descriptions of monads and some associated mathematical structures.
\begin{definition}[Monad]
Let $\mathcal{C}$ be some category. A \define{monad} on $\mathcal{C}$, consists
of an endofunctor $T$ on $\mathcal{C}$, a \define{unit} natural transformation $\eta$ and
\define{multiplication} natural transformation $\mu$:
\begin{equation*}
\begin{gathered}
\begin{tikzpicture}[scale=0.5]
\path coordinate[dot, label=below:$\eta$] (eta) ++(0,1) coordinate[label=above:$T$] (t);
\draw (eta) -- (t);
\begin{pgfonlayer}{background}
\fill[catc] ($(t) + (-1,0)$) rectangle ($(eta) + (1,-1)$);
\fill[white] ($(eta) + (-1,-1)$) rectangle ($(eta) + (1,-2)$);
\end{pgfonlayer}
\end{tikzpicture}
\end{gathered}
\qquad 
\begin{gathered}
\begin{tikzpicture}[scale=0.5]
\path coordinate[dot, label=below:$\mu$] (mu)
 +(0,1) coordinate[label=above:$T$] (t)
 +(-1,-1) coordinate[label=below:$T$] (bl)
 +(1,-1) coordinate[label=below:$T$] (br);
\draw (bl) to[out=90, in=180] (mu.west) -- (mu.east) to[out=0, in=90] (br)
 (mu) -- (t);
\begin{pgfonlayer}{background}
\fill[catc] ($(bl) + (-0.5,0)$) rectangle ($(t) + (1.5,0)$);
\end{pgfonlayer}
\end{tikzpicture}
\end{gathered}
\end{equation*}
The following \define{unit axioms} are required to hold:
\begin{equation}
\label{eq:monad-unit}
\begin{gathered}
\begin{tikzpicture}[scale=0.5]
\path coordinate[dot, label=below:$\mu$] (mu)
 +(-1,-1) coordinate[dot, label=below:$\eta$] (eta)
 +(0,1) coordinate[label=above:$T$] (t)
 ++(1,-1) coordinate (a) ++(0,-1) coordinate[label=below:$T$] (b);
\draw (eta) to[out=90, in=180] (mu)
 (b) -- (a) to[out=90, in=0] (mu.east) -- (mu.north) -- (t);
\begin{pgfonlayer}{background}
\fill[catc] ($(b) + (0.5,0)$) rectangle ($(t) + (-1.5,0)$);
\end{pgfonlayer}
\end{tikzpicture}
\end{gathered}
\enskip=\enskip
\begin{gathered}
\onecelldiag{T}{catc}{catc}{3}{1}
\end{gathered}
\enskip=\enskip
\begin{gathered}
\begin{tikzpicture}[auto, scale=0.5]
\path coordinate[dot, label=below:$\mu$] (mu)
 +(1,-1) coordinate[dot, label=below:$\eta$] (eta)
 +(0,1) coordinate[label=above:$T$] (t)
 ++(-1,-1) coordinate (a) ++(0,-1) coordinate[label=below:$T$] (b);
\draw (eta) to[out=90, in=0] (mu)
 (b) -- (a) to[out=90, in=180] (mu.west) -- (mu.north) -- (t);
\begin{pgfonlayer}{background}
\fill[catc] ($(b) + (-0.5,0)$) rectangle ($(t) + (1.5,0)$);
\end{pgfonlayer}
\end{tikzpicture}
\end{gathered}
\end{equation}
Also $\mu$ is required to satisfy the following \define{associativity axiom}:
\begin{equation}
\label{eq:monad-associativity}
\begin{gathered}
\begin{tikzpicture}[scale=0.5]
\path coordinate[dot, label=below:$\mu$] (mu) ++(1,1) coordinate[dot, label=below:$\mu$] 
(mu') +(0,1) coordinate[label=above:$T$] (t) ++(1,-1) coordinate (a) ++(0,-1) coordinate[label=below:$T$] (br)
 (mu) +(-1,-1) coordinate[label=below:$T$] (bl)
 (mu) +(1,-1) coordinate[label=below:$T$] (bm);
\draw (bl) to[out=90, in=180] (mu.west) -- (mu.north) to[out=90, in=180] (mu'.west) -- (mu'.north) -- (t)
 (br) -- (a) to[out=90, in=0] (mu')
 (bm) to[out=90, in=0] (mu);
\begin{pgfonlayer}{background}
\fill[catc] ($(bl) + (-0.5,0)$) rectangle ($(t) + (1.5,0)$);
\end{pgfonlayer}
\end{tikzpicture}
\end{gathered}
\enskip=\enskip
\begin{gathered}
\begin{tikzpicture}[scale=0.5]
\path coordinate[dot, label=below:$\mu$] (mu) ++(-1,1) coordinate[dot, label=below:$\mu$] 
 (mu') +(0,1) coordinate[label=above:$T$] (t) ++(-1,-1) coordinate (a) ++(0,-1) coordinate[label=below:$T$] (bl)
 (mu) +(-1,-1) coordinate[label=below:$T$] (bm)
 (mu) +(1,-1) coordinate[label=below:$T$] (br);
\draw (br) to[out=90, in=0] (mu.east) -- (mu.north) to[out=90, in=0] (mu'.east) -- (mu'.north) -- (t)
 (bm) to[out=90, in=180] (mu.west)
 (bl) -- (a) to[out=90, in=180] (mu'.west);
\begin{pgfonlayer}{background}
\fill[catc] ($(br) + (0.5, 0)$) rectangle ($(t) + (-1.5, 0)$);
\end{pgfonlayer}
\end{tikzpicture}
\end{gathered}
\end{equation}
The string diagram notation makes clear that a monad can be seen as a monoid in
the functor category $[\mathcal{C}, \mathcal{C}]$, as suggested by the use of the names ``unit'' and ``multiplication''.
\end{definition}
There is a close relationship between monads and adjunctions, in fact every adjunction induces a monad.
\begin{lemma}
\label{lem:adjmod}
Every adjunction:
\begin{center}
\begin{tabular}{c c c c}
\onecelldiag{F}{catc}{catd}{2}{2}
&
\onecelldiag{G}{catd}{catc}{2}{2}
&
\cupcell{\eta}{F}{G}{catc}{catd}
&
\capcell{\epsilon}{G}{F}{catd}{catc}
\end{tabular}
\end{center}
induces a monad.
\end{lemma}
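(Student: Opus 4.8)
The plan is to realise the induced monad on $\mathcal{C}$ as a monoid in $[\mathcal{C}, \mathcal{C}]$, taking its underlying endofunctor to be the composite $T = G \circ F$, which is drawn graphically as the two parallel wires $F$ (left) and $G$ (right) with outer region $\mathcal{C}$ --- precisely the codomain of the unit cup $\eta$. I would then take the monad unit to be the adjunction unit $\eta$ itself, since this cup already carries the correct type $1_{\mathcal{C}} \Rightarrow G \circ F$. For the multiplication $\mu : (G \circ F) \circ (G \circ F) \Rightarrow G \circ F$ I would use the counit $\epsilon$ whiskered on the left by $G$ and on the right by $F$, that is $G\epsilon F$: graphically this is the counit cap applied to the two middle wires of the four-wire picture of $T \circ T$ (reading $F,G,F,G$ from left to right), bending them together while the outermost $F$ and $G$ survive as the single output $T$. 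Note that the middle pair of wires is exactly the $F \circ G$ on which $\epsilon$ is defined, so the types match.

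First I would verify the two unit axioms \eqref{eq:monad-unit}. Plugging the unit cup into either input of the multiplication cap produces an $\eta$--$\epsilon$ zig-zag along one of the two surviving wires, and each of the two unit laws then reduces to a single straightening of that zig-zag. But straightening such a zig-zag is exactly one of the snake equations \eqref{eq:snake-eta-epsilon} and \eqref{eq:snake-epsilon-eta}. Thus the monad unit laws are nothing more than the triangle identities of the adjunction, restated in graphical form, and there is nothing further to check.

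Next I would treat the associativity axiom \eqref{eq:monad-associativity}. On the six-wire diagram representing $T^3 = G \circ F \circ G \circ F \circ F$, both sides $\mu \circ (\mu T)$ and $\mu \circ (T\mu)$ amount to capping the two disjoint wire-pairs in positions $(2,3)$ and $(4,5)$ with copies of $\epsilon$; the only apparent difference between the two sides is the relative height at which the two caps are drawn. Since the caps act on disjoint wires, the sliding (elevator) equalities together with the interchange law let me move one cap past the other, so the two diagrams coincide. In particular associativity requires no additional appeal to the adjunction axioms --- it is handled entirely by the topology of the notation.

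I expect no serious obstacle here, since the entire content of the lemma collapses, under the string-diagram formalism, to the snake equations (for the unit laws) and to interchange (for associativity). The one place where care is genuinely needed is bookkeeping with the left-to-right composition convention: I must confirm that the middle pair of wires in the picture of $T \circ T$ really is the $F \circ G$ on which the counit acts, and that the whiskered counit $G\epsilon F$ is drawn as the cap on exactly those two wires rather than the outer ones. Checking these orientations against the types displayed in the diagrams is the only step where an error could plausibly arise, and it is routine.
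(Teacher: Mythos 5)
Your proposal is correct and coincides with the paper's own proof: the same monad data ($T = G \circ F$, unit $\eta$, multiplication $G\epsilon F$ capping the middle $G,F$ wires), with the unit laws discharged by the snake equations and associativity by sliding the two disjoint $\epsilon$ caps past each other. The only blemish is the typo $T^3 = G \circ F \circ G \circ F \circ F$, which should read $G \circ F \circ G \circ F \circ G \circ F$; your subsequent description of the six-wire diagram and the caps at positions $(2,3)$ and $(4,5)$ is the intended and correct one.
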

\begin{proof}
Take the endofunctor as $G \circ F$, and the unit of the monad is also $\eta$. Take the multiplication $\mu$
of the monad as the composite:
\begin{center}
\begin{tikzpicture}[scale=0.5]
\path coordinate[dot, label=below:$\epsilon$] 
 (epsilon) ++(-1,-1) coordinate (a) ++(0,-0.5) coordinate[label=below:$G$] (bl) ++(-1,0) coordinate[label=below:$F$] (bll) 
  ++(0,2) coordinate[label=above:$F$] (tl)
 (epsilon) ++(1,-1) coordinate (b) ++(0,-0.5) coordinate[label=below:$F$] (br) ++(1,0) coordinate[label=below:$G$] (brr) 
  ++(0,2) coordinate[label=above:$G$] (tr);
\draw (bll) -- (tl)
 (brr) -- (tr)
 (bl) -- (a) to[out=90, in=180] (epsilon.west) -- (epsilon.east) to[out=0, in=90] (b) -- (br);
\begin{pgfonlayer}{background}
\fill[catc] ($(tl) + (-1,0)$) rectangle (bll);
\fill[catc] ($(tr) + (1,0)$) rectangle (brr);
\fill[catd] (bll) rectangle (tr);
\fill[catc] (bl) -- (a) to[out=90, in=180] (epsilon.west) -- (epsilon.east) to[out=0, in=90] (b) -- (br) -- cycle;
\end{pgfonlayer}
\end{tikzpicture}
\end{center}
We must then check the required axioms hold, for the unit axioms we apply the properties of adjunctions:
\begin{equation*}
\begin{gathered}
\begin{tikzpicture}[scale=0.5]
\path coordinate[dot, label=above:$\eta$] (eta) ++(2,1) coordinate[dot, label=below:$\epsilon$] (epsilon)
 ++(1,-1) coordinate (a) ++(0,-0.5) coordinate[label=below:$F$] (bl)
 (eta) ++(-1,1) coordinate (b) ++(0,0.5) coordinate[label=above:$F$] (tl)
 (bl) ++(1,0) coordinate[label=below:$G$] (br) ++(0,2) coordinate[label=above:$G$] (tr);
\draw (tl) -- (b) to[out=-90, in=180] (eta.west) -- (eta.east) to[out=0, in=180] (epsilon.west) -- (epsilon.east) to[out=0, in=90] (a) -- (bl);
\draw (br) -- (tr);
\begin{pgfonlayer}{background}
\fill[catc] ($(tl) + (-1,0)$) rectangle ($(br) + (1,0)$);
\fill[catd] (tl) -- (b) to[out=-90, in=180] (eta.west) -- (eta.east) to[out=0, in=180] (epsilon.west) -- (epsilon.east) to[out=0, in=90] (a) -- (bl) -- (br) -- (tr) -- cycle;
\end{pgfonlayer}
\end{tikzpicture}
\end{gathered}
\stackrel{\eqref{eq:snake-eta-epsilon}}{=}
\begin{gathered}
\begin{tikzpicture}[scale=0.5]
\path coordinate[label=above:$F$] (tl) ++(1,0) coordinate[label=above:$G$] (tr)
 (tl) ++(0,-2) coordinate[label=below:$F$] (bl)
 (tr) ++(0,-2) coordinate[label=below:$G$] (br);
\draw (bl) -- (tl)
 (br) -- (tr);
\begin{pgfonlayer}{background}
\fill[catc] ($(tl) + (-1,0)$) rectangle (bl);
\fill[catc] ($(tr) + (1,0)$) rectangle (br);
\fill[catd] (tl) rectangle (br);
\end{pgfonlayer}
\end{tikzpicture}
\end{gathered}
\stackrel{\eqref{eq:snake-epsilon-eta}}{=}
\begin{gathered}
\begin{tikzpicture}[scale=0.5]
\path coordinate[dot, label=above:$\eta$] (eta) ++(-2,1) coordinate[dot, label=below:$\epsilon$] (epsilon)
 ++(-1,-1) coordinate (a) ++(0,-0.5) coordinate[label=below:$G$] (br)
 (eta) ++(1,1) coordinate (b) ++(0,0.5) coordinate[label=above:$G$] (tr)
 (br) ++(-1,0) coordinate[label=below:$F$] (bl) ++(0,2) coordinate[label=above:$F$] (tl);
\draw (tr) -- (b) to[out=-90, in=0] (eta.east) -- (eta.west) to[out=180, in=0] (epsilon.east) -- (epsilon.west) to[out=180, in=90] (a) -- (br)
 (bl) -- (tl);
\begin{pgfonlayer}{background}
\fill[catc] ($(bl) + (-1,0)$) rectangle ($(tr) + (1,0)$);
\fill[catd] (tr) -- (b) to[out=-90, in=0] (eta.east) -- (eta.west) to[out=180, in=0] (epsilon.east) -- (epsilon.west) to[out=180, in=90] (a) -- (br)
 -- (bl) -- (tl) -- cycle;
\end{pgfonlayer}
\end{tikzpicture}
\end{gathered}
\end{equation*}
and the associativity axioms follow directly from naturality:
\newcommand{\diagelem}[1]{
\begin{gathered}
\begin{tikzpicture}[auto,scale=0.3]
\path (0,-2.5) coordinate (base) 
 (-1.5,-#1) coordinate[dot, label=below:$\epsilon$] 
 (epsilon1) ++(-1,-1) coordinate (a)
 (epsilon1) ++(1,-1) coordinate (b)
 (1.5,#1) coordinate[dot, label=below:$\epsilon$] (epsilon2)
 (epsilon2) ++(-1,-1) coordinate (c)
 (epsilon2) ++(1,-1) coordinate (d);
\path let \p1 = (base) in
      let \p2 = (a) in
      let \p3 = (b) in
      let \p4 = (c) in
      let \p5 = (d) in
 coordinate[label=below:$G$] (bll) at (\x2, \y1)
 coordinate[label=below:$F$] (bl) at (\x3, \y1)
 coordinate[label=below:$G$] (br) at (\x4, \y1)
 coordinate[label=below:$F$] (brr) at (\x5, \y1);
\path 
 (bll) ++(-1,0) coordinate[label=below:$F$] (blll) ++(0,4) coordinate[label=above:$F$] (tlll)
 (brr) ++(1,0) coordinate[label=below:$G$] (brrr) +(0,4) coordinate[label=above:$G$] (trrr);
\draw (bll) -- (a) to[out=90, in=180] (epsilon1.west) -- (epsilon1.east) to[out=0, in=90] (b) -- (bl)
      (br) -- (c) to[out=90, in=180] (epsilon2.west) -- (epsilon2.east) to[out=0, in=90] (d) -- (brr)
 (blll) -- (tlll)
 (brrr) -- (trrr);
\begin{pgfonlayer}{background}
\fill[catd] ($(blll) + (-1,0)$) rectangle ($(trrr) + (1,0)$);
\fill[catc] ($(blll) + (-1,0)$) rectangle (tlll);
\fill[catc] ($(brrr) + (1,0)$) rectangle (trrr);
\fill[catc] (bll) -- (a) to[out=90, in=180] (epsilon1.west) -- (epsilon1.east) to[out=0, in=90] (b) -- (bl) -- cycle;
\fill[catc] (br) -- (c) to[out=90, in=180] (epsilon2.west) -- (epsilon2.east) to[out=0, in=90] (d) -- (brr) -- cycle;
\end{pgfonlayer}
\end{tikzpicture}
\end{gathered}
}
\begin{equation*}
\diagelem{1}
=
\diagelem{0}
=
\diagelem{-1}
\end{equation*}
\end{proof}
We now introduce a suitable notion of morphism for monads on the same category. We will generalize this
situation in section \ref{sec:moncat}.
\begin{definition}[Monad Morphism]
\label{def:monadmorphism}
For a fixed category $\mathcal{C}$, a \define{monad morphism} $(\mathcal{C}, T, \eta, \mu) \rightarrow (\mathcal{C}, T', \eta', \mu')$ is a natural transformation $\sigma : T \Rightarrow T'$ such that the following equalities hold:
\begin{subequations}
\begin{trivlist}\vspace*{-\baselineskip}
\item
\begin{minipage}{0.495\textwidth}
\begin{equation}
\label{eq:mmunit}
\begin{gathered}
\begin{tikzpicture}[scale=0.5]
\path coordinate[dot, label=right:$\eta$] (eta) ++(0,1) coordinate[dot, label=right:$\sigma$] (sigma) ++(0,1) coordinate[label=above:$T'$] (t);
\draw (eta) -- (sigma) -- (t);
\begin{pgfonlayer}{background}
\fill[catc] ($(eta) +(-1,-1)$) rectangle ($(t) + (1.5,0)$);
\fill[white] ($(eta) +(-1,-1)$) rectangle ($(eta) +(1,-1.5)$);
\end{pgfonlayer}
\end{tikzpicture}
\end{gathered}
=
\begin{gathered}
\begin{tikzpicture}[scale=0.5]
\path coordinate[dot, label=right:$\eta'$] (eta) ++(0,2) coordinate[label=above:$T'$] (t);
\draw (eta) -- (t);
\begin{pgfonlayer}{background}
\fill[catc] ($(eta) +(-1,-1)$) rectangle ($(t) + (1.5,0)$);
\fill[white] ($(eta) +(-1,-1)$) rectangle ($(eta) +(1,-1.5)$);
\end{pgfonlayer}
\end{tikzpicture}
\end{gathered}
\end{equation}
\end{minipage}
\begin{minipage}{0.495\textwidth}
\begin{equation}
\label{eq:mmmult}
\begin{gathered}
\begin{tikzpicture}[scale=0.5]
\path coordinate[dot, label=below:$\mu$] (mu)
 +(-1,-1) coordinate[label=below:$T$] (bl)
 +(1,-1) coordinate[label=below:$T$] (br)
 ++(0,1) coordinate[dot, label=right:$\sigma$] (sigma)
 ++(0,1) coordinate[label=above:$T'$] (t);
\draw (mu) -- (sigma) -- (t)
 (bl) to[out=90, in=180] (mu.west) -- (mu.east) to[out=0, in=90] (br);
\begin{pgfonlayer}{background}
\fill[catc] ($(bl) + (-1,0)$) rectangle ($(t) + (2,0)$);
\end{pgfonlayer}
\end{tikzpicture}
\end{gathered}
=
\begin{gathered}
\begin{tikzpicture}[scale=0.5]
\path coordinate[dot, label=below:$\mu'$] (mu) 
 +(0,1) coordinate[label=above:$T'$] (t)
 (mu) ++(-1,-1) coordinate[dot, label=left:$\sigma$] (sigma1) ++(0,-1) coordinate[label=below:$T$] (bl)
 (mu) ++(1,-1) coordinate[dot, label=right:$\sigma$] (sigma2) ++(0,-1) coordinate[label=below:$T$] (br);
\draw (mu) -- (t)
 (bl) -- (sigma1) to[out=90, in=180] (mu.west) -- (mu.east) to[out=0, in=90] (sigma2) -- (br);
\begin{pgfonlayer}{background}
\fill[catc] ($(bl) + (-1,0)$) rectangle ($(t) + (2,0)$);
\end{pgfonlayer}
\end{tikzpicture}
\end{gathered}
\end{equation}
\end{minipage}
\end{trivlist}
\end{subequations}
Again the string notation is instructive, if we consider the two monads as monoids in $[\mathcal{C}, \mathcal{C}]$ then
$\sigma$ is clearly a monoid homomorphism, commuting appropriately with the unit and multiplication.
\end{definition}
\begin{lemma}
Let $(\mathcal{C}, T, \eta, \mu)$, $(\mathcal{C}, T', \eta', \mu')$ and $(\mathcal{C}, T'', \eta'', \mu'')$ be monads.
Also let $\sigma : T \Rightarrow T'$ and $\tau : T' \Rightarrow T''$ be monad morphisms. Then $\tau \circ \sigma$ is a monad morphism. Also for any monad $(\mathcal{C}, T, \eta, \mu)$, the identity natural transformation on $T$ is a monad morphism.
\end{lemma}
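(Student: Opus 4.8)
The plan is to verify directly that the composite $\tau \circ \sigma$ satisfies the two monad morphism axioms \eqref{eq:mmunit} and \eqref{eq:mmmult}, reducing each to the corresponding axioms for $\sigma$ and for $\tau$ applied in turn. Graphically, $\tau \circ \sigma$ is the vertical stack carrying the dot $\sigma$ below the dot $\tau$ on a single $T$-wire, and in each case the strategy is to slide these dots downward past the unit or multiplication node one at a time.

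For the unit axiom I would start from the diagram with $\eta$ at the bottom and the two dots $\sigma$ then $\tau$ stacked above it. Applying the unit axiom \eqref{eq:mmunit} for $\sigma$ absorbs the lower pair into $\eta'$, leaving $\eta'$ with $\tau$ above; a second application of the unit axiom for $\tau$ then yields $\eta''$, as required. Symbolically this is just $(\tau \circ \sigma) \circ \eta = \tau \circ (\sigma \circ \eta) = \tau \circ \eta' = \eta''$.

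For the multiplication axiom I would begin with $\mu$ at the bottom and $\sigma$, $\tau$ stacked above its output wire. Applying the multiplication axiom \eqref{eq:mmmult} for $\sigma$ pushes the $\sigma$ dot below the node, replacing $\mu$ by $\mu'$ and placing a copy of $\sigma$ on each of the two incoming wires. A further application of the multiplication axiom for $\tau$ pushes $\tau$ through $\mu'$, replacing it by $\mu''$ and depositing a $\tau$ dot on each incoming wire above the existing $\sigma$. The resulting diagram has $\mu''$ fed by two wires, each carrying $\sigma$ then $\tau$, which the notation reads directly as $\mu'' \circ ((\tau \circ \sigma) * (\tau \circ \sigma))$. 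The only subtlety is that the symbolic version of this final reading invokes the interchange law, $(\tau * \tau) \circ (\sigma * \sigma) = (\tau \circ \sigma) * (\tau \circ \sigma)$; in the string diagrams this step is silent, since the two wires are manipulated independently.

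Finally, for the identity natural transformation $1_T$ on a monad $(\mathcal{C}, T, \eta, \mu)$, there is nothing to prove: since identity natural transformations are drawn as bare wires with no dot, both axioms reduce to the statements $\eta = \eta$ and $\mu = \mu$, which hold trivially. I do not expect any genuine obstacle here; the entire content is the orderly application of the two morphism axioms, with interchange handled automatically by the graphical calculus.
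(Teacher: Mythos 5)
Your proposal is correct and matches the paper's own proof essentially step for step: both verify the unit axiom by two successive applications of \eqref{eq:mmunit} and the multiplication axiom by two successive applications of \eqref{eq:mmmult} (the paper merely writes the multiplication chain in the reverse direction, starting from the $\mu''$ side), with interchange absorbed silently by the diagrammatic notation and the identity case dismissed as trivial.
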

\begin{proof}
For the unit we have:
\begin{equation*}
\begin{gathered}
\begin{tikzpicture}[scale=0.5]
\path coordinate[dot, label=right:$\eta$] (eta) ++(0,1) coordinate[dot, label=right:$\sigma$] (sigma) ++(0,1) coordinate[dot, label=right:$\tau$] (tau) 
 ++(0,1) coordinate[label=above:$T''$] (t);
\draw (eta) -- (sigma) -- (tau) -- (t);
\begin{pgfonlayer}{background}
\fill[catc] ($(eta) + (-1,-1)$) rectangle ($(t) + (2,0)$);
\end{pgfonlayer}
\end{tikzpicture}
\end{gathered}
\stackrel{\eqref{eq:mmunit}}{=}
\begin{gathered}
\begin{tikzpicture}[scale=0.5]
\path coordinate[dot, label=right:$\eta'$] (eta) ++(0,1) coordinate[dot, label=right:$\tau$] (tau) ++(0,1) coordinate[label=above:$T''$] (t);
\draw (eta) -- (tau) -- (t);
\begin{pgfonlayer}{background}
\fill[catc] ($(eta) + (-1,-2)$) rectangle ($(t) + (2,0)$);
\end{pgfonlayer}
\end{tikzpicture}
\end{gathered}
\stackrel{\eqref{eq:mmunit}}{=}
\begin{gathered}
\begin{tikzpicture}[scale=0.5]
\path coordinate[dot, label=right:$\eta''$] (eta) ++(0,1) coordinate[label=above:$T''$] (t);
\draw (eta) -- (t);
\begin{pgfonlayer}{background}
\fill[catc] ($(eta) + (-1,-3)$) rectangle ($(t) + (2,0)$);
\end{pgfonlayer}
\end{tikzpicture}
\end{gathered}
\end{equation*}
Similarly for the multiplication:
\begin{equation*}
\begin{gathered}
\begin{tikzpicture}[scale=0.5]
\path coordinate[dot, label=below:$\mu''$] (mu) ++(0,1) coordinate[label=above:$T''$] (t)
 (mu) ++(-1,-1) coordinate[dot, label=left:$\tau$] (tau1) ++(0,-1) coordinate[dot, label=left:$\sigma$] (sigma1) ++(0,-1) coordinate[label=below:$T$] (bl)
 (mu) ++(1,-1) coordinate[dot, label=right:$\tau$] (tau2) ++(0,-1) coordinate[dot, label=right:$\sigma$] (sigma2) ++(0,-1) coordinate[label=below:$T$] (br);
\draw (mu) -- (t)
 (bl) -- (sigma1) -- (tau1) to[out=90, in=180] (mu.west) -- (mu.east) to[out=0, in=90] (tau2) -- (sigma2) -- (br);
\begin{pgfonlayer}{background}
\fill[catc] ($(bl) + (-1,0)$) rectangle ($(t) + (2,0)$);
\end{pgfonlayer} 
\end{tikzpicture}
\end{gathered}
\stackrel{\eqref{eq:mmmult}}{=}
\begin{gathered}
\begin{tikzpicture}[scale=0.5]
\path coordinate[dot, label=below:$\mu'$] (mu) ++(0,1) coordinate[dot, label=right:$\tau$] (tau) ++(0,1) coordinate[label=above:$T''$] (t)
 (mu) ++(-1,-1) coordinate[dot, label=left:$\sigma$] (sigma1) ++(0,-1) coordinate[label=below:$T$] (bl)
 (mu) ++(1,-1) coordinate[dot, label=right:$\sigma$] (sigma2) ++(0,-1) coordinate[label=below:$T$] (br);
\draw (mu) -- (tau) -- (t)
 (bl) -- (sigma1) to[out=90, in=180] (mu.west) -- (mu.east) to[out=0, in=90] (sigma2) -- (br);
\begin{pgfonlayer}{background}
\fill[catc] ($(bl) + (-1,0)$) rectangle ($(t) + (2,0)$);
\end{pgfonlayer}
\end{tikzpicture}
\end{gathered}
\stackrel{\eqref{eq:mmmult}}{=}
\begin{gathered}
\begin{tikzpicture}[scale=0.5]
\path coordinate[dot, label=below:$\mu$] (mu) ++(0,1) coordinate[dot, label=right:$\sigma$] (sigma) ++(0,1) coordinate[dot, label=right:$\tau$] (tau) 
 ++(0,1) coordinate[label=above:$T''$] (t)
 (mu) +(-1,-1) coordinate[label=below:$T$] (bl) +(1,-1) coordinate[label=below:$T$] (br);
\draw (bl) to[out=90, in=180] (mu.west) -- (mu.east) to[out=0, in=90] (br);
\draw (mu) -- (sigma) -- (tau) -- (t);
\begin{pgfonlayer}{background}
\fill[catc] ($(bl) + (-1,0)$) rectangle ($(t) + (2,0)$);
\end{pgfonlayer}
\end{tikzpicture}
\end{gathered}
\end{equation*}
The second part is trivial.
\end{proof}
We could have seen the above lemma immediately as monoid homomorphisms compose.
Clearly for a category $\mathcal{C}$, the monads on $\mathcal{C}$ and the monad
morphisms form a category. We now investigate two important constructions for
a given monad.
\begin{definition}[Eilenberg-Moore Category]
The \define{Eilenberg-Moore category} for a monad $(\mathcal{C}, T, \eta, \mu)$, denoted \eilmo{T},
 is defined as follows:
\begin{itemize}
 \item {\bf Objects}: Pairs consisting of an object $X$ of $\mathcal{C}$ and a morphism $a : T(X) \rightarrow X$
   satisfying the following equalities:
\begin{subequations}
\begin{trivlist}
\item
\begin{minipage}{0.395\linewidth}
\begin{equation}
\label{eq:emunit}
\begin{gathered}
\begin{tikzpicture}[scale=0.5]
\path coordinate[dot, label=left:$a$] (a) +(0,1) coordinate[label=above:$X$] (t) +(0,-2) coordinate[label=below:$X$] (b)
 +(1,-1) coordinate[dot, label=below:$\eta$] (eta);
\draw (b) -- (a) -- (t)
 (eta) to[out=90, in=0] (a);
\begin{pgfonlayer}{background}
\fill[catterm] ($(t) + (-1,0)$) rectangle (b);
\fill[catc] (t) rectangle ($(b) + (1.5,0)$);
\end{pgfonlayer}
\end{tikzpicture}
\end{gathered}
=
\begin{gathered}
\onecelldiag{X}{catterm}{catc}{3}{1}
\end{gathered}
\end{equation}
\end{minipage}
\begin{minipage}{0.595\linewidth}
\begin{equation}
\label{eq:emmult}
\begin{gathered}
\begin{tikzpicture}[scale=0.5]
\path coordinate[dot, label=left:$a$] (a) 
 +(0,1) coordinate[label=above:$X$] (t)
 +(0,-2) coordinate[label=below:$X$] (bl)
 ++(2,-1) coordinate[dot, label=below:$\mu$] (mu)
 +(-1,-1) coordinate[label=below:$T$] (bm)
 +(1,-1) coordinate[label=below:$T$] (br);
\draw (bl) -- (a) -- (t)
 (bm) to[out=90, in=180] (mu.west) -- (mu.east) to[out=0, in=90] (br)
 (mu) to[out=90, in=0] (a);
\begin{pgfonlayer}{background}
\fill[catterm] ($(t) + (-1,0)$) rectangle (bl);
\fill[catc] (t) rectangle ($(br) + (0.5,0)$);
\end{pgfonlayer}
\end{tikzpicture}
\end{gathered}
=
\begin{gathered}
\begin{tikzpicture}[scale=0.5]
\path coordinate[dot, label=left:$a$] (a1) +(0,-1) coordinate[label=below:$X$] (bl)
 ++(0,1) coordinate[dot, label=left:$a$] (a2) ++(0,1) coordinate[label=above:$X$] (t)
 (bl) ++(1,0) coordinate[label=below:$T$] (bm) ++(1,0) coordinate[label=below:$T$] (br);
\draw (bl) -- (a1) -- (a2) -- (t)
 (bm) to[out=90, in=0] (a1)
 (br) to[out=90, in=0] (a2);
\begin{pgfonlayer}{background}
\fill[catterm] ($(t) + (-1,0)$) rectangle (bl);
\fill[catc] (t) rectangle ($(br) + (0.5,0)$);
\end{pgfonlayer}
\end{tikzpicture}
\end{gathered}
\end{equation}
\end{minipage}
\end{trivlist}
\end{subequations}
 \item {\bf Morphisms}: A morphism $(X, a) \rightarrow (Y, b)$ is an algebra homomorphism, as defined in example \ref{ex:alghom}.
  Composition of morphisms is as in $\mathcal{C}$.
\end{itemize}
The graphical notation shows that if we consider the monad as a monoid in $[\mathcal{C}, \mathcal{C}]$ then
we can view the objects in the Eilenberg-Moore category as monoid actions on objects in $\mathcal{C}$.
In fact $X$ is a constant functor, and we can generalize the above definition to arbitrary functors. 
We will not require this extra generality in our setting, details can be found in \citep{KellyStreet1974}.
\end{definition}
In lemma \ref{lem:adjmod} we saw that every adjunction induces a monad. We now see that every monad arises
in this way.
\begin{proposition}[\cite{EilenbergMoore1965}]
\label{prop:emadj}
For a monad $(\mathcal{C}, T, \eta, \mu)$ there are functors:
\begin{equation*}
U : \eilmo{T} \rightarrow \mathcal{C}
\end{equation*}
and:
\begin{equation*}
J : \mathcal{C} \rightarrow \eilmo{T}
\end{equation*}
with $J \dashv U$ and the monad induced as in lemma \ref{lem:adjmod} is equal to $T$.
\end{proposition}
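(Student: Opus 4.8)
The plan is to exhibit the standard Eilenberg--Moore adjunction and then read off its induced monad using lemma \ref{lem:adjmod}. First I would define the two functors explicitly. The forgetful functor $U$ sends an algebra $(X,a)$ to its carrier $X$ and acts as the identity on the underlying morphisms of algebra homomorphisms; functoriality is immediate since composition in $\eilmo{T}$ is composition in $\mathcal{C}$. The free functor $J$ sends an object $X$ to the free algebra $(T(X), \mu_X)$ and a morphism $f : X \to Y$ to $T(f)$. Two things must be checked here: that $(T(X),\mu_X)$ is genuinely an algebra, whose unit axiom \eqref{eq:emunit} is exactly the left unit law of the monad in \eqref{eq:monad-unit} and whose multiplication axiom \eqref{eq:emmult} is the associativity axiom \eqref{eq:monad-associativity}; and that $T(f)$ is a homomorphism $(T(X),\mu_X) \to (T(Y),\mu_Y)$, which is precisely the naturality square for $\mu$. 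Both are single graphical rewrites.

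Next I would supply the unit and counit. Since $U \circ J = T$ as a functor (because $U(T(X),\mu_X) = T(X)$ and $U(T(f)) = T(f)$), I take the adjunction unit to be the monad unit $\eta : 1_{\mathcal{C}} \Rightarrow UJ$ unchanged. For the counit $\epsilon : JU \Rightarrow 1_{\eilmo{T}}$, at an algebra $(X,a)$ I take the structure map $a$ itself, viewed as a morphism $(T(X),\mu_X) \to (X,a)$. The only step carrying real content is that $a$ really is a homomorphism out of the free algebra: this is exactly the multiplication axiom \eqref{eq:emmult}, drawn as sliding the lower $a$-node up past $\mu$. Naturality of $\epsilon$ with respect to a homomorphism $h : (X,a) \to (Y,b)$ unwinds to the defining condition $h \circ a = b \circ T(h)$, i.e. the statement that $h$ is a homomorphism, so naturality is automatic.

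Then I would verify the two snake equations \eqref{eq:snake-eta-epsilon} and \eqref{eq:snake-epsilon-eta}. Evaluating the first at an object $X$, the relevant counit component is $\epsilon_{J(X)} = \epsilon_{(T(X),\mu_X)} = \mu_X$, so the snake reduces to $\mu_X \circ T(\eta_X) = 1_{T(X)}$, a unit law in \eqref{eq:monad-unit}. Evaluating the second at an algebra $(X,a)$, applying $U$ to the counit gives $a$, and the snake reduces to $a \circ \eta_X = 1_X$, the algebra unit axiom \eqref{eq:emunit}. Graphically both are the familiar ``pull the wire straight'' pictures, with the bend populated by $\eta$ on one side and either $\mu$ or $a$ on the other.

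Finally, to identify the induced monad I would apply lemma \ref{lem:adjmod} to $J \dashv U$. The induced endofunctor is $U \circ J = T$ and the induced unit is $\eta$ by construction, so it remains only to compute the induced multiplication $U \epsilon J$. At $X$ this is $U(\epsilon_{(T(X),\mu_X)}) = U(\mu_X) = \mu_X$, whence the induced multiplication is $\mu$ and the induced monad is exactly $(\mathcal{C}, T, \eta, \mu)$. The main obstacle is not any single computation --- each is a one-step diagram manipulation --- but rather keeping straight which adjunction datum is matched against which monad or algebra axiom; once the counit is correctly identified as ``the structure map $a$'', everything else falls out of the axioms already recorded graphically.
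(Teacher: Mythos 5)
Your proof is correct, but it takes a genuinely different route from the paper's. The paper establishes $J \dashv U$ by exhibiting an explicit bijection of hom-sets $\eilmo{T}(J(X),(Y,a)) \cong \mathcal{C}(X,U(Y,a))$: downward by precomposing with $\eta_X$, upward by $f \mapsto a \circ T(f)$, then checking the two composites are identities (using \eqref{eq:emunit}, \eqref{eq:emmult}, the homomorphism property of $h$, and \eqref{eq:monad-unit}), with naturality of the bijection left as an exercise. You instead verify the paper's own definition of adjunction directly: you construct the unit (the monad unit $\eta$, using $UJ = T$ on the nose) and the counit $\epsilon_{(X,a)} = a$, observe that $a$ is a homomorphism out of the free algebra exactly by \eqref{eq:emmult} and that naturality of $\epsilon$ is exactly the homomorphism condition, and then check the snake equations \eqref{eq:snake-eta-epsilon} and \eqref{eq:snake-epsilon-eta}, which reduce to the monad unit law $\mu_X \circ T(\eta_X) = 1_{T(X)}$ from \eqref{eq:monad-unit} and the algebra unit axiom \eqref{eq:emunit} respectively. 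Your route is arguably the more self-contained one relative to this paper: its official definition of adjunction is the unit/counit/snake formulation, and the paper's bijection argument implicitly appeals to the equivalence between natural hom-set bijections and unit/counit data, a converse of the wire-bending machinery that is never explicitly stated. Your approach also makes the final claim immediate --- the induced endofunctor is $UJ = T$, the induced unit is $\eta$ by construction, and the induced multiplication $U\epsilon J$ has components $U(\epsilon_{(T(X),\mu_X)}) = \mu_X$ --- whereas the paper simply asserts this ``follows from the construction in lemma \ref{lem:adjmod}''. What the paper's approach buys in exchange is concrete factorization maps and consistency with its representability theme developed in later sections.
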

\begin{proof}
We define:
\begin{align*}
U : \eilmo{T} &\rightarrow \mathcal{C}\\
(X, a : T(X) \rightarrow X) &\mapsto X\\
h : X \rightarrow Y &\mapsto h
\end{align*}
and:
\begin{align*}
J : \mathcal{C} &\rightarrow \eilmo{T}\\
X &\mapsto \mu_X\\
h : X \rightarrow Y &\mapsto T(h)
\end{align*}
That these are valid functors is easy to check. Now we require a bijection:
\begin{center}
\begin{prooftree}
\AxiomC{$J(X) \rightarrow (Y, a : T(Y) \rightarrow Y)$}
\doubleLine
\UnaryInfC{$X \rightarrow U(Y, a : T(Y) \rightarrow Y)$}
\end{prooftree}
\end{center}
Expanding definitions this becomes:
\begin{center}
\begin{prooftree}
\newcommand{\emleft}{
\begin{gathered}
\begin{tikzpicture}[scale=0.5]
\path coordinate[dot, label=below:$\mu$] (mu) +(-1,-1) coordinate[label=below:$T$] (bm) +(1,-1) coordinate[label=below:$T$] (br)
 +(0,1) coordinate[label=above:$T$] (tr)
 (bm) ++(-1,0) coordinate[label=below:$X$] (bl) ++(0,2) coordinate[label=above:$X$] (tl);
\draw (bl) -- (tl)
 (mu) -- (tr)
 (bm) to[out=90, in=180] (mu.west) -- (mu.east) to[out=0, in=90] (br);
\begin{pgfonlayer}{background}
\fill[catterm] ($(tl) + (-1,0)$) rectangle (bl);
\fill[catc] (tl) rectangle ($(br) + (1,0)$);
\end{pgfonlayer}
\end{tikzpicture}
\end{gathered}
}
\newcommand{\emright}{
\begin{gathered}
\gmorph{a}{T}{Y}{Y}{catterm}{catc}{catc}{1}{1}
\end{gathered}
}
\AxiomC{$\emleft \rightarrow \emright$}
\doubleLine
\UnaryInfC{$X \rightarrow Y$}
\end{prooftree}
\end{center}
We define a map in the downward direction as follows:
\begin{equation*}
\begin{gathered}
\gmorph{h}{T}{X}{Y}{catterm}{catc}{catc}{1.5}{1}
\end{gathered}
\mapsto
\begin{gathered}
\begin{tikzpicture}[scale=0.5]
\path coordinate[dot, label=left:$h$] (a)
 +(0,1) coordinate[label=above:$Y$] (t)
 +(0,-2) coordinate[label=below:$X$] (b)
 +(1,-1) coordinate[dot, label=below:$\eta$] (eta);
\draw (b) -- (a) -- (t)
 (eta) to[out=90, in=0] (a);
\begin{pgfonlayer}{background}
\fill[catterm] ($(t) + (-1,0)$) rectangle (b);
\fill[catc] (t) rectangle ($(eta) + (1,-1)$);
\end{pgfonlayer}
\end{tikzpicture}
\end{gathered}
\end{equation*}
In the upward direction we define the map:
\begin{equation*}
\begin{gathered}
\twocelldiag{f}{X}{Y}{catterm}{catc}{1}{1}{1.5}
\end{gathered}
\mapsto
\begin{gathered}
\begin{tikzpicture}[scale=0.5]
\path coordinate[dot, label=left:$f$] (f) 
 +(0,-1) coordinate[label=below:$X$] (bl)
 ++(0,1) coordinate[dot, label=left:$a$] (a)
 ++(0,1) coordinate[label=above:$Y$] (t)
 (bl) ++(1,0) coordinate[label=below:$T$] (br);
\draw (bl) -- (f) -- (a) -- (t)
 (br) to[out=90, in=0] (a);
\begin{pgfonlayer}{background}
\fill[catterm] ($(t) + (-1,0)$) rectangle (bl);
\fill[catc] (t) rectangle ($(br) + (1,0)$);
\end{pgfonlayer}
\end{tikzpicture}
\end{gathered}
\end{equation*}
That this results in a valid algebra homomorphism follows immediately as $a$ is an Eilenberg-Moore algebra and so:
\begin{equation*}
\begin{gathered}
\begin{tikzpicture}[scale=0.5]
\path coordinate[dot, label=left:$f$] (f) +(0,-1) coordinate[label=below:$X$] (bl) +(2,0) coordinate[dot, label=below:$\mu$] (mu)
 ++(0,2) coordinate[dot, label=left:$a$] (a) ++(0,1) coordinate[label=above:$Y$] (t)
 (mu) +(-1,-1) coordinate[label=below:$T$] (bm) +(1,-1) coordinate[label=below:$T$] (br);
\draw (bl) -- (f) -- (a) -- (t)
 (mu) to[out=90, in=0] (a)
 (bm) to[out=90, in=180] (mu.west) -- (mu.east) to[out=0, in=90] (br);
\begin{pgfonlayer}{background}
\fill[catterm] ($(t) + (-1,0)$) rectangle (bl);
\fill[catc] (t) rectangle ($(br) + (1,0)$);
\end{pgfonlayer}
\end{tikzpicture}
\end{gathered}
\stackrel{\eqref{eq:emmult}}{=}
\begin{gathered}
\begin{tikzpicture}[scale=0.5]
\path coordinate[dot, label=left:$f$] (f) +(0,-1) coordinate[label=below:$X$] (bl)
 ++(0,1) coordinate[dot, label=left:$a$] (a1) ++(0,1) coordinate[dot, label=left:$a$] (a2) ++(0,1) coordinate[label=above:$Y$] (t)
 (bl) ++(1,0) coordinate[label=below:$T$] (bm) ++(1,0) coordinate[label=below:$T$] (br);
\draw (bl) -- (f) -- (a1) -- (a2) -- (t)
 (bm) to[out=90, in=0] (a1)
 (br) to[out=90, in=0] (a2);
\begin{pgfonlayer}{background}
\fill[catterm] ($(t) + (-1,0)$) rectangle (bl);
\fill[catc] (t) rectangle ($(br) + (1,0)$);
\end{pgfonlayer}
\end{tikzpicture}
\end{gathered}
\end{equation*}
We now check these maps constitute a bijection. Firstly, as $a$ is an Eilenberg-Moore algebra homomorphism we immediately have:
\begin{equation*}
\begin{gathered}
\begin{tikzpicture}[scale=0.5]
\path coordinate[dot, label=left:$f$] (f) 
 +(0,-1) coordinate[label=below:$X$] (b)
 +(1,0) coordinate[dot, label=below:$\eta$] (eta)
 ++(0,1) coordinate[dot, label=left:$a$] (a)
 ++(0,1) coordinate[label=above:$Y$] (t);
\draw (b) -- (f) -- (a) -- (t)
 (eta) to[out=90, in=0] (a);
\begin{pgfonlayer}{background}
\fill[catterm] ($(t) + (-1,0)$) rectangle (b);
\fill[catc] (t) rectangle ($(eta) + (1,-1)$);
\end{pgfonlayer}
\end{tikzpicture}
\end{gathered}
\stackrel{\eqref{eq:emunit}}{=}
\begin{gathered}
\twocelldiag{f}{X}{Y}{catterm}{catc}{1}{1}{1.5}
\end{gathered}
\end{equation*}
In the opposite direction:
\begin{equation*}
\begin{gathered}
\begin{tikzpicture}[scale=0.5]
\path coordinate[dot, label=left:$h$] (h) +(1,-1) coordinate[dot, label=below:$\eta$] (eta)
 +(0,-2) coordinate[label=below:$X$] (bl) ++(0,1) coordinate[dot, label=left:$a$] (a) +(0,1) coordinate[label=above:$Y$] (t)
 (eta) +(1,-1) coordinate[label=below:$T$] (br);
\draw (bl) -- (h) -- (a) -- (t)
 (eta) to[out=90, in=0] (h)
 (br) to[out=90, in=0] (a);
\begin{pgfonlayer}{background}
\fill[catterm] ($(t) + (-1,0)$) rectangle (bl);
\fill[catc] (t) rectangle ($(br) + (1,0)$);
\end{pgfonlayer}
\end{tikzpicture}
\end{gathered}
\enskip=\enskip
\begin{gathered}
\begin{tikzpicture}[scale=0.5]
\path coordinate[dot, label=left:$h$] (h)
 +(0,1) coordinate[label=above:$Y$] (t)
 +(0,-3) coordinate[label=below:$X$] (bl)
 ++(2,-1) coordinate[dot, label=below:$\mu$] (mu)
 +(-1,-1) coordinate[dot, label=below:$\eta$] (eta)
 ++(1,-2) coordinate[label=below:$T$] (br);
\draw (bl) -- (h) -- (t)
 (eta) to[out=90, in=180] (mu)
 (br) to[out=90, in=0] (mu)
 (mu) to[out=90, in=0] (h);
\begin{pgfonlayer}{background}
\fill[catterm] ($(t) + (-1,0)$) rectangle (bl);
\fill[catc] (t) rectangle ($(br) + (1,0)$);
\end{pgfonlayer}
\end{tikzpicture}
\end{gathered}
\enskip\stackrel{\eqref{eq:monad-unit}}{=}\enskip
\begin{gathered}
\gmorph{h}{T}{X}{Y}{catterm}{catc}{catc}{2.0}{1}
\end{gathered}
\end{equation*}
The first equality above follows by the assumption $h$ a homomorphism from $\mu_X$ to $a$.

Checking the naturality of these mappings is left as an exercise. That $T$ is the monad induced by
this adjunction follows from the construction in lemma \ref{lem:adjmod}.
\end{proof}
\begin{proposition}
\label{prop:emlift}
Every monad morphism:
\begin{equation*}
\sigma : (\mathcal{C}, T, \eta, \mu) \rightarrow (\mathcal{C}, T', \eta', \mu')
\end{equation*}
induces a functor:
\begin{equation*}
\eilmo{\sigma} : \eilmo{T'} \rightarrow \eilmo{T}
\end{equation*}
\end{proposition}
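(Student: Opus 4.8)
The plan is to define $\eilmo{\sigma}$ on objects by precomposing the algebra action with the appropriate component of $\sigma$, and on morphisms by acting as the identity on the underlying $\mathcal{C}$-morphism. Concretely, a $T'$-algebra $(X, a : T'(X) \to X)$ is sent to the pair $(X,\, a \circ \sigma_X : T(X) \to X)$; graphically this is just the diagram for $a$ with the dot for $\sigma$ inserted on the incoming wire immediately below $a$. Since $\eilmo{\sigma}$ leaves underlying objects and morphisms untouched, and composition in both Eilenberg--Moore categories is inherited from $\mathcal{C}$, preservation of identities and composites will be immediate; the content lies entirely in checking that the image of an object is again a valid algebra and that homomorphisms are sent to homomorphisms.

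First I would confirm that $(X,\, a \circ \sigma_X)$ is a genuine $T$-algebra. The unit axiom \eqref{eq:emunit} for the new action amounts to stacking the monad unit $\eta$, then $\sigma$, then $a$: in symbols $a \circ \sigma_X \circ \eta_X$. The monad-morphism unit law \eqref{eq:mmunit} collapses $\sigma \circ \eta$ to $\eta'$, after which the unit axiom \eqref{eq:emunit} for the original $T'$-algebra $a$ finishes the job, giving $a \circ \sigma_X \circ \eta_X = a \circ \eta'_X = \mathrm{id}_X$. Graphically this is two successive absorptions along a single wire.

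The multiplication axiom is the step that needs genuine care, and it is where naturality of $\sigma$ does the essential work. Writing $b = a \circ \sigma_X$ for the new action, one starts from $b \circ \mu_X = a \circ \sigma_X \circ \mu_X$; the monad-morphism multiplication law \eqref{eq:mmmult} rewrites $\sigma \circ \mu$ as $\mu' \circ (\sigma * \sigma)$, and the $T'$-algebra multiplication axiom \eqref{eq:emmult} then replaces $a \circ \mu'$ by the twice-iterated action $a \circ T'(a)$, leaving $a \circ T'(a) \circ (\sigma * \sigma)_X$. To match this against $b \circ T(b) = a \circ \sigma_X \circ T(a) \circ T(\sigma_X)$, I would slide the copy of $\sigma$ past $a$ using naturality of $\sigma$, namely $\sigma_X \circ T(a) = T'(a) \circ \sigma_{T'(X)}$, which identifies the remaining composite $\sigma_{T'(X)} \circ T(\sigma_X)$ with one of the two equal presentations of the horizontal composite $(\sigma * \sigma)_X$. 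Graphically this is exactly the topological move of letting $\sigma$ and $a$ slide past one another along a wire, handled silently by the notation; the main obstacle is purely the bookkeeping of reconciling the two presentations of $\sigma * \sigma$, which the diagrams make transparent.

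Finally I would check that $\eilmo{\sigma}$ sends morphisms to morphisms. Given a $T'$-algebra homomorphism $h : (X,a) \to (Y,a')$, so that $h \circ a = a' \circ T'(h)$, the required $T$-homomorphism condition $h \circ (a \circ \sigma_X) = (a' \circ \sigma_Y) \circ T(h)$ follows by first invoking that $h$ is a homomorphism and then again sliding $\sigma$ past $h$ by naturality, $T'(h) \circ \sigma_X = \sigma_Y \circ T(h)$. As before this is a single sliding move in the graphical calculus, so once the algebra structure is confirmed the homomorphism condition is essentially automatic, and functoriality of $\eilmo{\sigma}$ is then immediate from the fact that it leaves the underlying morphisms and their composition unchanged.
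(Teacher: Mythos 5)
Your proposal is correct and follows essentially the same route as the paper: the same definition $(X,a)\mapsto(X,\,a\circ\sigma_X)$ with the identity on morphisms, the unit axiom via \eqref{eq:mmunit} then \eqref{eq:emunit}, the multiplication axiom via \eqref{eq:mmmult} then \eqref{eq:emmult}, and the homomorphism condition via naturality of $\sigma$ (which the paper delegates to a special case of example \ref{ex:alghom}). The only cosmetic difference is that you invoke naturality of $\sigma$ explicitly to reconcile the two presentations of the horizontal composite $\sigma*\sigma$, a step the paper's string-diagram notation absorbs silently since both presentations are the same diagram.
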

\begin{proof}
On objects, a suitable functor acts as follows:
\begin{equation*}
\begin{gathered}
\gmorph{a}{T'}{X}{X}{catterm}{catc}{catc}{1.5}{1}
\end{gathered}
\mapsto
\begin{gathered}
\begin{tikzpicture}[scale=0.5]
\path coordinate[dot, label=left:$a$] (a) +(0,1) coordinate[label=above:$X$] (t)
 +(0,-2) coordinate[label=below:$X$] (bl)
 ++(1,-1) coordinate[dot, label=right:$\sigma$] (sigma)
 ++(0,-1) coordinate[label=below:$T$] (br);
\draw (bl) -- (a) -- (t)
 (br) -- (sigma) to[out=90, in=0] (a);
\begin{pgfonlayer}{background}
\fill[catterm] ($(t) + (-1,0)$) rectangle (bl);
\fill[catc] (t) rectangle ($(br) + (1,0)$);
\end{pgfonlayer}
\end{tikzpicture}
\end{gathered}
\end{equation*}
On morphisms, the functor acts as the identity.
We must show the resulting algebra satisfies the unit and multiplication axioms.
The unit axiom follows from the equalities:
\begin{equation*}
\begin{gathered}
\begin{tikzpicture}[scale=0.5]
\path coordinate[dot, label=left:$a$] (a)
 +(0,1) coordinate[label=above:$X$] (t)
 +(0,-3) coordinate[label=below:$X$] (b)
 ++(1,-1) coordinate[dot, label=right:$\sigma$] (sigma)
 ++(0,-1) coordinate[dot, label=below:$\eta$] (eta);
\draw (b) -- (a) -- (t)
 (eta) -- (sigma) to[out=90, in=0] (a);
\begin{pgfonlayer}{background}
\fill[catterm] ($(t) + (-1,0)$) rectangle (b);
\fill[catc] (t) rectangle ($(eta) + (1,-1)$);
\end{pgfonlayer}
\end{tikzpicture}
\end{gathered}
\stackrel{\eqref{eq:mmunit}}{=}
\begin{gathered}
\begin{tikzpicture}[scale=0.5]
\path coordinate[dot, label=left:$a$] (a)
 +(0,1) coordinate[label=above:$X$] (t)
 +(0,-3) coordinate[label=below:$X$] (b)
 ++(1,-1) coordinate[dot, label=right:$\eta'$] (eta);
\draw (b) -- (a) -- (t)
 (eta) to[out=90, in=0] (a);
\begin{pgfonlayer}{background}
\fill[catterm] ($(t) + (-1,0)$) rectangle (b);
\fill[catc] (t) rectangle ($(eta) + (1,-2)$);
\end{pgfonlayer}
\end{tikzpicture}
\end{gathered}
\stackrel{\eqref{eq:emunit}}{=}
\begin{gathered}
\onecelldiag{X}{catterm}{catc}{4}{1}
\end{gathered}
\end{equation*}
That the multiplication axiom is satisfied is shown as follows:
\begin{equation*}
\begin{gathered}
\begin{tikzpicture}[scale=0.5]
\path coordinate[dot, label=left:$a$] (a)
 +(0,1) coordinate[label=above:$X$] (t)
 +(0,-3) coordinate[label=below:$X$] (bl)
 ++(2,-1) coordinate[dot, label=right:$\sigma$] (sigma)
 ++(0, -1) coordinate[dot, label=below:$\mu$] (mu)
 +(-1,-1) coordinate[label=below:$T$] (bm)
 +(1,-1) coordinate[label=below:$T$] (br);
\draw (bl) -- (a) -- (t)
 (mu) -- (sigma) to[out=90, in=0] (a)
 (bm) to[out=90, in=180] (mu.west) -- (mu.east) to[out=0, in=90] (br);
\begin{pgfonlayer}{background}
\fill[catterm] ($(t) + (-1,0)$) rectangle (bl);
\fill[catc] (t) rectangle ($(br) + (1,0)$);
\end{pgfonlayer}
\end{tikzpicture}
\end{gathered}
\stackrel{\eqref{eq:mmmult}}{=}
\begin{gathered}
\begin{tikzpicture}[scale=0.5]
\path coordinate[dot, label=left:$a$] (a)
 +(0,1) coordinate[label=above:$X$] (t)
 +(0,-3) coordinate[label=below:$X$] (bl)
 ++(2,-1) coordinate[dot, label=below:$\mu'$] 
 (mu) ++(-1,-1) coordinate[dot, label=left:$\sigma$] (sigma1) ++(0,-1) coordinate[label=below:$T$] (bm)
 (mu) ++(1,-1) coordinate[dot, label=right:$\sigma$] (sigma2) ++(0,-1) coordinate[label=below:$T$] (br);
\draw (bl) -- (a) -- (t)
 (mu) to[out=90, in=0] (a)
 (bm) -- (sigma1) to[out=90, in=180] (mu.west) -- (mu.east) to[out=0, in=90] (sigma2) -- (br);
\begin{pgfonlayer}{background}
\fill[catterm] ($(t) + (-1,0)$) rectangle (bl);
\fill[catc] (t) rectangle ($(br) + (1,0)$);
\end{pgfonlayer}
\end{tikzpicture}
\end{gathered}
\stackrel{\eqref{eq:emmult}}{=}
\begin{gathered}
\begin{tikzpicture}[scale=0.5]
\path coordinate[dot, label=left:$a$] (a1)
 +(2,-2) coordinate[dot, label=right:$\sigma$] (sigma2)
 +(0,1) coordinate[label=above:$X$] (t)
 ++(0,-1) coordinate[dot, label=left:$a$] (a2)
 +(1,-1) coordinate[dot, label=left:$\sigma$] (sigma1)
 ++(0,-2) coordinate[label=below:$X$] (bl)
 (sigma1) ++(0,-1) coordinate[label=below:$T$] (bm)
 (sigma2) ++(0,-1) coordinate[label=below:$T$] (br);
\draw (bl) -- (a2) -- (a1) -- (t)
 (bm) -- (sigma1) to[out=90, in=0] (a2)
 (br) -- (sigma2) to[out=90, in=0] (a1);
\begin{pgfonlayer}{background}
\fill[catterm] ($(t) + (-1,0)$) rectangle (bl);
\fill[catc] (t) rectangle ($(br) + (1,0)$);
\end{pgfonlayer}
\end{tikzpicture}
\end{gathered}
\end{equation*}
That this is functorial is easy to show by applying a special case of example \ref{ex:alghom}.
\end{proof}

\begin{definition}[Kleisli Category]
For a monad $(\mathcal{C}, T, \eta, \mu)$ the \define{Kleisli category} \klei{T} is defined as follows:
\begin{itemize}
 \item {\bf Objects}: Objects of $\mathcal{C}$
 \item {\bf Morphisms}: A morphism of type $f: X \rightarrow Y$ in \klei{T} is a $\mathcal{C}$ morphism $f : X \rightarrow T(Y)$.
 Given another morphism $g:Y \rightarrow Z$ in \klei{T}, their (Kleisli) composite, written $g \kleicomp f$ is given
 by the following composite in $\mathcal{C}$:
\begin{center}
\klcomp
\end{center}
The identity morphism on $X$ in \klei{T} is given by $\eta_X$, that this is a valid identity morphism
follows immediately from the monad axioms.
\end{itemize}
\end{definition}
We now see that the Kleisli category can be used to give an alternative construction of an adjunction
inducing a particular monad, as was done with the Eilenberg-Moore category in proposition \ref{prop:emadj}
\begin{proposition}[\cite{Kleisli1965}]
\label{prop:kladj}
For a monad $(\mathcal{C}, T, \eta, \mu)$ there are functors:
\begin{equation*}
V : \klei{T} \rightarrow \mathcal{C}
\end{equation*}
and:
\begin{equation*}
H : \mathcal{C} \rightarrow \klei{T}
\end{equation*}
with $H \dashv V$ and the monad induced as in lemma \ref{lem:adjmod} is equal to $T$.
\end{proposition}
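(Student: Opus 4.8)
The plan is to mirror the Eilenberg--Moore construction of Proposition \ref{prop:emadj}, of which this is the formal dual. First I would fix the two functors. Define $V : \klei{T} \to \mathcal{C}$ on objects by $V(X) = T(X)$, and on a Kleisli morphism $f : X \to Y$ (that is, a $\mathcal{C}$-morphism $f : X \to T(Y)$) by $V(f) = \mu_Y \circ T(f)$. Define $H : \mathcal{C} \to \klei{T}$ as the identity on objects and by $H(h) = \eta_Y \circ h$ on a morphism $h : X \to Y$. Functoriality of $H$ is immediate from naturality of $\eta$ together with a unit axiom \eqref{eq:monad-unit} (recall the Kleisli identity on $X$ is $\eta_X$); functoriality of $V$ uses naturality of $\mu$ and the associativity axiom \eqref{eq:monad-associativity} on composites, and a unit axiom \eqref{eq:monad-unit} on identities.

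The central observation, which makes the adjunction almost trivial, is that the two relevant hom-sets coincide on the nose: by definition of the Kleisli category $\klei{T}(H X, Y) = \klei{T}(X, Y) = \mathcal{C}(X, T(Y)) = \mathcal{C}(X, V Y)$. I therefore take the adjunction bijection to be the identity map on $\mathcal{C}(X, T(Y))$. Under this identification the unit $\tilde\eta_X : X \to VH(X) = T(X)$ is the image of the Kleisli identity $\mathrm{id}_{HX} = \eta_X$, so $\tilde\eta = \eta$; dually the counit $\tilde\epsilon_Y : HV(Y) \to Y$ is the Kleisli morphism $T(Y) \to Y$ corresponding to $1_{T(Y)}$.

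The real work --- and the step I expect to be the main obstacle --- is checking that this identity map is natural in both variables, since the two hom-sets, though equal as sets, are acted on very differently: one side by Kleisli composition with $V(k)$ and $H(h)$, the other by ordinary composition in $\mathcal{C}$. Concretely, for $h : X' \to X$ in $\mathcal{C}$ and $k : Y \to Y'$ in $\klei{T}$ one must verify $k \kleicomp f \kleicomp H(h) = V(k) \circ f \circ h$ for every $f : X \to T(Y)$. Graphically this is a pair of slides: $f \kleicomp H(h)$ collapses to $f \circ h$ by pushing $\eta$ through $f$ (naturality of $\eta$) and applying a unit axiom, while $k \kleicomp (f \circ h)$ unfolds to $\mu_{Y'} \circ T(k) \circ f \circ h = V(k) \circ f \circ h$ straight from the definitions. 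All of these reduce to the monad unit and associativity equations, so the argument is routine in the graphical calculus, if a little tedious; as in Proposition \ref{prop:emadj} the bulk of it can reasonably be left as an exercise.

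Finally I would read off that the induced monad of Lemma \ref{lem:adjmod} is $T$. The endofunctor is $V \circ H$, with $VH(X) = T(X)$ on objects and $VH(h) = \mu_Y \circ T(\eta_Y \circ h) = T(h)$ on morphisms by a unit axiom, so $VH = T$. The unit is $\tilde\eta = \eta$ as above, and the multiplication is the whiskered counit $V \tilde\epsilon H$, whose component at $X$ is $V(\tilde\epsilon_{HX}) = V(1_{T(X)}) = \mu_X \circ T(1_{T(X)}) = \mu_X$. Hence the induced monad coincides with $(\mathcal{C}, T, \eta, \mu)$, completing the proof. One could alternatively obtain the adjunction abstractly via Lemma \ref{lem:adjup}, but the direct hom-set identification above is cleaner and makes the monad computation transparent.
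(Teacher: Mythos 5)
Your proposal is correct and takes essentially the same route as the paper: the identical definitions of $V$ and $H$, the same on-the-nose identification $\klei{T}(H(X), Y) = \mathcal{C}(X, T(Y)) = \mathcal{C}(X, V(Y))$ with the identity map as the adjunction bijection, and functoriality and naturality reduced to the monad unit and associativity axioms. The only difference is one of explicitness — you write out the naturality check and the computation recovering $(T, \eta, \mu)$, which the paper dismisses as ``easy to check'' and ``follows from the construction in lemma \ref{lem:adjmod}''.
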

\begin{proof}
We define:
\begin{align*}
V : \klei{T} &\rightarrow \mathcal{C}\\
X \mapsto T(X)\\
f : X \rightarrow T(X) &\mapsto \mu_Y \circ T(f)
\end{align*}
That this preserves identities is easy to check. For functoriality of composition, we have the following equalities:
\begin{eqproof*}
V(
\begin{gathered}
\fmorph{g}{T}{Y}{Z}{catterm}{catc}{catc}{1.5}{1}
\end{gathered}
\kleicomp
\begin{gathered}
\fmorph{f}{T}{X}{Y}{catterm}{catc}{catc}{1.5}{1}
\end{gathered}
)
\explain{definition of Kleisli Composition}
V(
\begin{gathered}
\klcomp
\end{gathered}
)
\explain{definition of $V$}
\begin{tikzpicture}[scale=0.5]
\path coordinate[dot, label=left:$f$] (f) +(0,-1) coordinate[label=below:$X$] (bl)
 ++(0,1) coordinate[dot, label=left:$g$] (g) +(0,4) coordinate[label=above:$Z$] (tl)
 ++(2,2) coordinate[dot, label=below:$\mu$] (mu1) ++(1,1) coordinate[dot, label=below:$\mu$] (mu2) ++(0,1) coordinate[label=above:$T$] (tr)
 (mu) ++(1,-1) coordinate (a)
 (bl) ++(4,0) coordinate[label=below:$T$] (br);
\draw (bl) -- (f) -- (g) -- (tl)
 (mu1) to[out=90, in=180] (mu2.west) -- (mu2.north) -- (tr)
 (mu2) to[out=0, in=90] (br)
 (f) to[out=0, in=-90] (a) to[out=90, in=0] (mu1)
 (g) to[out=0, in=180] (mu1);
\begin{pgfonlayer}{background}
\fill[catterm] ($(bl) + (-1,0)$) rectangle (tl);
\fill[catc] (bl) rectangle ($(tr) + (2,0)$);
\end{pgfonlayer}
\end{tikzpicture}
\explain{monad associativity axiom \eqref{eq:monad-associativity}}
\begin{tikzpicture}[scale=0.5]
\path coordinate[dot, label=left:$f$] (f) +(0,-1) coordinate[label=below:$X$] (bl)
 ++(0,1) coordinate[dot, label=left:$g$] (g) +(0,4) coordinate[label=above:$Z$] (tl)
 ++(2,3) coordinate[dot, label=below:$\mu$] (mu1) ++(1,-1) coordinate[dot, label=below:$\mu$] (mu2) 
 (mu1) ++(0,1) coordinate[label=above:$T$] (tr)
 (mu) ++(1,-1) coordinate (a)
 (bl) ++(4,0) coordinate[label=below:$T$] (br);
\draw (bl) -- (f) -- (g) -- (tl)
 (mu1) to[out=0, in=90] (mu2)
 (mu1) -- (tr)
 (mu2) to[out=0, in=90] (br)
 (f) to[out=0, in=180] (mu2)
 (g) to[out=0, in=180] (mu1);
\begin{pgfonlayer}{background}
\fill[catterm] ($(bl) + (-1,0)$) rectangle (tl);
\fill[catc] (tl) rectangle ($(br) + (1,0)$);
\end{pgfonlayer}
\end{tikzpicture}
\explain{definition of $V$}
V(
\begin{gathered}
\fmorph{g}{T}{X}{Y}{catterm}{catc}{catc}{1.5}{1}
\end{gathered}
)
\circ
V(
\begin{gathered}
\fmorph{f}{T}{X}{Y}{catterm}{catc}{catc}{1.5}{1}
\end{gathered}
)
\end{eqproof*}
We also define:
\begin{align*}
H : \mathcal{C} &\rightarrow \klei{T}\\
X &\mapsto X\\
f : X \rightarrow Y &\mapsto \eta_Y \circ f
\end{align*}
Again that this preserves identities is easy to check. For functoriality of composition
we have:
\begin{eqproof*}
H(
\begin{gathered}
\twocelldiag{g}{Y}{Z}{catterm}{catc}{1}{1}{1}
\end{gathered}
)
\kleicomp
H(
\begin{gathered}
  \twocelldiag{f}{X}{Y}{catterm}{catc}{1}{1}{1}
\end{gathered}
)
\explain{definition of $H$}
\begin{gathered}
\hmorph{g}{Y}{Z}
\end{gathered}
\kleicomp
\begin{gathered}
\hmorph{f}{X}{Y}
\end{gathered}
\explain{definition of Kleisli composition}
\begin{tikzpicture}[scale=0.5]
\path coordinate[dot, label=left:$f$] +(0,-1) coordinate[label=below:$X$] (b)
 ++(0,1) coordinate[dot, label=left:$g$] (g)
 ++(0,1) coordinate[label=above:$Z$] (tl)
 ++(2,0) coordinate[label=above:$T$] (tr)
 ++(0,-1) coordinate[dot, label=below:$\mu$] (mu)
 +(-1,-1) coordinate[dot, label=below:$\eta$] (eta1)
 +(1,-1) coordinate[dot, label=below:$\eta$] (eta2);
\draw (mu) -- (tr)
 (b) -- (f) -- (g) -- (tl)
 (eta1) to[out=90, in=180] (mu.west) -- (mu.east) to[out=0, in=90] (eta2);
\begin{pgfonlayer}{background}
\fill[catterm] ($(b) + (-1,0)$) rectangle (tl);
\fill[catc] (b) rectangle ($(tr) + (2,0)$);
\end{pgfonlayer}
\end{tikzpicture}
\explain{monad unit axiom \eqref{eq:monad-unit}}
\begin{tikzpicture}[scale=0.5]
\path coordinate[dot, label=left:$f$] (f) +(0,-1) coordinate[label=below:$X$] (b)
 ++(0,1) coordinate[dot, label=left:$g$] (g) ++(0,1) coordinate[label=above:$Z$] (tl)
 ++(1,0) coordinate[label=above:$T$] (tr)
 ++(0,-1) coordinate[dot, label=below:$\eta$] (eta);
\draw (eta) -- (tr)
 (b) -- (f) -- (g) -- (tl);
\begin{pgfonlayer}{background}
\fill[catterm] ($(b) + (-1,0)$) rectangle (tl);
\fill[catc] (b) rectangle ($(tr) + (1,0)$);
\end{pgfonlayer}
\end{tikzpicture}
\explain{definition of $H$}
H(
\begin{gathered}
\twocellpairdiag{f}{g}{X}{Z}{catterm}{catc}{1}{1}{1}
\end{gathered}
)
\end{eqproof*}
Now we require a bijection:
\begin{center}
\begin{prooftree}
\AxiomC{$H(X) \rightarrow Y$}
\doubleLine
\UnaryInfC{$X \rightarrow V(Y)$}
\end{prooftree}
\end{center}
Expanding definitions, and noting that a Kleisli morphism $X \rightarrow Y$ is a $\mathcal{C}$
morphism $X \rightarrow T(Y)$ by definition, the required bijection reduces to:
\begin{center}
\begin{prooftree}
\AxiomC{$X \rightarrow T(Y)$}
\doubleLine
\UnaryInfC{$X \rightarrow T(Y)$}
\end{prooftree}
\end{center}
Naturality of this bijection under the two forms of composition is easy to check. That we recover
the original monad from this adjunction follows from the construction in lemma \ref{lem:adjmod}.
\end{proof}
\begin{proposition}
\label{prop:kleislilift}
Every monad morphism:
\begin{equation*}
\sigma : (\mathcal{C}, T, \eta, \mu) \rightarrow (\mathcal{C}, T', \eta', \mu')
\end{equation*}
induces a functor:
\begin{equation*}
\klei{\sigma} : \klei{T} \rightarrow \klei{T'}
\end{equation*}
\end{proposition}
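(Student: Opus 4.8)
The plan is to define $\klei{\sigma}$ to be the identity on objects, and on a morphism $f : X \to Y$ of $\klei{T}$ --- that is, a $\mathcal{C}$-morphism $f : X \to T(Y)$ --- to post-compose with the component $\sigma_Y$, yielding the $\mathcal{C}$-morphism $\sigma_Y \circ f : X \to T'(Y)$, which is precisely a morphism $X \to Y$ of $\klei{T'}$. Graphically, $f$ is drawn with its value wire running from $X$ to $Y$ together with an outgoing effect wire labelled $T$, and the action of $\klei{\sigma}$ simply places a $\sigma$ node on that effect wire, converting it from $T$ to $T'$. This mirrors proposition \ref{prop:emlift}, except that the Kleisli lift is covariant in $\sigma$ whereas the Eilenberg--Moore lift is contravariant; intuitively, algebras pull back while free constructions push forward.

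First I would verify preservation of identities. The identity on $X$ in $\klei{T}$ is $\eta_X$, and applying $\klei{\sigma}$ produces $\sigma_X \circ \eta_X$; by the monad morphism unit axiom \eqref{eq:mmunit} this equals $\eta'_X$, the identity on $X$ in $\klei{T'}$. Graphically this is a single rewrite absorbing the $\sigma$ node into the unit.

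The main work, and the only real obstacle, is preservation of Kleisli composition: for $f : X \to T(Y)$ and $g : Y \to T(Z)$ I must show $\klei{\sigma}(g \kleicomp f) = \klei{\sigma}(g) \kleicomp \klei{\sigma}(f)$, where the right-hand composite is now taken in $\klei{T'}$. Expanding the left-hand side, the $\sigma$ node sits directly above the $\mu$ node that merges the two $T$ effect wires arising from $f$ and $g$. Applying the monad morphism multiplication axiom \eqref{eq:mmmult} replaces this $\mu$-then-$\sigma$ configuration by a $\mu'$ node fed by two $\sigma$ nodes, one on each incoming effect wire. The $\sigma$ on the effect wire of $f$ is already in the position demanded by $\klei{\sigma}(f)$; the remaining step is to slide the other $\sigma$ node past the $g$ node along the value wire, which is exactly naturality of $\sigma$. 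After this single slide the diagram is literally $\klei{\sigma}(g) \kleicomp \klei{\sigma}(f)$.

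Thus the proof reduces to one application of \eqref{eq:mmmult} followed by one naturality move, with identity preservation handled by \eqref{eq:mmunit}, and functoriality then follows. I expect the bookkeeping of which effect wire each $\sigma$ lands on to be the only delicate point, and this is precisely what the string diagram notation renders transparent, since the interchange law lets the two $\sigma$ nodes float freely to their required heights.
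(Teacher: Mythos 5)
Your proposal is correct and matches the paper's proof essentially step for step: identity on objects, post-composition with $\sigma$ on morphisms, identity preservation via \eqref{eq:mmunit}, and composition preservation via \eqref{eq:mmmult}. The only cosmetic difference is that you spell out the naturality slide of $\sigma$ past $g$ as an explicit step, whereas the paper's string-diagram notation absorbs that move silently into its single graphical equality.
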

\begin{proof}
On objects, the functor acts as the identity. The action on morphisms is defined in $\mathcal{C}$
as follows:
\begin{equation*}
\begin{gathered}
\fmorph{f}{T}{X}{Y}{catterm}{catc}{catc}{1.5}{1}
\end{gathered}
\mapsto
\begin{gathered}
\begin{tikzpicture}[scale=0.5]
\path coordinate[dot, label=left:$f$] (f) 
 +(0,2) coordinate[label=above:$X$] (tl)
 +(0,-1) coordinate[label=below:$X$] (b)
 ++(1,1) coordinate[dot, label=right:$\sigma$] (sigma) ++(0,1) coordinate[label=above:$T'$] (tr);
\draw (b) -- (f) -- (tl)
 (tr) -- (sigma) to[out=-90, in=0] (f);
\begin{pgfonlayer}{background}
\fill[catterm] ($(tl) + (-1,0)$) rectangle (b);
\fill[catc] (b) rectangle ($(tr) + (1,0)$);
\end{pgfonlayer}
\end{tikzpicture}
\end{gathered}
\end{equation*}
That this preserves identities is immediate from equation \eqref{eq:mmunit} as: 
\begin{equation*}
\begin{gathered}
\begin{tikzpicture}[scale=0.5]
\path coordinate[dot, label=right:$\eta$] (eta) ++(0,1) coordinate[dot, label=right:$\sigma$] (sigma) ++(0,1) coordinate[label=above:$T'$] (t)
 (t) ++(-1,0) coordinate[label=above:$X$] (tl)
 (eta) ++(-1,-1) coordinate[label=below:$X$] (bl);
\draw (eta) -- (sigma) -- (t)
 (bl) -- (tl);
\begin{pgfonlayer}{background}
\fill[catterm] ($(bl) + (-1,0)$) rectangle (tl);
\fill[catc] (bl) rectangle ($(t) + (1.5,0)$);
\end{pgfonlayer}
\end{tikzpicture}
\end{gathered}
=
\begin{gathered}
\begin{tikzpicture}[scale=0.5]
\path coordinate[dot, label=right:$\eta'$] (eta) ++(0,2) coordinate[label=above:$T'$] (t)
 (t) ++(-1,0) coordinate[label=above:$X$] (tl)
 (eta) ++(-1,-1) coordinate[label=below:$X$] (bl);
\draw (eta) -- (t)
 (bl) -- (tl);
\begin{pgfonlayer}{background}
\fill[catterm] ($(bl) + (-1,0)$) rectangle (tl);
\fill[catc] (bl) rectangle ($(t) + (1.5,0)$);
\end{pgfonlayer}
\end{tikzpicture}
\end{gathered}
\end{equation*}
Similarly, composition follows from equation \eqref{eq:mmmult} as:
\begin{equation*}
\begin{gathered}
\begin{tikzpicture}[scale=0.5]
\path coordinate[dot, label=left:$f$] (f) +(0,-1) coordinate[label=below:$X$] (b)
 ++(0,1) coordinate[dot, label=left:$g$] (g)
 +(0,4) coordinate[label=above:$Z$] (tl)
 ++(2,2) coordinate[dot, label=below:$\mu$] (mu)
 +(0,1) coordinate[dot, label=right:$\sigma$] (sigma)
 +(1,-1) coordinate (a)
 (sigma) ++(0,1) coordinate[label=above:$T'$] (tr);
\draw (b) -- (f) -- (g) -- (tl)
 (g) to[out=0, in=180] (mu)
 (f) to[out=0, in=-90] (a) to[out=90, in=0] (mu)
 (mu) -- (sigma) -- (tr);
\begin{pgfonlayer}{background}
\fill[catterm] ($(b) + (-1,0)$) rectangle (tl);
\fill[catc] (b) rectangle ($(tr) + (2,0)$);
\end{pgfonlayer}
\end{tikzpicture}
\end{gathered}
=
\begin{gathered}
\begin{tikzpicture}[scale=0.5]
\path coordinate[dot, label=left:$f$] (f) +(0,-1) coordinate[label=below:$X$] (b)
 ++(0,1) coordinate[dot, label=left:$g$] (g)
 +(0,4) coordinate[label=above:$Z$] (tl)
 (g) ++(1,2) coordinate[dot, label=left:$\sigma$] (sigma1) ++(1,1) coordinate[dot, label=below:$\mu'$] (mu) ++(0,1) coordinate[label=above:$T'$] (tr)
 (f) ++(3,3) coordinate[dot, label=right:$\sigma$] (sigma2);
\draw (b) -- (f) -- (g) -- (tl)
 (g) to[out=0, in=-90] (sigma1) to[out=90, in=180] (mu)
 (f) to[out=0, in=-90] (sigma2) to[out=90, in=0] (mu)
 (mu) -- (tr);
\begin{pgfonlayer}{background}
\fill[catterm] ($(b) + (-1,0)$) rectangle (tl);
\fill[catc] (b) rectangle ($(tr) + (2,0)$);
\end{pgfonlayer}
\end{tikzpicture}
\end{gathered}
\end{equation*}
\end{proof}
\subsection{Categories of Monads}
\label{sec:moncat}
We now consider some appropriate types of morphisms between monads on different base categories.
\begin{definition}
The category \defcmonad is defined as having:
\begin{itemize}
 \item {\bf Objects}: Monads
 \item {\bf Morphisms}: A morphism of type $(\mathcal{C}, T, \eta, \mu) \rightarrow (\mathcal{C}', T', \eta', \mu')$
  is a pair consisting of a functor $F : \mathcal{C} \rightarrow \mathcal{C}'$ and a natural transformation $f : T' F \Rightarrow F T$
  satisfying the following equations:
\begin{subequations}
\begin{trivlist}
\item
\begin{minipage}{0.445\linewidth}
\begin{equation}
\label{eq:mcatmor-unit}
\begin{gathered}
\unitpulledleft{\eta}{T}{F}{catc}{catc'}
\end{gathered}
=
\begin{gathered}
\unitpullleft{\eta'}{f}{T}{F}{catc}{catc'}
\end{gathered}
\end{equation}
\end{minipage}
\begin{minipage}{0.545\linewidth}
\begin{equation}
\label{eq:mcatmor-mult}
\begin{gathered}
\multpulledleft{\mu}{f}{T}{T'}{F}{catc}{catc'}
\end{gathered}
=
\begin{gathered}
\multpullleft{\mu'}{f}{T}{T'}{F}{catc}{catc'}
\end{gathered}
\end{equation}
\end{minipage}
\end{trivlist}
\end{subequations}

\end{itemize}
\end{definition}
\begin{proposition}
\label{prop:cmonadlift}
Every \refcmonad morphism:
\begin{equation*}
(\mathcal{C}, T, \eta, \mu) \rightarrow (\mathcal{C}', T', \eta', \mu')
\end{equation*}
induces a functor:
\begin{equation*}
\eilmo{T} \rightarrow \eilmo{T'}
\end{equation*}
\end{proposition}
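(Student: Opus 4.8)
The plan is to define the induced functor on objects by sending a $T$-algebra $(X, a : T(X) \to X)$ to the pair $(F(X),\, F(a) \circ f_X)$, where the candidate structure map $F(a) \circ f_X : T'F(X) \to F(X)$ first uses the component $f_X : T'F(X) \to FT(X)$ to land in $FT(X)$ and then applies $F(a)$. On morphisms the functor will act simply as $F$, sending a $T$-algebra homomorphism $h : (X,a) \to (Y,b)$ to $F(h)$. The real content is then to check well-definedness, namely that $(F(X), F(a)\circ f_X)$ really is a $T'$-algebra and that $F(h)$ really is a $T'$-algebra homomorphism; once that is done, functoriality is immediate, since $F$ preserves identities and composites and composition in each Eilenberg--Moore category is inherited from the base category.

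First I would verify the unit axiom \eqref{eq:emunit} for $(F(X), F(a)\circ f_X)$. Precomposing the structure map with $\eta'_{F(X)}$ and applying the \refcmonad unit equation \eqref{eq:mcatmor-unit}, whose component form reads $f_X \circ \eta'_{F(X)} = F(\eta_X)$, rewrites $F(a) \circ f_X \circ \eta'_{F(X)}$ as $F(a) \circ F(\eta_X) = F(a \circ \eta_X)$; the original $T$-algebra unit law gives $a \circ \eta_X = \mathrm{id}_X$, so the whole composite collapses to $\mathrm{id}_{F(X)}$. Graphically this is just pushing the unit up the wire with \eqref{eq:mcatmor-unit} and absorbing it via the algebra unit law.

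Next, and this is where the main work lies, I would check the multiplication axiom \eqref{eq:emmult}. Writing $a' = F(a) \circ f_X$, the two sides are $a' \circ \mu'_{F(X)}$ and $a' \circ T'(a')$. Starting from the former, the \refcmonad multiplication equation \eqref{eq:mcatmor-mult}, whose component form is $f_X \circ \mu'_{F(X)} = F(\mu_X) \circ f_{T(X)} \circ T'(f_X)$, turns $a' \circ \mu'_{F(X)}$ into $F(a \circ \mu_X) \circ f_{T(X)} \circ T'(f_X)$; the $T$-algebra associativity law $a \circ \mu_X = a \circ T(a)$ together with functoriality then yields $F(a) \circ FT(a) \circ f_{T(X)} \circ T'(f_X)$. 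The last step is to recognise $FT(a) \circ f_{T(X)} = f_X \circ T'F(a)$ as precisely the naturality square of $f$ at the morphism $a : T(X) \to X$; substituting it gives $F(a) \circ f_X \circ T'(F(a)) \circ T'(f_X) = a' \circ T'(a')$, as required. I expect the choreography here --- feeding in the multiplication coherence \eqref{eq:mcatmor-mult}, the algebra associativity law, and naturality of $f$ at the right places --- to be the principal obstacle, even though in the string calculus each of these moves is a routine slide.

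Finally, for the action on morphisms I would observe that establishing that $F(h)$ is a $T'$-algebra homomorphism $(F(X), F(a)\circ f_X) \to (F(Y), F(b)\circ f_Y)$ is exactly Example \ref{ex:alghom} with the natural transformation $\alpha$ there instantiated to $f$: the identity $F(h) \circ F(a) \circ f_X = F(b) \circ f_Y \circ T'F(h)$ follows from $h$ being a $T$-algebra homomorphism together with naturality of $f$. With well-definedness secured, functoriality is a one-line consequence of $F$ being a functor, mirroring the corresponding step in the proof of Proposition \ref{prop:emlift}.
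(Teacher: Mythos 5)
Your proposal is correct and follows essentially the same route as the paper: the same object assignment $(X,a) \mapsto (F(X), F(a)\circ f_X)$, the same verification of \eqref{eq:emunit} via \eqref{eq:mcatmor-unit} and of \eqref{eq:emmult} via \eqref{eq:mcatmor-mult}, and the same appeal to Example \ref{ex:alghom} for the morphism action and functoriality. The only cosmetic difference is that you invoke naturality of $f$ explicitly in the multiplication step, whereas the paper's string-diagrammatic proof absorbs that move silently into the notation.
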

\begin{proof}
For \refcmonad morphism $(F, f)$ the action on objects of our functor is given by:
\begin{equation*}
\begin{gathered}
\gmorph{a}{T}{X}{X}{catterm}{catc}{catc}{1.5}{1}
\end{gathered}
\mapsto
\begin{gathered}
\begin{tikzpicture}[scale=0.5]
\path coordinate[dot, label=left:$a$] (a) +(0,1) coordinate[label=above:$X$] (tl)
 ++(-0,-2) coordinate[label=below:$X$] (bl) ++(1,0) coordinate[label=below:$F$] (bm) ++(1,0) coordinate[label=below:$T'$] (br)
 (a) ++(3,0) coordinate (r) ++(0,1) coordinate[label=above:$F$] (tr);
\draw (bl) -- (a) -- (tl);
\draw[name path=vert] (br) to[out=90, in=0] (a);
\draw[name path=curv] (bm) to[out=90, in=-90] (r) -- (tr);
\path[name intersections={of=vert and curv}] coordinate[dot, label={[label distance=0.25cm]right:$f$}] (sigma) at (intersection-1);
\begin{pgfonlayer}{background}
\fill[catterm] ($(bl) + (-1,0)$) rectangle (tl);
\fill[catc] (bl) rectangle ($(tr) + (1,0)$);
\fill[catc'] (bm) to[out=90, in=-90] (r) -- (tr) -- ++(1,0) -- ++(0,-3) -- cycle;
\end{pgfonlayer}
\end{tikzpicture}
\end{gathered}
\end{equation*}
That this extends to a functor to the category of $T'$-algebras follows from example \ref{ex:alghom}.
We must then confirm that the resulting algebra is in \eilmo{T'}. For the unit axiom we have:
\begin{equation*}
\begin{gathered}
\begin{tikzpicture}[scale=0.5]
\path coordinate[dot, label=left:$a$] (a) +(0,1) coordinate[label=above:$X$] (tl)
 +(0,-3) coordinate[label=below:$X$] (bl)
 ++(2,-2) coordinate[dot, label=right:$\eta'$] (eta)
 ++(-1,0) coordinate (l) ++(0,-1) coordinate[label=below:$F$] (br)
 (tl) ++(3,0) coordinate[label=above:$F$] (tr);
\draw (bl) -- (a) -- (tl);
\draw[name path=vert] (eta) to[out=90, in=0] (a);
\draw[name path=curv] (br) -- (l) to[out=90, in=-90] (tr);
\path[name intersections={of=vert and curv}] coordinate[dot, label=right:$f$] (sigma) at (intersection-1);
\begin{pgfonlayer}{background}
\fill[catterm] ($(bl) + (-1,0)$) rectangle (tl);
\fill[catc] (bl) rectangle ($(tr) + (1,0)$);
\fill[catc'] (br) -- (l) to[out=90, in=-90] (tr) -- ++(1,0) -- ++(0,-4) -- cycle;
\end{pgfonlayer}
\end{tikzpicture}
\end{gathered}
\stackrel{\eqref{eq:mcatmor-unit}}{=}
\begin{gathered}
\begin{tikzpicture}[scale=0.5]
\path coordinate[dot, label=left:$a$] (a) +(0,1) coordinate[label=above:$X$] (tl)
 +(0,-3) coordinate[label=below:$X$] (bl)
 ++(2,-1) coordinate[dot, label=left:$\eta$] (eta)
 (bl) ++(1,0) coordinate[label=below:$F$] (br) ++(0,1) coordinate (l) ++(2,1) coordinate (r) ++(0,2) coordinate[label=above:$F$] (tr);
\draw (bl) -- (a) -- (tl)
 (eta) to[out=90, in=0] (a)
 (br) -- (l) to[out=90, in=-90] (r) -- (tr);
\begin{pgfonlayer}{background}
\fill[catterm] ($(bl) + (-1,0)$) rectangle (tl);
\fill[catc] (bl) rectangle ($(tr) + (1,0)$);
\fill[catc'] (br) -- (l) to[out=90, in=-90] (r) -- (tr) -- ++(1,0) -- ++(0,-4) -- cycle;
\end{pgfonlayer}
\end{tikzpicture}
\end{gathered}
\stackrel{\eqref{eq:emunit}}{=}
\begin{gathered}
\begin{tikzpicture}[scale=0.5]
\path coordinate[label=below:$X$] (bl) +(1,0) coordinate[label=below:$F$] (br)
 ++(0,4) coordinate[label=above:$X$] (tl) ++(1,0) coordinate[label=above:$F$] (tr);
\draw (bl) -- (tl)
 (br) -- (tr);
\begin{pgfonlayer}{background}
\fill[catterm] ($(bl) + (-1,0)$) rectangle (tl);
\fill[catc] (bl) rectangle (tr);
\fill[catc'] (br) rectangle ($(tr) + (1,0)$);
\end{pgfonlayer}
\end{tikzpicture}
\end{gathered}
\end{equation*}
For the multiplication axiom:
\begin{equation*}
\begin{gathered}
\begin{tikzpicture}[scale=0.5]
\path coordinate[dot, label=left:$a$] (a) +(0,1) coordinate[label=above:$X$] (tl)
 +(0,-4) coordinate[label=below:$X$] (bl)
 ++(3,-3) coordinate[dot, label={[label distance=0.25cm]right:$\mu'$}] (mu)
 +(-1,-1) coordinate[label=below:$T'$] (bm)
 +(1,-1) coordinate[label=below:$T'$] (br);
\path (bl) ++(1,0) coordinate[label=below:$F$] (cb) ++(0,1) coordinate (l)
 ++(4,1.5) coordinate (r) ++(0,2.5) coordinate[label=above:$F$] (ct);
\draw (bl) -- (a) -- (tl);
\draw[name path=vert] (mu) to[out=90, in=0] (a);
\draw (bm) to[out=90, in=180] (mu.west) -- (mu.east) to[out=0, in=90] (br);
\draw[name path=curv] (cb) -- (l) to[out=90, in=-90] (r) -- (ct);
\path[name intersections={of=vert and curv}] coordinate[dot, label=45:$f$] (sigma) at (intersection-1);
\begin{pgfonlayer}{background}
\fill[catterm] ($(bl) + (-1,0)$) rectangle (tl);
\fill[catc] (bl) rectangle ($(ct) + (0.5,0)$);
\fill[catc'] (cb) -- (l) to[out=90, in=-90] (r) -- (ct) -- ++(0.5,0) -- ++(0,-5) -- cycle;
\end{pgfonlayer}
\end{tikzpicture}
\end{gathered}
\stackrel{\eqref{eq:mcatmor-mult}}{=}
\begin{gathered}
\begin{tikzpicture}[scale=0.5]
\path coordinate[dot, label=left:$a$] (a) +(0,1) coordinate[label=above:$X$] (tl)
 +(0,-4) coordinate[label=below:$X$] (bl)
 ++(3,-1) coordinate[dot, label=below:$\mu$] (mu)
 +(-1,-3) coordinate[label=below:$T'$] (bm)
 +(1,-3) coordinate[label=below:$T'$] (br);
\path (bl) ++(1,0) coordinate[label=below:$F$] (cb) ++(0,1) coordinate (l)
 ++(4,1.5) coordinate (r) ++(0,2.5) coordinate[label=above:$F$] (ct);
\draw (bl) -- (a) -- (tl);
\draw (mu) to[out=90, in=0] (a);
\draw[name path=vert1] (bm) to[out=90, in=180] (mu.west); 
\draw[name path=vert2] (br) to[out=90, in=0] (mu.east);
\draw[name path=curv] (cb) -- (l) to[out=90, in=-90] (r) -- (ct);
\path[name intersections={of=vert1 and curv}] coordinate[dot, label=-45:$f$] (sigma1) at (intersection-1);
\path[name intersections={of=vert2 and curv}] coordinate[dot, label=-45:$f$] (sigma2) at (intersection-1);
\begin{pgfonlayer}{background}
\fill[catterm] ($(bl) + (-1,0)$) rectangle (tl);
\fill[catc] (bl) rectangle ($(ct) + (0.5,0)$);
\fill[catc'] (cb) -- (l) to[out=90, in=-90] (r) -- (ct) -- ++(0.5,0) -- ++(0,-5) -- cycle;
\end{pgfonlayer}
\end{tikzpicture}
\end{gathered}
\stackrel{\eqref{eq:emmult}}{=}
\begin{gathered}
\begin{tikzpicture}[scale=0.5]
\path coordinate[dot, label=left:$a$] (a2) +(0,1) coordinate[label=above:$X$] (tl)
 ++(0,-1) coordinate[dot, label=left:$a$] (a1) ++(0,-3) coordinate[label=below:$X$] (bl)
 ++(2,0) coordinate[label=below:$T'$] (bm) ++(2,0) coordinate[label=below:$T'$] (br);
\path (bl) ++(1,0) coordinate[label=below:$F$] (cb) ++(0,1) coordinate (l)
 ++(4,1.5) coordinate (r) ++(0,2.5) coordinate[label=above:$F$] (ct);
\draw (bl) -- (a1) -- (a2) -- (tl);
\draw[name path=vert1] (bm) to[out=90, in=0] (a1);
\draw[name path=vert2] (br) to[out=90, in=0] (a2);
\draw[name path=curv] (cb) -- (l) to[out=90, in=-90] (r) -- (ct);
\path[name intersections={of=vert1 and curv}] coordinate[dot, label=-45:$f$] (sigma1) at (intersection-1);
\path[name intersections={of=vert2 and curv}] coordinate[dot, label=-45:$f$] (sigma2) at (intersection-1);
\begin{pgfonlayer}{background}
\fill[catterm] ($(bl) + (-1,0)$) rectangle (tl);
\fill[catc] (bl) rectangle ($(ct) + (0.5,0)$);
\fill[catc'] (cb) -- (l) to[out=90, in=-90] (r) -- (ct) -- ++(0.5,0) -- ++(0,-5) -- cycle;
\end{pgfonlayer}
\end{tikzpicture}
\end{gathered}
\end{equation*}
\end{proof}
\begin{remark}
Every monad morphism (definition \ref{def:monadmorphism}) is an endomorphism in \refcmonad with the functor
part the identity. Proposition \ref{prop:emlift} is then a special case of proposition \ref{prop:cmonadlift}.
\end{remark}
\begin{definition}
The category \defcmonadstar is defined as having:
\begin{itemize}
 \item {\bf Objects}: Monads
 \item {\bf Morphism}: A morphism of type $(\mathcal{C}, T, \eta, \mu) \rightarrow (\mathcal{C}', T', \eta', \mu')$
  is a pair consisting of a functor $F: \mathcal{C} \rightarrow \mathcal{C}'$ and a natural transformation $f : F T \Rightarrow T' F$
  satisfying the following equations:
\begin{subequations}
\begin{trivlist}
\item
\begin{minipage}{0.445\linewidth}
\begin{equation}
\label{eq:mstarcatmor-unit}
\begin{gathered}
\unitpullright{\eta}{f}{T'}{F}{catc}{catc'}
\end{gathered}
=
\begin{gathered}
\unitpulledright{\eta'}{T'}{F}{catc}{catc'}
\end{gathered}
\end{equation}
\end{minipage}
\begin{minipage}{0.545\linewidth}
\begin{equation}
\label{eq:mstarcatmor-mult}
\begin{gathered}
\multpullright{\mu}{f}{T'}{T}{F}{catc}{catc'}
\end{gathered}
=
\begin{gathered}
\multpulledright{\mu'}{f}{T'}{T}{F}{catc}{catc'}
\end{gathered}
\end{equation}
\end{minipage}
\end{trivlist}
\end{subequations}
\end{itemize}
\end{definition}
\begin{proposition}
\label{prop:cmonadstarlift}
Every \refcmonadstar morphism:
\begin{equation*}
(\mathcal{C}, T, \eta, \mu) \rightarrow (\mathcal{C}', T', \eta', \mu')
\end{equation*}
induces a functor:
\begin{equation*}
\klei{T} \rightarrow \klei{T'}
\end{equation*}
\end{proposition}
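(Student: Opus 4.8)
The plan is to combine the construction of Proposition \ref{prop:kleislilift}, which handles the case of a single base category, with the change of base carried by the functor part of the morphism, in the manner dual to the Eilenberg--Moore lifting of Proposition \ref{prop:cmonadlift}. Write $(F, f)$ for the given \refcmonadstar morphism, so that $F : \mathcal{C} \to \mathcal{C}'$ and $f : FT \Rightarrow T'F$ satisfy the unit law \eqref{eq:mstarcatmor-unit} and the multiplication law \eqref{eq:mstarcatmor-mult}. On objects the induced functor sends $X$ to $F(X)$, which is legitimate since the objects of a Kleisli category are exactly the objects of its base. On a morphism $g : X \to Y$ of \klei{T}, that is a $\mathcal{C}$-morphism $g : X \to T(Y)$, I send it to $f_Y \circ F(g) : F(X) \to T'F(Y)$, which is precisely a morphism $F(X) \to F(Y)$ of \klei{T'}. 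Graphically this is the diagram for $g$ with $F$ applied throughout, followed by an application of the $f$ cell on the wire leaving through $T'$, exactly as $\sigma$ is applied in Proposition \ref{prop:kleislilift}.

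It then remains to check functoriality. Preservation of identities reduces to the equation $f_X \circ F(\eta_X) = \eta'_{F(X)}$, since the Kleisli identities are $\eta_X$ and $\eta'_{F(X)}$ respectively; this is exactly the unit axiom \eqref{eq:mstarcatmor-unit}. Preservation of composition requires that the image of $h \kleicomp g$ equal the \klei{T'}-composite of the images. Expanding each side with the formula $h \kleicomp g = \mu_Z \circ T(h) \circ g$ and its primed analogue, and observing that both composites end in the common factor $F(g)$, reduces the goal to the single identity $f_Z \circ F(\mu_Z) \circ FT(h) = \mu'_{F(Z)} \circ T'(f_Z) \circ T'F(h) \circ f_Y$.

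This last identity is the only step that is not purely formal, and is where I expect the real content to sit. I would attack it by rewriting its left-hand side with the multiplication axiom \eqref{eq:mstarcatmor-mult}, whose component form is $f_Z \circ F(\mu_Z) = \mu'_{F(Z)} \circ T'(f_Z) \circ f_{T(Z)}$, after which the goal becomes $f_{T(Z)} \circ FT(h) = T'F(h) \circ f_Y$. But this is simply the naturality square for $f : FT \Rightarrow T'F$ taken at the morphism $h : Y \to T(Z)$, and so holds automatically. In the graphical calculus this final naturality step is invisible, being just the sliding of the $f$ cell past $h$ along its wire, so the multiplication law is the only axiom invoked explicitly; the mild bookkeeping of separating the multiplicative part from the naturality part is the only point that needs any care, and the verification is otherwise routine.
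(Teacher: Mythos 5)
Your proposal is correct and matches the paper's proof: the same functor (objects $X \mapsto F(X)$, morphisms $g \mapsto f_Y \circ F(g)$), with identities from the unit axiom \eqref{eq:mstarcatmor-unit} and composition from the multiplication axiom \eqref{eq:mstarcatmor-mult}. The only cosmetic difference is that you unfold the naturality of $f$ as an explicit step, whereas in the paper's string-diagrammatic proof it is absorbed silently into the deformation of the diagram, exactly as you anticipate.
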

\begin{proof}
On objects, the induced functor maps $X$ to $F(X)$. On morphisms the action is:
\begin{equation*}
\begin{gathered}
\fmorph{f}{T}{X}{Y}{catterm}{catc}{catc}{1.5}{1}
\end{gathered}
\mapsto
\begin{gathered}
\begin{tikzpicture}[scale=0.5]
\path coordinate[dot, label=left:$a$] (a) +(0,-1) coordinate[label=below:$X$] (bl)
 +(0,2) coordinate[label=above:$Y$] (tl)
 ++(2,2) coordinate[label=above:$T'$] (tr)
 (tl) ++(1,0) coordinate[label=above:$F$] (tm)
 (bl) ++(3,0) coordinate[label=below:$F$] (br) ++(0,1) coordinate (r);
\draw (bl) -- (a) -- (tl);
\draw[name path=vert] (a) to[out=0, in=-90] (tr);
\draw[name path=curv] (br) -- (r) to[out=90, in=-90] (tm);
\path[name intersections={of=vert and curv}] coordinate[dot, label={[label distance=0.25cm]right:$f$}] (f) at (intersection-1);
\begin{pgfonlayer}{background}
\fill[catterm] ($(tl) + (-1,0)$) rectangle (bl);
\fill[catc] (tl) rectangle ($(br) + (1,0)$);
\fill[catc'] (br) -- (r) to[out=90, in=-90] (tm) -- ++(3,0) -- ++(0,-3) -- cycle;
\end{pgfonlayer}
\end{tikzpicture}
\end{gathered}
\end{equation*}
That this preserves identities follows immediately from the \refcmonadstar axiom \eqref{eq:mstarcatmor-unit} as:
\begin{equation*}
\begin{gathered}
\begin{tikzpicture}[scale=0.5]
\path coordinate[dot, label=below:$\eta$] (eta) ++(0,2) coordinate[label=above:$T'$] (tr)
 ++(-1,0) coordinate[label=above:$F$] (tm)
 ++(-1,0) coordinate[label=above:$X$] (tl)
 ++(0,-3) coordinate[label=below:$X$] (bl)
 ++(3,0) coordinate[label=below:$F$] (br) ++(0,1) coordinate (r);
\draw (bl) -- (tl);
\draw[name path=vert] (eta) -- (tr);
\draw[name path=curv] (br) -- (r) to[out=90, in=-90] (tm);
\path[name intersections={of=vert and curv}] coordinate[dot, label={[label distance=0.25cm]right:$f$}] (f) at (intersection-1);
\begin{pgfonlayer}{background}
\fill[catterm] ($(tl) + (-1,0)$) rectangle (bl);
\fill[catc] (tl) rectangle ($(br) + (1,0)$);
\fill[catc'] (br) -- (r) to[out=90, in=-90] (tm) -- ++(3,0) -- ++(0,-3) -- cycle;
\end{pgfonlayer}
\end{tikzpicture}
\end{gathered}
=
\begin{gathered}
\begin{tikzpicture}[scale=0.5]
\path coordinate[dot, label=right:$\eta'$] (eta') ++(0,1) coordinate[label=above:$T'$] (tr)
 ++(-1,0) coordinate[label=above:$F$] (tm) +(0,-1) coordinate (l)
 ++(-1,0) coordinate[label=above:$X$] (tl)
 ++(0,-3) coordinate[label=below:$X$] (bl)
 ++(3,0) coordinate[label=below:$F$] (br);
\draw (bl) -- (tl)
 (eta) -- (tr)
 (br) to[out=90, in=-90] (l) -- (tm);
\begin{pgfonlayer}{background}
\fill[catterm] ($(tl) + (-1,0)$) rectangle (bl);
\fill[catc] (tl) rectangle ($(br) + (1,0)$);
\fill[catc'] (br) to[out=90, in=-90] (l) -- (tm) -- ++(3,0) -- ++(0,-3) -- cycle;
\end{pgfonlayer}
\end{tikzpicture}
\end{gathered}
\end{equation*}
Functoriality of composition also follows immediately from the \refcmonadstar axiom \eqref{eq:mstarcatmor-mult} as we have:
\begin{equation*}
\begin{gathered}
\begin{tikzpicture}[scale=0.5]
\path coordinate[dot, label=left:$p$] (p) +(0,-1) coordinate[label=below:$X$] (bl)
 ++(0,1) coordinate[dot, label=left:$q$] (q) +(0,4) coordinate[label=above:$Z$] (tl)
 ++(3,2) coordinate[dot, label=below:$\mu$] (mu) +(0,2) coordinate[label=above:$T'$] (tr)
 ++(1,-1) coordinate (a);
\path (tl) ++(1,0) coordinate[label=above:$F$] (tm) ++(0,-1)
 (bl) ++(5,0) coordinate[label=below:$F$] (br) ++(0,4) coordinate (r);
\draw (bl) -- (p) -- (q) -- (tl);
\draw (p) to[out=0, in=-90] (a) to[out=90, in=0] (mu.east);
\draw (q) to[out=0, in=180] (mu.west);
\draw[name path=vert] (mu) -- (tr);
\draw[name path=curv] (br) -- (r) to[out=90, in=-90] (tm);
\path[name intersections={of=vert and curv}] coordinate[dot, label=45:$f$] (f) at (intersection-1);
\begin{pgfonlayer}{background}
\fill[catterm] ($(tl) + (-1,0)$) rectangle (bl);
\fill[catc] (tl) rectangle ($(br) + (1,0)$);
\fill[catc'] (br) -- (r) to[out=90, in=-90] (tm) -- ++(5,0) -- ++(0,-6) -- cycle;
\end{pgfonlayer}
\end{tikzpicture}
\end{gathered}
=
\begin{gathered}
\begin{tikzpicture}[scale=0.5]
\path coordinate[dot, label=left:$p$] (p) +(0,-1) coordinate[label=below:$X$] (bl)
 ++(0,1) coordinate[dot, label=left:$q$] (q) +(0,4) coordinate[label=above:$Z$] (tl)
 ++(3,3) coordinate[dot, label=below:$\mu'$] (mu) +(0,1) coordinate[label=above:$T'$] (tr)
 ++(1,-1) coordinate (a);
\path (tl) ++(1,0) coordinate[label=above:$F$] (tm) ++(0,-1) coordinate (l)
 (bl) ++(5,0) coordinate[label=below:$F$] (br);
\draw (bl) -- (p) -- (q) -- (tl);
\draw[name path=vert2] (p) to[out=0, in=-90] (a) to[out=90, in=0] (mu.east);
\draw[name path=vert1] (q) to[out=0, in=180] (mu.west);
\draw (mu) -- (tr);
\draw[name path=curv] (br) to[out=90, in=-90] (l) -- (tm);
\path[name intersections={of=vert1 and curv}] coordinate[dot, label=left:$f$] (f1) at (intersection-1);
\path[name intersections={of=vert2 and curv}] coordinate[dot, label={[label distance=0.25cm]right:$f$}] (f2) at (intersection-1);
\begin{pgfonlayer}{background}
\fill[catterm] ($(tl) + (-1,0)$) rectangle (bl);
\fill[catc] (tl) rectangle ($(br) + (1,0)$);
\fill[catc'] (br) to[out=90, in=-90] (l) -- (tm) -- ++(5,0) -- ++(0,-6) -- cycle;
\end{pgfonlayer}
\end{tikzpicture}
\end{gathered}
\end{equation*}
\end{proof}
\begin{remark}
Every monad morphism (definition \ref{def:monadmorphism}) is an endomorphism in \refcmonadstar
with the functor part the identity. Proposition \ref{prop:kleislilift} is then a special case
of proposition \ref{prop:cmonadstarlift}.
\end{remark}
\begin{remark}
The categories \refcmonad and \refcmonadstar can be described more generally for an arbitrary 2-category,
and in fact can be given the structure of 2-categories themselves, although we will not require this additional
structure for our examples. 
For more details, see \citep{Street1972} and \citep{StreetLack2002} 
and also the excellent exposition in the early parts of \cite{PowerWatanabe2002}.
\end{remark}

\subsection{Distributive Laws}
In lemma \ref{lem:adjcomp} we saw that adjunctions compose in a straightforward manner. Monads do
not in general compose, but they can be composed in the presence of a suitable mediating natural
transformation, referred to as a distributive law.
\begin{definition}[Distributive Law]
\label{def:distrlaw}
For monads $(\mathcal{C}, T, \eta, \mu)$ and $(\mathcal{C}, T', \eta', \mu')$ 
a \define{distributive law} \citep{Beck1969} is a natural
transformation:
\begin{equation}
\label{eq:distlaw}
\begin{gathered}
\begin{tikzpicture}[scale=0.5]
\path coordinate[dot, label=below:$\delta$] (delta)
 +(-1,1) coordinate[label=above:$T$] (tl) +(1,1) coordinate[label=above:$T'$] (tr)
 +(-1,-1) coordinate[label=below:$T'$] (bl) +(1,-1) coordinate[label=below:$T$] (br);
\draw (bl) to[out=90, in=180] (delta.west) -- (delta.east) to[out=0, in=90] (br)
 (tl) to[out=-90, in=180] (delta.west) -- (delta.east) to[out=0, in=-90] (tr);
\begin{pgfonlayer}{background}
\fill[catc] ($(tl) + (-1,0)$) rectangle ($(br) + (1,0)$);
\end{pgfonlayer}
\end{tikzpicture}
\end{gathered}
\end{equation}
satisfying the following equations:
\begin{subequations}
\begin{trivlist}
\item
\begin{minipage}{0.495\linewidth}
\begin{equation}
\label{eq:distretal}
\begin{gathered}
\unitpullright{\eta'}{\delta}{T'}{T}{catc}{catc}
\end{gathered}
=
\begin{gathered}
\unitpulledright{\eta'}{T'}{T}{catc}{catc}
\end{gathered}
\end{equation}
\end{minipage}
\begin{minipage}{0.495\linewidth}
\begin{equation}
\label{eq:distretar}
\begin{gathered}
\unitpullleft{\eta}{\delta}{T}{T'}{catc}{catc}
\end{gathered}
=
\begin{gathered}
\unitpulledleft{\eta}{T}{T'}{catc}{catc}
\end{gathered}
\end{equation}
\end{minipage}
\item
\begin{minipage}{0.495\linewidth}
\begin{equation}
\label{eq:distrmul}
\begin{gathered}
\multpullright{\mu'}{\delta}{T'}{T'}{T}{catc}{catc}
\end{gathered}
=
\begin{gathered}
\multpulledright{\mu'}{\delta}{T'}{T'}{T}{catc}{catc}
\end{gathered}
\end{equation}
\end{minipage}
\begin{minipage}{0.495\linewidth}
\begin{equation}
\label{eq:distrmur}
\begin{gathered}
\multpullleft{\mu}{\delta}{T}{T}{T'}{catc}{catc}
\end{gathered}
=
\begin{gathered}
\multpulledleft{\mu}{\delta}{T}{T}{T'}{catc}{catc}
\end{gathered}
\end{equation}
\end{minipage}
\end{trivlist}
\end{subequations}
\end{definition}
\begin{remark}[Artistic Values]
The axioms presented for a distributive law in definition \ref{def:distrlaw} perhaps best
illustrate the importance of how we choose to draw our string diagrams. In the axioms of equations
\eqref{eq:distretal}, \eqref{eq:distretar}, \eqref{eq:distrmul} and \eqref{eq:distrmur} we choose
to draw the natural transformation $\delta$ in different ways. The different choices of
how $\delta$ is drawn allow us to emphasise the nature of the transformations as ``sliding''
various $\eta$ and $\epsilon$ natural transformations across an appropriate line in the diagram.
For example, we could instead have persisted with the neutral depiction of diagram \eqref{eq:distlaw},
and drawn axiom \eqref{eq:distrmur} as:
\begin{equation*}
\begin{gathered}
\begin{tikzpicture}[scale=0.5]
\path coordinate[dot, label=$\delta$] (dl)
 +(-1,-1) coordinate[label=below:$T'$] (bl)
 +(1,-1) coordinate[label=below:$T$] (bm)
 +(-1,1) coordinate (a)
 ++(1,1) coordinate (b) ++(1,1) coordinate[dot, label=above:$\delta$] (dr)
 +(1,-1) coordinate (c)
 +(1,1) coordinate (d)
 ++(-1,1) coordinate (e) ++(-1,1) coordinate[dot, label=below:$\mu$] (mu)
 +(-1,-1) coordinate (f)
 +(0,1) coordinate[label=above:$T$] (tl)
 (d) ++(0,2) coordinate[label=above:$T'$] (tr)
 (c) ++(0,-2) coordinate[label=below:$T$] (br);
\draw (bl) to[out=90, in=180] (dl.west) -- (dl.east) to[out=0, in=-90] (b) to[out=90, in=180] (dr.west) -- (dr.east) to[out=0, in=-90] (d) -- (tr)
 (bm) to[out=90, in=0] (dl)
 (br) -- (c) to[out=90, in=0] (dr)
 (dl.west) to[out=180, in=-90] (a) -- (f) to[out=90, in=180] (mu.west) -- (mu.north) -- (tl)
 (dr.west) to[out=180, in=-90] (e) to[out=90, in=0] (mu.east);
\begin{pgfonlayer}{background}
\fill[catc] ($(bl) + (-0.5,0)$) rectangle ($(tr) + (0.5,0)$);
\fill[catc] (bl) to[out=90, in=180] (dl.west) -- (dl.east) to[out=0, in=-90] (b) to[out=90, in=180] (dr.west) -- (dr.east) to[out=0, in=-90] 
 (d) -- (tr) -- ++(0.5, 0) -- ($(br) + (0.5,0)$) -- cycle;
\end{pgfonlayer}
\end{tikzpicture}
\end{gathered}
=
\begin{gathered}
\begin{tikzpicture}[scale=0.5]
\path coordinate[dot, label=45:$\mu$] (mu)
 +(-1,-1) coordinate[label=below:$T$] (bm)
 +(1,-1) coordinate[label=below:$T$] (br)
 ++(0,1) coordinate (a) ++(-1,1) coordinate[dot, label=below:$\delta$] (delta)
 +(-1,1) coordinate (b)
 +(1,1) coordinate (c)
 ++(-1,-1) coordinate (d) ++(0,-2) coordinate[label=below:$T'$] (bl)
 (b) ++(0,2) coordinate[label=above:$T$] (tl)
 (c) ++(0,2) coordinate[label=above:$T'$] (tr);
\draw (bl) -- (d) to[out=90, in=180] (delta.west) -- (delta.east) to[out=0, in=-90] (c) -- (tr)
 (mu.north) -- (a) to[out=90, in=0] (delta.east) -- (delta.west) to[out=180, in=-90] (b) -- (tl)
 (bm) to[out=90, in=180] (mu.west) -- (mu.east) to[out=0, in=90] (br);
\begin{pgfonlayer}{background}
\fill[catc] ($(bl) + (-0.5,0)$) rectangle ($(tr) + (0.5,0)$);
\fill[catc] (bl) -- (d) to[out=90, in=180] (delta.west) -- (delta.east) to[out=0, in=-90] (c) -- (tr)
 -- ++(1.5,0) -- ($(br) + (0.5,0)$) -- cycle;
\end{pgfonlayer}
\end{tikzpicture}
\end{gathered}
\end{equation*}
These diagrams describe exactly the same relationship, but the visual intuition for the nature of the
axiom is completely lost.
\end{remark}

\begin{proposition}
Given monads $(\mathcal{C}, T, \eta, \mu)$ and $(\mathcal{C}, T', \eta', \mu')$ and a distributive
law as in definition \ref{def:distrlaw}, then $T' \circ T$ gives a monad on $\mathcal{C}$ with
unit and multiplication:
\begin{equation*}
\begin{gathered}
\begin{tikzpicture}[scale=0.5]
\path coordinate[dot, label=left:$\eta$] (eta) +(0,2) coordinate[label=above:$T$] (tl)
 ++(1,0) coordinate[dot, label=right:$\eta'$] (eta') +(0,2) coordinate[label=above:$T'$] (tr);
\draw (eta) -- (tl)
 (eta') -- (tr);
\begin{pgfonlayer}{background}
\fill[catc] ($(tl) + (-1,0)$) rectangle ($(eta') + (1,-1)$);
\fill[white] ($(tl) + (-1,-3)$) rectangle ($(eta') + (1,-2)$);
\end{pgfonlayer}
\end{tikzpicture}
\end{gathered}
\qquad
\begin{gathered}
\begin{tikzpicture}[scale=0.5]
\path coordinate[dot, label=below:$\delta$] (delta)
 +(-1,-1) coordinate[label=below:$T'$] (bl)
 +(1,-1) coordinate[label=below:$T$] (br)
 +(-1,1) coordinate[dot, label=below:$\mu$] (mu)
 +(1,1) coordinate[dot, label=below:$\mu'$] (mu')
 (mu) +(0,1) coordinate[label=above:$T$] (tl) +(-1,-2) coordinate[label=below:$T$] (bll)
 (mu') +(0,1) coordinate[label=above:$T'$] (tr) +(1,-2) coordinate[label=below:$T'$] (brr);
\draw (mu) -- (tl)
 (mu') -- (tr)
 (bl) to[out=90, in=180] (delta.west) -- (delta.east) to[out=0, in=90] (br)
 (mu) to[out=0, in=180] (delta.west) -- (delta.east) to[out=0, in=180] (mu')
 (bll) to[out=90, in=180] (mu)
 (brr) to[out=90, in=0] (mu');
\begin{pgfonlayer}{background}
\fill[catc] ($(bll) + (-1,0)$) rectangle ($(brr) + (1,3)$);
\end{pgfonlayer}
\end{tikzpicture}
\end{gathered}
\end{equation*}
\end{proposition}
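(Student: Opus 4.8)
The plan is to check directly that the endofunctor $T' \circ T$, equipped with the displayed unit and multiplication (call them $u$ and $m$), satisfies the three monad axioms: the two unit laws of \eqref{eq:monad-unit} and the associativity law \eqref{eq:monad-associativity}. I would carry this out entirely graphically, so that the naturality of $\delta$, $\mu$, $\mu'$, $\eta$ and $\eta'$ is discharged silently by sliding cells along wires; the only facts invoked by name are the monad axioms for $T$ and for $T'$ separately, together with the four distributive-law axioms \eqref{eq:distretal}, \eqref{eq:distretar}, \eqref{eq:distrmul} and \eqref{eq:distrmur}.

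For the unit laws, recall that $u$ is drawn as an $\eta$ on the $T$-wire beside an $\eta'$ on the $T'$-wire, and that in $m$ the box $\delta$ acts on the inner $T', T$ pair while the outer $T$ and $T'$ wires are multiplied by $\mu$ and $\mu'$ respectively. To prove the left unit law I would precompose $m$ with $u$ on its left-hand copy of $T' \circ T$; the resulting diagram has, from left to right, an $\eta$-capped $T$, an $\eta'$-capped $T'$, and then the two identity wires $T$ and $T'$. The $\eta'$-capped $T'$ feeds $\delta$, and I would pull it through $\delta$ using \eqref{eq:distretal}, so that the $\eta'$ now meets $\mu'$ and the $\eta$ meets $\mu$; the two monad unit axioms \eqref{eq:monad-unit} for $T'$ and $T$ then collapse each strand to the bare identity wire. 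The right unit law is the mirror-image argument, this time sliding an $\eta$ across $\delta$ via \eqref{eq:distretar} before cancelling with \eqref{eq:monad-unit}.

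Associativity is where the real work lies, and it is the step I expect to be the main obstacle. Both iterated multiplications $m \circ (m * 1)$ and $m \circ (1 * m)$ unfold into diagrams on the six wires of $(T' \circ T)^3$, each carrying two $\delta$-boxes together with several copies of $\mu$ and $\mu'$. The strategy is to reduce both sides to a common normal form: first use the multiplication-compatibility axioms \eqref{eq:distrmul} and \eqref{eq:distrmur} to pull a $\mu$ and a $\mu'$ through a $\delta$, trading a ``multiply then distribute'' configuration for a ``distribute then multiply'' one; then use naturality of $\delta$ to reposition the remaining $\delta$-box past the multiplications it must cross; and finally apply the individual associativity axioms \eqref{eq:monad-associativity} for $\mu$ and for $\mu'$ to reassociate the nested $T$- and $T'$-multiplications. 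The delicate point is the order in which these moves are made: a careless rewriting can try to slide a $\delta$ through a region where the wire types do not yet agree, so I would sequence the steps so that each appeal to \eqref{eq:distrmul} or \eqref{eq:distrmur} is type-correct. This bookkeeping is precisely what the string-diagram types make visible, and it is the reason the graphical argument is cleaner than its symbolic counterpart. Once both sides are brought to the same arrangement of a single $\delta$ flanked by one $\mu$ and one $\mu'$, the desired equality is immediate.
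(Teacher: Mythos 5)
Your handling of the unit laws coincides with the paper's own proof: insert the unit into one copy of $T'\circ T$, pull the $\eta'$ (respectively $\eta$) that meets $\delta$ across it using \eqref{eq:distretal} (respectively \eqref{eq:distretar}), then finish with \eqref{eq:monad-unit} for each monad separately. Your associativity plan --- expand with the multiplication axioms \eqref{eq:distrmul} and \eqref{eq:distrmur}, slide by naturality, reassociate with \eqref{eq:monad-associativity}, and bring both sides to a common normal form --- is also the paper's strategy in outline.

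The gap is in the endpoint of that associativity chase. A diagram containing exactly one $\delta$, one $\mu$ and one $\mu'$ is (up to sliding) the multiplication $m$ itself: since $\mu$ and $\mu'$ are binary, such a diagram has domain $(T'\circ T)^2$, four wires, whereas both sides of the associativity equation are transformations $(T'\circ T)^3 \Rightarrow T'\circ T$ on six wires. So neither side can ever be rewritten into ``a single $\delta$ flanked by one $\mu$ and one $\mu'$'', and a chase aimed at that normal form cannot terminate. The correct common form, which is the middle diagram of the paper's proof, has three $\delta$'s, two $\mu$'s and two $\mu'$'s: permuting the input wires $T,T',T,T',T,T'$ into the blocks $T,T,T,T',T',T'$ costs three interchanges of a $T'$ past a $T$ (three $\delta$'s), and collapsing each block of three costs two binary multiplications. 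Concretely, from either association a single application of the appropriate distributive-law multiplication axiom creates the third $\delta$, and applications of \eqref{eq:monad-associativity} then line up the nested $\mu$'s and $\mu'$'s; the resulting diagram is invariant under exchanging the roles of $T$ and $T'$ (the symmetry the paper explicitly remarks on at the end of its proof), which is exactly why the two sides meet there. With the target corrected, the rest of your plan goes through as written.
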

\begin{proof}
For the first of the unit axioms, we have the following equalities:
\begin{equation*}
\begin{gathered}
\begin{tikzpicture}[scale=0.5]
\path coordinate[dot, label=below:$\mu'$] (mu') +(0,1) coordinate[label=above:$T'$] (tr)
 +(1,-3) coordinate[label=below:$T'$] (br)
 +(-2,-2) coordinate[dot, label=left:$\eta'$] (eta')
 (mu') ++(-2,0) coordinate[dot, label=below:$\mu$] (mu) ++(1,-3) coordinate[label=below:$T$] (bl)
 (mu) ++(-2,-2) coordinate[dot, label=left:$\eta$] (eta)
 (mu) ++(0,1) coordinate[label=above:$T$] (tl);
\draw (mu') -- (tr)
 (mu') to[out=0, in=90] (br)
 (eta) to[out=90, in=180] (mu)
 (mu) -- (tl);
\draw[name path=vert] (bl) to[out=90, in=0] (mu);
\draw[name path=curv] (eta') to[out=90, in=180] (mu');
\path[name intersections={of=vert and curv}] coordinate[dot, label=above:$\delta$] (delta) at (intersection-1);
\begin{pgfonlayer}{background}
\fill[catc] ($(tl) + (-3,0)$) rectangle ($(br) + (1,0)$);
\end{pgfonlayer}
\end{tikzpicture}
\end{gathered}
\stackrel{\eqref{eq:distretal}}{=}
\begin{gathered}
\begin{tikzpicture}[scale=0.5]
\path coordinate[dot, label=below:$\mu'$] (mu') +(0,1) coordinate[label=above:$T'$] (tr)
 +(1,-3) coordinate[label=below:$T'$] (br) +(-1,-1) coordinate[dot, label=below:$\eta'$] (eta')
 (mu') ++(-3,0) coordinate[dot, label=below:$\mu$] (mu) +(0,1) coordinate[label=above:$T$] (tl)
 +(1,-3) coordinate[label=below:$T$] (bl) +(-1,-1) coordinate[dot, label=below:$\eta$] (eta);
\draw (mu) -- (tl)
 (mu') -- (tr)
 (eta) to[out=90, in=180] (mu.west) -- (mu.east) to[out=0, in=90] (bl)
 (eta') to[out=90, in=180] (mu'.west) -- (mu'.east) to[out=0, in=90] (br);
\begin{pgfonlayer}{background}
\fill[catc] ($(tl) + (-2,0)$) rectangle ($(br) + (1,0)$);
\end{pgfonlayer}
\end{tikzpicture}
\end{gathered}
\stackrel{\eqref{eq:monad-unit}}{=}
\begin{gathered}
\begin{tikzpicture}[scale=0.5]
\path coordinate[label=below:$T$] (bl) +(0,4) coordinate[label=above:$T$] (tl)
 ++(1,0) coordinate[label=below:$T'$] (br) ++(0,4) coordinate[label=above:$T'$] (tr);
\draw (bl) -- (tl)
 (br) -- (tr);
\begin{pgfonlayer}{background}
\fill[catc] ($(bl) + (-1,0)$) rectangle ($(tr) + (1,0)$);
\end{pgfonlayer}
\end{tikzpicture}
\end{gathered}
\end{equation*}
The second unit axiom follows dually using axiom \eqref{eq:distretar}. 
The proof of associativity is a more interesting exercise in manipulating
string diagrams. At each stage we attempt to draw our string diagrams
so as to emphasise the forthcoming proof step, exploiting the topological
flexibility of the notation.
\begin{eqproof*}
\begin{tikzpicture}[scale=0.5]
\path coordinate[dot, label=above:$\delta$] (delta1)
 +(-2,1) coordinate[dot, label=below:$\mu$] (mu1)
 +(2,1) coordinate[dot, label=below:$\mu'$] (mu1')
 (delta1) +(-1,-1) coordinate[label=below:$T'$] (b) +(1,-1) coordinate[label=below:$T$] (c)
 (mu1) +(-1,-2) coordinate[label=below:$T$] (a)
 (mu1') +(1,-2) coordinate[label=below:$T'$] (d)
 (d) ++(1,0) coordinate[label=below:$T$] (e)
 (mu1) ++(1,1) coordinate[dot, label=below:$\mu$] (mu2) ++(0,2) coordinate[label=above:$T$] (a')
 (mu1') ++(1,2) coordinate[dot, label=below:$\mu'$] (mu2') ++(0,1) coordinate[label=above:$T'$] (b')
 (mu2') ++(2,-4) coordinate[label=below:$T'$] (f);
\draw (b) to[out=90, in=180] (delta1.west) -- (delta1.east) to[out=0, in=90] (c)
 (a) 
 to[out=90, in=180] (mu1.west) -- (mu1.east) 
 to[out=0, in=180] (delta1.west) -- (delta1.east) 
 to[out=0, in=180] (mu1'.west) -- (mu1'.east) 
 to[out=0, in=90] (d)
 (mu1) to[out=90, in=180] (mu2)
 (mu2) -- (a')
 (mu2') -- (b')
 (mu2') to[out=0, in=90] (f);
\draw[name path=curv] (e) to[out=90, in=0] (mu2);
\draw[name path=vert] (mu1') to[out=90, in=180] (mu2');
\path[name intersections={of=vert and curv}] coordinate[dot, label=135:$\delta$] (delta2) at (intersection-1);
\begin{pgfonlayer}{background}
\fill[catc] ($(a) + (-1,0)$) rectangle ($(b') + (3,0)$);
\end{pgfonlayer}
\end{tikzpicture}
\explain{distributive law axiom \eqref{eq:distrmul}}
\begin{tikzpicture}[scale=0.5]
\path coordinate[dot, label=above:$\delta$] (delta1)
 +(-2,1) coordinate[dot, label=below:$\mu$] (mu1)
 +(2,3) coordinate[dot, label=below:$\mu'$] (mu1')
 (delta1) +(-1,-1) coordinate[label=below:$T'$] (b) +(1,-1) coordinate[label=below:$T$] (c)
 (mu1) +(-1,-2) coordinate[label=below:$T$] (a)
 (mu1') +(1,-4) coordinate[label=below:$T'$] (d)
 (d) ++(1,0) coordinate[label=below:$T$] (e)
 (mu1) ++(1,1) coordinate[dot, label=below:$\mu$] (mu2) ++(0,3) coordinate[label=above:$T$] (a')
 (mu1') ++(1,1) coordinate[dot, label=below:$\mu'$] (mu2') ++(0,1) coordinate[label=above:$T'$] (b')
 (mu2') ++(2,-5) coordinate[label=below:$T'$] (f);
\draw (b) to[out=90, in=180] (delta1.west) -- (delta1.east) to[out=0, in=90] (c)
 (a) 
 to[out=90, in=180] (mu1.west) -- (mu1.east) 
 to[out=0, in=180] (delta1.west)
 (mu1) to[out=90, in=180] (mu2);
\draw[name path=vert2] (d) to[out=90, in=0] (mu1');
\draw[name path=vert1] (delta1) to[out=0, in=180] (mu1');
\draw (mu2) -- (a');
\draw (mu2') -- (b')
 (mu2') to[out=0, in=90] (f);
\draw[name path=curv] (e) to[out=90, in=0] (mu2);
\draw (mu1') to[out=90, in=180] (mu2');
\path[name intersections={of=vert1 and curv}] coordinate[dot, label=135:$\delta$] (delta2) at (intersection-1);
\path[name intersections={of=vert2 and curv}] coordinate[dot, label=left:$\delta$] (delta3) at (intersection-1);
\begin{pgfonlayer}{background}
\fill[catc] ($(a) + (-1,0)$) rectangle ($(b') + (3,0)$);
\end{pgfonlayer}
\end{tikzpicture}
\explain{monad associativity axiom \eqref{eq:monad-associativity}}
\begin{tikzpicture}[scale=0.5]
\path coordinate[dot, label=135:$\mu$] (mu1) ++(1,1) coordinate[dot, label=below:$\mu$] (mu2) ++(0,1) coordinate[label=above:$T$] (a')
 (mu1) ++(5,0) coordinate[dot, label=45:$\mu'$] (mu1') ++(-1,1) coordinate[dot, label=45:$\mu'$] (mu2') ++(0,1) coordinate[label=above:$T'$] (b')
 (mu1) ++(-1,-2) coordinate[label=below:$T$] (a)
 (mu1) ++(0,-2) coordinate[label=below:$T'$] (b)
 (mu1') ++(1,-2) coordinate[label=below:$T'$] (f)
 (mu1') ++(0,-2) coordinate[label=below:$T$] (e)
 (b) ++(1,0) coordinate[label=below:$T$] (c)
 (e) ++(-1,0) coordinate[label=below:$T'$] (d);
\draw (a) to[out=90, in=180] (mu1.west) -- (mu1.north) to[out=90, in=180] (mu2.west) -- (mu2.north) -- (a');
\draw[name path=ru] (b) to[out=90, in=180] (mu2');
\draw[name path=rd] (d) to[out=90, in=180] (mu1');
\draw (f) to[out=90, in=0] (mu1'.east) -- (mu1'.north) to[out=90, in=0] (mu2'.east) -- (mu2'.north) -- (b');
\draw[name path=lu] (e) to[out=90, in=0] (mu2);
\draw[name path=ld] (c) to[out=90, in=0] (mu1);
\path[name intersections={of=lu and ru}] coordinate[dot, label=above:$\delta$] (delta1) at (intersection-1);
\path[name intersections={of=lu and rd}] coordinate[dot, label=below:$\delta$] (delta2) at (intersection-1);
\path[name intersections={of=ru and ld}] coordinate[dot, label=below:$\delta$] (delta3) at (intersection-1);
\begin{pgfonlayer}{background}
\fill[catc] ($(a) + (-1,0)$) rectangle ($(b') + (3,0)$);
\end{pgfonlayer}
\end{tikzpicture}
\explain{monad associativity axiom \eqref{eq:monad-associativity}}
\begin{tikzpicture}[xscale=-0.5, yscale=0.5]
\path coordinate[dot, label=above:$\delta$] (delta1)
 +(-2,1) coordinate[dot, label=below:$\mu'$] (mu1)
 +(2,3) coordinate[dot, label=below:$\mu$] (mu1')
 (delta1) +(-1,-1) coordinate[label=below:$T$] (b) +(1,-1) coordinate[label=below:$T'$] (c)
 (mu1) +(-1,-2) coordinate[label=below:$T'$] (a)
 (mu1') +(1,-4) coordinate[label=below:$T$] (d)
 (d) ++(1,0) coordinate[label=below:$T'$] (e)
 (mu1) ++(1,1) coordinate[dot, label=below:$\mu'$] (mu2) ++(0,3) coordinate[label=above:$T'$] (a')
 (mu1') ++(1,1) coordinate[dot, label=below:$\mu$] (mu2') ++(0,1) coordinate[label=above:$T$] (b')
 (mu2') ++(2,-5) coordinate[label=below:$T$] (f);
\draw (b) to[out=90, in=180] (delta1.west) -- (delta1.east) to[out=0, in=90] (c)
 (a) 
 to[out=90, in=180] (mu1.west) -- (mu1.east) 
 to[out=0, in=180] (delta1.west)
 (mu1) to[out=90, in=180] (mu2);
\draw[name path=vert2] (d) to[out=90, in=0] (mu1');
\draw[name path=vert1] (delta1) to[out=0, in=180] (mu1');
\draw (mu2) -- (a');
\draw (mu2') -- (b')
 (mu2') to[out=0, in=90] (f);
\draw[name path=curv] (e) to[out=90, in=0] (mu2);
\draw (mu1') to[out=90, in=180] (mu2');
\path[name intersections={of=vert1 and curv}] coordinate[dot, label=45:$\delta$] (delta2) at (intersection-1);
\path[name intersections={of=vert2 and curv}] coordinate[dot, label=left:$\delta$] (delta3) at (intersection-1);
\begin{pgfonlayer}{background}
\fill[catc] ($(a) + (-1,0)$) rectangle ($(b') + (3,0)$);
\end{pgfonlayer}
\end{tikzpicture}
\explain{distributive law axiom \eqref{eq:distrmul}}
\begin{tikzpicture}[xscale=-0.5, yscale=0.5]
\path coordinate[dot, label=above:$\delta$] (delta1)
 +(-2,1) coordinate[dot, label=below:$\mu'$] (mu1)
 +(2,1) coordinate[dot, label=below:$\mu$] (mu1')
 (delta1) +(-1,-1) coordinate[label=below:$T$] (b) +(1,-1) coordinate[label=below:$T'$] (c)
 (mu1) +(-1,-2) coordinate[label=below:$T'$] (a)
 (mu1') +(1,-2) coordinate[label=below:$T$] (d)
 (d) ++(1,0) coordinate[label=below:$T'$] (e)
 (mu1) ++(1,1) coordinate[dot, label=below:$\mu'$] (mu2) ++(0,2) coordinate[label=above:$T'$] (a')
 (mu1') ++(1,2) coordinate[dot, label=below:$\mu$] (mu2') ++(0,1) coordinate[label=above:$T$] (b')
 (mu2') ++(2,-4) coordinate[label=below:$T$] (f);
\draw (b) to[out=90, in=180] (delta1.west) -- (delta1.east) to[out=0, in=90] (c)
 (a) 
 to[out=90, in=180] (mu1.west) -- (mu1.east) 
 to[out=0, in=180] (delta1.west) -- (delta1.east) 
 to[out=0, in=180] (mu1'.west) -- (mu1'.east) 
 to[out=0, in=90] (d)
 (mu1) to[out=90, in=180] (mu2)
 (mu2) -- (a')
 (mu2') -- (b')
 (mu2') to[out=0, in=90] (f);
\draw[name path=curv] (e) to[out=90, in=0] (mu2);
\draw[name path=vert] (mu1') to[out=90, in=180] (mu2');
\path[name intersections={of=vert and curv}] coordinate[dot, label=45:$\delta$] (delta2) at (intersection-1);
\begin{pgfonlayer}{background}
\fill[catc] ($(a) + (-1,0)$) rectangle ($(b') + (3,0)$);
\end{pgfonlayer}
\end{tikzpicture}
\end{eqproof*}
We note how the proof makes essential use of the symmetry of the monad and distributive law multiplication
axioms. 
\end{proof}
\begin{proposition}
Given monads $(\mathcal{C}, T, \eta, \mu)$ and $(\mathcal{C}, T', \eta', \mu')$ and a distributive
law as in definition \ref{def:distrlaw}, then $T'$ induces a monad on \eilmo{T}.
\end{proposition}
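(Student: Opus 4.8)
The plan is to lift $T'$ directly to an endofunctor of \eilmo{T} by means of the distributive law, and then to recognise the two remaining distributive law axioms as precisely the conditions needed to carry the unit $\eta'$ and multiplication $\mu'$ up to \eilmo{T}.

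First I would observe that the pair $(T', \delta)$ is nothing but a \refcmonad endomorphism of $(\mathcal{C}, T, \eta, \mu)$: taking the functor part to be $T'$ and the natural transformation part to be $\delta : T T' \Rightarrow T' T$, the two \refcmonad morphism axioms \eqref{eq:mcatmor-unit} and \eqref{eq:mcatmor-mult} specialise exactly to the distributive law axioms \eqref{eq:distretar} and \eqref{eq:distrmur}, namely the ones built from the unit and multiplication of $T$. Proposition \ref{prop:cmonadlift} then immediately supplies an endofunctor $\widetilde{T'} : \eilmo{T} \to \eilmo{T}$, acting on an algebra $(X, a)$ by $(T'X,\, T'a \circ \delta_X)$ and on a homomorphism $h$ by $T'h$. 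In particular this step already discharges the verification that $T'a \circ \delta_X$ is a genuine $T$-algebra (the equalities \eqref{eq:emunit} and \eqref{eq:emmult}), since that is the content of the object part of that proposition.

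Next I would lift the unit and multiplication, taking them again to be $\eta'$ and $\mu'$, so that the only thing to check is that each component is a homomorphism of $T$-algebras. For the unit, $\eta'_X$ must be a homomorphism $(X, a) \to \widetilde{T'}(X, a)$; graphically this asks that $\eta'$ slide past $\delta$, which is precisely the remaining unit axiom \eqref{eq:distretal}, after which the equation collapses onto the naturality square of $\eta'$ at $a$. For the multiplication, $\mu'_X$ must be a homomorphism $\widetilde{T'}\widetilde{T'}(X, a) \to \widetilde{T'}(X, a)$, where the source algebra carries two nested copies of $\delta$; reorganising these is exactly the remaining multiplication axiom \eqref{eq:distrmul}, and naturality of $\mu'$ at $a$ then finishes the check. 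I expect this multiplication case to be the main obstacle, since it is the only place where two occurrences of the distributive law interact and where the full strength of \eqref{eq:distrmul} together with naturality of both $\delta$ and $\mu'$ is required; in string diagrams it amounts to sliding the $\mu'$ node down through the two $\delta$ crossings.

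Finally I would deduce the monad laws by transport along the forgetful functor. The functor $U : \eilmo{T} \to \mathcal{C}$ of Proposition \ref{prop:emadj} is faithful and satisfies $U \widetilde{T'} = T' U$, $U\widetilde{\eta'} = \eta' U$ and $U\widetilde{\mu'} = \mu' U$. Hence the naturality of the lifted $\eta'$ and $\mu'$ and the monad unit and associativity axioms for $(\widetilde{T'}, \widetilde{\eta'}, \widetilde{\mu'})$ are equalities of parallel morphisms in \eilmo{T} whose images under $U$ are the corresponding, already valid, equalities for the monad $(\mathcal{C}, T', \eta', \mu')$; faithfulness of $U$ reflects each back into \eilmo{T}. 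Everything beyond the multiplication check of the second step is thus either a direct appeal to Proposition \ref{prop:cmonadlift} or routine transport along $U$.
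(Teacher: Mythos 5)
Your proposal is correct and takes essentially the same route as the paper's proof: the distributive law is recognised as a \refcmonad endomorphism so that proposition \ref{prop:cmonadlift} lifts $T'$ to an endofunctor of \eilmo{T}, the remaining axioms \eqref{eq:distretal} and \eqref{eq:distrmul} show that the components of $\eta'$ and $\mu'$ are $T$-algebra homomorphisms, and the monad laws carry over because morphisms in \eilmo{T} compose exactly as in $\mathcal{C}$. Your closing appeal to faithfulness of the forgetful functor $U$ is just a more explicit phrasing of that last observation.
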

\begin{proof}
We first note that a distributive law is a \refcmonad morphism, and so $T'$ lifts to an endofunctor
on \eilmo{T} by proposition \ref{prop:cmonadlift}. We take $\eta'$ and $\mu'$ as the unit and
multiplication.
That $\eta$ gives a natural transformation in \eilmo{T} follow from:
\begin{equation*}
\begin{gathered}
\begin{tikzpicture}[scale=0.5]
\path coordinate[dot, label=left:$a$] (a) +(0,1) coordinate[label=above:$X$] (tl) +(1,1) coordinate[label=above:$T'$] (tr)
 ++(0,-2) coordinate[label=below:$X$] (bl) ++(2,0) coordinate[label=below:$T$] (br)
 ++(-1,1) coordinate[dot, label=-135:$\eta'$] (eta');
\draw (bl) -- (a) -- (tl);
\draw[name path=curv] (br) to[out=90, in=0] (a);
\draw[name path=vert] (eta') -- (tr);
\path[name intersections={of=vert and curv}] coordinate[dot, label=45:$f$] (f) at (intersection-1);
\begin{pgfonlayer}{background}
\fill[catterm] ($(tl) + (-1,0)$) rectangle (bl);
\fill[catc] (tl) rectangle ($(br) + (1,0)$);
\end{pgfonlayer}
\end{tikzpicture}
\end{gathered}
=
\begin{gathered}
\begin{tikzpicture}[scale=0.5]
\path coordinate[dot, label=left:$a$] (a) +(0,-1) coordinate[label=below:$X$] (bl)
 +(2,-1) coordinate[label=below:$T$] (br)
 ++(0,2) coordinate[label=above:$X$] (tl)
 ++(1,0) coordinate[label=above:$T'$] (tr)
 ++(0,-1) coordinate[dot, label=right:$\eta'$] (eta');
\draw (eta') -- (tr)
 (bl) -- (a) -- (tl)
 (br) to[out=90, in=0] (a);
\begin{pgfonlayer}{background}
\fill[catterm] ($(tl) + (-1,0)$) rectangle (bl);
\fill[catc] (tl) rectangle ($(br) + (1,0)$);
\end{pgfonlayer}
\end{tikzpicture}
\end{gathered}
\end{equation*}
Similarly that $\mu'$ is a natural transformation in \eilmo{T} follows from:
\begin{equation*}
\begin{gathered}
\begin{tikzpicture}[scale=0.5]
\path coordinate[dot, label=left:$a$] (a) +(0,1) coordinate[label=above:$X$] (tl)
 +(2,1) coordinate[label=above:$T'$] (tr)
 +(4,-1.5) coordinate (b)
 ++(0,-3) coordinate[label=below:$X$] (bl)
 ++(2,1) coordinate[dot, label=45:$\mu'$] (mu')
 +(-1,-1) coordinate[label=below:$T'$] (bm)
 ++(1,-1) coordinate[label=below:$T'$] (br)
 (br) ++(1,0) coordinate[label=below:$T$] (brr);
\draw (bm) to[out=90, in=180] (mu'.west) -- (mu'.east) to[out=0, in=90] (br)
 (bl) -- (a) -- (tl);
\draw[name path=vert] (mu') -- (tr);
\draw[name path=curv] (a) to[out=0, in=90] (b) -- (brr);
\path[name intersections={of=vert and curv}] coordinate[dot, label=-135:$f$] (f) at (intersection-1);
\begin{pgfonlayer}{background}
\fill[catterm] ($(tl) + (-1,0)$) rectangle (bl);
\fill[catc] (tl) rectangle ($(brr) + (1,0)$);
\end{pgfonlayer}
\end{tikzpicture}
\end{gathered}
=
\begin{gathered}
\begin{tikzpicture}[scale=0.5]
\path coordinate[dot, label=left:$a$] (a)
 +(0,2) coordinate[label=above:$X$] (tl)
 +(2,2) coordinate[label=above:$T'$] (tr)
 +(4,-1.5) coordinate (b)
 ++(0,-2) coordinate[label=below:$X$] (bl)
 ++(1,0) coordinate[label=below:$T'$] (bm) +(1,3) coordinate[dot, label=45:$\mu'$] (mu')
 ++(2,0) coordinate[label=below:$T'$] (br)
 ++(1,0) coordinate[label=below:$T$] (brr);
\draw[name path=vert1] (bm) to[out=90, in=180] (mu');
\draw[name path=vert2] (br) to[out=90, in=0] (mu');
\draw[name path=curv] (brr) -- (b) to[out=90, in=0] (a);
\draw (mu') -- (tr);
\path[name intersections={of=vert1 and curv}] coordinate[dot, label=135:$f$] (f1) at (intersection-1);
\path[name intersections={of=vert2 and curv}] coordinate[dot, label=-135:$f$] (f2) at (intersection-1);
\begin{pgfonlayer}{background}
\fill[catterm] ($(tl) + (-1,0)$) rectangle (bl);
\fill[catc] (tl) rectangle ($(brr) + (1,0)$);
\end{pgfonlayer}
\end{tikzpicture}
\end{gathered}
\end{equation*}
That $\eta'$ and $\mu'$ satisfying the monad axioms in then obvious as morphisms in \eilmo{T} compose
exactly as in $\mathcal{C}$.
\end{proof}

\section{Representable Functors}
\label{sec:repr}
We now investigate the important concept of representable functors. Many aspects of category theory
can be phrased as requiring that a certain functor is representable, and examined from the string diagram
perspective, representability provides a standard framework for generating calculation rules in a
uniform manner. These calculation rules can then be specialized to provide graphical rules for limits,
colimits, adjunctions, and left and right Kan extensions with little effort, unifying how these concepts
can be approached graphically. Our use of representability bears some resemblance to the use of
initiality to provide calculation rules in \citep{Fokkinga1992b}.

\subsection{Covariant Representable Functors}
\label{sec:rep}
\begin{definition}[Covariant Representable Functor]
A covariant functor $G : \mathcal{C} \rightarrow \refcset$ is said to be \define{representable} if there
is an object $S$ of $\mathcal{C}$ such that there is a natural isomorphism:
\begin{equation}
\label{eq:repr}
G \cong \mathcal{C}(S,-)
\end{equation}
\end{definition}
We will identify the elements of a set with morphisms from the one element set, 
as discussed in section \ref{sec:elements}.
We will use a box notation to represent the mappings of the natural isomorphism,
similar to the box notation for monoidal functors introduced in \citep{CockettSeely1999, BluteCockettSeely2002}
and described in detail in \citep{Mellies2006}.
For an object $X$ in $\mathcal{C}$ the mappings from left to right and right to left in equation 
\eqref{eq:repr} can be drawn graphically as:
\begin{trivlist}
\item
\begin{minipage}{0.495\textwidth}
\begin{align*}
G(X) &\rightarrow \mathcal{C}(S,X)\\
\begin{gathered}
\diagrepto{r}{*}{X}{G}{catterm}{catc}{catset}{1}{2}
\end{gathered}
&\mapsto
\begin{gathered}
\begin{tikzpicture}[auto, x=0.5cm, y=0.5cm]
\diagbb{2.5cm}{2cm};
\outerreptoex{cbb}{X}{G}{r}{S};
\end{tikzpicture}
\end{gathered}
\end{align*}
\end{minipage}
\begin{minipage}{0.495\textwidth}
\begin{align*}
\mathcal{C}(S,X) &\rightarrow G(X)\\
\begin{gathered}
\twocelldiag{k}{S}{X}{catterm}{catc}{2}{2}{2}
\end{gathered}
&\mapsto
\begin{gathered}
\begin{tikzpicture}[auto, x=0.5cm, y=0.5cm]
\diagbb{2.5cm}{2cm};
\outerrepfromex{cbb}{S}{X}{k}{G};
\end{tikzpicture}
\end{gathered}
\end{align*}
\end{minipage}
\end{trivlist}
As these operations correspond to isomorphisms, the following two conditions hold:
\begin{subequations}
\begin{equation}
\label{eq:reprisoa}
\begin{gathered}
\begin{tikzpicture}[auto, x=0.5cm, y=0.5cm]
\diagbb{3.5cm}{3cm};
\path (cbb.north)
      -- ++(-1, 0) coordinate[label=above:$X$] (a)
      -- ++(0,-1.5) coordinate (a')
      (cbb.north)
      -- ++(1, 0) coordinate[label=above:$G$] (b)
      -- ++(0,-1.5) coordinate (b')
      (cbb.south) coordinate[label=below:$*$] (bot)
      -- ++(0,1.5) coordinate (c);
\fill[catterm] (cbb.south) -- (cbb.center) -- (a') -- (a) 
 -- (cbb.north west) -- (cbb.south west) -- cycle;
\fill[catset] (cbb.south) -- (cbb.center) -- (b') -- (b)
 -- (cbb.north east) -- (cbb.south east) -- cycle;
\fill[catc] (a) rectangle (b');
\draw (a') -- (a)
      (b') -- (b)
      (cbb.south) -- (c);
\path (cbb.south west) -- ++(0.5,1) coordinate (bl)
      (cbb.north east) -- ++(-0.5,-1) coordinate (tr);
\node[rectangle, fit=(bl)(tr)] (subdiag) {};
\reptoex{subdiag}{X}{G}{r}{S};
\draw[very thick] (subdiag.south west) rectangle (subdiag.north east);
\end{tikzpicture}
\end{gathered}
=
\begin{gathered}
\diagrepto{r}{*}{X}{G}{catterm}{catc}{catset}{1}{2}
\end{gathered}
\end{equation}
\begin{equation}
\label{eq:reprisob}
\begin{gathered}
\begin{tikzpicture}[auto, x=0.5cm, y=0.5cm]
\diagbb{3.5cm}{3cm};
\fill[catterm] (cbb.south west) rectangle (cbb.north);
\fill[catc] (cbb.south east) rectangle (cbb.north);
\coordinate[label=below:$S$] (b) at (cbb.south);
\coordinate[label=above:$X$] (t) at (cbb.north);
\draw (b) -- (t);
\path (cbb.south west) -- ++(0.5,1) coordinate (bl)
      (cbb.north east) -- ++(-0.5,-1) coordinate (tr);
\node[rectangle, fit=(bl)(tr)] (subdiag) {};
\repfromex{subdiag}{S}{X}{k}{G};
\draw[very thick] (subdiag.south west) rectangle (subdiag.north east);
\end{tikzpicture}
\end{gathered}
=
\begin{gathered}
\twocelldiag{k}{S}{X}{catterm}{catc}{2}{2}{2}
\end{gathered}
\end{equation}
\end{subequations}
The important aspect of representability is that the isomorphism is natural.
This can be seen as requiring that the following two calculation rules hold,
allowing us to ``push and pop'' morphisms in and out of the box notation.
\begin{subequations}
\begin{equation}
\label{eq:thetaone}
\begin{gathered}
\begin{tikzpicture}[auto, x=0.5cm, y=0.5cm]
\diagbb{2.5cm}{2.5cm};
\fill[catterm] (cbb.south west) rectangle (cbb.north);
\fill[catc] (cbb.south east) rectangle (cbb.north);
\path (cbb.south west) -- ++(0.5,1) coordinate (bl)
      (cbb.north east) -- ++(-0.5,-2) coordinate (tr)
      (cbb.north) coordinate[label=above:$Y$] (top) -- ++(0,-1) coordinate[dot, label=right:$h$] (h)
      (cbb.south) coordinate[label=below:$S$] (bot);
\draw (top) -- (h) -- ++(0,-1.5)
      (bot) -- ++(0,1.5);
\node[rectangle, fit=(bl)(tr)] (subdiag) {};
\repto{subdiag}{X}{G}{r}{$*$};
\draw[very thick] (subdiag.south west) rectangle (subdiag.north east);
\end{tikzpicture}
\end{gathered}
=
\begin{gathered}
\begin{tikzpicture}[auto, x=0.5cm, y=0.5cm]
\diagbb{2.5cm}{2.5cm};
\path (cbb.south west) -- ++(0.5,1) coordinate (bl)
      (cbb.north east) -- ++(-0.5,-1) coordinate (tr);
\fill[catterm] (cbb.south west) rectangle (cbb.north);
\fill[catc] (cbb.south east) rectangle (cbb.north);
\draw (cbb.south) coordinate[label=below:$S$] (bot) -- ++(0,1.5)
      (cbb.north) coordinate[label=above:$Y$] (top) -- ++(0, -1.5);
\node[rectangle, fit=(bl)(tr)] (subdiag) {};
\reptocomp{subdiag}{X}{Y}{G}{r}{h}{$*$};
\draw[very thick] (subdiag.south west) rectangle (subdiag.north east);
\end{tikzpicture}
\end{gathered}
\end{equation}
\begin{equation}
\label{eq:thetatwo}
\begin{gathered}
\begin{tikzpicture}[auto, x=0.5cm, y=0.5cm]
\diagbb{2.5cm}{2cm};
\path (cbb.south west) -- ++(0.5,1) coordinate (bl)
      (cbb.north east) -- ++(-0.5,-1.5) coordinate (tr)
      (cbb.north) -- ++(-1,0) coordinate[label=above:$Y$] (a')
      (cbb.north) -- ++(1,0) coordinate[label=above:$G$] (b');
\path (a') -- ++(0,-1.5) coordinate (a)
      (b') -- ++(0,-1.5) coordinate (b);
\coordinate[dot, label=right:$h$] (h) at ($(a)!0.5!(a')$);
\begin{pgfonlayer}{background}
\fill[catterm] (cbb.south) -- (cbb.center) -- (a)
  -- (a') -- (cbb.north west) -- (cbb.south west) -- cycle;
\fill[catset] (cbb.south) -- (cbb.center) -- (b)
  -- (b') -- (cbb.north east) -- (cbb.south east) -- cycle;
\fill[catc] (a) rectangle (b');
\end{pgfonlayer}
\draw (a) -- (h) -- (a')
      (b) -- (b')
      (cbb.south) coordinate[label=below:$*$] (bot) -- ++(0,1.5);
\node[rectangle, fit=(bl)(tr)] (subdiag) {};
\repfrom{subdiag}{S}{X}{k};
\draw[very thick] (subdiag.south west) rectangle (subdiag.north east);
\end{tikzpicture}
\end{gathered}
=
\begin{gathered}
\begin{tikzpicture}[auto, x=0.5cm, y=0.5cm]
\diagbb{2.5cm}{2cm};
\path (cbb.north)
      -- +(-1,0) coordinate[label=above:$Y$] (a')
      -- +(1,0) coordinate[label=above:$G$] (b');
\path (a') -- ++(0,-2) coordinate (a)
      (b') -- ++(0,-2) coordinate (b);
\path (cbb.south west) -- ++(0.5,1) coordinate (bl)
      (cbb.north east) -- ++(-0.5,-1) coordinate (tr);
\fill[catterm] (cbb.south) -- (cbb.center) -- (a) -- (a') 
 -- (cbb.north west) -- (cbb.south west) -- cycle;
\fill[catset] (cbb.south) -- (cbb.center) -- (b) -- (b') 
 -- (cbb.north east) -- (cbb.south east) -- cycle;
\fill[catc] (a) rectangle (b');
\draw (a) -- (a')
      (b) -- (b')
      (cbb.south) coordinate[label=below:$*$] -- ++(0,1.5);
\node[rectangle, fit=(bl)(tr)] (subdiag) {};
\repfromcomp{subdiag}{S}{X}{Y}{k}{h};
\draw[very thick] (subdiag.south west) rectangle (subdiag.north east);
\end{tikzpicture}
\end{gathered}
\end{equation}
\end{subequations}
\newcommand{\lmorph}[1]{
\path (#1.center) -- ++(-0.5, 0) coordinate (f)
      -- ++(1,-1) coordinate (a);
\path let \p1 = (#1.north) in
      let \p2 = (#1.south) in
      let \p3 = (f) in
      let \p4 = (a) in
      coordinate (t) at (\x3, \y1)
      coordinate (b) at (\x3, \y2)
      coordinate (a') at (\x4, \y2);
\fill[catterm] (b) -- (t) -- (#1.north west) -- (#1.south west) -- cycle;
\fill[catd] (f) to[out=0, in=90] (a) -- (a') -- (#1.south east) -- (#1.north east) -- (t) -- cycle;
\fill[catc] (f) to[out=0, in=90] (a) -- (a') -- (b) -- cycle;

\draw (b) to node{$X$} (f) to node{$Y$} (t)
      (f) to[out=0, in=90] (a) to node{$F$} (a');
\strnat{f};
\strlabl{f}{$f$}
}

\newcommand{\lmorphex}[1]{
\path (#1.center) -- ++(-0.5, 0) coordinate (f)
      -- ++(1,-1) coordinate (a);
\path let \p1 = (#1.north) in
      let \p2 = (#1.south) in
      let \p3 = (f) in
      let \p4 = (a) in
      coordinate (t) at (\x3, \y1)
      coordinate (b) at (\x3, \y2)
      coordinate (a') at (\x4, \y2);
\coordinate (h) at ($(f)!0.5!(t)$);
\fill[catterm] (b) -- (t) -- (#1.north west) -- (#1.south west) -- cycle;
\fill[catd] (f) to[out=0, in=90] (a) -- (a') -- (#1.south east) -- (#1.north east) -- (t) -- cycle;
\fill[catc] (f) to[out=0, in=90] (a) -- (a') -- (b) -- cycle;

\draw (b) to node{$X$} (f) to node{$Y$} (h) to node{$Z$} (t)
      (f) to[out=0, in=90] (a) to node{$F$} (a');
\strnat{f};
\strlabl{f}{$f$};
\strnat{h};
\strlabr{h}{$h$}
}

\newcommand{\rmorph}[1]{
\path (#1.center) -- ++(-0.5, 0) coordinate (f)
      -- ++(1,1) coordinate (a);
\path let \p1 = (#1.north) in
      let \p2 = (#1.south) in
      let \p3 = (f) in
      let \p4 = (a) in
      coordinate (t) at (\x3, \y1)
      coordinate (b) at (\x3, \y2)
      coordinate (a') at (\x4, \y1);
\fill[catterm] (b) -- (t) -- (#1.north west) -- (#1.south west) -- cycle;
\fill[catc] (f) to[out=0, in=270] (a) -- (a') -- (#1.north east) -- (#1.south east) -- (b) -- cycle;
\fill[catd] (f) to[out=0, in=270] (a) -- (a') -- (t) -- cycle;

\draw (b) to node{$X$} (f) to node{$Y$} (t)
      (f) to[out=0, in=270] (a) to node[swap]{$G$} (a');
\strnat{f};
\strlabl{f}{$g$}
}
The next lemma shows that to prove representability it is sufficient to show a family
of bijections satisfy just one of the two equations above.
\begin{lemma}
\label{lem:pushpop}
For a given family of isomorphisms of hom sets:
\begin{equation*}
\theta_X : F(X) \cong \mathcal{C}(S,X)
\end{equation*}
The conditions in equations \eqref{eq:thetaone} and \eqref{eq:thetatwo} are equivalent.
\end{lemma}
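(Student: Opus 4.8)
The two displayed conditions are precisely the naturality squares for $\theta$ and for its inverse: equation \eqref{eq:thetaone} says that post-composing the output of $\theta_X$ with a morphism $h \colon X \to Y$ agrees with transporting the input along $h$ inside the box, while \eqref{eq:thetatwo} asserts the dual statement for the inverse maps $\theta^{-1}$. Since each $\theta_X$ is assumed to be an isomorphism, the equivalence we are asked for is an instance of the familiar principle that naturality of a pointwise-invertible transformation is equivalent to naturality of its inverse. The plan is therefore to derive \eqref{eq:thetatwo} from \eqref{eq:thetaone} and then observe that the reverse implication follows by a mirror-image argument. Crucially, both directions use only that $\theta_X$ and $\theta_X^{-1}$ cancel, which is exactly the content of the isomorphism equations \eqref{eq:reprisoa} and \eqref{eq:reprisob} --- and these depend solely on each $\theta_X$ being invertible, not on any naturality that we are still trying to establish.

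First I would prove the implication from \eqref{eq:thetaone} to \eqref{eq:thetatwo} graphically. Starting from the left-hand side of \eqref{eq:thetatwo}, which is the functor image of a \emph{repfrom} box applied to $k$, I would use \eqref{eq:reprisoa} --- that a \emph{repto} box followed by a \emph{repfrom} box is the identity --- to insert such a cancelling pair immediately above the existing box. This rewrites the expression as a \emph{repfrom} box applied to a \emph{repto} box, where the latter now has the morphism $h$ pushed into it. I would then apply the push-pop rule \eqref{eq:thetaone} to that inner \emph{repto} box, sliding $h$ out of the box and onto the wire carrying the morphism $S \to Y$. Finally, the residual composite formed by the two remaining boxes (a \emph{repfrom} followed by a \emph{repto}, applied to $k$) is removed using the other isomorphism equation \eqref{eq:reprisob}, leaving precisely the right-hand side of \eqref{eq:thetatwo}, namely the \emph{repfromcomp} box built from $h \circ k$.

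The converse implication is established by the symmetric calculation: one inserts a cancelling pair via \eqref{eq:reprisob}, applies the inverse push-pop rule \eqref{eq:thetatwo} to the resulting inner box, and then cancels the leftover composite with \eqref{eq:reprisoa}. I do not expect any genuine difficulty here, since there is no substantive categorical content beyond the standard inverse-naturality argument. The only thing requiring care --- and hence the mild ``obstacle'' of the proof --- is the topological bookkeeping: keeping track of which box is the inner one, ensuring the node $h$ is deposited on the correct strand, and checking that the source and target objects match so that each cited equation applies verbatim. Because each direction rests on the two cancellation equations together with a single application of the relevant push-pop rule, presenting the inserted cancelling pair cleanly in the box notation is the whole of the work.
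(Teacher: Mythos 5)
Your proposal is correct and follows essentially the same route as the paper: the paper's proof simply observes that pointwise inverses of a natural transformation form a natural transformation and declares the claim immediate, which is exactly the principle you identify. Your insert-cancel-apply-cancel calculation using \eqref{eq:reprisoa}, \eqref{eq:thetaone} (resp.\ \eqref{eq:thetatwo}), and \eqref{eq:reprisob} is just an explicit, correct spelling-out of the step the paper leaves as ``easy to show.''
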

\begin{proof}
It is easy to show that if every component of a natural transformation has an inverse,
then the inverses form a natural transformation. The claim then follows immediately.
\end{proof}
\begin{remark}
Given a representable functor:
\begin{equation*}
G \cong \mathcal{C}(S,-)
\end{equation*}
Generally we will know more about the structure of the left hand side than just
that it is a functor with codomain \refcset. It will be common for $G$ to involve
a hom bifunctor $\mathcal{C}(-,-)$ in some way, and this structure can be used to
put the above equations into more convenient form than reasoning in terms of elements
of an abstract set. This will be seen in the examples in section \ref{sec:reprexamples}.
\end{remark}

\subsection{Contravariant Representable Functors}
\begin{definition}[Contravariant Representable Functor]
A contravariant functor $F : \mathcal{C}^{op} \rightarrow \refcset$ is said to be representable
if there is an object $R$ of $\mathcal{C}$ such that there is a natural isomorphism:
\begin{equation}
\label{eq:reprcontra}
F \cong \mathcal{C}(-,R)
\end{equation}
\end{definition}
It is obvious from equation \eqref{eq:reprcontra} we also have:
\begin{equation*}
F \cong \mathcal{C}^{op}(R,-)
\end{equation*}
This is now expressed as in equation \eqref{eq:repr} for the covariant case, and so we can use
identical definitions to those in section \ref{sec:rep}, but replacing $\mathcal{C}$ with $\mathcal{C}^{op}$
everywhere.
Alternatively, we can use both $\mathcal{C}$ and $\mathcal{C}^{op}$ in our diagrams.
The left to right and right to left mappings of equation \eqref{eq:repr} then become:
\begin{trivlist}
\item
\begin{minipage}{0.495\textwidth}
\begin{align*}
F(X) &\rightarrow \mathcal{C}(X, R)\\
\begin{gathered}
\diagrepto{r}{*}{X}{F}{catterm}{catcop}{catset}{1}{2}
\end{gathered}
&\mapsto
\begin{gathered}
\begin{tikzpicture}[auto, x=0.5cm, y=0.5cm]
\diagbb{2.5cm}{2cm};
\repcontratoex{cbb}{X}{F}{r}{R};
\end{tikzpicture}
\end{gathered}
\end{align*}
\end{minipage}
\newcommand{\repcontrafromex}[5]{
 \path (#1.north)
       -- ++(-1,0) coordinate[label=above:#2] (#1-a)
       -- ++(0,-1.5) coordinate (#1-a')
       (#1.north)
       -- ++(1,0) coordinate[label=above:#5] (#1-b)
       -- ++(0,-1.5) coordinate (#1-b');
 \fill[catcop] (#1-a) rectangle (#1-b');
 \fill[catterm] (#1.south west) -- (#1.south) -- (#1.center) -- (#1-a') -- (#1-a) -- (#1.north west) -- cycle;
 \fill[catset] (#1.south east) -- (#1.south) -- (#1.center) -- (#1-b') -- (#1-b) -- (#1.north east) -- cycle;
\coordinate[label=below:$*$] (bot) at (#1.south);
 \draw (#1.south) -- ++(0,1.5)
       (#1-b) -- (#1-b')
       (#1-a) -- (#1-a');
 \path (#1.south west) -- ++(0.5,1) coordinate (#1-bl)
       (#1.north east) -- ++(-0.5,-1) coordinate (#1-tr);
 \node[rectangle, fit=(#1-bl)(#1-tr)] (#1-subdiag) {};
 \repfrom{#1-subdiag}{#2}{#3}{#4};
 \draw[very thick] (#1-subdiag.south west) rectangle (#1-subdiag.north east)
}
\begin{minipage}{0.495\textwidth}
\begin{align*}
\mathcal{C}(X, R) &\rightarrow F(X)\\
\begin{gathered}
\twocelldiag{k}{X}{R}{catterm}{catc}{2}{2}{2}
\end{gathered}
&\mapsto
\begin{gathered}
\begin{tikzpicture}[auto, x=0.5cm, y=0.5cm]
\diagbb{2.5cm}{2cm};
\repcontrafromex{cbb}{X}{R}{k}{F};
\end{tikzpicture}
\end{gathered}
\end{align*}
\end{minipage}
\end{trivlist}
The calculation rules given in equations \eqref{eq:thetaone} and \eqref{eq:thetatwo} can also be adapted in a similar manner.

\subsection{Universality from Representables}
\label{sec:universality}
Much of category theory is concerned with the important notion of a universal property. We now
show how the representability of a functor leads to a particular universal property. When more
structure is known about the representable functor, we can recover well known universal properties
such as those of adjunctions, limits, colimits and Kan extensions, as will be discussed in later
sections and examples.

\begin{definition}[Counit]
For a representable functor:
\begin{equation*}
G \cong \mathcal{C}(R,-)
\end{equation*}
the \define{counit} is the element of $G(R)$ that is the image of $1_R$ under the natural isomorphism.
\end{definition}

\begin{proposition}
For a representable functor:
\begin{equation*}
G \cong \mathcal{C}(R,-)
\end{equation*}
and $X$ an object of $\mathcal{C}$, for every element $r \in G(X)$, there exists a unique $f$ such 
that $r$ can be written in the form:
\begin{equation*}
r = G(f)(c)
\end{equation*}
where $c$ is the counit of the representable functor.
\end{proposition}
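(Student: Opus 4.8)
The plan is to let $f$ be the image of $r$ under the left‑to‑right map of the representing isomorphism $G \cong \mathcal{C}(R,-)$; writing $\theta_X : G(X) \to \mathcal{C}(R,X)$ for that map (the ``to'' box at $X$), I would simply set $f := \theta_X(r)$, a morphism $R \to X$. Both existence and uniqueness will then reduce to a single naturality identity together with the fact, recorded in equations \eqref{eq:reprisoa} and \eqref{eq:reprisob}, that the ``to'' and ``from'' boxes are mutually inverse. The whole argument is a string‑diagrammatic rendering of the Yoneda lemma.

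The central step is to establish the identity $G(f)(c) = \theta_X^{-1}(f)$, relating the action of the functor on the counit $c$ to the right‑to‑left box. Recall that $c$ is obtained by applying the ``from'' box to the identity $1_R$, so $c = \theta_R^{-1}(1_R)$. I would obtain the identity by instantiating the push/pop rule \eqref{eq:thetatwo} (applied to the covariant representable with representing object $R$) at the inner morphism $1_R : R \to R$ and the outer morphism $f : R \to X$. The left‑hand side of that rule is then $G(f)$ applied to the ``from'' box of $1_R$, that is $G(f)(c)$, while the right‑hand side is the ``from'' box of the composite $f \circ 1_R$. Since identity wires are suppressed in the notation, $1_R$ is invisible and the composite is just $f$, so the two sides read off as $G(f)(c) = \theta_X^{-1}(f)$.

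With this identity in place, existence is immediate: for the chosen $f = \theta_X(r)$ we get $G(f)(c) = \theta_X^{-1}(\theta_X(r)) = r$, using equation \eqref{eq:reprisoa}. For uniqueness, suppose some $g : R \to X$ also satisfies $G(g)(c) = r$. Applying the central identity with $g$ in place of $f$ gives $\theta_X^{-1}(g) = r$, and then applying the ``to'' box $\theta_X$ and invoking equation \eqref{eq:reprisob} yields $g = \theta_X(r) = f$.

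I expect no genuine mathematical difficulty here; the only real care needed is bookkeeping about types and about which of the two isomorphism conditions is used in which direction, and confirming that the instance $k = 1_R$ of rule \eqref{eq:thetatwo} is legitimate once the identity wire is suppressed. (One could equally use the equivalent rule \eqref{eq:thetaone} by Lemma \ref{lem:pushpop}, but the ``from'' rule matches the shape of the argument most directly.) Indeed, as the subsequent remark will note, this proposition is really a packaging of naturality, so the substance lies entirely in the clean derivation of $G(f)(c) = \theta_X^{-1}(f)$ from the push/pop rule.
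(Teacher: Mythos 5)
Your proposal is correct and follows essentially the same route as the paper's proof: both take $f = \theta_X(r)$, use the push/pop rule \eqref{eq:thetatwo} instantiated at the identity $1_R$ to identify $G(f)(c)$ with $\theta_X^{-1}(f)$, and then conclude existence and uniqueness from the isomorphism equations \eqref{eq:reprisoa} and \eqref{eq:reprisob}. The only cosmetic difference is that you isolate $G(f)(c)=\theta_X^{-1}(f)$ as a single reusable identity, whereas the paper runs the same manipulation once as a diagram chain for existence and once as a chain of equivalences for uniqueness.
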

\begin{proof}
For an arbitrary $r \in F(X)$ we have the following sequence of equalities:
\begin{equation*}
\begin{gathered}
\diagrepto{r}{*}{X}{G}{catterm}{catc}{catset}{1}{2}
\end{gathered}
\stackrel{\eqref{eq:reprisoa}}{=}
\begin{gathered}
\begin{tikzpicture}[auto, x=0.5cm, y=0.5cm]
\diagbb{3.5cm}{4cm};
\path (cbb.north)
      -- ++(-1, 0) coordinate[label=above:$X$] (a)
      -- ++(0,-1.5) coordinate (a')
      (cbb.north)
      -- ++(1, 0) coordinate[label=above:$G$] (b)
      -- ++(0,-1.5) coordinate (b')
      (cbb.south) coordinate[label=below:$*$] (bot)
      -- ++(0,1.5) coordinate (c);
\fill[catterm] (cbb.south) -- (cbb.center) -- (a') -- (a) 
 -- (cbb.north west) -- (cbb.south west) -- cycle;
\fill[catset] (cbb.south) -- (cbb.center) -- (b') -- (b)
 -- (cbb.north east) -- (cbb.south east) -- cycle;
\fill[catc] (a) rectangle (b');
\draw (a') -- (a)
      (b') -- (b)
      (cbb.south) -- (c);
\path (cbb.south west) -- ++(0.5,1) coordinate (bl)
      (cbb.north east) -- ++(-0.5,-1) coordinate (tr);
\node[rectangle, fit=(bl)(tr)] (subdiag) {};
\reptoex{subdiag}{X}{G}{r}{S};
\draw[very thick] (subdiag.south west) rectangle (subdiag.north east);
\end{tikzpicture}
\end{gathered}
\stackrel{\eqref{eq:thetatwo}}{=}
\begin{gathered}
\begin{tikzpicture}[auto, x=0.5cm, y=0.5cm]
\diagbb{3.5cm}{4cm};
\path (cbb.north west) ++(0.5,-1.0) coordinate (tl)
      (cbb.center) ++ (0.5,0) coordinate (br);
\path (cbb.center) ++(-2,-1) coordinate (tl')
      (cbb.center) ++(2,-3) coordinate (br');
\node[rectangle, fit=(tl)(br)] (subdiag) {};
\node[rectangle, fit=(tl')(br')] (subdiag') {};
\path (subdiag.south) coordinate (a)
      (subdiag.north) coordinate (b)
      (subdiag'.south) coordinate (c)
      (subdiag'.north east) ++(-1,0) coordinate (d);
\path let \p1 = (subdiag'.north) in
      let \p2 = (a) in
      let \p3 = (cbb.north) in
      let \p4 = (b) in
      let \p5 = (cbb.south) in
      let \p6 = (c) in
      let \p7 = (d) in
 coordinate (a') at (\x2, \y1)
 coordinate[label=above:$X$] (b') at (\x4, \y3)
 coordinate[label=below:$*$] (c') at (\x6, \y5)
 coordinate[label=above:$G$] (d') at (\x7, \y3);
\begin{pgfonlayer}{background}
\fill[catset] (cbb.north west) rectangle (cbb.south east);
\fill[catc] (a') rectangle (d');
\fill[catterm] (b') -- (a') -- (c) -- (c') -- (cbb.south west) -- (cbb.north west) -- cycle;
\end{pgfonlayer}
\draw (a') -- (a)
      (b) -- (b')
      (c') -- (c)
      (d) -- (d');
\fill[catc] (subdiag'.south west) rectangle (subdiag'.north east);
\fill[catterm] (subdiag'.south west) rectangle (subdiag'.north);
\draw (subdiag'.south) to node{$R$} (subdiag'.north);
\repto{subdiag}{X}{G}{r}{$*$};
\draw[very thick] (subdiag.south west) rectangle (subdiag.north east)
                  (subdiag'.south west) rectangle (subdiag'.north east);
\end{tikzpicture}
\end{gathered}
\end{equation*}
Now assume $r = G(f)(c)$ and $r = G(g)(c)$, then:
\begin{eqproof*}
\begin{gathered}
\repuniva{f}
\end{gathered}
=
\begin{gathered}
\repuniva{g}
\end{gathered}
\explain[\Leftrightarrow]{ push / pop equation \eqref{eq:thetatwo} }
\begin{gathered}
\begin{tikzpicture}[auto, x=0.5cm, y=0.5cm]
\diagbb{2.5cm}{2cm};
\outerrepfromex{cbb}{R}{X}{f}{G};
\end{tikzpicture}
\end{gathered}
=
\begin{gathered}
\begin{tikzpicture}[auto, x=0.5cm, y=0.5cm]
\diagbb{2.5cm}{2cm};
\outerrepfromex{cbb}{R}{X}{g}{G};
\end{tikzpicture}
\end{gathered}
\explain[\Leftrightarrow]{ isomorphism }
\begin{gathered}
\twocelldiag{f}{R}{X}{catterm}{catc}{2}{2}{1}
\end{gathered}
=
\begin{gathered}
\twocelldiag{g}{R}{X}{catterm}{catc}{2}{2}{1}
\end{gathered}
\end{eqproof*}
\end{proof}
\begin{remark}
A dual universality result can be given for contravariant representable functors,
the details are left to the interested reader.
\end{remark}

\subsection{Examples of Representable Functors and their Calculation Rules}
\label{sec:reprexamples}
In this section we will consider two important applications of the representability based approach of section
\ref{sec:repr}, adjunctions and Kan extensions. Another important source of examples are limits and colimits,
these will be discussed in detail later in section \ref{sec:limits}.

\begin{example}[Adjunctions]
For $F \dashv G$, for each $X$ and $Y$ we have mappings:
\begin{subequations}
\begin{equation}
\label{eq:adjFtoG}
\begin{gathered}
\gmorph{f}{F}{X}{Y}{catterm}{catc}{catd}{1.5}{1}
\end{gathered}
\mapsto
\begin{gathered}
\begin{tikzpicture}[auto, x=0.5cm, y=0.5cm]
\diagbb{2.5cm}{2.5cm};
\path (cbb.center)
      -- +(-1.5, -1.5) coordinate (bl)
      -- +(1.5,1.5) coordinate (tr)
      -- +(0,-1.5) coordinate (a)
      -- +(-1,1.5) coordinate (b)
      -- +(1, 1.5) coordinate (c);
\path let \p1 = (cbb.south) in
      let \p2 = (cbb.north) in
      let \p3 = (a) in
      let \p4 = (b) in
      let \p5 = (c) in
 coordinate[label=below:$X$] (a') at (\x3, \y1)
 coordinate[label=above:$Y$] (b') at (\x4, \y2)
 coordinate[label=above:$G$] (c') at (\x5, \y2);
\draw (a') -- (a)
      (b) -- (b')
      (c) -- (c');
\begin{pgfonlayer}{background}
\fill[catd] (b) -- (b') -- (c') -- (c) -- cycle;
\fill[catc] (a') -- (a) -- (c) -- (c') 
 -- (cbb.north east) -- (cbb.south east) -- cycle;
\fill[catterm] (a') -- (a) -- (b) -- (b')
 -- (cbb.north west) -- (cbb.south west) -- cycle;
\end{pgfonlayer}
\node[rectangle, fit=(bl)(tr)] (subdiag) {};
\lmorph{subdiag};
\draw[very thick] (subdiag.south west) rectangle (subdiag.north east);
\end{tikzpicture}
\end{gathered}
:=
\begin{gathered}
\bendfmorph{f}{1}
\end{gathered}
\end{equation}
\begin{equation}
\label{eq:adjGtoF}
\begin{gathered}
\fmorph{g}{G}{X}{Y}{catterm}{catc}{catd}{1.5}{1}
\end{gathered}
\mapsto
\begin{gathered}
\begin{tikzpicture}[auto, x=0.5cm, y=0.5cm]
\diagbb{2.5cm}{2.5cm};
\path (cbb.center)
      -- +(-1.5, -1.5) coordinate (bl)
      -- +(1.5,1.5) coordinate (tr)
      -- +(-1,-1.5) coordinate (a)
      -- +(1,-1.5) coordinate (b)
      -- +(0, 1.5) coordinate (c);
\path let \p1 = (cbb.north) in
      let \p2 = (cbb.south) in
      let \p3 = (a) in
      let \p4 = (b) in
      let \p5 = (c) in
 coordinate[label=below:$X$] (a') at (\x3, \y2)
 coordinate[label=below:$F$] (b') at (\x4, \y2)
 coordinate[label=above:$Y$] (c') at (\x5, \y1);
\begin{pgfonlayer}{background}
\fill[catc] (a') -- (a) -- (b) -- (b') -- cycle;
\fill[catd] (b') -- (b) -- (c) -- (c')
 -- (cbb.north east) -- (cbb.south east) -- cycle;
\fill[catterm] (a') -- (a) -- (c) -- (c') 
 -- (cbb.north west) -- (cbb.south west) -- cycle;
\end{pgfonlayer}
\draw (a') -- (a)
      (b') -- (b)
      (c) -- (c');
\node[rectangle, fit=(bl)(tr)] (subdiag) {};
\rmorph{subdiag};
\draw[very thick] (subdiag.south west) rectangle (subdiag.north east);
\end{tikzpicture}
\end{gathered}
:=
\begin{gathered}
\bendgmorph{g}{1}
\end{gathered}
\end{equation}
\end{subequations}
It is immediate from the adjunction axioms that these maps are mutually inverse. 
Naturality of the first mapping follows easily from the following equalities:
\begin{equation*}
\begin{gathered}
\begin{tikzpicture}[auto, x=0.5cm, y=0.5cm]
\diagbb{2.5cm}{3cm};
\path (cbb.center)
      -- +(-1.5, -1.5) coordinate (bl)
      -- +(1.5,1.5) coordinate (tr)
      -- +(0,-1.5) coordinate (a)
      -- +(-1,1.5) coordinate (b)
      -- +(1, 1.5) coordinate (c);
\path let \p1 = (cbb.south) in
      let \p2 = (cbb.north) in
      let \p3 = (a) in
      let \p4 = (b) in
      let \p5 = (c) in
 coordinate[label=below:$X$] (a') at (\x3, \y1)
 coordinate[label=above:$Z$] (b') at (\x4, \y2)
 coordinate[label=above:$G$] (c') at (\x5, \y2);
\coordinate[dot, label=right:$h$] (h) at ($(b)!0.5!(b')$);
\begin{pgfonlayer}{background}
\fill[catd] (b) -- (b') -- (c') -- (c) -- cycle;
\fill[catc] (a') -- (a) -- (c) -- (c') 
 -- (cbb.north east) -- (cbb.south east) -- cycle;
\fill[catterm] (a') -- (a) -- (b) -- (b')
 -- (cbb.north west) -- (cbb.south west) -- cycle;
\end{pgfonlayer}
\draw (a') -- (a)
      (b) -- (h) -- (b')
      (c) -- (c');
\node[rectangle, fit=(bl)(tr)] (subdiag) {};
\lmorph{subdiag};
\draw[very thick] (subdiag.south west) rectangle (subdiag.north east);
\end{tikzpicture}
\end{gathered}
\stackrel{\eqref{eq:adjFtoG}}{=}
\begin{gathered}
\begin{tikzpicture}[scale=0.5]
\path coordinate[dot, label=left:$f$] (f)
 +(0,-3) coordinate[label=below:$X$] (b)
 +(1,-1) coordinate[dot, label=below:$\eta$] (eta)
 ++(0,2) coordinate[dot, label=left:$h$] (h)
 ++(0,1) coordinate[label=above:$Z$] (tl)
 ++(3,0) coordinate[label=above:$G$] (tr);
\draw (b) -- (f) -- (h) -- (tl)
 (f) to[out=-90, in=180] (eta) to[out=0, in=-90] (tr);
\begin{pgfonlayer}{background}
\fill[catterm] ($(b) + (-1,0)$) rectangle (tl);
\fill[catc] (b) rectangle ($(tr) + (1,0)$);
\fill[catd] (f) to[out=-90, in=180] (eta) to[out=0, in=-90] (tr) -- (tl) -- cycle;
\end{pgfonlayer}
\end{tikzpicture}
\end{gathered}
\stackrel{\eqref{eq:adjFtoG}}{=}
\begin{gathered}
\begin{tikzpicture}[auto, x=0.5cm, y=0.5cm]
\diagbb{2.5cm}{3cm};
\path (cbb.center)
      -- +(-1.5, -2) coordinate (bl)
      -- +(1.5,2) coordinate (tr)
      -- +(0,-1.5) coordinate (a)
      -- +(-1,1.5) coordinate (b)
      -- +(1, 1.5) coordinate (c);
\path let \p1 = (cbb.south) in
      let \p2 = (cbb.north) in
      let \p3 = (a) in
      let \p4 = (b) in
      let \p5 = (c) in
 coordinate[label=below:$X$] (a') at (\x3, \y1)
 coordinate[label=above:$Z$] (b') at (\x4, \y2)
 coordinate[label=above:$G$] (c') at (\x5, \y2);
\begin{pgfonlayer}{background}
\fill[catd] (b) -- (b') -- (c') -- (c) -- cycle;
\fill[catc] (a') -- (a) -- (c) -- (c') 
 -- (cbb.north east) -- (cbb.south east) -- cycle;
\fill[catterm] (a') -- (a) -- (b) -- (b')
 -- (cbb.north west) -- (cbb.south west) -- cycle;
\end{pgfonlayer}
\draw (a') -- (a)
      (b) -- (b')
      (c) -- (c');
\node[rectangle, fit=(bl)(tr)] (subdiag) {};
\lmorphex{subdiag};
\draw[very thick] (subdiag.south west) rectangle (subdiag.north east);
\end{tikzpicture}
\end{gathered}
\end{equation*}
Naturality of the second map then follows from lemma \ref{lem:pushpop}.
We remark that the usual bijection, natural in $X$ and $Y$ 
induced by an adjunction:
\begin{center}
\begin{prooftree}
\AxiomC{$F(X) \rightarrow Y$}
\doubleLine
\UnaryInfC{$X \rightarrow G(Y)$}
\end{prooftree}
\end{center}
can also be seen as an immediate corollary of lemma \ref{lem:mvnat}.
\end{example}

\begin{example}[Kan Extensions]
A novel graphical notation for Kan extensions was introduced in \citep{Hinze2012}.
The paper then develops many proofs using both a traditional symbolic style and
a string diagram based graphical approach, illustrating the compactness and efficiency
of the latter.

We will examine Kan extensions
as an example application of the representability based results we have developed
in earlier sections.
Given functors $J : \mathcal{B} \rightarrow \mathcal{A}$ and $F : \mathcal{B} \rightarrow \mathcal{C}$,
$F$ is said to have a left Kan extension along $J$ if the functor
$[\mathcal{B}, \mathcal{C}](F, (-) \circ J)$ is representable.
Then we have:
\begin{equation*}
[\mathcal{B}, \mathcal{C}](F, (-) \circ J) \cong [\mathcal{A}, \mathcal{C}](\lkan{F}{J}, -)
\end{equation*}
Dually, $F$ has a right Kan extension along $J$ if we have the following natural isomorphism:
\begin{equation*}
[\mathcal{B}, \mathcal{C}]((-) \circ J, F) \cong [\mathcal{A}, \mathcal{C}](-, \rkan{F}{J})
\end{equation*}
If we denote the counit as $c$, in the case of left Kan extensions, 
the universality results of section \ref{sec:universality} specialize to the following axiom, giving the usual
unique factorization property of left Kan extensions:
\newcommand{\lkanmor}[5]{
\path (#1) coordinate (#1-sigma)
      +(-1,1) coordinate (#1-a)
      +(1,1) coordinate (#1-b);
\path let \p1 = (#1.north) in
      let \p2 = (#1.south) in
      let \p3 = (#1-sigma) in
      let \p4 = (#1-a) in
      let \p5 = (#1-b) in
 coordinate (#1-d) at (\x3, \y2)
 coordinate (#1-a') at (\x4, \y1)
 coordinate (#1-b') at (\x5, \y1);
\fill[catc] (#1.south west) rectangle (#1.north east);
\fill[catd] (#1-a') -- (#1-a) to[out=270, in=180] (#1-sigma) to[out=0, in=270] (#1-b) -- (#1-b') -- cycle;
\fill[cate] (#1-d) -- (#1-sigma) to[out=0, in=270] (#1-b) -- (#1-b') -- (#1.north east) -- (#1.south east) -- cycle;
\draw (#1-a') to node[swap]{#3} (#1-a) to[out=270, in=180] (#1-sigma) to[out=0, in=270] (#1-b) to node[swap]{#5} (#1-b')
      (#1-sigma) to node{#4} (#1-d);
\strnat{#1-sigma};
\strlabu{#1-sigma}{#2}
}
\begin{equation*}
\begin{gathered}
\twocelldiag{\beta}{\lkan{F}{J}}{H}{catd}{cate}{1}{1}{1.5}
\end{gathered}
=
\begin{gathered}
\begin{tikzpicture}[auto,x=0.5cm,y=0.5cm]
\diagbb{3cm}{3cm};
\path (cbb.south west) 
      ++(1,1) coordinate (bl)
      ++(4,4.0) coordinate (tr)
      (cbb.north west) 
      ++(2,0) coordinate (a') 
      ++(0,-1) coordinate (a)
      (cbb.north east)
      ++(-4,0) coordinate (b')
      ++(0,-1) coordinate (b);
\node[rectangle, fit=(bl)(tr)] (subdiag) {};
\fillall{cate};
\fillwest{catd}{cbb.south}{cbb.north};
\draw (cbb.south) coordinate[label=below:\lkan{F}{J}] (bot) -- (subdiag.south) -- (subdiag.north) -- (cbb.north) coordinate[label=above:$H$] (top);
\lkanmor{subdiag}{$\sigma$}{$J$}{$F$}{$H$};
\draw[very thick] (subdiag.south west) rectangle (subdiag.north east);
\end{tikzpicture}
\end{gathered}
\Leftrightarrow
\begin{gathered}
\diagrepto{\sigma}{F}{J}{H}{catc}{catd}{cate}{1}{2}
\end{gathered}
=
\begin{gathered}
\begin{tikzpicture}[scale=0.5]
\path coordinate[dot, label=above:$c$] (c) 
 +(-1,2) coordinate[label=above:$J$] (tl)
 +(0,-1) coordinate[label=below:$F$] (b)
 +(1,2) coordinate[label=above:$H$] (tr);
\draw (b) -- (c);
\draw (tl) to[out=-90, in=180] (c.west) -- (c.east) to[out=0, in=-90] coordinate[dot, midway, label=right:$\beta$] (beta) (tr);
\begin{pgfonlayer}{background}
\fill[catc] ($(tl) + (-1,0)$) rectangle (b);
\fill[cate] ($(tr) + (1,0)$) rectangle (b);
\fill[catd] (tl) to[out=-90, in=180] (c.west) -- (c.east) to[out=0, in=-90] (tr) -- cycle;
\end{pgfonlayer}
\end{tikzpicture}
\end{gathered}
\end{equation*}
We recover the calculation laws of \citep{Hinze2012} as follows:
\begin{itemize}
 \item The computation law follows immediately from the universal property above
 \item The reflection law follows directly from the definition of the counit
 \item The fusion law is an instance of the ``push / pop'' identity in equation \eqref{eq:thetaone} 
\end{itemize}
\end{example}

\section{Limits and colimits}
\label{sec:limits}
Limits and colimits are key notions in category theory. They can be approached from the perspective
of representability introduced in section \ref{sec:repr}. We will start with the general setting,
and then provide specialized results and notation for some common cases.

\subsection{Arbitrary Limits and Colimits}
\label{sec:arblimits}
We first define a standard functor used when reasoning about limits and colimits.
\begin{definition}
Let $\mathcal{C}$, $\mathcal{D}$ be categories, then we define the functor 
$\Delta : \mathcal{C} \rightarrow [\mathcal{D}, \mathcal{C}]$ as taking objects to
the corresponding constant functor, and morphism $f$ to the natural transformation with
all components equal to $f$.
\end{definition}
Now we define limits and colimits in terms of representability of appropriate functors.
\begin{definition}
Let $\mathcal{C}$, $\mathcal{D}$ be categories, and the functor $D:\mathcal{D} \rightarrow \mathcal{C}$ a
diagram in $\mathcal{C}$. The limit of $D$ exists if the functor $[\mathcal{D}, \mathcal{C}](\Delta(-),D)$ is
representable, i.e.
\begin{equation*}
[\mathcal{D}, \mathcal{C}](\Delta(-),D) \cong \mathcal{C}(-,\limit{D})
\end{equation*}
The limiting cone then corresponds to the identity morphism on \limit{D}.

Dually, the colimit of $D$ exists if the functor $[\mathcal{D}, \mathcal{C}](D,\Delta(-))$ is representable, i.e.
\begin{equation*}
[\mathcal{D}, \mathcal{C}](D, \Delta(-)) \cong \mathcal{C}(\colimit{D},-)
\end{equation*}
\end{definition}
As the existence of limits and colimits corresponds to representability of certain functors, we can use the calculation
rules in section \ref{sec:repr}. We can use our additional knowledge of the structure of the functor on the left hand side to
phrase the calculation laws in a more convenient form. For limits we have:
\begin{equation*}
\begin{gathered}
\begin{tikzpicture}[auto, x=0.5cm, y=0.5cm]
\diagbb{3cm}{2.5cm};
\path (cbb.north west) -- ++(0.5,-1) coordinate (tl)
      (cbb.south east) -- ++(-0.5,1.5) coordinate (br);
\node[rectangle, fit=(br)(tl)] (subdiag) {};
\path (subdiag.north) 
      coordinate (a)
      (subdiag.south)
      +(-1,0) coordinate (b)
      +(1,0) coordinate (c);
\path let \p1 = (cbb.north) in
      let \p2 = (cbb.south) in
      let \p3 = (a) in
      let \p4 = (b) in
      let \p5 = (c) in
 coordinate[label=above:$D$] (a') at (\x3,\y1)
 coordinate[label=below:$X$] (b') at (\x4,\y2)
 coordinate[label=below:$\Delta$] (c') at (\x5,\y2)
 coordinate[dot, label=right:$h$] (h) at ($(b)!0.5!(b')$);
\draw (a) -- (a')
      (b') -- (h) -- (b)
      (c') -- (c);
\repfrom{subdiag}{$Y$}{\limit{D}}{k};
\draw[very thick] (subdiag.south west) rectangle (subdiag.north east);
\begin{pgfonlayer}{background}
\fill[catc] (cbb.south west) rectangle (cbb.north east);
\fill[catterm] (b') -- (b) -- (a) -- (a') -- (cbb.north west) -- (cbb.south west) -- cycle;
\fill[catd] (c') -- (c) -- (a) -- (a') -- (cbb.north east) -- (cbb.south east) -- cycle;
\end{pgfonlayer}
\end{tikzpicture}
\end{gathered}
=
\begin{gathered}
\begin{tikzpicture}[auto, x=0.5cm, y=0.5cm]
\diagbb{3cm}{2.5cm};
\path (cbb.north west) -- ++(0.5,-1) coordinate (tl)
      (cbb.south east) -- ++(-0.5,1) coordinate (br);
\node[rectangle, fit=(br)(tl)] (subdiag) {};
\path (subdiag.north) 
      coordinate (a)
      (subdiag.south)
      +(-1,0) coordinate (b)
      +(1,0) coordinate (c);
\path let \p1 = (cbb.north) in
      let \p2 = (cbb.south) in
      let \p3 = (a) in
      let \p4 = (b) in
      let \p5 = (c) in
 coordinate[label=above:$D$] (a') at (\x3,\y1)
 coordinate[label=below:$X$] (b') at (\x4,\y2)
 coordinate[label=below:$\Delta$] (c') at (\x5,\y2);
\draw (a) -- (a')
      (b') -- (b)
      (c') -- (c);
\repfromcomp{subdiag}{$X$}{$Y$}{\limit{D}}{h}{k};
\draw[very thick] (subdiag.south west) rectangle (subdiag.north east);
\begin{pgfonlayer}{background}
\fill[catc] (cbb.south west) rectangle (cbb.north east);
\fill[catterm] (b') -- (b) -- (a) -- (a') -- (cbb.north west) -- (cbb.south west) -- cycle;
\fill[catd] (c') -- (c) -- (a) -- (a') -- (cbb.north east) -- (cbb.south east) -- cycle;
\end{pgfonlayer}
\end{tikzpicture}
\end{gathered}
\end{equation*}
\newcommand{\reptoi}[5]{
\path (#1.center) coordinate (#1-r)
      +(-1,-1) coordinate (#1-a)
      +(1,-1) coordinate (#1-b)
      (#1.north) coordinate (#1-t);
\path let \p1 = (subdiag.north) in
      let \p2 = (subdiag.south) in
      let \p3 = (#1-a) in
      let \p4 = (#1-b) in
 coordinate (#1-a') at (\x3, \y2)
 coordinate (#1-b') at (\x4, \y2);
\fill[catterm] (#1.south west) rectangle (#1.north east);
\fill[catc] (#1-a') -- (#1-a) to[out=90, in=180] (#1-r) to[out=0, in=90] (#1-b) -- (#1-b') -- cycle;
\fill[catd] (#1-b') -- (#1-b) to[out=90, in=0] (#1-r) -- (#1-t) -- (#1.north east) -- (#1.south east) -- cycle;
\draw (#1-a') to node{#2} (#1-a) to[out=90, in=180] (#1-r) to[out=0, in=90] (#1-b) to node{#3} (#1-b')
      (#1-r) to node{#5} (#1-t);
\strnat{#1-r};
\strlabd{#1-r}{#4};
}
\newcommand{\reptoicomp}[7]{
\path (#1.center) ++(0,1) coordinate (#1-r)
      +(-1,-1) coordinate (#1-a)
      +(1,-1) coordinate (#1-b)
      (#1.north) coordinate (#1-t);
\path let \p1 = (subdiag.north) in
      let \p2 = (subdiag.south) in
      let \p3 = (#1-a) in
      let \p4 = (#1-b) in
 coordinate (#1-a') at (\x3, \y2)
 coordinate (#1-b') at (\x4, \y2)
 coordinate (#1-h) at ($(#1-a')!0.5!(#1-a)$);
\fill[catterm] (#1.south west) rectangle (#1.north east);
\fill[catc] (#1-a') -- (#1-a) to[out=90, in=180] (#1-r) to[out=0, in=90] (#1-b) -- (#1-b') -- cycle;
\fill[catd] (#1-b') -- (#1-b) to[out=90, in=0] (#1-r) -- (#1-t) -- (#1.north east) -- (#1.south east) -- cycle;
\draw (#1-a') to node{#2} (#1-h) to node{#3} (#1-a) to[out=90,in=180] (#1-r) to[out=0,in=90] (#1-b) to node{#4} (#1-b')
      (#1-r) to node{#7} (#1-t);
\strnat{#1-r};
\strlabd{#1-r}{#5};
\strnat{#1-h};
\strlabr{#1-h}{#6};
}
\begin{equation*}
\begin{gathered}
\begin{tikzpicture}[auto, x=0.5cm, y=0.5cm]
\diagbb{3cm}{3cm};
\path (cbb.north west) -- ++(0.5,-1) coordinate (tl)
      (cbb.south east) -- ++(-0.5,1.5) coordinate (br);
\node[rectangle, fit=(br)(tl)] (subdiag) {};
\path (subdiag.north) coordinate (a)
      (cbb.north) coordinate[label=above:\limit{D}] (a')
      (subdiag.south) coordinate (b)
      (cbb.south) coordinate[label=below:$X$] (b')
      coordinate[dot, label=right:$h$] (h) at ($(b)!0.5!(b')$);
\draw (a) -- (a')
      (b') -- (h) -- (b);
\reptoi{subdiag}{$Y$}{$\Delta$}{$\lambda$}{$D$};
\draw[very thick] (subdiag.south west) rectangle (subdiag.north east);
\begin{pgfonlayer}{background}
\fill[catterm] (cbb.south west) rectangle (cbb.north);
\fill[catc] (cbb.south east) rectangle (cbb.north);
\end{pgfonlayer}
\end{tikzpicture}
\end{gathered}
=
\begin{gathered}
\begin{tikzpicture}[auto, x=0.5cm, y=0.5cm]
\diagbb{3cm}{3cm};
\path (cbb.north west) -- ++(0.5,-1) coordinate (tl)
      (cbb.south east) -- ++(-0.5,1.5) coordinate (br);
\node[rectangle, fit=(br)(tl)] (subdiag) {};
\path (subdiag.north) coordinate (a)
      (cbb.north) coordinate[label=above:\limit{D}] (a')
      (subdiag.south) coordinate (b)
      (cbb.south) coordinate[label=below:$X$] (b')
      coordinate (h) at ($(b)!0.5!(b')$);
\draw (a) -- (a')
      (b') -- (b);
\reptoicomp{subdiag}{$X$}{$Y$}{$\Delta$}{$\lambda$}{$h$}{$D$};
\draw[very thick] (subdiag.south west) rectangle (subdiag.north east);
\begin{pgfonlayer}{background}
\fill[catterm] (cbb.south west) rectangle (cbb.north);
\fill[catc] (cbb.south east) rectangle (cbb.north);
\end{pgfonlayer}
\end{tikzpicture}
\end{gathered}
\end{equation*}
For colimits the following equations hold:
\begin{equation*}
\begin{gathered}
\begin{tikzpicture}[auto, x=0.5cm, y=0.5cm]
\diagbb{3cm}{3cm};
\path (cbb.south west) -- ++(0.5,1) coordinate (bl)
      (cbb.north east) -- ++(-0.5,-1.5) coordinate (tr)
      (cbb.north) -- ++(0,-0.5) coordinate[dot, label=right:$h$] (h);
\draw (cbb.north) coordinate[label=above:$Y$] -- (h) -- ++(0,-1.5)
      (cbb.south) coordinate[label=below:\colimit{D}] -- ++(0,1.5);
\node[rectangle, fit=(bl)(tr)] (subdiag) {};
\begin{scope}[catset/.style={color=green!20}]
\repto{subdiag}{X}{$\Delta$}{$\lambda$}{$D$};
\end{scope}
\draw[very thick] (subdiag.south west) rectangle (subdiag.north east);
\begin{pgfonlayer}{background}
\fill[catterm] (cbb.south west) rectangle (cbb.north);
\fill[catc] (cbb.south east) rectangle (cbb.north);
\end{pgfonlayer}
\end{tikzpicture}
\end{gathered}
=
\begin{gathered}
\begin{tikzpicture}[auto, x=0.5cm, y=0.5cm]
\diagbb{3cm}{3cm};
\path (cbb.south west) -- ++(0.5,1) coordinate (bl)
      (cbb.north east) -- ++(-0.5,-1.5) coordinate (tr);
\draw (cbb.south) coordinate[label=below:\colimit{D}] -- ++(0,1.5)
      (cbb.north) coordinate[label=above:$Y$] -- ++(0, -1.5);
\node[rectangle, fit=(bl)(tr)] (subdiag) {};
\begin{scope}[catset/.style={color=green!20}]
\reptocomp{subdiag}{X}{Y}{$\Delta$}{$\lambda$}{h}{$D$};
\end{scope}
\draw[very thick] (subdiag.south west) rectangle (subdiag.north east);
\begin{pgfonlayer}{background}
\fill[catterm] (cbb.south west) rectangle (cbb.north);
\fill[catc] (cbb.south east) rectangle (cbb.north);
\end{pgfonlayer}
\end{tikzpicture}
\end{gathered}
\end{equation*}
\begin{equation*}
\begin{gathered}
\begin{tikzpicture}[auto, x=0.5cm, y=0.5cm]
\diagbb{3cm}{2.5cm};
\path (cbb.south west) -- ++(0.5,1) coordinate (bl)
      (cbb.north east) -- ++(-0.5,-1.5) coordinate (tr)
      (cbb.north) -- ++(-1,0) coordinate[label=above:$Y$] (a')
      (cbb.north) -- ++(1,0) coordinate[label=above:$\Delta$] (b');
\path (a') -- ++(0,-1.5) coordinate (a)
      (b') -- ++(0,-1.5) coordinate (b);
\coordinate[dot, label=right:$h$] (h) at ($(a)!0.5!(a')$);
\draw (a) -- (h) -- (a')
      (b) -- (b')
      (cbb.south) coordinate[label=below:$D$] -- ++(0,1.5);
\node[rectangle, fit=(bl)(tr)] (subdiag) {};
\repfrom{subdiag}{\colimit{D}}{X}{k};
\draw[very thick] (subdiag.south west) rectangle (subdiag.north east);
\begin{pgfonlayer}{background}
\fill[catterm] (cbb.south) -- (cbb.center) -- (a)
  -- (a') -- (cbb.north west) -- (cbb.south west) -- cycle;
\fill[catd] (cbb.south) -- (cbb.center) -- (b)
  -- (b') -- (cbb.north east) -- (cbb.south east) -- cycle;
\fill[catc] (a) rectangle (b');
\end{pgfonlayer}
\end{tikzpicture}
\end{gathered}
=
\begin{gathered}
\begin{tikzpicture}[auto, x=0.5cm, y=0.5cm]
\diagbb{3cm}{2.5cm};
\path (cbb.north)
      -- +(-1,0) coordinate[label=above:$Y$] (a')
      -- +(1,0) coordinate[label=above:$\Delta$] (b');
\path (a') -- ++(0,-2) coordinate (a)
      (b') -- ++(0,-2) coordinate (b);
\path (cbb.south west) -- ++(0.5,1) coordinate (bl)
      (cbb.north east) -- ++(-0.5,-1) coordinate (tr);
\draw (a) -- (a')
      (b) -- (b')
      (cbb.south) coordinate[label=below:$D$] -- ++(0,1.5);
\node[rectangle, fit=(bl)(tr)] (subdiag) {};
\repfromcomp{subdiag}{\colimit{D}}{X}{Y}{k}{h};
\draw[very thick] (subdiag.south west) rectangle (subdiag.north east);
\begin{pgfonlayer}{background}
\fill[catterm] (cbb.south) -- (cbb.center) -- (a) -- (a') 
 -- (cbb.north west) -- (cbb.south west) -- cycle;
\fill[catd] (cbb.south) -- (cbb.center) -- (b) -- (b') 
 -- (cbb.north east) -- (cbb.south east) -- cycle;
\fill[catc] (a) rectangle (b');
\end{pgfonlayer}
\end{tikzpicture}
\end{gathered}
\end{equation*}
We will specialize the universality result described in section \ref{sec:universality} when
we discuss some specific limits and colimits in later sections.

\subsection{Specific Limits and Colimits}
We now examine a few specific limits and colimits. In these concrete cases we can
specialize the results in section \ref{sec:arblimits} and provide more convenient
notation and calculation rules for some common cases.

\subsubsection*{Initial Objects}
The initial object will be denoted $0$ and
we will use $!: 0 \rightarrow X$ to denote the unique morphism from the initial object to an object $X$.
The universal property for initial objects can be expressed by the following relationship, for each object $X$ in $\mathcal{C}$:
\begin{equation*}
\begin{gathered}
\twocelldiag{f}{0}{X}{catterm}{catc}{2}{2}{1}
\end{gathered}
\Leftrightarrow
\begin{gathered}
\twocelldiag{f}{0}{X}{catterm}{catc}{2}{2}{1}
\end{gathered}
=
\begin{gathered}
\twocelldiag{!}{0}{X}{catterm}{catc}{2}{2}{1}
\end{gathered}
\end{equation*}
We will now further specialize this rule for initial algebras, and as an example calculation apply the rule to provide
a graphical proof of Lambek's lemma. 
Initial algebras, and their dual notion terminal coalgebras, are of interested in modelling of datatypes
\citep{GoguenThatcherWagnerWright1975, GoguenThatcherWagnerWright1977, Hagino1987, Hagino1987b, Malcolm1990} and \citep{Malcolm1990b}.
\begin{example}[Initial Algebras and Lambek's Lemma]
\label{ex:initalg}
For endofunctor $T : \mathcal{C} \rightarrow \mathcal{C}$, if $T$ has an initial algebra $(\initalgbase{T}, \initalg{T})$ then
we can rewrite the initiality condition in a more useful form, with the left hand equation in $\mathcal{C}$
and the right hand equality in \algcat{T}. We also adopt some standard notation in this context and denote the
unique morphism from the initial algebra to an algebra $a$ as \fold{T}{a}.
\begin{equation*}
\begin{gathered}
\begin{tikzpicture}[scale=0.5]
\path coordinate[dot, label=left:\initalg{T}]  (init)
 +(0,-1) coordinate[label=below:\initalgbase{T}] (bl)
 +(1.5,-1) coordinate[label=below:$T$] (br)
 ++(0,1) coordinate[dot, label=left:$h$] (h)
 ++(0,1) coordinate[label=above:$X$] (t);
\draw (bl) -- (init) -- (h) -- (t)
 (br) to[out=90, in=0] (init);
\begin{pgfonlayer}{background}
\fill[catterm] ($(t) + (-1.5,0)$) rectangle (bl);
\fill[catc] (t) rectangle ($(br) + (1,0)$);
\end{pgfonlayer}
\end{tikzpicture}
\end{gathered}
=
\begin{gathered}
\begin{tikzpicture}[scale=0.5]
\path coordinate[dot, label=left:$h$] (h)
 +(0,-1) coordinate[label=below:\initalgbase{T}] (bl)
 +(1.5,-1) coordinate[label=below:$T$] (br)
 ++(0,1) coordinate[dot, label=left:$a$] (a)
 ++(0,1) coordinate[label=above:$X$] (t);
\draw (bl) -- (h) -- (a) -- (t)
 (br) to[out=90, in=0] (a);
\begin{pgfonlayer}{background}
\fill[catterm] ($(t) + (-1.5,0)$) rectangle (bl);
\fill[catc] (t) rectangle ($(br) + (1,0)$);
\end{pgfonlayer}
\end{tikzpicture}
\end{gathered}
\Leftrightarrow 
\begin{gathered}
\twocelldiag{h}{\initalg{T}}{a}{catterm}{catd}{2}{2}{1.5}
\end{gathered}
=
\begin{gathered}
\begin{tikzpicture}[scale=0.5]
\path coordinate[dot, label=left:$\fold{T}a$] (fold)
 +(0,-1.5) coordinate[label=below:\initalg{T}] (b)
 +(0,1.5) coordinate[label=above:$a$] (t);
\draw (b) -- (fold) -- (t);
\begin{pgfonlayer}{background}
\fill[catterm] ($(b) + (-3.0,0)$) rectangle (t);
\fill[catd] (t) rectangle ($(b) + (1,0)$);
\end{pgfonlayer}
\end{tikzpicture}
\end{gathered}
\end{equation*}
It is immediately obvious that $\fold{T}{\initalg{T}} = 1_{\initalgbase{T}}$.
We can then prove Lambek's lemma, that the initial algebra is an isomorphism.
We have:
\begin{eqproof*}
\begin{gathered}
\begin{tikzpicture}[scale=0.5]
\path coordinate[dot, label=left:\initalg{T}] (init)
 +(0,1) coordinate[label=above:\initalgbase{T}] (t)
 +(1,-1) coordinate (a)
 ++(0,-2) coordinate[dot, label=left:$\fold{T}(T \initalg{T})$] (fold)
 ++(0,-1) coordinate[label=below:\initalgbase{T}] (b);
\draw (b) -- (fold) -- (init) -- (t);
\draw (fold) to[out=0, in=-90] (a) to[out=90, in=0] (init);
\begin{pgfonlayer}{background}
\fill[catterm] ($(t) + (-4.5,0)$) rectangle (b);
\fill[catd] (t) rectangle ($(b) + (2,0)$);
\end{pgfonlayer}
\end{tikzpicture}
\end{gathered}
= 
\begin{gathered}
\onecelldiag{\initalgbase{T}}{catterm}{catd}{4}{1}
\end{gathered}
\explain[\Leftrightarrow]{initial algebra law and $\fold{T}{\initalg{T}} = 1_{\initalgbase{T}}$}
\begin{gathered}
\begin{tikzpicture}[scale=0.5]
\path coordinate[dot, label=left:\initalg{T}] (init1)
 +(0,-1) coordinate[label=below:\initalgbase{T}] (bl)
 +(1,-1) coordinate[label=below:$T$] (br)
 ++(0,1) coordinate[dot, label=left:$\fold{T}(T \initalg{T})$] (fold)
 ++(0,2) coordinate[dot, label=left:\initalg{T}] (init2)
 +(1,-1) coordinate (a)
 ++(0,1) coordinate[label=above:\initalgbase{T}] (t);
 \draw (bl) -- (init1) -- (fold) -- (init2) -- (t)
 (br) to[out=90, in=0] (init1)
 (fold) to[out=0, in=-90] (a) to[out=90, in=0] (init2);
\begin{pgfonlayer}{background}
\fill[catterm] ($(t) + (-4.5,0)$) rectangle (bl);
\fill[catc] (t) rectangle ($(br) + (0.5,0)$);
\end{pgfonlayer}
\end{tikzpicture}
\end{gathered}
=
\begin{gathered}
\begin{tikzpicture}[scale=0.5]
\path coordinate[dot, label=left:$\fold{T}(T \initalg{T})$] (fold)
 +(0,-1) coordinate[label=below:\initalgbase{T}] (bl)
 +(1.5,-1) coordinate[label=below:$T$] (br)
 ++(0,2) coordinate[dot, label=left:\initalg{T}] (init1)
 +(1,-1) coordinate (a)
 ++(0,1) coordinate[dot, label=left:\initalg{T}] (init2)
 ++(0,1) coordinate[label=above:\initalgbase{T}] (t);
\draw (bl) -- (fold) -- (init1) -- (init2) -- (t)
 (br) to[out=90, in=0] (init2)
 (fold) to[out=0, in=-90] (a) to[out=90, in=0] (init1);
\begin{pgfonlayer}{background}
\fill[catterm] ($(t) + (-4.5, 0)$) rectangle (bl);
\fill[catc] (t) rectangle ($(br) + (0.5,0)$);
\end{pgfonlayer}
\end{tikzpicture}
\end{gathered}
\explain[\Leftrightarrow]{\fold{T}{$(T \initalg{T})$} is an algebra homomorphism}
\true
\end{eqproof*}
For the other direction we have:
\begin{eqproof*}
\begin{tikzpicture}[scale=0.5]
\path coordinate[dot, label=left:\initalg{T}] (init)
 +(0,-1) coordinate[label=below:\initalgbase{T}] (bl)
 +(2,-1) coordinate[label=below:$T$] (br)
 ++(0,2) coordinate[dot, label=left:$\fold{T}(T \initalg{T})$] (fold)
 +(0,1) coordinate[label=above:\initalgbase{T}] (tl)
 +(2,1) coordinate[label=above:$T$] (tr);
\draw (bl) -- (fold) -- (init) -- (tl)
 (br) to[out=90, in=0] (init)
 (fold) to[out=0, in=-90] (tr);
\begin{pgfonlayer}{background}
\fill[catterm] ($(tl) + (-4.5, 0)$) rectangle (bl);
\fill[catc] (tl) rectangle ($(br) + (0.5, 0)$);
\end{pgfonlayer}
\end{tikzpicture}
\explain{\fold{T}{$(T \initalg{T})$} is an algebra homomorphism}
\begin{tikzpicture}[scale=0.5]
\path coordinate[dot, label=left:$\fold{T}(T \initalg{T})$] (fold)
 +(0,-1) coordinate[label=below:\initalgbase{T}] (bl)
 +(2,-1) coordinate[label=below:$T$] (br)
 ++(0,2) coordinate[dot, label=left:\initalg{T}] (init)
 +(1,-1) coordinate (a)
 +(0,1) coordinate[label=above:\initalgbase{T}] (tl)
 +(2,1) coordinate[label=above:$T$] (tr);
\draw (bl) -- (fold) -- (init) -- (tl)
 (br) -- (tr)
 (fold) to[out=0, in=-90] (a) to[out=90, in=0] (init);
\begin{pgfonlayer}{background}
\fill[catterm] ($(tl) + (-4.5, 0)$) rectangle (bl);
\fill[catc] (tl) rectangle ($(br) + (0.5, 0)$);
\end{pgfonlayer}
\end{tikzpicture}
\explain{previous part}
\begin{tikzpicture}[auto, x=0.5cm, y=0.5cm]
\path coordinate[label=above:\initalgbase{T}] (tl)
 +(2,0) coordinate[label=above:$T$] (tr)
 ++(0,-2) coordinate[label=below:\initalgbase{T}] (bl)
 ++(2,0) coordinate[label=below:$T$] (br);
\draw (bl) -- (tl)
      (br) -- (tr);
\begin{pgfonlayer}{background}
\fill[catterm] ($(tl) + (-1,0)$) rectangle (bl);
\fill[catc] (tl) rectangle ($(br) + (1,0)$);
\end{pgfonlayer}
\end{tikzpicture}
\end{eqproof*}
\end{example}

\subsubsection*{Binary Products}
\label{sec:binprod}
The binary product of objects $Y$ and $Z$ will be written $Y \times Z$, in the usual way usual. 
The representability condition gives a bijective correspondence between morphisms $X \rightarrow Y \times Z$ and 
pairs of morphisms $X \rightarrow Y$ and $X \rightarrow Z$.
To aid calculations we will introduce three different maps:
\begin{trivlist}
\item
\begin{minipage}{0.495\textwidth}
\begin{equation*}
\begin{gathered}
\twocelldiag{h}{X}{Y \times Z}{catterm}{catc}{1.5}{1.5}{2}
\end{gathered}
\mapsto
\begin{gathered}
\begin{tikzpicture}[auto, x=0.5cm, y=0.5cm]
\diagbb{3.5cm}{2cm};
\path (cbb.south west) ++(0.5,1) coordinate (bl)
      (cbb.north east) ++(-0.5,-1) coordinate (tr);
\node[rectangle, fit=(bl)(tr)] (subdiag) {};
\begin{scope}
\path[clip] (subdiag.south west) rectangle (subdiag.north east);
\singlemor{subdiag}{$h$}{$X$}{$Y \times Z$}{catterm}{catc};
\path (subdiag.north west) ++(0.5,-0.5) node {$\lhd$};
\end{scope}
\draw[very thick] (subdiag.south west) rectangle (subdiag.north east);
\draw (cbb.south) coordinate[label=below:$X$] -- (subdiag.south)
      (subdiag.north)  -- (cbb.north) coordinate[label=above:$Y$];
\begin{pgfonlayer}{background}
\fill[catterm] (cbb.north west) rectangle (cbb.south);
\fill[catc] (cbb.north east) rectangle (cbb.south);
\end{pgfonlayer}
\end{tikzpicture}
\end{gathered}
\end{equation*}
\end{minipage}
\begin{minipage}{0.495\textwidth}
\begin{equation*}
\begin{gathered}
\twocelldiag{h}{X}{Y \times Z}{catterm}{catc}{1.5}{1.5}{2}
\end{gathered}
\mapsto
\begin{gathered}
\begin{tikzpicture}[auto, x=0.5cm, y=0.5cm]
\diagbb{3.5cm}{2cm};
\path (cbb.south west) ++(0.5,1) coordinate (bl)
      (cbb.north east) ++(-0.5,-1) coordinate (tr);
\node[rectangle, fit=(bl)(tr)] (subdiag) {};
\begin{scope}
\path[clip] (subdiag.south west) rectangle (subdiag.north east);
\singlemor{subdiag}{$h$}{$X$}{$Y \times Z$}{catterm}{catc};
\path (subdiag.north west) ++(0.5,-0.5) node {$\rhd$};
\end{scope}
\draw[very thick] (subdiag.south west) rectangle (subdiag.north east);
\draw (cbb.south) coordinate[label=below:$X$] -- (subdiag.south)
      (subdiag.north)  -- (cbb.north) coordinate[label=above:$Z$];
\begin{pgfonlayer}{background}
\fill[catterm] (cbb.north west) rectangle (cbb.south);
\fill[catc] (cbb.north east) rectangle (cbb.south);
\end{pgfonlayer}
\end{tikzpicture}
\end{gathered}
\end{equation*}
\end{minipage}
\end{trivlist}
\begin{equation*}
(
\begin{gathered}
\twocelldiag{f}{X}{Y}{catterm}{catc}{1.5}{1.5}{1.5}
\end{gathered}
,
\begin{gathered}
\twocelldiag{g}{X}{Z}{catterm}{catc}{1.5}{1.5}{1.5}
\end{gathered}
)
\mapsto
\begin{gathered}
\begin{tikzpicture}[auto, x=0.5cm, y=0.5cm]
\diagbb{3.5cm}{2cm};
\path (cbb.south west) ++(0.5,1) coordinate (bl)
      (cbb.north east) ++(-0.5,-1) coordinate (tr);
\node[rectangle, fit=(bl)(tr)] (subdiag) {};
\begin{scope}
\path[clip] (subdiag.south west) rectangle (subdiag.north);
\node[rectangle, fit=(subdiag.south west)(subdiag.north)] (left) {};
\singlemor{left}{$f$}{$X$}{$Y$}{catterm}{catc};
\end{scope}
\begin{scope}
\path[clip] (subdiag.south) rectangle (subdiag.north east);
\node[rectangle, fit=(subdiag.south)(subdiag.north east)] (right) {};
\singlemor{right}{$g$}{$X$}{$Z$}{catterm}{catc};
\end{scope}
\draw[very thick] (subdiag.south west) rectangle (subdiag.north east);
\draw[very thick] (subdiag.south) -- (subdiag.north);
\draw (subdiag.north) -- (cbb.north) coordinate[label=above:$Y \times Z$];
\draw (cbb.south) coordinate[label=below:$X$] -- (subdiag.south);
\begin{pgfonlayer}{background}
\fill[catterm] (cbb.north west) rectangle (cbb.south);
\fill[catc] (cbb.north east) rectangle (cbb.south);
\end{pgfonlayer}
\end{tikzpicture}
\end{gathered}
\end{equation*}
We then define the usual notation for the two components of the unit:
\begin{trivlist}
\item
\begin{minipage}{0.495\textwidth}
\begin{equation*}
\begin{gathered}
\twocelldiag{\pi_1}{Y \times Z}{Y}{catterm}{catc}{1.5}{1.5}{1.5}
\end{gathered}
=
\begin{gathered}
\begin{tikzpicture}[auto, x=0.5cm, y=0.5cm]
\diagbb{3.5cm}{2cm};
\path (cbb.south west) ++(0.5,1) coordinate (bl)
      (cbb.north east) ++(-0.5,-1) coordinate (tr);
\node[rectangle, fit=(bl)(tr)] (subdiag) {};
\begin{scope}
\path[clip] (subdiag.south west) rectangle (subdiag.north east);
\zeromor{subdiag}{$Y \times Z$}{catterm}{catc};
\path (subdiag.north west) ++(0.5,-0.5) node {$\lhd$};
\end{scope}
\draw[very thick] (subdiag.south west) rectangle (subdiag.north east);
\draw (cbb.south) coordinate[label=below:$Y \times Z$] -- (subdiag.south)
      (subdiag.north) -- (cbb.north) coordinate[label=above:$Y$] ;
\begin{pgfonlayer}{background}
\fill[catterm] (cbb.north west) rectangle (cbb.south);
\fill[catc] (cbb.north east) rectangle (cbb.south);
\end{pgfonlayer}
\end{tikzpicture}
\end{gathered}
\end{equation*}
\end{minipage}
\begin{minipage}{0.495\textwidth}
\begin{equation*}
\begin{gathered}
\twocelldiag{\pi_2}{Y \times Z}{Z}{catterm}{catc}{1.5}{1.5}{1.5}
\end{gathered}
=
\begin{gathered}
\begin{tikzpicture}[auto, x=0.5cm, y=0.5cm]
\diagbb{3.5cm}{2cm};
\path (cbb.south west) ++(0.5,1) coordinate (bl)
      (cbb.north east) ++(-0.5,-1) coordinate (tr);
\node[rectangle, fit=(bl)(tr)] (subdiag) {};
\begin{scope}
\path[clip] (subdiag.south west) rectangle (subdiag.north east);
\zeromor{subdiag}{$Y \times Z$}{catterm}{catc};
\path (subdiag.north west) ++(0.5,-0.5) node {$\rhd$};
\end{scope}
\draw[very thick] (subdiag.south west) rectangle (subdiag.north east);
\draw (cbb.south) coordinate[label=below:$Y \times Z$] -- (subdiag.south)
      (subdiag.north) -- (cbb.north) coordinate[label=above:$Z$] ;
\begin{pgfonlayer}{background}
\fill[catterm] (cbb.north west) rectangle (cbb.south);
\fill[catc] (cbb.north east) rectangle (cbb.south);
\end{pgfonlayer}
\end{tikzpicture}
\end{gathered}
\end{equation*}
\end{minipage}
\end{trivlist}
Using the material in section \ref{sec:universality} we can then write the universal property of products as:
\begin{equation*}
\begin{array}{ll}
 &
\begin{gathered}
\twocellpairdiag{h}{\pi_1}{X}{Y}{catterm}{catc}{1.5}{1.5}{1}
\end{gathered}
=
\begin{gathered}
\twocelldiag{f}{X}{Y}{catterm}{catc}{1.5}{1.5}{1.5}
\end{gathered}
\wedge
\begin{gathered}
\twocellpairdiag{h}{\pi_2}{X}{Z}{catterm}{catc}{1.5}{1.5}{1}
\end{gathered}
=
\begin{gathered}
\twocelldiag{g}{X}{Z}{catterm}{catc}{1.5}{1.5}{1.5}
\end{gathered}
\\
\Leftrightarrow & \\
 &
\begin{gathered}
\twocelldiag{h}{X}{Y \times Z}{catterm}{catc}{1.5}{1.5}{1.5}
\end{gathered}
=
\begin{gathered}
\begin{tikzpicture}[auto, x=0.5cm, y=0.5cm]
\diagbb{3.5cm}{2cm};
\path (cbb.south west) ++(0.5,1) coordinate (bl)
      (cbb.north east) ++(-0.5,-1) coordinate (tr);
\node[rectangle, fit=(bl)(tr)] (subdiag) {};
\begin{scope}
\path[clip] (subdiag.south west) rectangle (subdiag.north);
\node[rectangle, fit=(subdiag.south west)(subdiag.north)] (left) {};
\singlemor{left}{$f$}{$X$}{$Y$}{catterm}{catc};
\end{scope}
\begin{scope}
\path[clip] (subdiag.south) rectangle (subdiag.north east);
\node[rectangle, fit=(subdiag.south)(subdiag.north east)] (right) {};
\singlemor{right}{$g$}{$X$}{$Z$}{catterm}{catc};
\end{scope}
\draw[very thick] (subdiag.south west) rectangle (subdiag.north east);
\draw[very thick] (subdiag.south) -- (subdiag.north);
\draw (subdiag.north) -- (cbb.north) coordinate[label=above:$Y \times Z$];
\draw (cbb.south) coordinate[label=below:$X$] -- (subdiag.south);
\begin{pgfonlayer}{background}
\fill[catterm] (cbb.north west) rectangle (cbb.south);
\fill[catc] (cbb.north east) rectangle (cbb.south);
\end{pgfonlayer}
\end{tikzpicture}
\end{gathered}
\end{array}
\end{equation*}
The ``push / pop'' equations \ref{eq:thetaone} and \ref{eq:thetatwo} then lead to the following three equalities:
\begin{equation*}
\begin{gathered}
\begin{tikzpicture}[auto, x=0.5cm, y=0.5cm]
\diagbb{3.5cm}{2.5cm};
\path (cbb.south west) ++(0.5,2) coordinate (bl)
      (cbb.north east) ++(-0.5,-1) coordinate (tr);
\node[rectangle, fit=(bl)(tr)] (subdiag) {};
\begin{scope}
\path[clip] (subdiag.south west) rectangle (subdiag.north);
\node[rectangle, fit=(subdiag.south west)(subdiag.north)] (left) {};
\singlemor{left}{$f$}{$X$}{$Y$}{catterm}{catc};
\end{scope}
\begin{scope}
\path[clip] (subdiag.south) rectangle (subdiag.north east);
\node[rectangle, fit=(subdiag.south)(subdiag.north east)] (right) {};
\singlemor{right}{$g$}{$X$}{$Z$}{catterm}{catc};
\end{scope}
\draw[very thick] (subdiag.south west) rectangle (subdiag.north east);
\draw[very thick] (subdiag.south) -- (subdiag.north);
\draw (subdiag.north) -- (cbb.north) coordinate[label=above:$Y \times Z$];
\coordinate[dot, label=right:$h$] (h) at ($(cbb.south)!0.5!(subdiag.south)$);
\draw (cbb.south) coordinate[label=below:$X'$] -- (h) -- (subdiag.south);
\begin{pgfonlayer}{background}
\fill[catterm] (cbb.north west) rectangle (cbb.south);
\fill[catc] (cbb.north east) rectangle (cbb.south);
\end{pgfonlayer}
\end{tikzpicture}
\end{gathered}
=
\begin{gathered}
\begin{tikzpicture}[auto, x=0.5cm, y=0.5cm]
\diagbb{3.5cm}{2.5cm};
\path (cbb.south west) ++(0.5,1) coordinate (bl)
      (cbb.north east) ++(-0.5,-1) coordinate (tr);
\node[rectangle, fit=(bl)(tr)] (subdiag) {};
\begin{scope}
\path[clip] (subdiag.south west) rectangle (subdiag.north);
\node[rectangle, fit=(subdiag.south west)(subdiag.north)] (left) {};
\doublemor{left}{$h$}{$f$}{$X'$}{$X$}{$Y$}{catterm}{catc};
\end{scope}
\begin{scope}
\path[clip] (subdiag.south) rectangle (subdiag.north east);
\node[rectangle, fit=(subdiag.south)(subdiag.north east)] (right) {};
\doublemor{right}{$h$}{$g$}{$X'$}{$X$}{$Z$}{catterm}{catc};
\end{scope}
\draw[very thick] (subdiag.south west) rectangle (subdiag.north east);
\draw[very thick] (subdiag.south) -- (subdiag.north);
\draw (cbb.south) coordinate[label=below:$X'$] -- (subdiag.south);
\coordinate (h) at ($(subdiag.north)!0.5!(cbb.north)$);
\draw (subdiag.north) -- (cbb.north) coordinate[label=above:$Y \times Z$];
\begin{pgfonlayer}{background}
\fill[catterm] (cbb.north west) rectangle (cbb.south);
\fill[catc] (cbb.north east) rectangle (cbb.south);
\end{pgfonlayer}
\end{tikzpicture}
\end{gathered}
\end{equation*}
\begin{equation*}
\begin{gathered}
\begin{tikzpicture}[auto, x=0.5cm, y=0.5cm]
\diagbb{3.5cm}{2.5cm};
\path (cbb.south west) ++(0.5,1) coordinate (bl)
      (cbb.north east) ++(-0.5,-1) coordinate (tr);
\node[rectangle, fit=(bl)(tr)] (subdiag) {};
\begin{scope}
\path[clip] (subdiag.south west) rectangle (subdiag.north east);
\doublemor{subdiag}{$h$}{$v$}{$X'$}{$X$}{$Y \times Z$}{catterm}{catc};
\path (subdiag.north west) ++(0.5,-0.5) node {$\lhd$};
\end{scope}
\draw[very thick] (subdiag.south west) rectangle (subdiag.north east);
\draw (cbb.south) coordinate[label=below:$X'$] -- (subdiag.south)
      (subdiag.north) -- (cbb.north) coordinate[label=above:$Y$];
\begin{pgfonlayer}{background}
\fill[catterm] (cbb.north west) rectangle (cbb.south);
\fill[catc] (cbb.north east) rectangle (cbb.south);
\end{pgfonlayer}
\end{tikzpicture}
\end{gathered}
=
\begin{gathered}
\begin{tikzpicture}[auto, x=0.5cm, y=0.5cm]
\diagbb{3.5cm}{2.5cm};
\path (cbb.south west) ++(0.5,2) coordinate (bl)
      (cbb.north east) ++(-0.5,-1) coordinate (tr);
\node[rectangle, fit=(bl)(tr)] (subdiag) {};
\begin{scope}
\path[clip] (subdiag.south west) rectangle (subdiag.north east);
\singlemor{subdiag}{$v$}{$X$}{$Y \times Z$}{catterm}{catc};
\path (subdiag.north west) ++(0.5,-0.5) node {$\lhd$};
\end{scope}
\draw[very thick] (subdiag.south west) rectangle (subdiag.north east);
\coordinate[dot, label=right:$h$] (h) at ($(cbb.south)!0.5!(subdiag.south)$);
\draw (cbb.south) coordinate[label=below:$X'$] -- (h) -- (subdiag.south)
      (subdiag.north) -- (cbb.north) coordinate[label=above:$Y$];
\begin{pgfonlayer}{background}
\fill[catterm] (cbb.north west) rectangle (cbb.south);
\fill[catc] (cbb.north east) rectangle (cbb.south);
\end{pgfonlayer}
\end{tikzpicture}
\end{gathered}
\end{equation*}
\begin{equation*}
\begin{gathered}
\begin{tikzpicture}[auto, x=0.5cm, y=0.5cm]
\diagbb{3.5cm}{2.5cm};
\path (cbb.south west) ++(0.5,1) coordinate (bl)
      (cbb.north east) ++(-0.5,-1) coordinate (tr);
\node[rectangle, fit=(bl)(tr)] (subdiag) {};
\begin{scope}
\path[clip] (subdiag.south west) rectangle (subdiag.north east);
\doublemor{subdiag}{$h$}{$v$}{$X'$}{$X$}{$Y \times Z$}{catterm}{catc};
\path (subdiag.north west) ++(0.5,-0.5) node {$\rhd$};
\end{scope}
\draw[very thick] (subdiag.south west) rectangle (subdiag.north east);
\draw (cbb.south) coordinate[label=below:$X'$] -- (subdiag.south)
      (subdiag.north) -- (cbb.north) coordinate[label=above:$Z$];
\begin{pgfonlayer}{background}
\fill[catterm] (cbb.north west) rectangle (cbb.south);
\fill[catc] (cbb.north east) rectangle (cbb.south);
\end{pgfonlayer}
\end{tikzpicture}
\end{gathered}
=
\begin{gathered}
\begin{tikzpicture}[auto, x=0.5cm, y=0.5cm]
\diagbb{3.5cm}{2.5cm};
\path (cbb.south west) ++(0.5,2) coordinate (bl)
      (cbb.north east) ++(-0.5,-1) coordinate (tr);
\node[rectangle, fit=(bl)(tr)] (subdiag) {};
\begin{scope}
\path[clip] (subdiag.south west) rectangle (subdiag.north east);
\singlemor{subdiag}{$v$}{$X$}{$Y \times Z$}{catterm}{catc};
\path (subdiag.north west) ++(0.5,-0.5) node {$\rhd$};
\end{scope}
\draw[very thick] (subdiag.south west) rectangle (subdiag.north east);
\coordinate[dot, label=right:$h$] (h) at ($(cbb.south)!0.5!(subdiag.south)$);
\draw (cbb.south) coordinate[label=below:$X'$] -- (h) -- (subdiag.south)
      (subdiag.north) -- (cbb.north) coordinate[label=above:$Z$];      
\begin{pgfonlayer}{background}
\fill[catterm] (cbb.north west) rectangle (cbb.south);
\fill[catc] (cbb.north east) rectangle (cbb.south);
\end{pgfonlayer}
\end{tikzpicture}
\end{gathered}
\end{equation*}
That these maps witness a bijection leads to the following three of equalities:
\begin{equation*}
\begin{gathered}
\begin{tikzpicture}[auto, x=0.5cm, y=0.5cm]
\diagbb{5.5cm}{2cm};
\path (cbb.south west) ++(0.5,1) coordinate (bl)
      (cbb.north east) ++(-0.5,-1) coordinate (tr);
\node[rectangle, fit=(bl)(tr)] (subdiag) {};
\begin{scope}
\path[clip] (subdiag.south west) rectangle (subdiag.north);
\node[rectangle, fit=(subdiag.south west)(subdiag.north)] (left) {};
\singlemor{left}{$v$}{$X$}{$Y \times Z$}{catterm}{catc};
\end{scope}
\begin{scope}
\path[clip] (subdiag.south) rectangle (subdiag.north east);
\node[rectangle, fit=(subdiag.south)(subdiag.north east)] (right) {};
\singlemor{right}{$v$}{$X$}{$Y \times Z$}{catterm}{catc};
\end{scope}
\draw[very thick] (subdiag.south west) rectangle (subdiag.north east);
\draw[very thick] (subdiag.south) -- (subdiag.north);
\draw (cbb.south) coordinate[label=below:$X$] -- (subdiag.south);
\draw (subdiag.north) -- (cbb.north) coordinate[label=above:$Y \times Z$];
\path (left.north west) ++(0.75,-0.75) node {$\lhd$}
      (right.north west) ++(0.75,-0.75) node {$\rhd$};
\begin{pgfonlayer}{background}
\fill[catterm] (cbb.north west) rectangle (cbb.south);
\fill[catc] (cbb.north east) rectangle (cbb.south);
\end{pgfonlayer}
\end{tikzpicture}
\end{gathered}
=
\begin{gathered}
\twocelldiag{v}{X}{Y \times Z}{catterm}{catc}{2}{2}{2}
\end{gathered}
\end{equation*}
\begin{equation*}
\begin{gathered}
\begin{tikzpicture}[auto, x=0.5cm, y=0.5cm]
\diagbb{4.5cm}{3.5cm};
\path (cbb.south west) ++(0.5,1) coordinate (bl')
      (cbb.north east) ++(-0.5,-1) coordinate (tr');
\node[rectangle, fit=(bl')(tr')] (subdiag') {};
\path (subdiag'.south west) ++(0.5,1.5) coordinate (bl)
      (subdiag'.north east) ++(-0.5,-1.5) coordinate (tr);
\node[rectangle, fit=(bl)(tr)] (subdiag) {};
\fill[catc] (subdiag'.south west) rectangle (subdiag'.north east);
\fill[catterm] (subdiag'.south west) rectangle (subdiag'.north);
\begin{scope}
\path[clip] (subdiag.south west) rectangle (subdiag.north);
\node[rectangle, fit=(subdiag.south west)(subdiag.north)] (left) {};
\singlemor{left}{$f$}{$X$}{$Y$}{catterm}{catc};
\end{scope}
\begin{scope}
\path[clip] (subdiag.south) rectangle (subdiag.north east);
\node[rectangle, fit=(subdiag.south)(subdiag.north east)] (right) {};
\singlemor{right}{$g$}{$X$}{$Z$}{catterm}{catc};
\end{scope}
\draw[very thick] (subdiag.south west) rectangle (subdiag.north east);
\draw[very thick] (subdiag.south) -- (subdiag.north);
\draw (subdiag'.south) to node[swap]{$X$} (subdiag.south);
\draw (subdiag.north) to node{$Y \times Z$} (subdiag'.north);
\draw[very thick] (subdiag'.south west) rectangle (subdiag'.north east);
\draw (cbb.south) coordinate[label=below:$X$] -- (subdiag'.south)
      (subdiag'.north) -- (cbb.north) coordinate[label=above:$Y$];
\path (subdiag'.north west) ++(0.5,-0.5) node{$\lhd$};
\begin{pgfonlayer}{background}
\fill[catterm] (cbb.north west) rectangle (cbb.south);
\fill[catc] (cbb.north east) rectangle (cbb.south);
\end{pgfonlayer}
\end{tikzpicture}
\end{gathered}
=
\begin{gathered}
\twocelldiag{f}{X}{Y}{catterm}{catc}{2}{2}{3.5}
\end{gathered}
\end{equation*}
\begin{equation*}
\begin{gathered}
\begin{tikzpicture}[auto, x=0.5cm, y=0.5cm]
\diagbb{4.5cm}{3.5cm};
\path (cbb.south west) ++(0.5,1) coordinate (bl')
      (cbb.north east) ++(-0.5,-1) coordinate (tr');
\node[rectangle, fit=(bl')(tr')] (subdiag') {};
\path (subdiag'.south west) ++(0.5,1.5) coordinate (bl)
      (subdiag'.north east) ++(-0.5,-1.5) coordinate (tr);
\node[rectangle, fit=(bl)(tr)] (subdiag) {};
\fill[catc] (subdiag'.south west) rectangle (subdiag'.north east);
\fill[catterm] (subdiag'.south west) rectangle (subdiag'.north);
\begin{scope}
\path[clip] (subdiag.south west) rectangle (subdiag.north);
\node[rectangle, fit=(subdiag.south west)(subdiag.north)] (left) {};
\singlemor{left}{$f$}{$X$}{$Y$}{catterm}{catc};
\end{scope}
\begin{scope}
\path[clip] (subdiag.south) rectangle (subdiag.north east);
\node[rectangle, fit=(subdiag.south)(subdiag.north east)] (right) {};
\singlemor{right}{$g$}{$X$}{$Z$}{catterm}{catc};
\end{scope}
\draw[very thick] (subdiag.south west) rectangle (subdiag.north east);
\draw[very thick] (subdiag.south) -- (subdiag.north);
\draw (subdiag'.south) to node[swap]{$X$} (subdiag.south);
\draw (subdiag.north) to node{$Y \times Z$} (subdiag'.north);
\draw[very thick] (subdiag'.south west) rectangle (subdiag'.north east);
\draw (cbb.south) coordinate[label=below:$X$] -- (subdiag'.south)
      (subdiag'.north) -- (cbb.north) coordinate[label=above:$Z$];
\path (subdiag'.north west) ++(0.5,-0.5) node{$\rhd$};
\begin{pgfonlayer}{background}
\fill[catterm] (cbb.north west) rectangle (cbb.south);
\fill[catc] (cbb.north east) rectangle (cbb.south);
\end{pgfonlayer}
\end{tikzpicture}
\end{gathered}
=
\begin{gathered}
\twocelldiag{g}{X}{Z}{catterm}{catc}{2}{2}{3.5}
\end{gathered}
\end{equation*}
Readers familiar with more standard introductions to category theory will hopefully recognise
many standard exercises in the properties of binary products are given in graphical form
by the various equations above.

\subsubsection*{Terminal Objects}
The terminal object will be denoted $1$,
and $! : X \rightarrow 1$ will denote the unique morphism from an object $X$ to
the terminal object. The universal property of the terminal object can then
be written, for each object $X$ in $\mathcal{C}$:
\begin{equation*}
\begin{gathered}
\twocelldiag{f}{X}{1}{catterm}{catc}{2}{2}{1}
\end{gathered}
\Leftrightarrow
\begin{gathered}
\twocelldiag{f}{X}{1}{catterm}{catc}{2}{2}{1}
\end{gathered}
=
\begin{gathered}
\twocelldiag{!}{X}{1}{catterm}{catc}{2}{2}{1}
\end{gathered}
\end{equation*}

\begin{example}[Terminal Coalgebras and the Fusion Law]
\label{ex:terminalcoalg}
Coalgebras are the dual notion to algebras, a standard reference is \citep{Rutten2000}. 
For an endofunctor $T : \mathcal{C} \rightarrow \mathcal{C}$
a coalgebra is a pair consisting of an object $X$ and a morphism $X \rightarrow T(X)$.
A coalgebra morphism $(X, a) \rightarrow (Y, b)$ is a $\mathcal{C}$ morphism $h : X \rightarrow Y$ 
satisfying:
\begin{equation*}
\begin{gathered}
\coalgmordom{a}{h}{X}{Y}{T}{catc}{1}{1}
\end{gathered}
=
\begin{gathered}
\coalgmorcodom{b}{h}{X}{Y}{T}{catc}{1}{1}
\end{gathered}
\end{equation*}
Dually to the situation with algebras in example \ref{ex:initalg}, 
if an endofunctor $T$ has a terminal coalgebra $(\termcoalgbase{T}, \termcoalg{T})$, 
we can rewrite the terminal object condition for coalgebras in a more convenient form.
We will adopt the standard notation of \unfold{T}{a} for the unique morphism from a
coalgebra $a$ to the terminal coalgebra.
\begin{equation*}
\begin{gathered}
\coalgmorcodom{\termcoalg{T}}{h}{X}{\termcoalgbase{T}}{T}{catc}{2}{1}
\end{gathered}
=
\begin{gathered}
\coalgmordom{a}{h}{X}{\termcoalgbase{T}}{T}{catc}{1}{1}
\end{gathered}
\Leftrightarrow 
\begin{gathered}
\twocelldiag{h}{a}{\termcoalg{T}}{catterm}{catd}{1}{1}{1.5}
\end{gathered}
=
\begin{gathered}
\twocelldiag{\unfold{T}a}{a}{\termcoalg{T}}{catterm}{catd}{1}{3.5}{1.5}
\end{gathered}
\end{equation*}
We will now prove the important (strong) fusion law for terminal coalgebras. We have
the following chain of equivalences:
\begin{eqproof*}
\begin{gathered}
\begin{tikzpicture}[scale=0.5]
\path coordinate[dot, label=left:$h$] (h) +(0,-1) coordinate[label=below:$g$] (b)
 ++(0,1) coordinate[dot, label=left:$\unfold{T}f$] (unfold)
 ++(0,1) coordinate[label=above:\termcoalg{T}] (t);
\draw (b) -- (h) -- (unfold) -- (t);
\begin{pgfonlayer}{background}
\fill[catterm] ($(t) + (-4,0)$) rectangle (b);
\fill[catd] (t) rectangle ($(b) + (1,0)$);
\end{pgfonlayer}
\end{tikzpicture}
\end{gathered}
=
\begin{gathered}
\twocelldiag{\unfold{T}g}{g}{\termcoalg{T}}{catterm}{catd}{1}{4}{1.5}
\end{gathered}
\explain[\Leftrightarrow]{universal property of terminal coalgebra}
\begin{gathered}
\begin{tikzpicture}[scale=0.5]
\path coordinate[dot, label=left:$h$] (h)
 +(0,-1) coordinate[label=below:$X$] (b)
 ++(0,1) coordinate[dot, label=left:$\unfold{T}f$] (unfold)
 ++(0,1) coordinate[dot, label=left:\termcoalg{T}] (term)
 ++(0,1) coordinate[label=above:\termcoalgbase{T}] (tl)
 ++(2,0) coordinate[label=above:$T$] (tr);
\draw (b) -- (h) -- (unfold) -- (term) -- (tl)
 (term) to[out=0, in=-90] (tr);
\begin{pgfonlayer}{background}
\fill[catterm] ($(b) + (-4,0)$) rectangle (tl);
\fill[catc] (b) rectangle ($(tr) + (0.5,0)$);
\end{pgfonlayer}
\end{tikzpicture}
\end{gathered}
=
\begin{gathered}
\begin{tikzpicture}[scale=0.5]
\path coordinate[dot, label=left:$g$] (g)
 +(0,-1) coordinate[label=below:$X$] (b)
 ++(0,1) coordinate[dot, label=left:$h$] (h)
 ++(0,1) coordinate[dot, label=left:$\unfold{T}f$] (unfold)
 ++(0,1) coordinate[label=above:\termcoalgbase{T}] (tl)
 ++(2,0) coordinate[label=above:$T$] (tr);
\draw (b) -- (g) -- (h) -- (unfold) -- (tl)
 (g) to[out=0, in=-90] (tr);
\begin{pgfonlayer}{background}
\fill[catterm] ($(b) + (-4,0)$) rectangle (tl);
\fill[catc] (b) rectangle ($(tr) + (0.5,0)$);
\end{pgfonlayer}
\end{tikzpicture}
\end{gathered}
\explain[\Leftrightarrow]{\unfold{T}{f} is a coalgebra homomorphism}
\begin{gathered}
\begin{tikzpicture}[scale=0.5]
\path coordinate[dot, label=left:$h$] (h)
 +(0,-1) coordinate[label=below:$X$] (b)
 ++(0,1) coordinate[dot, label=left:$f$] (f)
 ++(0,1) coordinate[dot, label=left:$\unfold{T}f$] (unfold)
 ++(0,1) coordinate[label=above:\termcoalgbase{T}] (tl)
 ++(2,0) coordinate[label=above:$T$] (tr);
\draw (b) -- (h) -- (f) -- (unfold) -- (tl)
 (f) to[out=0, in=-90] (tr);
\begin{pgfonlayer}{background}
\fill[catterm] ($(b) + (-4,0)$) rectangle (tl);
\fill[catc] (b) rectangle ($(tr) + (0.5,0)$);
\end{pgfonlayer}
\end{tikzpicture}
\end{gathered}
=
\begin{gathered}
\begin{tikzpicture}[scale=0.5]
\path coordinate[dot, label=left:$g$] (g)
 +(0,-1) coordinate[label=below:$X$] (b)
 ++(0,1) coordinate[dot, label=left:$h$] (h)
 ++(0,1) coordinate[dot, label=left:$\unfold{T}f$] (unfold)
 ++(0,1) coordinate[label=above:\termcoalgbase{T}] (tl)
 ++(2,0) coordinate[label=above:$T$] (tr);
\draw (b) -- (g) -- (h) -- (unfold) -- (tl)
 (g) to[out=0, in=-90] (tr);
\begin{pgfonlayer}{background}
\fill[catterm] ($(b) + (-4,0)$) rectangle (tl);
\fill[catc] (b) rectangle ($(tr) + (0.5,0)$);
\end{pgfonlayer}
\end{tikzpicture}
\end{gathered}
\end{eqproof*}
The intuitive explanation of the fusion law is that if the final equality is satisfied, then we can ``fuse''
the morphism $h$ with $\unfold{T}{f}$ to give a single unfold morphism $\unfold{T}{g}$. 
As the precondition for fusion is rather cumbersome, it is often easier to apply
the weak fusion law which follows as an immediate corollary via Leibniz:
\begin{equation*}
\begin{gathered}
\coalgmorcodom{f}{h}{X}{Y}{T}{catc}{1}{1}
\end{gathered}
=
\begin{gathered}
\coalgmordom{g}{h}{X}{Y}{T}{catc}{1}{1}
\end{gathered}
\Rightarrow 
\begin{gathered}
\begin{tikzpicture}[scale=0.5]
\path coordinate[dot, label=left:$h$] (h) +(0,-1) coordinate[label=below:$g$] (b)
 ++(0,1) coordinate[dot, label=left:$\unfold{T}f$] (unfold)
 ++(0,1) coordinate[label=above:\termcoalg{T}] (t);
\draw (b) -- (h) -- (unfold) -- (t);
\begin{pgfonlayer}{background}
\fill[catterm] ($(t) + (-4,0)$) rectangle (b);
\fill[catd] (t) rectangle ($(b) + (1,0)$);
\end{pgfonlayer}
\end{tikzpicture}
\end{gathered}
=
\begin{gathered}
\twocelldiag{\unfold{T}g}{g}{\termcoalg{T}}{catterm}{catd}{1}{4}{1.5}
\end{gathered}
\end{equation*}
\end{example}

\subsubsection*{Binary Coproducts}
\label{sec:bincoprod}
Binary coproducts follow the dual pattern to the description of binary products in
section \ref{sec:binprod}. We will write the binary coproduct of $X$ and $Y$ as
$X + Y$ in the usual way.
The representability condition gives a bijective correspondence between morphisms $X  + Y \rightarrow Z$ and 
pairs of morphisms $X \rightarrow Z$ and $Y \rightarrow Z$.
As with binary products, to aid calculations we will introduce three different maps:
\begin{trivlist}
\item
\begin{minipage}{0.495\textwidth}
\begin{equation*}
\begin{gathered}
\twocelldiag{h}{X + Y}{Z}{catterm}{catc}{1.25}{1.25}{2}
\end{gathered}
\mapsto
\begin{gathered}
\begin{tikzpicture}[auto, x=0.5cm, y=0.5cm]
\diagbb{3.5cm}{2cm};
\path (cbb.south west) ++(0.5,1) coordinate (bl)
      (cbb.north east) ++(-0.5,-1) coordinate (tr);
\node[rectangle, fit=(bl)(tr)] (subdiag) {};
\begin{scope}
\path[clip] (subdiag.south west) rectangle (subdiag.north east);
\singlemor{subdiag}{$h$}{$X + Y$}{$Z$}{catterm}{catc};
\path (subdiag.north west) ++(0.5,-0.5) node {$\lhd$};
\end{scope}
\draw[very thick] (subdiag.south west) rectangle (subdiag.north east);
\draw (cbb.south) coordinate[label=below:$X$] -- (subdiag.south)
      (subdiag.north) -- (cbb.north) coordinate[label=above:$Z$];
\begin{pgfonlayer}{background}
\fill[catterm] (cbb.north west) rectangle (cbb.south);
\fill[catc] (cbb.north east) rectangle (cbb.south);
\end{pgfonlayer}
\end{tikzpicture}
\end{gathered}
\end{equation*}
\end{minipage}
\begin{minipage}{0.495\textwidth}
\begin{equation*}
\begin{gathered}
\twocelldiag{h}{X + Y}{Z}{catterm}{catc}{1.25}{1.25}{2}
\end{gathered}
\mapsto
\begin{gathered}
\begin{tikzpicture}[auto, x=0.5cm, y=0.5cm]
\diagbb{3.5cm}{2cm};
\path (cbb.south west) ++(0.5,1) coordinate (bl)
      (cbb.north east) ++(-0.5,-1) coordinate (tr);
\node[rectangle, fit=(bl)(tr)] (subdiag) {};
\begin{scope}
\path[clip] (subdiag.south west) rectangle (subdiag.north east);
\singlemor{subdiag}{$h$}{$X + Y$}{$Z$}{catterm}{catc};
\path (subdiag.north west) ++(0.5,-0.5) node {$\rhd$};
\end{scope}
\draw[very thick] (subdiag.south west) rectangle (subdiag.north east);
\coordinate (h) at ($(cbb.south)!0.5!(subdiag.south)$);
\draw (cbb.south) coordinate[label=below:$Y$] -- (subdiag.south)
      (subdiag.north) -- (cbb.north) coordinate[label=above:$Z$];      
\begin{pgfonlayer}{background}
\fill[catterm] (cbb.north west) rectangle (cbb.south);
\fill[catc] (cbb.north east) rectangle (cbb.south);
\end{pgfonlayer}
\end{tikzpicture}
\end{gathered}
\end{equation*}
\end{minipage}
\end{trivlist}
\begin{equation*}
(
\begin{gathered}
\twocelldiag{f}{X}{Z}{catterm}{catc}{2}{2}{2}
\end{gathered}
,
\begin{gathered}
\twocelldiag{g}{Y}{Z}{catterm}{catc}{2}{2}{2}
\end{gathered}
)
\mapsto
\begin{gathered}
\begin{tikzpicture}[auto, x=0.5cm, y=0.5cm]
\diagbb{3.5cm}{2cm};
\path (cbb.south west) ++(0.5,1) coordinate (bl)
      (cbb.north east) ++(-0.5,-1) coordinate (tr);
\node[rectangle, fit=(bl)(tr)] (subdiag) {};
\begin{scope}
\path[clip] (subdiag.south west) rectangle (subdiag.north);
\node[rectangle, fit=(subdiag.south west)(subdiag.north)] (left) {};
\singlemor{left}{$f$}{$X$}{$Z$}{catterm}{catc};
\end{scope}
\begin{scope}
\path[clip] (subdiag.south) rectangle (subdiag.north east);
\node[rectangle, fit=(subdiag.south)(subdiag.north east)] (right) {};
\singlemor{right}{$g$}{$Y$}{$Z$}{catterm}{catc};
\end{scope}
\draw[very thick] (subdiag.south west) rectangle (subdiag.north east);
\draw[very thick] (subdiag.south) -- (subdiag.north);
\draw (subdiag.north) -- (cbb.north) coordinate[label=above:$Z$];
\draw (cbb.south) coordinate[label=below:$X + Y$] -- (subdiag.south);
\begin{pgfonlayer}{background}
\fill[catterm] (cbb.north west) rectangle (cbb.south);
\fill[catc] (cbb.north east) rectangle (cbb.south);
\end{pgfonlayer}
\end{tikzpicture}
\end{gathered}
\end{equation*}
We then define the usual notation for the two components of the counit:
\begin{trivlist}
\item
\begin{minipage}{0.495\textwidth}
\begin{equation*}
\begin{gathered}
\twocelldiag{\kappa_1}{X}{X + Y}{catterm}{catc}{1.25}{1.25}{2}
\end{gathered}
=
\begin{gathered}
\begin{tikzpicture}[auto, x=0.5cm, y=0.5cm]
\diagbb{3.5cm}{2cm};
\path (cbb.south west) ++(0.5,1) coordinate (bl)
      (cbb.north east) ++(-0.5,-1) coordinate (tr);
\node[rectangle, fit=(bl)(tr)] (subdiag) {};
\begin{scope}
\path[clip] (subdiag.south west) rectangle (subdiag.north east);
\zeromor{subdiag}{$X + Y$}{catterm}{catc};
\path (subdiag.north west) ++(0.5,-0.5) node {$\lhd$};
\end{scope}
\draw[very thick] (subdiag.south west) rectangle (subdiag.north east);
\draw (cbb.south) coordinate[label=below:$X$] -- (subdiag.south)
      (subdiag.north) -- (cbb.north) coordinate[label=above:$X + Y$];
\begin{pgfonlayer}{background}
\fill[catterm] (cbb.north west) rectangle (cbb.south);
\fill[catc] (cbb.north east) rectangle (cbb.south);
\end{pgfonlayer}
\end{tikzpicture}
\end{gathered}
\end{equation*}
\end{minipage}
\begin{minipage}{0.495\textwidth}
\begin{equation*}
\begin{gathered}
\twocelldiag{\kappa_2}{Y}{X + Y}{catterm}{catc}{1.25}{1.25}{2}
\end{gathered}
=
\begin{gathered}
\begin{tikzpicture}[auto, x=0.5cm, y=0.5cm]
\diagbb{3.5cm}{2cm};
\path (cbb.south west) ++(0.5,1) coordinate (bl)
      (cbb.north east) ++(-0.5,-1) coordinate (tr);
\node[rectangle, fit=(bl)(tr)] (subdiag) {};
\begin{scope}
\path[clip] (subdiag.south west) rectangle (subdiag.north east);
\zeromor{subdiag}{$X + Y$}{catterm}{catc};
\path (subdiag.north west) ++(0.5,-0.5) node {$\rhd$};
\end{scope}
\draw[very thick] (subdiag.south west) rectangle (subdiag.north east);
\draw (cbb.south) coordinate[label=below:$Y$] -- (subdiag.south)
      (subdiag.north) -- (cbb.north) coordinate[label=above:$X + Y$];
\begin{pgfonlayer}{background}
\fill[catterm] (cbb.north west) rectangle (cbb.south);
\fill[catc] (cbb.north east) rectangle (cbb.south);
\end{pgfonlayer}
\end{tikzpicture}
\end{gathered}
\end{equation*}
\end{minipage}
\end{trivlist}
Using the material in section \ref{sec:universality} we can then write the universal property of coproducts as:
\begin{equation*}
\begin{array}{ll}
 &
\begin{gathered}
\twocellpairdiag{\kappa_1}{h}{X}{Z}{catterm}{catc}{2}{2}{1}
\end{gathered}
=
\begin{gathered}
\twocelldiag{f}{X}{Z}{catterm}{catc}{2}{2}{1.5}
\end{gathered}
\wedge
\begin{gathered}
\twocellpairdiag{\kappa_2}{h}{Y}{Z}{catterm}{catc}{2}{2}{1}
\end{gathered}
=
\begin{gathered}
\twocelldiag{g}{Y}{Z}{catterm}{catc}{2}{2}{1.5}
\end{gathered}
\\
\Leftrightarrow & \\
 &
\begin{gathered}
\twocelldiag{h}{X + Y}{Z}{catterm}{catc}{2}{2}{1.5}
\end{gathered}
=
\begin{gathered}
\begin{tikzpicture}[auto, x=0.5cm, y=0.5cm]
\diagbb{3.5cm}{2cm};
\path (cbb.south west) ++(0.5,1) coordinate (bl)
      (cbb.north east) ++(-0.5,-1) coordinate (tr);
\node[rectangle, fit=(bl)(tr)] (subdiag) {};
\begin{scope}
\path[clip] (subdiag.south west) rectangle (subdiag.north);
\node[rectangle, fit=(subdiag.south west)(subdiag.north)] (left) {};
\singlemor{left}{$f$}{$X$}{$Z$}{catterm}{catc};
\end{scope}
\begin{scope}
\path[clip] (subdiag.south) rectangle (subdiag.north east);
\node[rectangle, fit=(subdiag.south)(subdiag.north east)] (right) {};
\singlemor{right}{$g$}{$Y$}{$Z$}{catterm}{catc};
\end{scope}
\draw[very thick] (subdiag.south west) rectangle (subdiag.north east);
\draw[very thick] (subdiag.south) -- (subdiag.north);
\draw (subdiag.north) -- (cbb.north) coordinate[label=above:$Z$];
\draw (cbb.south) coordinate[label=below:$X + Y$] -- (subdiag.south) ;
\begin{pgfonlayer}{background}
\fill[catterm] (cbb.north west) rectangle (cbb.south);
\fill[catc] (cbb.north east) rectangle (cbb.south);
\end{pgfonlayer}
\end{tikzpicture}
\end{gathered}
\end{array}
\end{equation*}
The ``push / pop'' equations \ref{eq:thetaone} and \ref{eq:thetatwo} lead to the following three equalities:
\begin{equation*}
\begin{gathered}
\begin{tikzpicture}[auto, x=0.5cm, y=0.5cm]
\diagbb{3.5cm}{2.5cm};
\path (cbb.south west) ++(0.5,1) coordinate (bl)
      (cbb.north east) ++(-0.5,-2) coordinate (tr);
\node[rectangle, fit=(bl)(tr)] (subdiag) {};
\begin{scope}
\path[clip] (subdiag.south west) rectangle (subdiag.north);
\node[rectangle, fit=(subdiag.south west)(subdiag.north)] (left) {};
\singlemor{left}{$f$}{$X$}{$Y$}{catterm}{catc};
\end{scope}
\begin{scope}
\path[clip] (subdiag.south) rectangle (subdiag.north east);
\node[rectangle, fit=(subdiag.south)(subdiag.north east)] (right) {};
\singlemor{right}{$g$}{$X$}{$Z$}{catterm}{catc};
\end{scope}
\draw[very thick] (subdiag.south west) rectangle (subdiag.north east);
\draw[very thick] (subdiag.south) -- (subdiag.north);
\draw (cbb.south) coordinate[label=below:$X + Y$] -- (subdiag.south);
\coordinate[dot, label=right:$h$] (h) at ($(subdiag.north)!0.5!(cbb.north)$);
\draw (subdiag.north) -- (h) -- (cbb.north) coordinate[label=above:$Z'$];
\begin{pgfonlayer}{background}
\fill[catterm] (cbb.north west) rectangle (cbb.south);
\fill[catc] (cbb.north east) rectangle (cbb.south);
\end{pgfonlayer}
\end{tikzpicture}
\end{gathered}
=
\begin{gathered}
\begin{tikzpicture}[auto, x=0.5cm, y=0.5cm]
\diagbb{3.5cm}{2.5cm};
\path (cbb.south west) ++(0.5,1) coordinate (bl)
      (cbb.north east) ++(-0.5,-1) coordinate (tr);
\node[rectangle, fit=(bl)(tr)] (subdiag) {};
\begin{scope}
\path[clip] (subdiag.south west) rectangle (subdiag.north);
\node[rectangle, fit=(subdiag.south west)(subdiag.north)] (left) {};
\doublemor{left}{$f$}{$h$}{$X$}{$Z$}{$Z'$}{catterm}{catc};
\end{scope}
\begin{scope}
\path[clip] (subdiag.south) rectangle (subdiag.north east);
\node[rectangle, fit=(subdiag.south)(subdiag.north east)] (right) {};
\doublemor{right}{$g$}{$h$}{$Y$}{$Z$}{$Z'$}{catterm}{catc};
\end{scope}
\draw[very thick] (subdiag.south west) rectangle (subdiag.north east);
\draw[very thick] (subdiag.south) -- (subdiag.north);
\draw (cbb.south) coordinate[label=below:$X + Y$] -- (subdiag.south);
\coordinate (h) at ($(subdiag.north)!0.5!(cbb.north)$);
\draw (subdiag.north) -- (cbb.north) coordinate[label=above:$Z'$];
\begin{pgfonlayer}{background}
\fill[catterm] (cbb.north west) rectangle (cbb.south);
\fill[catc] (cbb.north east) rectangle (cbb.south);
\end{pgfonlayer}
\end{tikzpicture}
\end{gathered}
\end{equation*}
\begin{equation*}
\begin{gathered}
\begin{tikzpicture}[auto, x=0.5cm, y=0.5cm]
\diagbb{3cm}{2.5cm};
\path (cbb.south west) ++(0.5,1) coordinate (bl)
      (cbb.north east) ++(-0.5,-1) coordinate (tr);
\node[rectangle, fit=(bl)(tr)] (subdiag) {};
\begin{scope}
\path[clip] (subdiag.south west) rectangle (subdiag.north east);
\doublemor{subdiag}{$v$}{$h$}{$X + Y$}{$Z$}{$Z'$}{catterm}{catc};
\path (subdiag.north west) ++(0.5,-0.5) node {$\lhd$};
\end{scope}
\draw[very thick] (subdiag.south west) rectangle (subdiag.north east);
\draw (cbb.south) coordinate[label=below:$X$] -- (subdiag.south)
      (subdiag.north) -- (cbb.north) coordinate[label=above:$Z'$];
\begin{pgfonlayer}{background}
\fill[catterm] (cbb.north west) rectangle (cbb.south);
\fill[catc] (cbb.north east) rectangle (cbb.south);
\end{pgfonlayer}
\end{tikzpicture}
\end{gathered}
=
\begin{gathered}
\begin{tikzpicture}[auto, x=0.5cm, y=0.5cm]
\diagbb{3cm}{2.5cm};
\path (cbb.south west) ++(0.5,1) coordinate (bl)
      (cbb.north east) ++(-0.5,-2) coordinate (tr);
\node[rectangle, fit=(bl)(tr)] (subdiag) {};
\begin{scope}
\path[clip] (subdiag.south west) rectangle (subdiag.north east);
\singlemor{subdiag}{$v$}{$X + Y$}{$Z$}{catterm}{catc};
\path (subdiag.north west) ++(0.5,-0.5) node {$\lhd$};
\end{scope}
\draw[very thick] (subdiag.south west) rectangle (subdiag.north east);
\coordinate[dot, label=right:$h$] (h) at ($(subdiag.north)!0.5!(cbb.north)$);
\draw (cbb.south) coordinate[label=below:$X$] -- (subdiag.south)
      (subdiag.north) -- (h) -- (cbb.north) coordinate[label=above:$Z'$];
\begin{pgfonlayer}{background}
\fill[catterm] (cbb.north west) rectangle (cbb.south);
\fill[catc] (cbb.north east) rectangle (cbb.south);
\end{pgfonlayer}
\end{tikzpicture}
\end{gathered}
\end{equation*}
\begin{equation*}
\begin{gathered}
\begin{tikzpicture}[auto, x=0.5cm, y=0.5cm]
\diagbb{3cm}{2.5cm};
\path (cbb.south west) ++(0.5,1) coordinate (bl)
      (cbb.north east) ++(-0.5,-1) coordinate (tr);
\node[rectangle, fit=(bl)(tr)] (subdiag) {};
\begin{scope}
\path[clip] (subdiag.south west) rectangle (subdiag.north east);
\doublemor{subdiag}{$v$}{$h$}{$X + Y$}{$Z$}{$Z'$}{catterm}{catc};
\path (subdiag.north west) ++(0.5,-0.5) node {$\rhd$};
\end{scope}
\draw[very thick] (subdiag.south west) rectangle (subdiag.north east);
\draw (cbb.south) coordinate[label=below:$Y$] -- (subdiag.south)
      (subdiag.north) -- (cbb.north) coordinate[label=above:$Z'$];
\begin{pgfonlayer}{background}
\fill[catterm] (cbb.north west) rectangle (cbb.south);
\fill[catc] (cbb.north east) rectangle (cbb.south);
\end{pgfonlayer}
\end{tikzpicture}
\end{gathered}
=
\begin{gathered}
\begin{tikzpicture}[auto, x=0.5cm, y=0.5cm]
\diagbb{3cm}{2.5cm};
\path (cbb.south west) ++(0.5,1) coordinate (bl)
      (cbb.north east) ++(-0.5,-2) coordinate (tr);
\node[rectangle, fit=(bl)(tr)] (subdiag) {};
\begin{scope}
\path[clip] (subdiag.south west) rectangle (subdiag.north east);
\singlemor{subdiag}{$v$}{$X + Y$}{$Z$}{catterm}{catc};
\path (subdiag.north west) ++(0.5,-0.5) node {$\rhd$};
\end{scope}
\draw[very thick] (subdiag.south west) rectangle (subdiag.north east);
\coordinate[dot, label=right:$h$] (h) at ($(subdiag.north)!0.5!(cbb.north)$);
\draw (cbb.south) coordinate[label=below:$Y$] -- (subdiag.south)
      (subdiag.north) -- (h) -- (cbb.north) coordinate[label=above:$Z'$];
\begin{pgfonlayer}{background}
\fill[catterm] (cbb.north west) rectangle (cbb.south);
\fill[catc] (cbb.north east) rectangle (cbb.south);
\end{pgfonlayer}
\end{tikzpicture}
\end{gathered}
\end{equation*}
That these maps witness a bijection leads to the following three identities:
\begin{equation*}
\begin{gathered}
\begin{tikzpicture}[auto, x=0.5cm, y=0.5cm]
\diagbb{5.5cm}{2cm};
\path (cbb.south west) ++(0.5,1) coordinate (bl)
      (cbb.north east) ++(-0.5,-1) coordinate (tr);
\node[rectangle, fit=(bl)(tr)] (subdiag) {};
\begin{scope}
\path[clip] (subdiag.south west) rectangle (subdiag.north);
\node[rectangle, fit=(subdiag.south west)(subdiag.north)] (left) {};
\singlemor{left}{$v$}{$X + Y$}{$Z$}{catterm}{catc};
\end{scope}
\begin{scope}
\path[clip] (subdiag.south) rectangle (subdiag.north east);
\node[rectangle, fit=(subdiag.south)(subdiag.north east)] (right) {};
\singlemor{right}{$v$}{$X + Y$}{$Z$}{catterm}{catc};
\end{scope}
\draw[very thick] (subdiag.south west) rectangle (subdiag.north east);
\draw[very thick] (subdiag.south) -- (subdiag.north);
\draw (cbb.south) coordinate[label=below:$X + Y$] -- (subdiag.south);
\draw (subdiag.north) -- (cbb.north) coordinate[label=above:$Z$];
\path (left.north west) ++(0.75,-0.75) node {$\lhd$}
      (right.north west) ++(0.75,-0.75) node {$\rhd$};
\begin{pgfonlayer}{background}
\fill[catterm] (cbb.north west) rectangle (cbb.south);
\fill[catc] (cbb.north east) rectangle (cbb.south);
\end{pgfonlayer}
\end{tikzpicture}
\end{gathered}
=
\begin{gathered}
\twocelldiag{v}{X + Y}{Z}{catterm}{catc}{2}{2}{2}
\end{gathered}
\end{equation*}
\begin{equation*}
\begin{gathered}
\begin{tikzpicture}[auto, x=0.5cm, y=0.5cm]
\diagbb{4.5cm}{3.5cm};
\path (cbb.south west) ++(0.5,1) coordinate (bl')
      (cbb.north east) ++(-0.5,-1) coordinate (tr');
\node[rectangle, fit=(bl')(tr')] (subdiag') {};
\path (subdiag'.south west) ++(0.5,1.5) coordinate (bl)
      (subdiag'.north east) ++(-0.5,-1.5) coordinate (tr);
\node[rectangle, fit=(bl)(tr)] (subdiag) {};
\fill[catc] (subdiag'.south west) rectangle (subdiag'.north east);
\fill[catterm] (subdiag'.south west) rectangle (subdiag'.north);
\begin{scope}
\path[clip] (subdiag.south west) rectangle (subdiag.north);
\node[rectangle, fit=(subdiag.south west)(subdiag.north)] (left) {};
\singlemor{left}{$f$}{$X$}{$Z$}{catterm}{catc};
\end{scope}
\begin{scope}
\path[clip] (subdiag.south) rectangle (subdiag.north east);
\node[rectangle, fit=(subdiag.south)(subdiag.north east)] (right) {};
\singlemor{right}{$g$}{$Y$}{$Z$}{catterm}{catc};
\end{scope}
\draw[very thick] (subdiag.south west) rectangle (subdiag.north east);
\draw[very thick] (subdiag.south) -- (subdiag.north);
\draw (subdiag'.south) to node[swap]{$X + Y$} (subdiag.south);
\draw (subdiag.north) to node{$Z$} (subdiag'.north);
\draw[very thick] (subdiag'.south west) rectangle (subdiag'.north east);
\draw (cbb.south) coordinate[label=below:$X$] -- (subdiag'.south)
      (subdiag'.north) -- (cbb.north) coordinate[label=above:$Z$];
\path (subdiag'.north west) ++(0.5,-0.5) node{$\lhd$};
\begin{pgfonlayer}{background}
\fill[catterm] (cbb.north west) rectangle (cbb.south);
\fill[catc] (cbb.north east) rectangle (cbb.south);
\end{pgfonlayer}
\end{tikzpicture}
\end{gathered}
=
\begin{gathered}
\twocelldiag{f}{X}{Z}{catterm}{catc}{2}{2}{3.5}
\end{gathered}
\end{equation*}
\begin{equation*}
\begin{gathered}
\begin{tikzpicture}[auto, x=0.5cm, y=0.5cm]
\diagbb{4.5cm}{3.5cm};
\path (cbb.south west) ++(0.5,1) coordinate (bl')
      (cbb.north east) ++(-0.5,-1) coordinate (tr');
\node[rectangle, fit=(bl')(tr')] (subdiag') {};
\path (subdiag'.south west) ++(0.5,1.5) coordinate (bl)
      (subdiag'.north east) ++(-0.5,-1.5) coordinate (tr);
\node[rectangle, fit=(bl)(tr)] (subdiag) {};
\fill[catc] (subdiag'.south west) rectangle (subdiag'.north east);
\fill[catterm] (subdiag'.south west) rectangle (subdiag'.north);
\begin{scope}
\path[clip] (subdiag.south west) rectangle (subdiag.north);
\node[rectangle, fit=(subdiag.south west)(subdiag.north)] (left) {};
\singlemor{left}{$f$}{$X$}{$Z$}{catterm}{catc};
\end{scope}
\begin{scope}
\path[clip] (subdiag.south) rectangle (subdiag.north east);
\node[rectangle, fit=(subdiag.south)(subdiag.north east)] (right) {};
\singlemor{right}{$g$}{$Y$}{$Z$}{catterm}{catc};
\end{scope}
\draw[very thick] (subdiag.south west) rectangle (subdiag.north east);
\draw[very thick] (subdiag.south) -- (subdiag.north);
\draw (subdiag'.south) to node[swap]{$X + Y$} (subdiag.south);
\draw (subdiag.north) to node{$Z$} (subdiag'.north);
\draw[very thick] (subdiag'.south west) rectangle (subdiag'.north east);
\draw (cbb.south) coordinate[label=below:$Y$] -- (subdiag'.south)
      (subdiag'.north) -- (cbb.north) coordinate[label=above:$Z$];
\path (subdiag'.north west) ++(0.5,-0.5) node{$\rhd$};
\begin{pgfonlayer}{background}
\fill[catterm] (cbb.north west) rectangle (cbb.south);
\fill[catc] (cbb.north east) rectangle (cbb.south);
\end{pgfonlayer}
\end{tikzpicture}
\end{gathered}
=
\begin{gathered}
\twocelldiag{g}{Y}{Z}{catterm}{catc}{2}{2}{3.5}
\end{gathered}
\end{equation*}
\begin{remark}
Throughout this section there has been no attempt to present a minimal set
of equations. Many of the equalities above are interderivable, but we
have aimed to give explicit statements of many useful properties, and to
exploit results about of representable functors to prove standard identities
from general principles.
\end{remark}

\section{Bifunctors}
In earlier sections, for example the accounts of binary products and coproducts 
in sections \ref{sec:binprod} and \ref{sec:bincoprod},
we have implicitly been dealing with bifunctors, we now describe a general strategy
for handling bifunctors using string diagrams. Our approach can be seen as using ``sections''
in the terminology of functional programming.
Bifunctors are slightly awkward in the string diagrammatic framework as
ideally we would use horizontal juxtaposition to describe the parameters to the bifunctor,
in the style used in the calculi for monoidal categories.
Unfortunately we
have already used this graphical freedom to describe functor composition, if we wish
to continue working with 2-dimensional diagrams we must make some compromises.

Consider a bifunctor:
\begin{equation*}
T : \mathcal{C} \times \mathcal{D} \rightarrow \mathcal{E}
\end{equation*}
By fixing either the first or second parameter, each $\mathcal{C}$ object $C$ and $\mathcal{D}$ object $D$ induce (unary) functors:
\begin{equation*}
\begin{gathered}
\onecelldiag{T_C}{catd}{cate}{2}{1}
\end{gathered}
\qquad
\begin{gathered}
\onecelldiag{T^D}{catc}{cate}{2}{1}
\end{gathered}
\end{equation*}
Also, $\mathcal{C}$ morphism $f : C \rightarrow C'$ and $\mathcal{D}$ morphism $g : D \rightarrow D'$ induce natural transformations:
\begin{equation*}
\begin{gathered}
\twocelldiag{T_f}{T_C}{T_{C'}}{catd}{cate}{1}{2}{1}
\end{gathered}
\qquad
\begin{gathered}
\twocelldiag{T^g}{T^D}{T^{D'}}{catc}{cate}{1}{2}{1}
\end{gathered}
\end{equation*}
These satisfy the following obvious functoriality equations:
\begin{subequations}
\begin{trivlist}
\item
\begin{minipage}{0.495\textwidth}
\begin{equation*}
\begin{gathered}
\twocelldiag{T_{1_C}}{T_C}{T_C}{catd}{cate}{1.5}{2}{1.5}
\end{gathered}
=
\begin{gathered}
\onecelldiag{T_C}{catd}{cate}{3}{1}
\end{gathered}
\end{equation*}
\end{minipage}
\begin{minipage}{0.495\textwidth}
\begin{equation*}
\begin{gathered}
\twocelldiag{T^{1_D}}{T^D}{T^D}{catc}{cate}{1.5}{2}{1.5}
\end{gathered}
=
\begin{gathered}
\onecelldiag{T^D}{catc}{cate}{3}{1}
\end{gathered}
\end{equation*}
\end{minipage}
\item
\begin{minipage}{0.495\textwidth}
\begin{equation*}
\begin{gathered}
\twocellpairdiag{T_f}{T_{f'}}{T_C}{T_{C''}}{catd}{cate}{1}{2}{1}
\end{gathered}
=
\begin{gathered}
\twocelldiag{T_{f' \circ f}}{T_C}{T_{C''}}{catd}{cate}{1.5}{3}{1.5}
\end{gathered}
\end{equation*}
\end{minipage}
\begin{minipage}{0.495\textwidth}
\begin{equation*}
\begin{gathered}
\twocellpairdiag{T^g}{T^{g'}}{T^D}{T^{D''}}{catc}{cate}{1}{2}{1}
\end{gathered}
=
\begin{gathered}
\twocelldiag{T^{g' \circ g}}{T^D}{T^{D''}}{catc}{cate}{1.5}{3}{1.5}
\end{gathered}
\end{equation*}
\end{minipage}
\end{trivlist}
\end{subequations}
For bifunctors $S, T : \mathcal{C} \times \mathcal{D} \rightarrow \mathcal{E}$ we can
consider natural transformations $\alpha : S \Rightarrow T$,
typically $\alpha_{C,D}$ is then said to be ``natural in both $C$ and $D$''. Again we fix either the first
or second parameter, giving for each $\mathcal{C}$ object $C$ and each $\mathcal{D}$ object $D$ families of natural transformations:
\begin{equation*}
\begin{gathered}
\twocelldiag{\alpha_C}{S_C}{T_C}{catd}{cate}{1.5}{1.5}{1.5}
\end{gathered}
\qquad
\begin{gathered}
\twocelldiag{\alpha^D}{S^D}{T^D}{catc}{cate}{1.5}{1.5}{1.5}
\end{gathered}
\end{equation*}
Naturality in the fixed parameters must then be handled explicitly as the satisfaction of the following equations:
\begin{subequations}
\begin{trivlist}
\item
\begin{minipage}{0.495\textwidth}
\begin{equation}
\label{eq:binat}
\begin{gathered}
\twocellpairdiag{\alpha_C}{T_f}{S_C}{T_{C'}}{catd}{cate}{1}{2}{1}
\end{gathered}
=
\begin{gathered}
\twocellpairdiag{S_f}{\alpha_{C'}}{S_C}{T_{C'}}{catd}{cate}{1}{2}{1}
\end{gathered}
\end{equation}
\end{minipage}
\begin{minipage}{0.495\textwidth}
\begin{equation}
\label{eq:binattwo}
\begin{gathered}
\twocellpairdiag{\alpha^D}{T^g}{S^D}{T^{D'}}{catc}{cate}{1}{2}{1}
\end{gathered}
=
\begin{gathered}
\twocellpairdiag{S^g}{\alpha^{D'}}{S^D}{T^{D'}}{catc}{cate}{1}{2}{1}
\end{gathered}
\end{equation}
\end{minipage}
\end{trivlist}
\end{subequations}
We can then work with transformations natural in two parameters by choosing one parameter
to fix, naturality in the other parameter then behaves in the usual way for string
diagrams, and naturality in the fixed parameter is captured equationally as
the commutativity conditions of equations \eqref{eq:binat} and \eqref{eq:binattwo}.
Some of the calculations are now effectively being performed in the subscripts and subscripts
in these diagrams, but we do retain a topological feel to our reasoning.

We can also relate the two choices of notation via the following equations:
\begin{trivlist}
\item
\begin{minipage}{0.495\linewidth}
\begin{equation}
\label{eq:bifunrela}
\begin{gathered}
\begin{tikzpicture}[scale=0.5]
\path coordinate[dot, label=left:$g$] (g) +(0,1) coordinate[label=above:$D'$] (tl) +(0,-1) coordinate[label=below:$D$] (bl)
 ++(1.5,0) coordinate[dot, label=right:$T_f$] (Tf) +(0,1) coordinate[label=above:$T_{C'}$] (tr) +(0,-1) coordinate[label=below:$T_C$] (br);
\draw (bl) -- (g) -- (tl)
 (br) -- (Tf) -- (tr);
\begin{pgfonlayer}{background}
\fill[catterm] ($(tl) + (-1.0, 0)$) rectangle (bl);
\fill[catd] (tl) rectangle (br);
\fill[cate] (tr) rectangle ($(br) + (1.5,0)$);
\end{pgfonlayer}
\end{tikzpicture}
\end{gathered}
=
\begin{gathered}
\begin{tikzpicture}[scale=0.5]
\path coordinate[dot, label=left:$f$] (g) +(0,1) coordinate[label=above:$C'$] (tl) +(0,-1) coordinate[label=below:$C$] (bl)
 ++(1.5,0) coordinate[dot, label=right:$T^g$] (Tf) +(0,1) coordinate[label=above:$T^{D'}$] (tr) +(0,-1) coordinate[label=below:$T^D$] (br);
\draw (bl) -- (g) -- (tl)
 (br) -- (Tf) -- (tr);
\begin{pgfonlayer}{background}
\fill[catterm] ($(tl) + (-1.0, 0)$) rectangle (bl);
\fill[catc] (tl) rectangle (br);
\fill[cate] (tr) rectangle ($(br) + (1.5,0)$);
\end{pgfonlayer}
\end{tikzpicture}
\end{gathered}
\end{equation}
\end{minipage}
\begin{minipage}{0.495\linewidth}
\begin{equation}
\label{eq:bifunrelb}
\begin{gathered}
\begin{tikzpicture}[scale=0.5]
\path +(0,1) coordinate[label=above:$D$] (tl) +(0,-1) coordinate[label=below:$D$] (bl)
 ++(1.5,0) coordinate[dot, label=right:$\alpha_C$] (Tf) +(0,1) coordinate[label=above:$T_{C}$] (tr) +(0,-1) coordinate[label=below:$S_C$] (br);
\draw (bl) -- (tl)
 (br) -- (Tf) -- (tr);
\begin{pgfonlayer}{background}
\fill[catterm] ($(tl) + (-1, 0)$) rectangle (bl);
\fill[catd] (tl) rectangle (br);
\fill[cate] (tr) rectangle ($(br) + (1.5,0)$);
\end{pgfonlayer}
\end{tikzpicture}
\end{gathered}
=
\begin{gathered}
\begin{tikzpicture}[scale=0.5]
\path +(0,1) coordinate[label=above:$C$] (tl) +(0,-1) coordinate[label=below:$C$] (bl)
 ++(1.5,0) coordinate[dot, label=right:$\alpha^D$] (Tf) +(0,1) coordinate[label=above:$T^D$] (tr) +(0,-1) coordinate[label=below:$S^D$] (br);
\draw (bl) -- (tl)
 (br) -- (Tf) -- (tr);
\begin{pgfonlayer}{background}
\fill[catterm] ($(tl) + (-1, 0)$) rectangle (bl);
\fill[catc] (tl) rectangle (br);
\fill[cate] (tr) rectangle ($(br) + (1.5,0)$);
\end{pgfonlayer}
\end{tikzpicture}
\end{gathered}
\end{equation}
\end{minipage}
\end{trivlist}
The relationships in equations \ref{eq:bifunrela} and \ref{eq:bifunrelb} illustrate a new phenomenon whereby we have equalities
in which the functors at the top and bottom of the diagrams on each side are apparently ``different'', 
but the composites are actually equal, for example in this case by definition we have equalities $T_X Y = T(X,Y) = T^Y X$. 
These types of equations
between composite functors are an occasion on which it can be useful to explicitly insert identity natural transformations, in
order to witness the equalities, for example, we can rewrite equation \ref{eq:bifunrela} in a topologically more instructive
form as:
\begin{equation*}
\begin{gathered}
\begin{tikzpicture}[scale=0.5]
\path coordinate[dot, label=above:$1$] (eq) +(-1,1) coordinate[label=above:$C'$] (tl) +(1,1) coordinate[label=above:$T^{D'}$] (tr)
 (eq) ++(-1,-1) coordinate[dot, label=left:$g$] (g) ++(0,-1) coordinate[label=below:$D$] (bl)
 (eq) ++(1,-1) coordinate[dot,label=right:$T_f$] (Tf) ++(0,-1) coordinate[label=below:$T_C$] (br);
\draw (bl) -- (g) to[out=90, in=180] (eq.west) -- (eq.west) to[out=180, in=-90] (tl)
 (br) -- (Tf) to[out=90, in=0] (eq.east) -- (eq.east) to[out=0, in=-90] (tr);
\begin{pgfonlayer}{background}
\fill[catterm] ($(tl) + (-1.5,0)$) rectangle ($(br) + (1.5,0)$);
\fill[catd] (bl) -- (g.south) -- (g.north) to[out=90, in=-180] (eq.west) -- (eq.east) to[out=0, in=90] (Tf) -- (br) -- cycle;
\fill[catc] (tl) to[out=-90, in=180] (eq.west) -- (eq.east) to[out=0, in=-90] (tr) -- cycle;
\fill[cate] (br) -- (Tf.south) -- (Tf.north) to[out=90, in=0] (eq.east) to[out=0, in=-90] (tr) -- ($(tr) + (1.5,0)$) -- ($(br) + (1.5,0)$) -- cycle;
\end{pgfonlayer}
\end{tikzpicture}
\end{gathered}
=
\begin{gathered}
\begin{tikzpicture}[scale=0.5]
\path coordinate[dot, label=below:$1$] (eq) +(-1,-1) coordinate[label=below:$D$] (bl) +(1,-1) coordinate[label=below:$T_C$] (br)
 (eq) ++(-1,1) coordinate[dot, label=left:$f$] (f) ++(0,1) coordinate[label=above:$C'$] (tl)
 (eq) ++(1,1) coordinate[dot, label=right:$T^g$] (Tg) ++(0,1) coordinate[label=above:$T^{D'}$] (tr);
\draw (bl) to[out=90, in=180] (eq.west) -- (eq.west) to[out=180, in=-90] (f) -- (tl)
 (br) to[out=90, in=0] (eq.east) -- (eq.east) to[out=0, in=-90] (Tg) -- (tr);
\begin{pgfonlayer}{background}
\fill[catterm] ($(tl) + (-1.5,0)$) rectangle ($(br) + (1.5,0)$);
\fill[catd] (bl) to[out=90, in=180] (eq.west) -- (eq.east) to[out=0, in=90] (br) -- cycle;
\fill[catc] (tl) -- (f.north) -- (f.south) to[out=-90, in=180] (eq.west) -- (eq.east) to[out=0, in=-90] (Tg) -- (tr) -- cycle;
\fill[cate] (br) to[out=90, in=0] (eq.east) to[out=0, in=-90] (Tg) -- (tr) -- ($(tr) + (1.5,0)$) -- ($(br) + (1.5,0)$) -- cycle;
\end{pgfonlayer}
\end{tikzpicture}
\end{gathered}
\end{equation*}
These \define{witnessing identity morphisms} then appear as a trivial form of distributive law, 
smoothing diagrammatic calculations by allowing us to switch between two equal composite functors.

\section{Conclusion}
We have shown in the previous sections how string diagrams can be used
to perform calculational proofs whilst retaining the vital type information.
Although we could not hope to provide comprehensive coverage, for example
we have not discussed dual adjunctions, equivalences, exponentials or 
many common types of limits and colimits,
hopefully we have provided sufficient background to make extending the graphical
proof style to further topics straightforward.

Tool support for developing and documenting proofs in this diagrammatic style
would be tremendously useful, and is a direction for further investigation.
The proofs we develop, although conceptually compact, often physically take up
a large amount of page space and the proof style also makes essential use of colour.
These attributes are not particularly ``publication friendly'', tool support could
also aim to provide (preferably bidirectional) translations between string based 
proofs and more traditional formats.

Although we do not claim that string diagrammatic proofs are always the most suitable
approach to any category theoretic question, many problems where there is a lot
of ``bookkeeping'' involving functoriality and naturality conditions can be greatly
simplified. String diagrams also provide a different, more visual and topological
intuition for category theoretic concepts and proofs. 
It is therefore hard to explain the absence of string diagrammatic tools
from introductory courses and texts on category theory and the relegation of this material
to folklore in the community. Teaching experience 
introducing these tools earlier in category theory courses would be also therefore be of interest.

\subsection*{Acknowledgements}
The author would like to thank Samson Abramsky, Andreas D\"oring, Ralf Hinze, Peter Selinger, 
Pawel Sobocinski, Eduardo Dubuc and Brendan Fong for useful feedback and encouragement.

\bibliography{papers}

\begin{thebibliography}{53}
\providecommand{\natexlab}[1]{#1}
\providecommand{\url}[1]{\texttt{#1}}
\expandafter\ifx\csname urlstyle\endcsname\relax
  \providecommand{\doi}[1]{doi: #1}\else
  \providecommand{\doi}{doi: \begingroup \urlstyle{rm}\Url}\fi

\bibitem[Abramsky and Coecke(2008)]{AbramskyCoecke2008}
S.~Abramsky and B.~Coecke.
\newblock Categorical quantum mechanics.
\newblock In K.~Engesser, D.~M. Gabbay, and D.~Lehmann, editors, \emph{Handbook
  of Quantum Logic and Quantum Structures}. Elsevier, 2008.

\bibitem[Abramsky and Tzevelekos(2011)]{AbrTze2011}
S.~Abramsky and N.~Tzevelekos.
\newblock Introduction to categories and categorical logic.
\newblock In B.~Coecke, editor, \emph{New Structures for Physics}, volume 813
  of \emph{Lecture Notes in Physics}, pages 3--94. Springer, 2011.

\bibitem[Ad\'{a}mek et~al.(2009)Ad\'{a}mek, Herrlich, and Strecker]{ACC2009}
J.~Ad\'{a}mek, H.~Herrlich, and G.~E. Strecker.
\newblock \emph{Abstract and Concrete Categories}.
\newblock Dover, 2009.

\bibitem[Backhouse(1989)]{Backhouse1989}
R.~C. Backhouse.
\newblock Make formality work for us.
\newblock \emph{EATCS Bulletin}, 38:\penalty0 219--249, 1989.

\bibitem[Baez and Stay(2011)]{BaezStay2011}
J.~C. Baez and M.~Stay.
\newblock Physics, topology, logic and computation: A {R}osetta stone.
\newblock In B.~Coecke, editor, \emph{New Structures for Physics}, volume 813
  of \emph{Lecture Notes in Physics}, pages 95--174. Springer, 2011.

\bibitem[Barr and Wells(2005)]{BarrWells2005}
M.~Barr and C.~Wells.
\newblock Toposes, triples and theories.
\newblock \emph{Reprints in Theory and Applications of Categories}, 1:\penalty0
  1--289, 2005.

\bibitem[Beck(1969)]{Beck1969}
J.~Beck.
\newblock Distributive laws.
\newblock In H.~Appelgate, M.~Barr, J.~Beck, F.~W. Lawvere, F.~Linton,
  E.~Manes, M.~Tierney, and F.~Ulmer, editors, \emph{Seminar on Triples and
  Categorical Homotopy Theory}, volume~80 of \emph{Lecture Notes in
  Mathematics}, pages 119--140. Springer, 1969.

\bibitem[B\'enabou(1967)]{Benabou1967}
J.~B\'enabou.
\newblock Introduction to bicategories.
\newblock In A.~Dold and B.~Eckmann, editors, \emph{Reports of the Midwest
  Category Seminar}, volume~47 of \emph{Lecture Notes in Mathematics}, pages
  1--77. Springer, 1967.

\bibitem[Blute et~al.(2002)Blute, Cockett, and Seely]{BluteCockettSeely2002}
R.~Blute, J.~R.~B. Cockett, and R.~A.~G. Seely.
\newblock The logic of linear functors.
\newblock \emph{Mathematical Structures in Computer Science}, 12\penalty0
  (4):\penalty0 513--539, 2002.

\bibitem[Borceux(1994)]{Borceux1994a}
F.~Borceux.
\newblock \emph{Handbook of Categorical Algebra}, volume~1.
\newblock Cambridge University Press, 1994.

\bibitem[Cockett and Seely(1999)]{CockettSeely1999}
J.~R.~B. Cockett and R.~A.~G. Seely.
\newblock Linearly distributive functors.
\newblock \emph{The Barrfestschrift, Journal of Pure and Applied Algebra},
  143:\penalty0 133--173, 1999.

\bibitem[Coecke and Duncan(2011)]{CoeckeDuncan2011}
B.~Coecke and R.~Duncan.
\newblock Interacting quantum observables: categorical algebra and
  diagrammatics.
\newblock \emph{New Journal of Physics}, 13\penalty0 (4):\penalty0 043016,
  2011.

\bibitem[Dijkstra and Scholten(1990)]{DijkstraScholten1990}
E.~W. Dijkstra and C.~S. Scholten.
\newblock \emph{Predicate calculus and program semantics}.
\newblock Texts and monographs in computer science. Springer, 1990.

\bibitem[Dubuc and Szyld(2013)]{Dubuc2013}
E.~J. Dubuc and M.~Szyld.
\newblock A {T}annakian context for {G}alois theory.
\newblock \emph{Advances in Mathematics}, 234:\penalty0 528--549, 2013.

\bibitem[Ehresmann(1963)]{Ehresmann1963}
C.~Ehresmann.
\newblock Cat\'egories structur\'ees.
\newblock \emph{Ann. Sci. Ecole Norm. Sup.}, 80:\penalty0 349--425, 1963.

\bibitem[Eilenberg and Moore(1965)]{EilenbergMoore1965}
S.~Eilenberg and J.~C. Moore.
\newblock Adjoint functors and triples.
\newblock \emph{Illinois J. Math.}, 9:\penalty0 381--398, 1965.

\bibitem[Fokkinga(1992{\natexlab{a}})]{Fokkinga1992}
M.~M. Fokkinga.
\newblock \emph{{A Gentle Introduction to Category Theory --- the calculational
  approach}}, volume Part I, pages 1--72.
\newblock University of Utrecht, Utrecht, Netherlands, Sep 1992{\natexlab{a}}.

\bibitem[Fokkinga(1992{\natexlab{b}})]{Fokkinga1992b}
M.~M. Fokkinga.
\newblock Calculate categorically!
\newblock \emph{Formal Aspects of Computing}, 4\penalty0 (4):\penalty0
  673--692, 1992{\natexlab{b}}.

\bibitem[Fokkinga and Meertens(1994)]{FokkingaMeertens1994}
M.~M. Fokkinga and L.~Meertens.
\newblock {Adjunctions}.
\newblock Technical Report Memoranda Inf 94-31, University of Twente, Enschede,
  Netherlands, Jun 1994.

\bibitem[Goguen et~al.(1975)Goguen, Thatcher, Wagner, and
  Wright]{GoguenThatcherWagnerWright1975}
J.~A. Goguen, J.~W. Thatcher, E.~G. Wagner, and J.~B. Wright.
\newblock An introduction to categories, algebraic theories and algebras.
\newblock Technical report, IBM Thomas J. Watson Research Centre, Yorktown
  Heights, 1975.

\bibitem[Goguen et~al.(1977)Goguen, Thatcher, Wagner, and
  Wright]{GoguenThatcherWagnerWright1977}
J.~A. Goguen, J.~W. Thatcher, E.~G. Wagner, and J.~B. Wright.
\newblock Initial algebra semantics and continuous algebras.
\newblock \emph{Journal of the ACM}, 24\penalty0 (1):\penalty0 68--95, 1977.

\bibitem[Gray(1974)]{Day1974}
J.~W. Gray.
\newblock \emph{Formal Category Theory: Adjointness for 2-Categories}, volume
  391 of \emph{Lecture Notes in Mathematics}.
\newblock Springer, 1974.

\bibitem[Gries and Schneider(1993)]{GriesSchneider1993}
D.~Gries and F.~B. Schneider.
\newblock \emph{A Logical Approach to Discrete Math}.
\newblock Springer, 1993.

\bibitem[Hagino(1987{\natexlab{a}})]{Hagino1987}
T.~Hagino.
\newblock \emph{A Categorical Programming Language}.
\newblock PhD thesis, University of Edinburgh, 1987{\natexlab{a}}.

\bibitem[Hagino(1987{\natexlab{b}})]{Hagino1987b}
T.~Hagino.
\newblock A typed lambda calculus with categorical type constructors.
\newblock In D.~H. Pitt, A.~Poign{\'e}, and D.~E. Rydeheard, editors,
  \emph{Category Theory and Computer Science}, volume 283 of \emph{Lecture
  Notes in Computer Science}, pages 140--157. Springer, 1987{\natexlab{b}}.

\bibitem[Hinze(2012)]{Hinze2012}
R.~Hinze.
\newblock Kan extensions for program optimisation or: Art and {D}an explain an
  old trick.
\newblock In J.~Gibbons and P.~Nogueira, editors, \emph{MPC}, volume 7342 of
  \emph{Lecture Notes in Computer Science}, pages 324--362. Springer, 2012.

\bibitem[Jacobs(2012)]{Jacobs2012b}
B.~Jacobs.
\newblock Introduction to coalgebra, towards mathematics of states and
  observation.
\newblock Book in preparation, 2012.

\bibitem[Joyal and Street(1988)]{JoyalStreet1988}
A.~Joyal and R.~Street.
\newblock Planar diagrams and tensor algebra, 1988.

\bibitem[Joyal and Street(1991)]{JoyalStreet1991}
A.~Joyal and R.~Street.
\newblock Geometry of tensor calculus {I}.
\newblock \emph{Advances in Mathematics}, 88:\penalty0 55--113, 1991.

\bibitem[Joyal and Street(1995)]{JoyalStreet1995}
A.~Joyal and R.~Street.
\newblock Geometry of tensor calculus {II}.
\newblock unpublished draft, 1995.

\bibitem[Kelly and Street(1974)]{KellyStreet1974}
G.~M. Kelly and R.~Street.
\newblock Review of the elements of 2-categories.
\newblock In \emph{Sydney Category Seminar}, volume 420 of \emph{Lecture Notes
  in Mathematics}, pages 75--103. Springer, 1974.

\bibitem[Kelly and Laplaza(1980)]{KellyLaplaza1980}
M.~Kelly and M.~L. Laplaza.
\newblock Coherence for compact closed categories.
\newblock \emph{Journal of Pure and Applied Algebra}, 19:\penalty0 193--213,
  1980.

\bibitem[Kissinger et~al.(2014)Kissinger, Merry, Frot, Coecke, Quick, Dixon,
  Soloviev, Duncan, and Zamdzhiev]{Quantomatic}
A.~Kissinger, A.~Merry, B.~Frot, B.~Coecke, D.~Quick, L.~Dixon, M.~Soloviev,
  R.~Duncan, and V.~Zamdzhiev.
\newblock The {Q}uantomatic tool website.
\newblock https://sites.google.com/site/quantomatic/, 2014.

\bibitem[Kleisli(1965)]{Kleisli1965}
H.~Kleisli.
\newblock Every standard construction is induced by a pair of functors.
\newblock \emph{Proc. Am. Math. Soc.}, 16:\penalty0 544--546, 1965.

\bibitem[MacLane(1998)]{MacLane1998}
S.~MacLane.
\newblock \emph{Categories for the Working Mathematician}, volume~5 of
  \emph{Graduate Texts in Mathematics}.
\newblock Springer, 1998.

\bibitem[Malcolm(1990{\natexlab{a}})]{Malcolm1990}
G.~Malcolm.
\newblock \emph{Algebraic Data Types and Program Transformation}.
\newblock PhD thesis, Rijksuniversiteit Groningen, 1990{\natexlab{a}}.

\bibitem[Malcolm(1990{\natexlab{b}})]{Malcolm1990b}
G.~Malcolm.
\newblock Data structures and program transformation.
\newblock \emph{Sci. Comput. Program.}, 14\penalty0 (2-3):\penalty0 255--279,
  1990{\natexlab{b}}.

\bibitem[Manes(1976)]{Manes1976}
E.~G. Manes.
\newblock \emph{Algebraic Theories}, volume~26 of \emph{Graduate Texts in
  Mathematics}.
\newblock Springer, 1976.

\bibitem[Melli{\`e}s(2006)]{Mellies2006}
P.-A. Melli{\`e}s.
\newblock Functorial boxes in string diagrams.
\newblock In Zolt{\'a}n {\'E}sik, editor, \emph{CSL}, volume 4207 of
  \emph{Lecture Notes in Computer Science}, pages 1--30. Springer, 2006.

\bibitem[Melli{\`e}s(2012)]{Mellies2012}
P.-A. Melli{\`e}s.
\newblock Game semantics in string diagrams.
\newblock In \emph{{Proceedings of the 27th Annual IEEE Symposium on Logic in
  Computer Science, LICS 2012, Dubrovnik, Croatia, June 25-28, 2012}}, pages
  481--490. IEEE, 2012.

\bibitem[Moggi(1991)]{Moggi1991}
E.~Moggi.
\newblock Notions of computation and monads.
\newblock \emph{Inf. Comput.}, 93\penalty0 (1):\penalty0 55--92, 1991.

\bibitem[Palmquist(1971)]{Palmquist1971}
P.~H. Palmquist.
\newblock The double category of adjoint squares.
\newblock In J.~W. Gray and S.~MacLane, editors, \emph{Reports of the Midwest
  Category Seminar {V}}, volume 195 of \emph{Lecture Notes in Mathematics}.
  Springer, 1971.

\bibitem[Pierce(1991)]{Pierce1991}
B.~C. Pierce.
\newblock \emph{Basic Category Theory for Computer Scientists}.
\newblock MIT Press, 1991.

\bibitem[Power and Watanabe(2002)]{PowerWatanabe2002}
J.~Power and H.~Watanabe.
\newblock Combining a monad and a comonad.
\newblock \emph{Theor. Comput. Sci.}, 280\penalty0 (1-2):\penalty0 137--162,
  2002.

\bibitem[Rutten(2000)]{Rutten2000}
J.~J. M.~M. Rutten.
\newblock Universal coalgebra: a theory of systems.
\newblock \emph{Theor. Comput. Sci.}, 249\penalty0 (1):\penalty0 3--80, 2000.

\bibitem[Selinger(2011)]{Selinger2011}
P.~Selinger.
\newblock A survey of graphical languages for monoidal categories.
\newblock In B.~Coecke, editor, \emph{New Structures for Physics}, volume 813
  of \emph{Lecture Notes in Physics}, pages 289--355. Springer, 2011.

\bibitem[Stay and Vicary(2013)]{StayVicary2013}
M.~Stay and J.~Vicary.
\newblock Bicategorical semantics for nondeterministic computation.
\newblock \emph{CoRR}, abs/1301.3393, 2013.

\bibitem[Street(1972)]{Street1972}
R.~Street.
\newblock The formal theory of monads.
\newblock \emph{J. Pure Appl. Algebra}, 2:\penalty0 149--168, 1972.

\bibitem[Street(1995)]{Street1995}
R.~Street.
\newblock Low-dimensional topology and higher-order categories.
\newblock In \emph{Proceedings of CT95}, 1995.

\bibitem[Street and Lack(2002)]{StreetLack2002}
R.~Street and S.~Lack.
\newblock The formal theory of monads {II}.
\newblock \emph{J. Pure Appl. Algebra}, 175:\penalty0 243--265, 2002.

\bibitem[van Gasteren(1990)]{vanGasteren1990}
A.~J.~M. van Gasteren.
\newblock \emph{On the Shape of Mathematical Arguments}, volume 445 of
  \emph{Lecture Notes in Computer Science}.
\newblock Springer, 1990.

\bibitem[Vicary(2012)]{Vicary2012}
J.~Vicary.
\newblock Higher semantics of quantum protocols.
\newblock In \emph{Proceedings of the 27th Annual IEEE Symposium on Logic in
  Computer Science, LICS 2012, Dubrovnik, Croatia, June 25-28, 2012}, pages
  606--615. IEEE, 2012.

\bibitem[Wadler(1995)]{Wadler1995}
P.~Wadler.
\newblock Monads for functional programming.
\newblock In J.~Jeuring and E.~Meijer, editors, \emph{Advanced Functional
  Programming}, volume 925 of \emph{Lecture Notes in Computer Science}, pages
  24--52. Springer, 1995.

\end{thebibliography}
\bibliographystyle{plainnat}

\end{document}